\newcommand{\SG}[1]{{\color{red}#1--Sean}}
\newcommand{\EG}[1]{{\color{blue}#1--Eugene}}
\newcommand{\MG}[1]{{\color{ForestGreen}#1--Maria}}
\theoremstyle{theorem}
\newtheorem{thm}{Theorem}[section]
\newtheorem{prop}[thm]{Proposition}
\newtheorem{lem}[thm]{Lemma}
\newtheorem{exa}[thm]{Example}
\newtheorem{rmk}[thm]{Remark}
\newtheorem{cor}[thm]{Corollary}
\theoremstyle{definition}
\newtheorem{defn}[thm]{Definition}
\newtheorem{example}[thm]{Example}
\newtheorem{problem}[thm]{Problem}
\newcommand{\bC}{\mathbb{C}}
\newcommand{\bZ}{\mathbb{Z}}
\newcommand{\bK}{\mathbb{K}}
\newcommand{\cO}{\mathcal{O}}
\newcommand{\bQ}{\mathbb{Q}}
\newcommand{\bP}{\mathbf{P}}
\newcommand{\bD}{\mathbb{D}}
\newcommand{\inv}{\mathrm{inv}}
\newcommand{\codim}{\mathrm{codim}}
\newcommand{\area}{\mathrm{area}}
\newcommand{\dinv}{\mathrm{dinv}}
\newcommand{\sweep}{\mathrm{sweep}}
\newcommand{\rank}{\mathrm{rk}}
\newcommand{\PF}{\mathrm{PF}}
\newcommand{\arm}{\mathrm{arm}}
\newcommand{\leg}{\mathrm{leg}}
\newcommand{\tdinv}{\mathrm{tdinv}}
\newcommand{\maxtdinv}{\mathrm{maxtdinv}}
\newcommand{\hdinv}{\mathrm{hdinv}}
\newcommand{\pathdinv}{\mathrm{pathdinv}}
\newcommand{\WPF}{\mathrm{WPF}}
\newcommand{\wWPF}{\overline{\mathrm{WPF}}}
\newcommand{\RD}{\mathrm{RD}}
\newcommand{\LD}{\mathcal{LD}}
\newcommand{\Stack}{\mathrm{Stack}}
\newcommand{\stack}{\mathrm{stack}}
\newcommand{\grFrob}{\mathrm{grFrob}}
\newcommand{\Hilb}{\mathrm{Hilb}}
\newcommand{\rev}{\mathrm{rev}}
\newcommand{\Frob}{\mathrm{Frob}}
\newcommand{\Sp}{\mathrm{Sp}}
\newcommand{\Gr}{\mathrm{Gr}}
\newcommand{\Ad}{\mathrm{Ad}}
\newcommand{\Lie}{\mathrm{Lie}}
\newcommand{\JT}{\mathrm{JT}}
\newcommand{\IC}{\mathrm{IC}}
\newcommand{\GL}{\mathrm{GL}}
\newcommand{\ev}{\mathrm{ev}}
\newcommand{\pr}{\mathrm{pr}}
\newcommand{\BM}{\mathrm{BM}}
\newcommand{\diag}{\mathrm{diag}}
\newcommand{\aGr}{\widetilde{\mathrm{Gr}}}
\newcommand{\aFl}{\widetilde{\mathrm{Fl}}}
\newcommand{\ttt}{\mathbf{t}}
\newcommand{\ww}{\mathbf{w}}
\newcommand{\bO}{\mathbb{O}}
\newcommand{\std}{\mathrm{std}}
\newcommand{\End}{\mathrm{End}}
\newcommand{\cE}{\mathcal{E}}
\newcommand{\rslocus}{\Xi}
\title{A geometric interpretation for the Delta Conjecture}
\author{Maria Gillespie} 
\address{Department of Mathematics, Colorado State University, Fort Collins, CO 80523, USA}
\email{maria.gillespie@colostate.edu}
\thanks{The first author was partially supported by NSF DMS award number 2054391.}
\author{Eugene Gorsky}
\address{Department of Mathematics, University of California Davis\\ One Shields Ave, Davis CA 95616, USA}
\email{egorskiy@math.ucdavis.edu}
\thanks{The second author was partially supported by NSF DMS award number 2302305.}
\author{Sean T. Griffin}
\address{Faculty of Mathematics, University of Vienna, Oskar-Morgenstern-Platz 1, 1090 Vienna, Austria}
\email{sean.griffin@univie.ac.at}
\thanks{The third author was supported by ERC grant “Refined invariants in combinatorics, low-dimensional topology and geometry of moduli spaces” No.~101001159}
\date{\today}
\begin{document}

\maketitle
\begin{abstract}
    We introduce a variety $Y_{n,k}$, which we call the \textit{affine $\Delta$-Springer fiber}, generalizing the affine Springer fiber studied by Hikita, whose Borel-Moore homology has an $S_n$ action and a bigrading that corresponds to the Delta Conjecture symmetric function $\mathrm{rev}_q\,\omega \Delta'_{e_{k-1}}e_n$ under the Frobenius character map.  We similarly provide a geometric interpretation for the Rational Shuffle Theorem in the integer slope case $(km,k)$. The variety $Y_{n,k}$ has a map to the affine Grassmannian whose fibers are the $\Delta$-Springer fibers introduced by Levinson, Woo, and the third author. Part of our proof of our geometric realization relies on our previous work on a Schur skewing operator formula relating the Rational Shuffle Theorem to the Delta Conjecture.
\end{abstract}

\tableofcontents

\section{Introduction}

In this paper, we give geometric realizations of both the Rectangular Shuffle Theorem in the $(mk,k)$ case and for the Delta Conjecture, in terms of affine Springer fibers.  This is the second of a two-part series of papers starting with \cite{GGG1}, which both algebraically and combinatorially establishes a skewing formula relating the polynomials from the $(mk,k)$ Rectangular Shuffle Theorem and those of the Delta Conjecture, building from the work of \cite{BHMPS}.  This skewing formula is key to the proof of our geometric construction.



We summarize the results of the paper in the following table:
\begin{center}
\begin{tabular}{|c|c|c|c|c|}
\hline
Degree & Algebra & Combinatorics & Geometry & Module\\
\hline
$K$ & $E_{K,k}\cdot 1$ & $\PF_{K,k}$ & $X_{n,k}$ & $H_*^{BM}(X_{n,k})\circlearrowleft S_K$\\
\hline
$n$ & $\Delta'_{e_{k-1}}e_n$ & $\LD_{n,k}^{\stack}$ & $Y_{n,k}$ & $H_*^{BM}(Y_{n,k})\circlearrowleft S_n$\\
\hline
\end{tabular}
\end{center}
We explain various objects in the table and relations between them in the following sections. In particular, the first row corresponds to algebraic, combinatorial and geometric avatars of a certain degree $K=k(n-k+1)$ symmetric function, while the second row corresponds to a degree $n$ symmetric function. The two symmetric functions are related by an explicit skewing operator $s_{\lambda}^{\perp}$, see Theorem \ref{thm:IntroSkewing}.


\subsection{Shuffle Theorems}

A \textbf{labeled $(n,n)$ Dyck path} or \textbf{parking function} is a Dyck path in the $n\times n$ grid whose vertical steps are labeled with positive integers, such that the labeling increases up each vertical run. See the left-most example in Figure~\ref{fig:PF-example}. The Shuffle Theorem~\cite{CM} gives the following remarkable combinatorial formula for the evaluation $\nabla e_n$,
\begin{equation}
\label{eq: nabla intro}
\nabla e_n = \sum_{P \in \WPF_{n,n}} t^{\area(P)} q^{\dinv(P)} x^P.
\end{equation}
See Section \ref{sec: background} for relevant definitions. Similarly, given $\gcd(a,b)=1$ one can define a {\bf $(ka,kb)$ rational parking function} as a lattice path (also known as a rational Dyck path) in the $(ka)\times (kb)$ grid that stays weakly above the line $y=ax/b$, starts in the southwest corner, and ends in the northeast corner, together with a column-strictly-increasing labeling of the up steps (see the middle path in Figure~\ref{fig:PF-example} for an example where $k=3$, $a=2$ and $b=1$). The combinatorial statistics $\area$ and $\dinv$ in the ``combinatorial" right hand side of \eqref{eq: nabla intro} has a natural generalization to rational parking functions. 

However, to generalize the ``algebraic'' left hand side one needs to consider the {\bf Elliptic Hall Algebra} $\cE_{q,t}$ defined in \cite{EHA} and extensively studied in the last two decades \cite{RationalShuffle,BHMPS,BHMPS2,GN,SV1,SV2,Negut}. This is a remarkable algebra acting on the space $\Lambda(q,t)$ of symmetric functions in infinitely many variables with coefficients in $\bQ(q,t)$. The Rational Shuffle Theorem, 
proposed by Bergeron, Garsia, Leven and Xin~\cite{RationalShuffle} (see also \cite{GN} for $k=1$ case and a connection to Hilbert schemes), subsequently proven by Mellit~\cite{Mellit}, states that
\[
E_{ka,kb} \cdot (-1)^{k(a+1)} = \sum_{P\in \PF_{ka,kb}} t^{\area(P)} q^{\dinv(P)} x^P,
\]
where $E_{ka,kb}$ is a particular element of  $\cE_{q,t}$.
Note that our conventions for $a$ and $b$ are flipped from~\cite{RationalShuffle}. 

We will be primarily interested in the case $(ka,kb)=(K,k)$ where $K=k(n-k+1)$ (so that $a=n-k+1$ and $b=1$)  and consider parking functions in the $K\times k$ rectangle. The corresponding symmetric function $E_{K,k}\cdot 1$ has degree $K$.

\begin{rmk}
For $k=n$ we have $K=k$, and the symmetric function in question is $E_{n,n}\cdot 1=\nabla e_n$.
\end{rmk}

\subsection{Affine Springer fibers}
In a different line of work, Hikita \cite{Hikita} gave an alternative geometric interpretation for $\nabla e_n$ in terms of {\bf affine Springer fibers}. Given a nil-elliptic operator $\gamma\in \mathfrak{sl}_n[[\epsilon]]$, one can define the affine Springer fiber $\Sp_{\gamma}$ in the affine flag variety $\aFl$. The group $S_n$ has a Springer-like representation in the (Borel-Moore) homology of $\Sp_{\gamma}$. Affine Springer fibers and their homology have been a subject of very active study in geometric representation theory, see \cite{Yun} and references therein. In particular, $\Sp_{\gamma}$ is closely related to compactified Jacobians and Hilbert schemes of points on plane curve singularities \cite{MY,MS,MaulikShen} and serves as a local model for fibers in the Hitchin integrable system \cite{OY,OY2}, while 
the point count of $\Sp_{\gamma}$ over a finite field is related to orbital integrals in number theory \cite{KivTsai}. The conjectures of Oblomkov, Rasmussen and Shende \cite{ORS} relate the homology of affine Springer fibers to Khovanov-Rozansky link homology.

If $\gamma$ is regular and semisimple, the geometry of $\Sp_{\gamma}$ is controlled by the characteristic polynomial $f_{\gamma}(z,\epsilon)=\det(\gamma-z I)$ and the {\bf spectral curve} $\{f_{\gamma}(z,\epsilon)=0\}$.  Hikita considered in \cite{Hikita} the case when 
$$
f_{\gamma}(z,\epsilon)=z^n-\epsilon^{n+1}
$$
and proved that $\Sp_{\gamma}$ admits an affine paving with cells in bijection with $(n,n)$ parking functions and the (Frobenius) character of the Borel-Moore homology of $\Sp_{\gamma}$ (as a bigraded $\bQ S_n$-module) matches the right hand side of \eqref{eq: nabla intro} (up to a minor twist). In particular, the dimension of the cells is closely related to the $\dinv$ statistics on parking functions. In \cite{CO} Carlsson and Oblomkov used this work to construct a long-sought explicit basis in the space of diagonal coinvariants.

In \cite{GMV} the second author, Mazin and Vazirani generalized the results of \cite{Hikita} to $(a,b)$-parking functions with $\gcd(a,b)=1$ (and $k=1$ in the above notations): for 
$$
f_{\gamma}(z,\epsilon)=z^a-\epsilon^{b}
$$
the affine Springer fiber also admits an affine paving with cells in bijection with $(a,b)$ parking functions. In particular, \cite{GMV} clarified the relation between $(a,b)$ parking functions and a certain subset of (extended) affine permutations in $\widetilde{S_a}$.

In \cite{GMO} the second author, Mazin and Oblomkov made progress towards the general non-coprime case $(ka,kb)$ by considering a more complicated class of ``generic'' spectral curves $\{f_{\gamma}(z,\epsilon)\}$. They proved that for such $\gamma$ the affine Springer fiber $\Gr_{\gamma}$ in the affine Grassmannian admits an affine paving with cells labeled by $(ka,kb)$-Dyck paths. It is plausible that the affine Springer fiber $\Sp_{\gamma}$ also admits an affine paving with cells labeled by $(ka,kb)$ parking functions, but we do not need it here.

Instead, we propose a {\bf different} geometric model for a special subclass of the non-coprime case. Let $K=k(n-k+1)$, we consider a family of nil-elliptic elements $\gamma_{n,k,N}\in \mathfrak{sl}_K[[\epsilon]]$ with characteristic polynomials $z^K-\epsilon^{k+N}$.
\begin{defn}\label{def:IntroVarieties}
    Let $\gamma=\gamma_{n,k,N}$ be the $\mathcal{O}$-linear operator on $\mathcal{O}^K = \mathbb{C}^K[[\epsilon]]$ defined by
    \[
    \gamma e_i = 
    \begin{cases}
    e_{i+k} & \text{if }1\leq i\leq (n-k)k\\
    e_{i+k+1} & \text{if } (n-k)k < i < K\\
    \epsilon^{N+1} e_1 & \text{if }i=K.
    \end{cases}
    \]
\end{defn}

Here we extend the basis periodically by $e_{i+K}=\epsilon e_i$. We also define a certain union $C$ of affine Schubert cells and the subvariety 
$$
X_{n,k,N}=C\cap \Sp_{\gamma}.
$$
We can now state our first main result:

\begin{thm}
\label{thm: intro Y}~
\begin{enumerate}[(a)]
\item For $N\ge k$ the space $X_{n,k}=X_{n,k,N}$ does not depend on $N$
and admits an affine paving in which the cells are in bijection with $(K,k)$ parking functions. 
\item For all $N$, the Borel-Moore homology of $X_{n,k,N}$ has an action of $S_K$. For $N\ge k$ the Frobenius character of this action equals 
$$
\mathrm{Frob}(H_*^{BM}(X_{n,k});q,t) = \mathrm{rev}_q\,\omega (E_{K,k}\cdot 1).
$$ 
where the $q$ parameter keeps track of homological degree and the $t$ grading keeps track of the connected  component of $\aFl$.
\end{enumerate}
\end{thm}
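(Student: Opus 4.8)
The plan is to realize $X_{n,k,N}$ inside the affine Springer fiber $\Sp_{\gamma}$ for $\gamma=\gamma_{n,k,N}$, with characteristic polynomial $z^K-\epsilon^{k+N}$, and to run the program of Hikita~\cite{Hikita} and of~\cite{GMV} in this non-coprime, integer-slope situation: the data $(K,k)=(k(n-k+1),k)$ has slope the \emph{integer} $n-k+1$. We would proceed in four steps. (1) Produce an affine paving of $X_{n,k,N}=C\cap\Sp_{\gamma}$ by intersecting with the affine Schubert cells of $\aFl$. (2) Exhibit a bijection between the cells and $(K,k)$ parking functions under which the complex dimension of a cell is a fixed shift of $\dinv$, while the connected component of $\aFl$ containing the cell records $\area$. (3) Prove the $N$-independence for $N\ge k$ by a stabilization argument. (4) Construct the $S_K$-action and compute the equivariant Frobenius character, reducing the identity to the Rational Shuffle Theorem of Mellit~\cite{Mellit}.

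For (1)--(2) we work in the lattice model of $\aFl$, in which a point is a periodic chain of $\mathcal{O}$-lattices in $\mathbb{C}^K[[\epsilon]]$ (with $e_{i+K}=\epsilon e_i$) and $\Sp_{\gamma}$ is the locus of $\gamma$-stable chains. The loop-rotation torus $\mathbb{C}^{\times}$, rescaling $e_i$ with weight $i$, acts on $\Sp_{\gamma}$ with isolated fixed points indexed by certain extended affine permutations in $\widetilde{S_K}$; intersecting with $C$ selects precisely the fixed points corresponding to $(K,k)$ parking functions, which is the purpose for which $C$ is chosen. The attracting cells of the $\mathbb{C}^{\times}$-flow are affine spaces, and the explicit staircase form of $\gamma$ in Definition~\ref{def:IntroVarieties} --- the shifts $e_i\mapsto e_{i+k}$ for $i\le(n-k)k$ and $e_i\mapsto e_{i+k+1}$ for $(n-k)k<i<K$ --- lets one compute the tangent space at each fixed point, hence each cell dimension, as a count of ``inversions'' of the corresponding affine permutation. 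The core of the argument is to identify this count with the $\dinv$ statistic on the associated $(K,k)$ parking function; this generalizes the coprime bijections of~\cite{GMV} with the rational slope $a/b$ replaced by the integer $n-k+1$, and is where essentially all the combinatorial work is concentrated.

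For (3), when $N$ is large the symbol $\epsilon^{N+1}$ enters $\gamma$ only through the wraparound $\gamma e_K=\epsilon^{N+1}e_1$, while every lattice chain occurring in $C\cap\Sp_{\gamma}$ is confined to a fixed band of $\epsilon$-adic valuations determined by $C$; once $N\ge k$ this band is too narrow for the $\gamma$-stability conditions to detect the change, so $X_{n,k,N}$ together with its paving is independent of $N$. For (4), the $S_K$-action on $H_*^{BM}(X_{n,k,N})$ is the affine analogue of the Springer action, constructed as in~\cite{Hikita}, and is available for every $N$; since $X_{n,k,N}$ is affine-paved, $H_*^{BM}$ is concentrated in even degrees, so a cell-by-cell analysis of the $S_K$-module structure (in the spirit of \cite{Hikita, CO}), graded by homological degree (which tracks $\dinv$ up to the reversal $\mathrm{rev}_q$) and by the connected component of $\aFl$ (equivalently $\area$), assembles the cellular classes into $\sum_{P\in\PF_{K,k}}t^{\area(P)}q^{\dinv(P)}x^P$, up to the sign $(-1)^{k(n-k+2)}$ and the $\omega$-twist intrinsic to passing between Borel-Moore homology and the cohomological Springer action --- the ``minor twist'' of~\cite{Hikita}. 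Invoking the Rational Shuffle Theorem identifies this with $\mathrm{rev}_q\,\omega(E_{K,k}\cdot 1)$; it is also this statement that, via the skewing formula of Theorem~\ref{thm:IntroSkewing}, is later related to the Delta Conjecture side and the variety $Y_{n,k}$.

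The main obstacle is step (2): establishing the affine paving together with an explicit, dimension-preserving bijection between cells and $(K,k)$ parking functions, i.e.\ proving that the ``inversion'' statistic cut out by the geometry of $\gamma$ and $C$ coincides with $\dinv$. A secondary but delicate point is to pin down the $S_K$-module structure cell-by-cell finely enough to recover the full symmetric function $\sum_P t^{\area}q^{\dinv}x^P$ rather than merely its principal specialization (the Betti numbers of $X_{n,k,N}$): one must control how $S_K$ mixes the cells lying over a fixed Dyck path and reconcile the sign, $\omega$, and $\mathrm{rev}_q$ normalizations so that the final equality matches the Rational Shuffle Theorem as stated.
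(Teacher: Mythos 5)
Your high-level plan (intersect $\Sp_\gamma$ with Schubert cells of $C$, match cells to $(K,k)$ parking functions, stabilize in $N$, invoke Springer theory plus the Rational Shuffle Theorem) is the same as the paper's, and your step (3) stabilization argument is essentially the paper's Lemma \ref{lem: N stability}. However, there are two concrete gaps in the remaining steps.

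First, your proposed torus in step (2) does not act on $\Sp_\gamma$. Rescaling $e_i$ with weight $i$ (equivalently, a loop-rotation-type torus with $\epsilon\mapsto s^K\epsilon$) would require $\gamma$ to be an eigenvector of the conjugation action, but $\gamma$ shifts indices by $k$ on the range $1\le i\le (n-k)k$ and by $k+1$ on $(n-k)k<i<K$, so no uniform scalar works; the Springer fiber is not preserved. The paper's Section~5.5 constructs a $\bC^*$ action with the more elaborate weights $\theta_N(m,i)=Km+q(N+k)+N(n-k+1)(r-1)$, precisely tuned so that $S_N\gamma=\diag(s^{N+k})\gamma S_N$ (Lemma~\ref{lem: C* weights}). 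Moreover, even this corrected torus is not used for a Bialynicki--Birula paving: the paper instead writes explicit equations for $X_{n,k}\cap C_{\omega^{-1}}$ inside the Schubert cell (Lemma~\ref{lem: equations cell}), and uses the torus weights only to show those equations are homogeneous so that the system can be solved triangularly (Theorem~\ref{thm: dim cell}). The resulting cell dimension is $\delta_{K,k}-\dinv(\pi)$, and identifying the linear-equation count with $\dinv$ is the combinatorial content of Lemmas~\ref{lem:deltaminusdinv} and~\ref{lem: A size} --- this is exactly the ``hard step'' you flagged but did not supply.

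Second, in step (4) you correctly identify the delicacy of extracting the $S_K$-module structure from a non-equivariant paving, but you do not give a mechanism for it. The paper resolves this without any cell-by-cell analysis via Lemma~\ref{lem: parabolic invariants}: for each composition $\eta\vDash K$, the $S_\eta$-invariants of $H_*^{\BM}(X_{n,k})$ are computed as $H_*^{\BM}(\pr_\eta(X_{n,k}))$ in the partial affine flag variety $\aFl_\eta$, which again has an affine paving by Proposition~\ref{prop: schubert cells in partial flags}, now indexed by word parking functions (Lemma~\ref{lem: wpf bijection}). This gives $\langle h_\eta,\Frob_{q,t}\rangle$ for all $\eta$, hence the full Frobenius character, and is matched against the $h_\eta$-coefficients of $\rev_q\,\omega(E_{K,k}\cdot 1)$ from the Rational Shuffle Theorem. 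Without this (or an equivalent) device, the ``cell-by-cell analysis'' you propose would only recover Betti numbers, as you yourself note.
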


It would be interesting to find the analogues of $X_{n,k}$
for more the more general family of symmetric functions $E_{km,kn}\cdot 1$ where $m$ and $n$ are coprime.

\begin{rmk}
For $k=n$ we have $\gamma_{n,n,N}\,e_i=e_{i+n+1}$ for $i<n$, and $\gamma_{n,n,N}\,e_K=\epsilon^{N+1}e_1$. For $N=n$, we recover Hikita's result but with a slightly different variety. In particular, $X_{n,n,n}$ is a non-compact subvariety of the affine partial flag variety $\aFl_{(1^n)} = \GL_n(\bC((\epsilon)))/I^-$ associated to $\GL$, and the space $X_{n,n,n}$ has one connected component for each possible value of the area statistic on an $n\times n$ Dyck path. On the other hand, Hikita studied the $(n,n+1)$ affine Springer fiber associated to the operator $\gamma_{n,n,0}$ inside of $\mathrm{SL}_n(\bC((\epsilon)))/I$, which is a compact subvariety of the affine flag variety associated to $\mathrm{SL}$. The content of Theorem~\ref{thm: intro Y} in the case $k=n$ then says that the affine Springer fiber studied by Hikita has the same Borel--Moore homology as that of $X_{n,n,n}$, even though the spaces are not isomorphic.


\end{rmk}



\subsection{Delta Conjecture}

A second generalization of the Shuffle Theorem called the {\bf Delta Conjecture} was formulated by Haglund, Remmel, and Wilson \cite{HRW}. It involves a more general Macdonald eigenoperator $\Delta'_{e_{k-1}}$ and relates it to \emph{stacked parking functions}. See the image on the right of Figure~\ref{fig:map-F} for an example of a $(n,k)$ stacked parking function for $n=6$ and $k=3$.  The (Rise) Delta Conjecture (reformulated here in terms of stacked parking functions) states 
\[
\Delta'_{e_{k-1}}e_n = \sum_{P\in \LD^{\mathrm{stack}}_{n,k}} t^{\area(P)}q^{\hdinv(P)} x^P.
\]
This version of the Delta Conjecture was proven by~\cite{DAdderio} and independently by~\cite{BHMPS}. To the authors' knowledge, the Valley version (involving a statistic $\mathrm{wdinv}$) of the conjecture remains open.

In~\cite{GGG1}, we proved a formula that directly relates the Rational Shuffle Theorem to the Delta Conjecture. 
\begin{thm}[{\cite[Theorem 1.1]{GGG1}}]\label{thm:IntroSkewing}
    Letting $K = k(n-k+1)$ and $\lambda = (k-1)^{n-k}$, we have
    \begin{equation}\label{eq:IntroSkewingEqn}
    \Delta'_{e_{k-1}}e_n = s_{\lambda}^\perp (E_{K,k}\cdot 1),
    \end{equation}
    where $s_\lambda^\perp$ is the adjoint to multiplication by the Schur function $s_\lambda$.
\end{thm}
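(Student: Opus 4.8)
We sketch the argument. The plan is to pass through the elliptic Hall algebra $\cE_{q,t}$ together with the explicit formulas of \cite{BHMPS}, and to reduce \eqref{eq:IntroSkewingEqn} to the statement that skewing by the rectangle $s_\lambda$ implements a transparent combinatorial operation on paths. First, the degree bookkeeping: $|\lambda| = (k-1)(n-k) = K-n$, so both sides of \eqref{eq:IntroSkewingEqn} are homogeneous of degree $n$, and the identity is consistent at the two extremes, $k=n$ giving $\nabla e_n = E_{n,n}\cdot 1$ and $k=1$ giving $e_n = E_{n,1}\cdot 1$.

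The first substantive step uses that $(K,k) = k\cdot(n-k+1,1)$ is $k$ times a primitive vector. Under the $\mathrm{SL}_2(\bZ)$-action on $\cE_{q,t}$, the ray subalgebra through $(n-k+1,1)$ is isomorphic to $\Lambda$, and $E_{K,k}$ is, up to the standard $q,t$-monomial normalization, the image of the degree-$k$ generator of the horizontal subalgebra under the automorphism $\varphi_M$ attached to $M = \left(\begin{smallmatrix} n-k+1 & n-k \\ 1 & 1\end{smallmatrix}\right)$. Since $M = \left(\begin{smallmatrix}1&1\\0&1\end{smallmatrix}\right)^{\,n-k}\left(\begin{smallmatrix}1&0\\1&1\end{smallmatrix}\right)$, the automorphism $\varphi_M$ factors into $n-k$ elementary upper-triangular twists following one lower-triangular twist, and the latter is conjugation by $\nabla$ up to normalization -- this is where the $\Delta$-operators emerge. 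I would then rewrite $E_{K,k}\cdot 1$ via the shuffle theorem ``under a line'' of \cite{BHMPS}, at slope $1/(n-k+1)$, making it an explicit $q^{\dinv}t^{\area}$-weighted sum of LLT polynomials indexed by $(K,k)$-Dyck paths.

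The core of the argument is to show that $s_\lambda^\perp$ carries this expansion term-by-term onto the formula of \cite{BHMPS} for $\Delta'_{e_{k-1}}e_n$ as a $q^{\hdinv}t^{\area}$-weighted sum over stacked $(n,k)$-paths. Writing $s_\lambda^\perp = \det\bigl(h_{\lambda_i - i + j}^\perp\bigr)_{1\le i,j\le n-k}$ (the $h_m^\perp$ commute, so the determinant is unambiguous) and iterating the action of the operators $h_m^\perp$ on the LLT side, the assertion to verify is that this rectangle-skewing amounts precisely to deleting from each $(K,k)$-path its bottom $n-k$ rows of width $k-1$, i.e. passing to the underlying stacked $(n,k)$-path, with the $\dinv$ of the big path minus the contribution of the deleted region equal to the $\hdinv$ of the small path, and the sum over fillings of the deleted region reconstructing the $s_\lambda$ that has been skewed away.

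The main obstacle is precisely this compatibility: skewing operators do not commute with the $\cE_{q,t}$-generators in any simple way, and $\lambda$ is a genuine rectangle rather than a single row or column, so no one-step Pieri rule applies, and the $\dinv$-versus-$\hdinv$ bookkeeping across the two path models is the real content. I would handle this by staying inside the \cite{BHMPS} framework, where the symmetric functions in question have explicit plethystic (equivalently ``Catalanimal'' residue) presentations, so that $s_\lambda^\perp$ acts by an explicit modification of the integrand and the matching reduces to a finite combinatorial check on paths. A parallel, purely combinatorial route would instead construct a weight-preserving bijection between $(K,k)$-parking functions equipped with a $\lambda$-shaped marking (read off by RSK from the car labels) and pairs consisting of a stacked $(n,k)$-parking function and a semistandard tableau of shape $\lambda$, the crux being that the bijection sends $(\area,\dinv)$ to $(\area,\hdinv)$.
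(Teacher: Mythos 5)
Your setup is sound: the degree check $|\lambda|=(k-1)(n-k)=K-n$ is correct, the matrix $M=\left(\begin{smallmatrix}n-k+1&n-k\\1&1\end{smallmatrix}\right)$ sending $(1,0)$ to $(n-k+1,1)$ and its factorization $\left(\begin{smallmatrix}1&1\\0&1\end{smallmatrix}\right)^{n-k}\left(\begin{smallmatrix}1&0\\1&1\end{smallmatrix}\right)$ are both correct, and identifying the key obstacle as the incompatibility of $s_\lambda^\perp$ with the $\cE_{q,t}$-generators is exactly right. The route you describe (elliptic Hall algebra, shuffle-theorem-under-a-line, BHMPS ``Catalanimal'' presentations) matches what this paper says about the proof in \cite{GGG1}: ``The proof uses the relation between the Elliptic Hall Algebra and the Shuffle algebra and certain identities for $E_{K,k}$ studied in \cite{BHMPS,BHMPS2,Negut}.''

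However, this is a plan rather than a proof, and you say so yourself. Everything after ``The core of the argument is to show\ldots'' is a list of assertions that \emph{would need} to be verified, with two candidate strategies sketched but neither executed. The Jacobi--Trudi expansion $s_\lambda^\perp=\det(h^\perp_{\lambda_i-i+j})$ is a legitimate reduction, but iterating $h_m^\perp$ through the LLT expansion of $E_{K,k}\cdot 1$ is not a straightforward Pieri computation since, as you note, $\lambda$ is a rectangle of height $n-k$ and the $\dinv$/$\hdinv$ bookkeeping across path models is genuinely delicate. You do not supply the ``finite combinatorial check on paths'' or the ``weight-preserving bijection''; both are named, neither is constructed. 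This is precisely the content of the theorem, so the proposal has a gap that coincides with the heart of the matter.

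There is also a specific combinatorial error. You describe the effect of the skewing as ``deleting from each $(K,k)$-path its bottom $n-k$ rows of width $k-1$.'' That is not the right operation. The map from $(K,k)$-parking functions to stacked $(n,k)$-parking functions used in this series (Section~\ref{sec: stacks}, Lemma~\ref{lem: admissible to stacks}, and the map $F$ of \cite{GGG1}) deletes the rows carrying the \emph{big labels} $n+1,\dots,K$; there are indeed $K-n=(k-1)(n-k)$ of them, but they sit wherever the labeling places them, subject only to the admissibility bound $b_i\le n-k$ per column, and are by no means confined to the bottom of the diagram. Moreover, the relevant combinatorial object on the skewed side is a \emph{quotient} by the $S_{K-n}$-action on big labels, so on the $\dinv$ side one must analyze a whole $S_{K-n}$-orbit and extract an isotypic component, not simply subtract a local contribution from a fixed region of the path. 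A bijective proof built on the ``bottom rows'' picture would fail. If you want to pursue the bijective route, the right starting point is the bijection of \cite[Lemma 4.7]{GGG1} together with a careful comparison of $\dinv$ and $\hdinv$ under that map, which is a nontrivial lemma in its own right.
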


The proof uses the relation between the Elliptic Hall Algebra and the Shuffle algebra and certain identities for $E_{K,k}$ studied in  \cite{BHMPS,BHMPS2,Negut}.

Our main result is a geometric version of \eqref{eq:IntroSkewingEqn} by constructing a family of subvarieties $Y_{n,k,N}$ in the partial affine flag variety:
$$
Y_{n,k,N} \coloneqq C'\cap \{\Lambda_\bullet \in \aFl_{(K-n,1^n)} \mid \gamma \Lambda_i\subseteq \Lambda_i\,\forall i,\, \JT(\gamma|_{\Lambda_0/\Lambda_{K-n}}) \leq (n-k)^{k-1}\},
$$
where $\leq$ above is \emph{dominance order} on partitions.
Equivalently, the Jordan type condition above may be written $\gamma^{n-k}\Lambda_0\subseteq \Lambda_{K-n}$. 
See Definition~\ref{def:Varieties} for more details, here we identify the affine partial flag variety with the space of flags of lattices, and $\JT(\gamma|_{\Lambda_0/\Lambda_{K-n}}) $ refers to the Jordan type of the induced action of $\gamma=\gamma_{n,k,N}$ on the quotient $\Lambda_0/\Lambda_{K-n}$.


\begin{thm}\label{thm:GradedFrob}~
\begin{enumerate}[(a)]
\item For all $N\geq k$, the space $Y_{n,k}=Y_{n,k,N}$ does not depend on $N$, and its Borel-Moore homology admits an action of $S_n$ such that
$$
q^{\binom{k-1}{2}(n-k)}\mathrm{Frob}(H_*^{BM}(Y_{n,k});q,t)=s_{\lambda'}^{\perp}\,\mathrm{Frob}(H_*^{BM}(X_{n,k});q,t)
$$
where $\lambda' = (n-k)^{k-1}$.
\item We have 
    $$
    \mathrm{Frob}\left(H_*^{BM}(Y_{n,k});q,t\right) = \mathrm{rev}_q\,\omega (\Delta'_{e_{k-1}}e_n),
    $$
    where the $q$ parameter keeps track of homological degree and the $t$ grading keeps track of the connected  component of $\widetilde{Fl}_{(K-n,1^n)}$.
\end{enumerate}
\end{thm}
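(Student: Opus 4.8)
The plan is to prove part (a) geometrically --- realizing the operator $s_{\lambda'}^\perp$ through a forgetful map between affine flag varieties together with the classical Springer correspondence --- and then to obtain part (b) by feeding (a) into Theorem~\ref{thm: intro Y}(b) and the skewing formula of Theorem~\ref{thm:IntroSkewing}.

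For the $N$-independence asserted in (a), one checks directly from Definition~\ref{def:IntroVarieties} that the conditions cutting out $Y_{n,k,N}$ --- $\gamma$-stability of the partial flag, the Jordan-type condition $\gamma^{n-k}\Lambda_0\subseteq\Lambda_{K-n}$, and membership in $C'$ --- only see $\gamma$ modulo a sufficiently high power of $\epsilon$ once $N\geq k$, exactly as for $X_{n,k}$ in Theorem~\ref{thm: intro Y}(a). The geometric core is then the following. Let $\pi\colon\aFl\to\aFl_{(K-n,n)}$ be the map remembering only the intermediate lattice $\Lambda_{K-n}\subseteq\Lambda_0$. Over the affine Springer fiber in $\aFl_{(K-n,n)}$, the fibers of $\pi$ restricted to $\Sp_\gamma$ are products $\mathcal{B}_{\gamma|_{\Lambda_0/\Lambda_{K-n}}}\times\mathcal{B}_{\gamma|_{\Lambda_{K-n}/\epsilon\Lambda_0}}$ of finite-type Springer fibers, carrying the Springer action of the Levi Weyl subgroup $S_{K-n}\times S_n\subseteq S_K$, while $Y_{n,k}$ is, fiberwise over the sublocus where $\JT(\gamma|_{\Lambda_0/\Lambda_{K-n}})\leq\lambda'$, obtained by keeping only the second factor. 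Using the standard fact that $s_{\lambda'}^\perp\mathrm{Frob}(V)=\mathrm{Frob}\,\mathrm{Hom}_{S_{K-n}}\!\big(\chi^{\lambda'},\mathrm{Res}^{S_K}_{S_{K-n}\times S_n}V\big)$ for an $S_K$-module $V$ (here $|\lambda'|=K-n$), the identity in (a) becomes the assertion that the $\chi^{\lambda'}$-isotypic part of the $S_{K-n}$-action on $H_*^{BM}(X_{n,k})$ --- which by the Springer correspondence vanishes off the stratum $\JT(\gamma|_{\Lambda_0/\Lambda_{K-n}})\leq\lambda'$ and on that stratum collapses the first Springer-fiber factor to a single class in homological degree $2n(\lambda')=2\binom{k-1}{2}(n-k)$ --- recovers $q^{\binom{k-1}{2}(n-k)}H_*^{BM}(Y_{n,k})$, with its $S_n$-action and its $t$-grading by connected component; this simultaneously \emph{defines} the $S_n$-action, which one may alternatively realize as the fiberwise Springer action of $S_n$ in the family of $\Delta$-Springer fibers (of Levinson, Woo, and the third author) over the affine Grassmannian.

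To make this comparison effective I would equip $Y_{n,k}$ with an affine paving --- built from an affine paving of the base affine Springer fiber (in the spirit of \cite{GMO}) together with the affine pavings of the fiber $\Delta$-Springer fibers --- with cells in bijection with $\LD_{n,k}^{\stack}$, cell dimension recorded by $\hdinv$ and connected component by $\area$, and then invoke the combinatorial skewing correspondence of \cite{GGG1} to match these data cell-by-cell against the $(K,k)$ parking functions indexing the cells of $X_{n,k}$, which is what renders the isotypic-component computation explicit. Given (a), part (b) follows formally: by Theorem~\ref{thm: intro Y}(b) one has $\mathrm{Frob}(H_*^{BM}(X_{n,k});q,t)=\mathrm{rev}_q\,\omega(E_{K,k}\cdot1)$; applying $s_{\lambda'}^\perp$, using $s_{\lambda'}^\perp\omega=\omega\,s_\lambda^\perp$ (as $(\lambda')'=\lambda$) and Theorem~\ref{thm:IntroSkewing} to replace $s_\lambda^\perp(E_{K,k}\cdot1)$ by $\Delta'_{e_{k-1}}e_n$, and noting that commuting $\mathrm{rev}_q$ past $s_{\lambda'}^\perp$ costs exactly the degree shift $q^{\binom{k-1}{2}(n-k)}$ recorded in (a) (equivalently, the dimension gap between $X_{n,k}$ and $Y_{n,k}$), one obtains $\mathrm{Frob}(H_*^{BM}(Y_{n,k});q,t)=\mathrm{rev}_q\,\omega(\Delta'_{e_{k-1}}e_n)$.

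The main obstacle I anticipate is precisely this middle step: identifying the $\chi^{\lambda'}$-isotypic extraction with the geometric passage from $X_{n,k}$ to $Y_{n,k}$ at the level of the full bigraded $S_n$-equivariant Borel-Moore homology, rather than merely of Euler characteristics or cell counts. This requires (i) a sufficiently tight understanding of how the affine Schubert strata comprising $C$ and $C'$ constrain the Jordan type of $\gamma$ on $\Lambda_0/\Lambda_{K-n}$ --- ideally forcing it to be the maximal partition $\lambda'$ on the locus that survives the projection, so that the first Springer factor genuinely contributes only its top class (with a delicate cancellation analysis to be done if non-maximal types do occur) --- and (ii) a proof that the geometrically defined $S_n$-action on $H_*^{BM}(Y_{n,k})$ coincides with the one implicit in the skewing, for which the cleanest route is to reduce, through the affine-Grassmannian fibration, to the known $S_n$-equivariant structure of the $\Delta$-Springer fibers and then close the loop with the combinatorial identities of \cite{GGG1}.
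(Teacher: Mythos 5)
Your overall strategy mirrors the paper's: establish $N$-independence as for $X_{n,k}$, prove a geometric skewing relation (part (a)) via a projection to a partial affine flag variety together with Springer theory, and then derive (b) by combining (a) with Theorem~\ref{thm: intro Y}(b), the algebraic skewing identity of Theorem~\ref{thm:IntroSkewing}, and the bookkeeping for $\rev_q$ and $\omega$. The algebraic finishing move for (b) is exactly right. But the heart of (a) --- what the paper isolates as Theorem~\ref{thm:perp homology} --- is where your sketch has a genuine gap. You hope that, over the locus that survives the projection, the Jordan type of $\gamma|_{\Lambda_0/\Lambda_{K-n}}$ is forced to be exactly $\lambda'=(n-k)^{k-1}$, so that the $V_{\lambda'}$-isotypic piece of the first Springer-fiber factor is concentrated in a single top class. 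That is false: Lemma~\ref{lem:eval-in-sub} only bounds the number of Jordan blocks by $k$ and places the evaluated operator in $\overline{\mathcal{O}}_t\setminus Z+\mathfrak{n}$, so Jordan types strictly below $\lambda'$ (in dominance) really do occur, and for such $x$ the multiplicity of $V_{\lambda'}$ in $H^*(\mathcal{B}_x)$ is larger than one and spread across degrees. A naive stratum-by-stratum isotypic extraction does not reconstruct $H^{BM}_*(Y_{n,k})$.

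What actually makes the argument go is the Borho--MacPherson machinery at the perverse-sheaf level: the decomposition $R\xi^y_*\IC(\bQ_{\mathcal{O}_y})[-2d_y]\cong\bigoplus_x Rj^x_*\IC(\bQ_{\mathcal{O}_x})[-2d_x]\otimes V_x^y$, applied together with Lemma~\ref{lem:rational-smoothness} (itself a consequence of Lemma~\ref{lem:eval-in-sub} and \cite[Lemma~3.4]{GG}) to replace the $\IC$ of $\overline{\mathcal{O}}_y$ by the constant sheaf over the image of $\ev$, then proper base change and Verdier duality to descend to Borel--Moore homology. It is precisely the rational smoothness that lets you do this over the strata with Jordan type strictly smaller than $\lambda'$, not any purity of the stratification. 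Separately, your proposed direct affine paving of $Y_{n,k}$ with cells indexed by $\LD_{n,k}^{\stack}$ and dimensions given by $\hdinv$ is not in the paper and is not needed for the result; moreover, the remark at the end of the introduction cautions that the projection $Y_{n,k}\to\aGr$ is not a fiber bundle over Schubert cells, so assembling such a paving from pavings of the fibers is itself a nontrivial undertaking that the sheaf-theoretic argument is designed to avoid.
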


In fact, part (b) follows from Theorem \ref{thm:IntroSkewing} and part (a). Part (a) is proved similarly to the main result of \cite{GG}, see below.

\begin{rmk} In the case $n=k$, Theorem~\ref{thm:GradedFrob} part (a) is trivial. Indeed, when $n=k$ we get $X_{n,n,N}=Y_{n,n,N}$ since $\Lambda_n=\Lambda_K$ and the Jordan type condition is vacuous. On the other hand, $\lambda' = \emptyset$ and
$$
\Delta_{e'_{n-1}}e_n=\nabla e_n=E_{n,n}\cdot 1.
$$
\end{rmk}

Lastly, we use Theorem~\ref{thm:IntroSkewing} and the Rational Shuffle Theorem to give a new combinatorial proof of the (Rise) Delta Conjecture.



\begin{figure}
    \centering
    \includegraphics[scale=0.5]{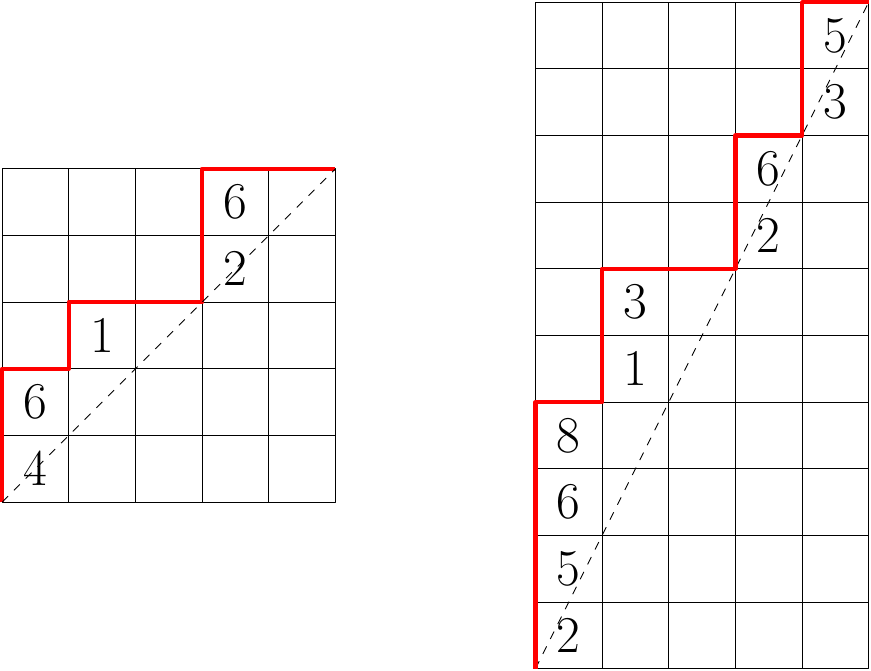}
    \caption{Examples of $(3,3)$ and $(6,3)$ parking functions}
    \label{fig:PF-example}
\end{figure}

\subsection{$\Delta$-Springer fibers}
In a previous work, the first and third authors \cite{GG} studied a family of varieties called $\Delta$-Springer fibers (see also \cite{Griffin,Griffin2,GLW}). We use an equivalent definition following \cite[Lemma 3.3]{GG}.

\begin{defn}
Fix a partition $\mu$ with $|\mu|=k$ and an integer $s\geq \ell(\mu)$, and define $\nu=(n-k)^{s}+\mu$. Also, fix a nilpotent operator $\overline{\gamma}$ on $\bC^{|\nu|}$ with Jordan type $\nu$.
Then the $\Delta$-Springer fiber $Z_{n,\mu,s}$ is defined as the space of partial flags
$$
Z_{n,\mu,s}=\{\bC^{|\nu|} = F_0\supset F_{|\nu|-n}\supset F_{1+|\nu|-n}\supset\cdots\supset F_{|\nu|}=0:\ \dim F_i=|\nu|-i\}
$$
such that $\overline{\gamma}F_i\subset F_i$ and $\JT(\overline{\gamma}|_{F_0/F_{|\nu|-n}}) \leq (n-k)^{s-1}$. 
Note that here we have indexed the parts of the flag in order to match our convention for indexing flags of lattices. Further note that the Jordan type condition may be replaced with $\overline{\gamma}^{n-k}F_0= \mathrm{im}(\overline{\gamma}^{n-k})\subseteq F_{|\nu|-n}$.
\end{defn}

One of the main results of \cite{GG} gives a representation-theoretic description for the cohomology of $\Delta$-Springer fiber.

\begin{thm}[\cite{GG}]
\label{thm: GG}
There is an action of $S_n$ in the cohomology of $Z_{n,\mu,s}$. The corresponding graded Frobenius character equals 
$$
q^{\binom{s-1}{2}(n-k)}\mathrm{Frob}(H^*(Z_{n,\mu,s});q)=
s_{(n-k)^{s-1}}^{\perp}\widetilde{H}_{\nu}(x;q)
$$
where $\widetilde{H}_{\nu}(x;q)$  is the modified Hall-Littlewood polynomial.
\end{thm}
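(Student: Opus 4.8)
The plan is to reduce Theorem~\ref{thm: GG} to a symmetric-function identity for a combinatorially defined quotient ring, and then to prove that identity by a Garsia--Procesi-style recursion matched against a box-removal recursion for the skewed Hall--Littlewood polynomial. \emph{Step 1 (reduction to a quotient ring).} Choosing $\overline\gamma$ in Jordan normal form, one writes the cohomology of the ambient partial flag variety of $\bC^{|\nu|}$ in terms of Chern roots of the tautological line bundles attached to the one-dimensional subquotients $F_{i}/F_{i+1}$ of the flag $F_{|\nu|-n}\supset\cdots\supset F_{|\nu|}$; passing to the subvariety $Z_{n,\mu,s}$ imposes relations coming from $\overline\gamma$-stability together with the Jordan-type bound $\JT(\overline\gamma|_{F_0/F_{|\nu|-n}})\leq (n-k)^{s-1}$. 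One expects this to identify $H^*(Z_{n,\mu,s};\bQ)$, as a graded ring, with a ``generalized Tanisaki'' quotient $R_{n,\mu,s}=\bQ[x_1,\dots,x_n]/I_{n,\mu,s}$ on which $S_n$ acts by permuting $x_1,\dots,x_n$ (this permutation action being the geometric $S_n$-action of the statement, realizable via monodromy of the family of such varieties). To make this rigorous one constructs an affine paving of $Z_{n,\mu,s}$ with cells indexed by suitable fillings and of explicitly computable dimension, which pins down the Poincar\'e polynomial, and then checks the candidate presentation has matching Hilbert series; the ring structure and $S_n$-equivariance can be transported from the known case of ordinary Springer fibers by a degeneration argument in the parameters. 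After this it suffices to prove
$$
q^{\binom{s-1}{2}(n-k)}\,\mathrm{Frob}(H^*(Z_{n,\mu,s});q)=s_{(n-k)^{s-1}}^{\perp}\widetilde H_\nu(x;q).
$$

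\emph{Step 2 (recursion on the ring side).} In the spirit of Garsia--Procesi and Tanisaki, I would establish a recursion for $\grFrob(R_{n,\mu,s})$ by splitting off the last variable: the short exact sequence of graded $S_{n-1}$-modules $0\to x_n R_{n,\mu,s}\to R_{n,\mu,s}\to R_{n,\mu,s}/(x_n)\to 0$ --- equivalently, geometrically, a stratification of $Z_{n,\mu,s}$ according to how the bottom line of the flag sits relative to the Jordan blocks of $\overline\gamma$ --- expresses $\grFrob(R_{n,\mu,s})$ in terms of $\grFrob(R_{n-1,\mu',s'})$ for smaller data, where removing a box of $\nu$ sometimes also forces a decrement of $s$ or a change of $\mu$. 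The base cases are $n=k$, where $(n-k)^{s-1}=\emptyset$, the skewing operator is the identity, $Z_{n,\mu,s}$ is an ordinary Springer fiber, and the identity becomes exactly the Garsia--Procesi theorem $\grFrob(H^*(\mathcal{B}_\nu);q)=\widetilde H_\nu$; and the small-$s$ cases in which the Jordan-type condition is vacuous.

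\emph{Step 3 (matching the recursion and the obstacle).} It remains to check that the right-hand side $s_{(n-k)^{s-1}}^{\perp}\widetilde H_\nu$, together with the normalizing power of $q$, obeys the same recursion. This rests on the classical box-removal recursion for the modified Hall--Littlewood polynomials (the one underlying Garsia--Procesi's computation) combined with the behavior of $s_{(n-k)^{s-1}}^{\perp}$, and the essential point is that the skewing shape is a \emph{rectangle}: the Littlewood--Richardson coefficients $c^{\lambda}_{(n-k)^{s-1},\,\kappa}$ are multiplicity-free and combinatorially transparent, so the skewing commutes in a controlled way with box removal, allowing one to identify precisely which term of the recursion survives and with which power of $q$; comparison with the $q^{\binom{s-1}{2}(n-k)}$ bookkeeping coming from the geometry then closes the induction. (Alternatively one can aim for a direct bijective proof, pairing the cells of the affine paving of Step~1, weighted by a charge-type statistic, with the tableau pairs computing $s_{(n-k)^{s-1}}^{\perp}\widetilde H_\nu$.) I expect Step~3 to be the heart of the matter: understanding the rectangular skewing operator well enough to see that it transports the Hall--Littlewood recursion correctly, with the right $q$-shift. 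A secondary but genuine technical hurdle is the equivariance claim in Step~1 --- verifying that the monodromy $S_n$-action on $H^*(Z_{n,\mu,s})$ is the permutation action on the Tanisaki-type presentation, which requires controlling the affine paving $S_n$-equivariantly enough to determine the module structure and not merely the graded dimension.
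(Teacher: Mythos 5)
The paper does not prove this theorem itself; it cites \cite{GG} and explicitly notes that the proof there ``heavily used the work of Borho and MacPherson \cite{BM} on partial resolutions of the nilpotent cone.'' In other words, the cited argument is sheaf-theoretic: one realizes $Z_{n,\mu,s}$ as a fiber of a Borho--MacPherson partial resolution $\widetilde{\mathcal{N}}^P\to\mathcal{N}$, decomposes the pushforward of the constant sheaf into $\mathrm{IC}$ summands tensored with the spaces $V_x^y=\Hom_{W_L}(V_y,V_x)$, identifies $V_y$ with $V_{(n-k)^{s-1}}$, and then uses the Hotta--Springer identification of ordinary Springer-fiber cohomology with $\widetilde{H}_\nu$ together with Lemma~\ref{lem:Skewing} to extract the $s_{(n-k)^{s-1}}^{\perp}$. (The present paper reproduces exactly this mechanism in Theorem~\ref{thm:perp homology} to prove the affine analogue.) Your proposal takes a genuinely different route --- a Tanisaki-quotient presentation and a Garsia--Procesi-style box-removal recursion --- and you are right that such a presentation exists in the $\Delta$-Springer setting (it is the content of the earlier work of Griffin, cited here as \cite{Griffin,Griffin2,GLW}). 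The combinatorial route, if completed, would buy an elementary proof that avoids perverse sheaves; the Borho--MacPherson route buys a conceptual explanation of \emph{why} a skewing operator appears (it is literally a $\Hom$-space between Springer representations) and is what allows the upgrade to the affine setting in this paper.

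That said, two parts of your plan are, as written, real gaps rather than omitted details. First, the equivariance claim in Step~1 --- that the geometric $S_n$-action agrees with the permutation action on your candidate ring $R_{n,\mu,s}$ --- is precisely what the Borho--MacPherson machinery delivers for free; reconstructing it ``via monodromy of the family'' still requires passing through the Grothendieck--Springer deformation and the same perverse-sheaf formalism, so it does not make the proof more elementary, and an affine paving alone cannot see the module structure. Second, the key claim in Step~3 that rectangularity of the skewing shape forces the Littlewood--Richardson coefficients to be multiplicity-free and therefore makes $s_{(n-k)^{s-1}}^{\perp}$ commute ``in a controlled way'' with the Hall--Littlewood box-removal recursion is not justified: multiplicity-freeness of $c^{\lambda}_{(n-k)^{s-1},\kappa}$ governs products with Schur functions, not the interaction of $s_{(n-k)^{s-1}}^{\perp}$ with the recursion $\widetilde{H}_{\nu}\mapsto\sum q^{\bullet}\widetilde{H}_{\nu^{(i)}}$, and it is far from clear that a single term survives with the expected $q$-shift. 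If you want to pursue the combinatorial route, the thing to establish is an explicit intertwining identity between $s_{(n-k)^{s-1}}^{\perp}$ and the Garsia--Procesi recursion (possibly only after the $q^{\binom{s-1}{2}(n-k)}$ normalization), and that identity is exactly the content that your sketch defers.
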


We can relate the variety $Y_{n,k}$ to the $\Delta$-Springer fibers as follows: consider the projection to the affine Grassmannian 
$$
\pi: Y_{n,k,N}\to \aGr
$$
which sends a flag of lattices $\Lambda_{\bullet}$ to $\Lambda_0$. 

\begin{thm}
\label{thm: Intro Delta Springer}
Each fiber of the projection $\pi$ is either empty or isomorphic to a $\Delta$-Springer fiber $Z_{n,\mu,k}$ for some $\mu$.
\end{thm}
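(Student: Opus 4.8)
The plan is to fix a point $\Lambda_0 = L$ in the image of $\pi$ and analyze the fiber $\pi^{-1}(L)$ directly. Since $\Lambda_0$ is fixed, the fiber consists of all completions of $L$ to a flag $\Lambda_\bullet = (\Lambda_0 = L \supseteq \Lambda_1 \supseteq \cdots \supseteq \Lambda_{K-n} \supseteq \cdots)$ of the correct type $(K-n, 1^n)$ in $\aFl_{(K-n,1^n)}$, subject to $\gamma$-stability of each $\Lambda_i$ and the Jordan-type bound $\JT(\gamma|_{\Lambda_0/\Lambda_{K-n}}) \leq (n-k)^{k-1}$, together with the condition $C'$. The key observation is that because the flag has type $(K-n,1^n)$, the only freedom is in choosing the ``small'' steps $\Lambda_{K-n} \supsetneq \Lambda_{K-n+1} \supsetneq \cdots \supsetneq \Lambda_K = \epsilon\Lambda_0$ (a full flag of the $n$-dimensional quotient), plus one ``big'' step $\Lambda_0 \supseteq \Lambda_{K-n}$ of corank $K-n$; but in fact $\Lambda_{K-n}$ is also essentially forced, or rather ranges over a space isomorphic to a finite-dimensional Grassmannian-type locus, once we pass to the quotient by a large power of $\epsilon$.

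The main reduction is to replace the lattice picture by a finite-dimensional one. Since $\gamma$ is nil-elliptic with $\gamma^K = \epsilon^{k+N}\cdot\mathrm{id}$ (up to the shift), for a sufficiently large power $\epsilon^M \Lambda_0 \subseteq \Lambda_K$, so the entire flag $\Lambda_\bullet$ is determined by its image in the finite-dimensional vector space $V \coloneqq \Lambda_0 / \epsilon^M \Lambda_0$, on which $\gamma$ acts as a nilpotent operator. The lattice $\Lambda_0$ is $\gamma$-stable, so we may set $\overline{\gamma} = \gamma|_{V}$; one checks its Jordan type is of the form $\nu = (n-k)^{k} + \mu$ for a suitable partition $\mu$ with $|\mu| = k$ determined by the combinatorial type of the affine Schubert cell containing $L$ (this is where the condition $C'$ and the specific form of $\gamma_{n,k,N}$ in Definition~\ref{def:IntroVarieties} enter). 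Under this identification, the flag of lattices $\Lambda_0 \supseteq \Lambda_{K-n} \supseteq \cdots \supseteq \Lambda_K$ corresponds precisely to a partial flag $F_0 \supseteq F_{|\nu|-n} \supseteq \cdots \supseteq F_{|\nu|} = 0$ in $V$ of the shape appearing in the definition of $Z_{n,\mu,k}$, the $\overline{\gamma}$-stability conditions match, and the Jordan-type bound $\JT(\gamma|_{\Lambda_0/\Lambda_{K-n}}) \leq (n-k)^{k-1}$ becomes exactly $\JT(\overline{\gamma}|_{F_0/F_{|\nu|-n}}) \leq (n-k)^{k-1}$. Hence $\pi^{-1}(L) \cong Z_{n,\mu,k}$.

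The step I expect to be the main obstacle is the bookkeeping that shows $\overline{\gamma}$ has Jordan type exactly $\nu = (n-k)^k + \mu$ with $|\mu| = k$, and that $s = k$ is the correct second index — i.e., that the number of rows of $\nu$ equals $k$ (so $s \geq \ell(\mu)$ holds with $s = k$), independent of which fiber we are in. This requires understanding how $\gamma_{n,k,N}$ acts modulo $\epsilon^M$ for varying $\Lambda_0$ ranging over the relevant affine Schubert cells in $C'$: the ``jump'' at $i = (n-k)k$ in the definition of $\gamma$ is what produces the extra box (the $\mu$-part) on top of the rectangle $(n-k)^k$, and one must verify this is stable across the cell and gives total size $|\nu| = k(n-k) + k = K$ rows summing correctly. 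A secondary point is checking that the finite-dimensional truncation is independent of the auxiliary parameter $M$ and of $N$ once $N \geq k$, which should follow from part (a) of Theorem~\ref{thm: intro Y} and Theorem~\ref{thm:GradedFrob}(a), or can be verified directly from the explicit action. Once the Jordan type is pinned down, the isomorphism with $Z_{n,\mu,k}$ is essentially a matching of definitions.
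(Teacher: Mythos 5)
Your overall strategy---fix $\Lambda_0$, pass to a finite-dimensional quotient on which $\gamma$ acts nilpotently, pin down the Jordan type, and match definitions---is the same as the paper's, but there is a genuine gap at precisely the step you flag as the main obstacle, and the resolution you sketch for it is incorrect. A preliminary point: the right truncation is $V=\Lambda_0/\epsilon\Lambda_0$, which is exactly $K$-dimensional because $\Lambda_K=\epsilon\Lambda_0$ by definition of the affine flag variety; truncating at $\epsilon^M\Lambda_0$ for large $M$ gives an $MK$-dimensional space on which $\gamma$ has the wrong Jordan type, and you would still have to quotient down. The substantive problem is your claim that $\overline{\gamma}=\gamma|_{V}$ has Jordan type $(n-k)^k+\mu$ with $|\mu|=k$, ``determined by the combinatorial type of the affine Schubert cell containing $L$.'' Neither assertion holds: the remark following Theorem~\ref{thm: Intro Delta Springer} exhibits (already for $n=k=5$) a single Schubert cell over which $\JT(\gamma|_{\Lambda_0/\epsilon\Lambda_0})$ jumps from $(3,1,1)$ at a generic point to $(2,2,1)$ at the torus-fixed point, so $\mu$ is not a function of the cell; and nothing forces $\JT(\overline{\gamma})$ to contain the rectangle $(n-k)^k$, so it need not have the form $(n-k)^k+\mu$ with $|\mu|=k$ at all.

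What is actually needed, and what the paper supplies, is the following. (i) Lemma~\ref{lem:eval-in-sub}: $\ell(\JT(\overline{\gamma}))\le k$ for every $\Lambda_0$ in the image. This is not immediate from the ``jump'' at $i=(n-k)k$ in the definition of $\gamma$; it is proved by introducing the explicit $\gamma$-stable lattice $\bO\{\epsilon e_1,\epsilon\gamma e_1,\dots,\epsilon\gamma^{K-1}e_1\}$, checking that its intersection with $\Lambda_0$ lies in $\epsilon\Lambda_0$ (a leading-term argument using that $\Lambda_0$ lies in a cell indexed by a $\gamma$-restricted permutation), and deducing $\JT(\overline{\gamma})\subseteq(k(n-k+1),(k-1)(n-k+1),\dots,n-k+1)$. (ii) One sets $\mu=\JT(\overline{\gamma}|_{\mathrm{im}(\overline{\gamma}^{\,n-k})})$, i.e.\ the columns of $\JT(\overline{\gamma})$ past the first $n-k$; a counting argument then shows every row of $\JT(\overline{\gamma})$ has length at least $n-|\mu|$. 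Here $|\mu|$ plays the role of ``$k$'' and $k$ the role of ``$s$'' in the definition of $Z_{n,\mu,s}$, so the relevant shape is $\nu=(n-|\mu|)^k+\mu$. (iii) Since $V$ is in general strictly larger than $\bC^{|\nu|}$, one must choose a $\overline{\gamma}$-stable subspace of $V$ of Jordan type $\nu$ containing $F_{K-n}\supseteq\mathrm{im}(\overline{\gamma}^{\,n-k})$ and project the flags onto it; only then is the fiber identified with $Z_{n,\mu,k}$, so the last step is not merely ``a matching of definitions.'' Without (i)--(iii) the argument does not close.
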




To prove Theorem \ref{thm: Intro Delta Springer}, we consider the $K$-dimensional space $\Lambda_0/\epsilon \Lambda_0$, and the partial flag $F_i=\Lambda_i/\epsilon\Lambda_0$. The operator $\overline{\gamma}$ is given by the restriction of $\gamma_{n,k,N}$ to $\Lambda_0/\epsilon \Lambda_0$, and the key Lemma \ref{lem:eval-in-sub} shows that it has Jordan type $\nu=(n-k)^{k}+\mu$ for some $\mu$. Now the conditions defining $Y_{n,k,N}$ translate to the conditions for $Z_{n,\nu,k}$, see Section \ref{sec: geometry} for details.

The proof of Theorem \ref{thm: GG} heavily used the work of Borho and MacPherson \cite{BM} on partial resolutions of the nilpotent cone. Note that by \cite{HottaSpringer} the modified Hall-Littlewood polynomial in the Theorem \ref{thm: GG} can be interpreted as the graded Frobenius character of the homology of the classical Springer fiber, which coincides with the fiber of the projection $X_{n,k,N}\to \aGr$. 

We use this idea to develop a sheaf-theoretic generalization of Theorem \ref{thm: GG}, and use it to prove Theorem \ref{thm:IntroSkewing}(a). 



\begin{rmk}
    Note that the discussion above and Theorem~\ref{thm: Intro Delta Springer} do not imply that $\omega (E_{K,k}\cdot 1)$ and $\omega\Delta'_{e_{k-1}}e_n$ expand positively in terms of Hall-Littlewood symmetric functions, even in the case of $\nabla e_5$. This is because the projection maps from $X_{n,k}$ and $Y_{n,k}$ to $\widetilde{\Gr}$ are not fiber bundles over the cells in the Schubert cell decomposition of $\widetilde{\Gr}$. We invite the reader to check that the fibers $\pi^{-1}(\Lambda)$ of $\pi: X_{5,5}\to\widetilde{\Gr}$ over $\Lambda\in C_{[1,7,8,9,15]}$ depend on $\Lambda$: Generically, $\pi^{-1}(\Lambda)$ is isomorphic to the Springer fiber associated to a nilpotent matrix of Jordan type $(3,1,1)$, but over the torus fixed point of the Schubert cell $\pi^{-1}(\Lambda)$ is isomorphic to the Springer fiber for Jordan type $(2,2,1)$.
\end{rmk}

\subsection{Organization of the paper}

The paper is organized as follows. In Section \ref{sec: background}, we give a combinatorial background on symmetric functions, parking functions and affine permutations. In Section \ref{sec: gamma restricted}, we translate parking function combinatorics and diagonal inversions in terms of a class of affine permutations called $\gamma$-restricted, and we prove several useful lemmas concerning them. 
Sections \ref{sec: geometry}, \ref{sec: Springer action}, and \ref{sec: X dimensions} are focused on the geometry of affine Springer fibers. In Section \ref{sec: geometry} we introduce the varieties $X_{n,k,N}$ and $Y_{n,k,N}$ and prove Theorem \ref{thm:GradedFrob}(a). In Section~\ref{sec: Springer action}, we use Springer theory and work of Borho and MacPherson to show there are compatible symmetric group actions on the Borel-Moore homologies of $X_{n,k,N}$ and $Y_{n,k,N}$. We then prove Theorem~\ref{thm:GradedFrob}(a), a geometric version of the skewing formula. In Section  \ref{sec: X dimensions}, we construct an affine paving of $X_{n,k}$. This allows us to prove Theorem \ref{thm: intro Y} and complete the proof of Theorem \ref{thm:GradedFrob}(b).

\subsection{Future directions}
We conclude the introduction with a few possible further directions. 

In particular, our varieties depend on a parameter $N$. However, we show in Theorems \ref{thm: intro Y} and ~\ref{thm:GradedFrob} that when $N\geq k$ these varieties are independent of $N$ and the graded Frobenius character of their Borel-Moore homologies give the symmetric functions in the Rational Shuffle Theorem and Delta Theorem. However, when $0 \leq N <k$, there exist examples of $X_{n,k,N}$ (respectively $Y_{n,k,N}$) whose Borel-Moore homology groups differ from the terms in the Rational Shuffle Theorem (respectively Delta Theorem). Therefore, one could define new symmetric functions $f_{n,k,N}$ and $g_{n,k,N}$ by
\begin{align*}
f_{n,k,N} &= \rev_q\omega\,\Frob(H_*^{\BM}(X_{n,k,N};q,t)),\\
g_{n,k,N} &= \rev_q\omega\,\Frob(H_*^{\BM}(Y_{n,k,N};q,t)).
\end{align*}
\begin{problem}
    Find combinatorial and Macdonald operator formulas for  $f_{n,k,N}$ and $g_{n,k,N}$ in the cases $0\leq N < k$ that generalize those on either side of the Rational Shuffle and Delta Theorems.
\end{problem}

Another natural question is whether our varieties can be generalized to give geometric interpretations of other symmetric functions coming from Macdonald-theoretic or Elliptic Hall Algebra operators.

\section*{Acknowledgments}

We thank Fran\c{c}ois Bergeron, Eric Carlsson, Mark Haiman, Jim Haglund, Oscar Kivinen, Jake Levinson, Misha Mazin, Anton Mellit, Andrei Negu\cb{t}, Anna Pun, George Seelinger, and Andy Wilson for useful discussions.

\section{Notation and Background}
\label{sec: background}

\subsection{Symmetric functions}
\label{sec: symmetricfunctions}

We refer to \cite{Macdonald} for details on many of the standard definitions in this section.  We will work in the ring of symmetric functions $\Lambda$ in infinitely many variables $x_1,x_2,\ldots$ over $\mathbb{Q}(q,t)$. We will use elementary symmetric functions
$$
e_m=\sum_{i_1<\cdots<i_m}x_{i_1}\cdots x_{i_m},\qquad\qquad e_{\lambda}=\prod_i e_{\lambda_i}
$$
where $\lambda$ is a partition.  We also have monomial symmetric functions 
$$
m_{\lambda}= \sum_{(i_1,\ldots,i_\ell)} x_{i_1}^{\lambda_1}x_{i_2}^{\lambda_2}\cdots x_{i_\ell}^{\lambda_{\ell}}
$$
where $(i_1,\ldots,i_\ell)$ is any tuple of distinct positive integers such that $i_j< i_{j+1}$ whenever $\lambda_j=\lambda_{j+1}$.  Both $\{m_\lambda\}$ and $\{e_\lambda\}$ form a basis of $\Lambda$.

We also use the basis of Schur functions $s_{\lambda}$, defined as
$s_\lambda=\sum K_{\lambda\mu} m_\mu$ where the coefficients $K_{\lambda\mu}$ are the \textit{Kostka numbers}, which count the number of column-strict \textit{Young tableaux} of shape $\lambda$ and content $\mu$.  We draw our Young tableaux in French notation:
$$\young(4,25,1155)$$
and the above tableau has content $(2,1,0,1,3)$, with the $i$th entry indicating the multiplicity of $i$ in the tableau.   Its shape is $(4,2,1)$, indicating the length of each row from bottom to top.

\begin{defn}
The Hall inner product on $\Lambda$ is defined by 
$$
\langle s_{\lambda},s_{\mu}\rangle=\delta_{\lambda,\mu}
$$ and extending by linearity, using the fact that $\{s_\lambda\}$ forms a basis of $\Lambda$.
\end{defn}

The Schur functions and Hall inner product are directly tied to the representation theory of the symmetric group $S_n$.  The irreducible representations $V_\lambda$ of $S_n$ are indexed by partitions $\lambda$ of $n$, and the \textbf{Frobenius map} $\Frob$ takes a representation $V=\bigoplus_\lambda (V_\lambda)^{\oplus c_\lambda}$ to $\sum c_\lambda s_\lambda$.  Clearly $\Frob$ is additive across direct sum, and it has the remarkable property of being multiplicative across induced tensor product:
$$\Frob(\mathrm{Ind}_{S_n\times S_m}^{S_{n+m}} V\otimes W)=\Frob(V)\Frob(W)$$

We make use of a (doubly) graded version of the Frobenius map as follows.
\begin{defn}
    Given a graded $S_n$ module $R=\bigoplus_d R_d$, its \textbf{graded Frobenius character} is $$\grFrob_q(R)=\sum_d \Frob(R_d)q^d.$$ For a doubly-graded module $S=\bigoplus S_{i,j}$, we write $$\grFrob_{q,t}(S)=\sum_{i,j} \Frob(S_{i,j})q^it^j.$$
\end{defn}

We use the adjoint operators to multiplication operators with respect to the Hall inner product. 

\begin{defn}
The operator $s_{\lambda}^{\perp}:\Lambda\to \Lambda$ is defined such that the identity
$$
\langle s_{\lambda}^{\perp}f,g\rangle=\langle f,s_{\lambda}g\rangle$$
holds for all symmetric functions $f,g\in \Lambda$ (resp. $f,g\in \Lambda_k$).
\end{defn}

We will make use of the following lemma from \cite{GG}.

\begin{lem}\cite[Lemma 2.1]{GG}\label{lem:Skewing}
    Given $W$ an $S_n$-module, $S_{n-m}\times S_{m}$ a Young subgroup, and a partition $\mu\vdash m$, then
    \[
    s_\mu^\perp\Frob(W) = \frac{1}{\dim(V_\mu)}\Frob(W^{V_\mu})
    \]
    where $W^{V_\mu}$ is the $V_\mu$-isotypic component of the restriction of $W$ to an $S_{m}$-module, whose Frobenius character is taken as an $S_{n-m}$-module.
\end{lem}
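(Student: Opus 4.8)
The plan is to reduce to the case of a single irreducible $S_n$-module and then combine the branching rule with the standard skewing identity $s_\mu^\perp s_\lambda = s_{\lambda/\mu}$. Since the Frobenius characteristic is additive over direct sums, the skewing operator $s_\mu^\perp$ is linear, and the passage to the $V_\mu$-isotypic component commutes with direct sums of $S_n$-modules, it suffices to verify the identity when $W = V_\lambda$ is irreducible with $\lambda \vdash n$.

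For the irreducible case, I would apply the branching rule for restriction from $S_n$ to the Young subgroup $S_{n-m}\times S_m$,
\[
\mathrm{Res}^{S_n}_{S_{n-m}\times S_m} V_\lambda \;\cong\; \bigoplus_{\alpha\vdash n-m,\ \beta\vdash m} (V_\alpha\boxtimes V_\beta)^{\oplus c^\lambda_{\alpha\beta}},
\]
where $c^\lambda_{\alpha\beta}$ is the Littlewood--Richardson coefficient. Extracting the $V_\mu$-isotypic subspace for the action of the $S_m$-factor keeps only the summands with $\beta = \mu$, and when this subspace is regarded as an $S_{n-m}$-module — on which $S_{n-m}$ acts trivially on each tensor factor $V_\mu$, directly from the product structure of $V_\alpha\boxtimes V_\mu$ as an $(S_{n-m}\times S_m)$-representation — one obtains
\[
V_\lambda^{V_\mu} \;\cong\; \bigoplus_{\alpha\vdash n-m} V_\alpha^{\oplus\, c^\lambda_{\alpha\mu}\,\dim(V_\mu)},
\qquad\text{so}\qquad
\Frob(V_\lambda^{V_\mu}) = \dim(V_\mu)\sum_{\alpha} c^\lambda_{\alpha\mu}\, s_\alpha .
\]

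Finally, I would invoke the standard identity $s_\mu^\perp s_\lambda = s_{\lambda/\mu} = \sum_\alpha c^\lambda_{\alpha\mu}\, s_\alpha$, which follows by pairing against $s_\alpha$ and using the defining adjunction together with $\langle s_\lambda, s_\mu s_\alpha\rangle = c^\lambda_{\mu\alpha}$. Comparing with the previous display gives $s_\mu^\perp \Frob(V_\lambda) = \tfrac{1}{\dim(V_\mu)}\Frob(V_\lambda^{V_\mu})$; summing over the irreducible constituents of a general $W$ then yields the stated formula. The only point requiring genuine care is the bookkeeping of the factor $\dim(V_\mu)$: it arises precisely because $W^{V_\mu}$ denotes the full $V_\mu$-isotypic subspace of $\mathrm{Res}_{S_m} W$, a bona fide $S_{n-m}$-submodule of $W$, rather than the multiplicity space $\Hom_{S_m}(V_\mu, W)$. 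Everything else is formal.
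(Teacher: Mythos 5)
Your argument is correct, and it is the standard derivation. The paper itself does not prove this lemma — it cites \cite{GG} for it — so there is no in-text proof to compare against; but the chain you give (reduce by linearity to $W=V_\lambda$, apply the Littlewood--Richardson branching rule for $\mathrm{Res}^{S_n}_{S_{n-m}\times S_m}$, extract the $\beta=\mu$ summands, observe that $V_\alpha\boxtimes V_\mu$ is $V_\alpha^{\oplus\dim V_\mu}$ as an $S_{n-m}$-module, and match against $s_\mu^\perp s_\lambda=\sum_\alpha c^\lambda_{\alpha\mu}s_\alpha$) is exactly the argument one expects, and your closing remark correctly identifies the source of the $\dim(V_\mu)$ normalization as the distinction between the full isotypic component $W^{V_\mu}$ and the multiplicity space $\Hom_{S_m}(V_\mu,\mathrm{Res}\,W)$, the latter being what $s_\mu^\perp$ computes directly via Frobenius reciprocity and the induction-multiplication identity $\Frob(\mathrm{Ind}(U\boxtimes V))=\Frob(U)\Frob(V)$.
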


\subsection{Rational parking functions}

Throughout this subsection, let $k$ and $K$ be positive integers such that $k| K$. Let $\RD_{K,k}$ be the set of $K\times k$ rational Dyck paths (height $K$ and width $k$), and let $\PF_{K,k}$ be the labeled parking functions on elements of $\RD_{K,k}$. That is, the set of labelings of the vertical runs of elements of $\RD_{K,k}$ by positive integers such that the labeling weakly increases up each vertical run. 

\begin{defn}
 The \textbf{area} of an element of $\PF_{K,k}$ is the number of whole boxes lying between the path and the diagonal, so that the diagonal does not pass through the interior of the box.
\end{defn}

\begin{defn}
\label{def: dinv}
The \textbf{dinv} statistic (for ``diagonal inversions'') on $\PF_{K,k}$ is defined as $$\dinv(P)=\pathdinv(D)+\tdinv(P)-\maxtdinv(D)$$ where $D$ is the Dyck path of $P$. We define each of these three quantities separately below.
 \end{defn}
 To define $\pathdinv(D)$, recall that the \textit{arm} of a box above a Dyck path in the $K\times k$ grid is the number of boxes to its right that still lie above the Dyck path.  The \textit{leg} is the number of boxes below it that still lie above the Dyck path.

\begin{defn}\label{def:arm-leg}
    The \textbf{pathdinv} of Dyck path $D$ from $(0,0)$ to $(k,K)$ (with $k|K$) is the number of boxes $b$ above the Dyck path of $P$ for which $$\frac{\arm(b)}{\leg(b)+1}\leq k/K< \frac{\arm(b)+1}{\leg(b)}$$
\end{defn}

\begin{rmk}
    We call this statistic $\pathdinv$ here to emphasize that it only depends on the rational Dyck path, and to distinguish it from $\dinv$ of the parking function.  It was simply called $\dinv$ in \cite{RationalShuffle}.
\end{rmk}

To define the $\tdinv$ statistic on rational parking functions, we use the same conditions on pairs of boxes identified in Lemma \ref{lem:box-pairs}, which we call attacking pairs.

\begin{defn}
    A \textbf{diagonal} in the $K$ by $k$ grid, where $k|K$, is a set of boxes whose lower left hand corners pairwise differ by a multiple of $(1,K/k)$.
    
    The \textbf{main diagonal} of the $K$ by $k$ rectangle is the line between $(0,0)$ and $(k,K)$.  Note that we use Cartesian coordinates in the first quadrant for the boxes and label each box's position by the coordinates of its lower left hand corner.
    
    A pair of boxes $a,b$ in a $K$ by $k$ grid (with $k|K$) is an \textbf{attacking pair} if and only if either:
    \begin{itemize}
        \item $a$ and $b$ are on the same diagonal, with $a$ to the left of $b$, or 
        \item $a$ is one diagonal below $b$, and to the right of $b$.
    \end{itemize}
\end{defn}

\begin{rmk}
    When boxes are known to be labeled, we often use the name of the box and its label interchangeably, as in the definition below. 
\end{rmk}

\begin{defn}\label{def:tdinv}
The \textbf{tdinv} of $P\in \PF_{K,k}$ is the number of attacking pairs of labeled boxes $a,b$ in $P$ such that $a<b$.  
\end{defn}

\begin{defn}\label{def:maxtdinv}
    The \textbf{maxtdinv} of a Dyck path $D$ is the largest possible $\tdinv$ of any parking function of shape $D$.  
\end{defn}

Note that $\mathrm{maxtdinv}(D)$ can be achieved by taking $\mathrm{tdinv}(D)$ of the labeling of $D$ obtained by labeling using $1,2,3,\ldots$ across diagonals from left to right, starting from the bottom-most diagonal and moving upwards. Alternatively, $\mathrm{maxtdinv}(D)$ is the number of attacking pairs $(a,b)$ such that $a$ and $b$ are boxes directly to the right of an up-step of $D$.

Define the \textbf{rank} of the box in row $j$ and column $i$ of the $K\times k$ rectangle to be
\[
    \rank(i,j) = 1 + (j-1)k + \left\lfloor \frac{j-1}{n-k+1}\right\rfloor - K(i-1).
\]
In other words, we put $1$ in the box in the SW corner, and fill the rest of the ranks by increasing them by $k$ or $k+1$, respectively, as we go North from $(i,j)$ to $(i,j+1)$ if $j$ is not divisible or divisible by $n-k+1$, respectively. As we go East from $(i,j)$ to $(i+1,j)$, we decrease by $K$.
%
%
The ranks of the boxes in the case of $K=12$ and $k=4$ is shown in Figure \ref{fig: ranks}.

\begin{figure}
\begin{tikzpicture}[scale=0.7]
\draw (0,0)--(0,12)--(4,12)--(4,0)--(0,0);
\draw (1,0)--(1,12);
\draw (2,0)--(2,12);
\draw (3,0)--(3,12);
\draw (0,1)--(4,1);
\draw (0,2)--(4,2);
\draw (0,3)--(4,3);
\draw (0,4)--(4,4);
\draw (0,5)--(4,5);
\draw (0,6)--(4,6);
\draw (0,7)--(4,7);
\draw (0,8)--(4,8);
\draw (0,9)--(4,9);
\draw (0,10)--(4,10);
\draw (0,11)--(4,11);
\draw [dotted] (0,0)--(4,12);
\draw [line width=2] (-1,3)--(4,3);
\draw [line width=2] (-1,6)--(4,6);
\draw [line width=2] (-1,9)--(4,9);
\draw (-0.5,0.5) node {1};
\draw (-0.5,1.5) node {5};
\draw (-0.5,2.5) node {9};
\draw (-0.5,3.5) node {14};
\draw (-0.5,4.5) node {18};
\draw (-0.5,5.5) node {22};
\draw (-0.5,6.5) node {27};
\draw (-0.5,7.5) node {31};
\draw (-0.5,8.5) node {35};
\draw (-0.5,9.5) node {40};
\draw (-0.5,10.5) node {44};
\draw (-0.5,11.5) node {48};
\draw (0.5,3.5) node {2};
\draw (0.5,4.5) node {6};
\draw (0.5,5.5) node {10};
\draw (0.5,6.5) node {15};
\draw (0.5,7.5) node {19};
\draw (0.5,8.5) node {23};
\draw (0.5,9.5) node {28};
\draw (0.5,10.5) node {32};
\draw (0.5,11.5) node {36};
\draw (1.5,6.5) node {3};
\draw (1.5,7.5) node {7};
\draw (1.5,8.5) node {11};
\draw (1.5,9.5) node {16};
\draw (1.5,10.5) node {20};
\draw (1.5,11.5) node {24};
\draw (2.5,9.5) node {4};
\draw (2.5,10.5) node {8};
\draw (2.5,11.5) node {12};
\draw [line width=2,red] (0,0)--(0,6)--(1,6)--(1,7)--(2,7)--(2,11)--(3,11)--(3,12)--(4,12);
\end{tikzpicture}
\caption{Ranks for $(K,k)=(12,4)$}
\label{fig: ranks}
\end{figure}

The following is easily verified and we omit the proof.

\begin{prop}
\label{prop: ranks}
All ranks above the diagonal are strictly positive, and pairwise distinct. The ranks in different rows have different remainders mod $K$. All ranks below or on the diagonal are nonpositive. 
\end{prop}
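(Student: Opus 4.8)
The claim is Proposition~\ref{prop: ranks}, asserting four facts about the rank function
$\rank(i,j) = 1 + (j-1)k + \lfloor (j-1)/(n-k+1)\rfloor - K(i-1)$ on the boxes of the $K\times k$ rectangle, where $K = k(n-k+1)$. My plan is to reduce everything to elementary estimates on the diagonal.
First I would record the two basic monotonicity facts baked into the definition: fixing the column $i$ and increasing $j$ by $1$ increases the rank by $k$ or $k+1$ (hence by at least $1$), and fixing $j$ and increasing $i$ by $1$ decreases the rank by exactly $K$. Together these imply that within a fixed column the ranks are strictly increasing from bottom to top, and that two boxes in the same column, one in column $i$ and the other in column $i'$ with $i<i'$, at the same height differ by $(i'-i)K$.

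\textbf{Distinctness and the mod-$K$ statement.}
For the ``different remainders mod $K$'' claim, note that the rank of the leftmost box in row $j$ (column $i=1$) is $r_j \coloneqq 1 + (j-1)k + \lfloor (j-1)/(n-k+1)\rfloor$, and every other box in row $j$ has rank $r_j - (i-1)K \equiv r_j \pmod K$. So it suffices to show $r_1,\dots,r_K$ are pairwise distinct mod $K$, equivalently that $r_j - r_{j'} \not\equiv 0 \pmod K$ for $1\le j' < j\le K$. Writing $j - j' = d$ with $1\le d\le K-1$, one computes $r_j - r_{j'} = dk + (\lfloor (j-1)/(n-k+1)\rfloor - \lfloor (j'-1)/(n-k+1)\rfloor)$, and the bracketed correction term lies between $0$ and $\lfloor (K-2)/(n-k+1)\rfloor \le n-k$ (in fact the correction equals the number of multiples of $n-k+1$ in the interval $(j'-1, j-1]$, which is at most $d$ and at most $k-1$... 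I would pin down the exact bound needed). Since $0 < dk + (\text{correction}) < (K-1)k + (n-k) < K\cdot k$ roughly, and more carefully the value is never a nonzero multiple of $K$ because its reduction mod $K$ is governed by the fact that consecutive $r_j$ increase by $k$ or $k+1$ and the total increase over one ``block'' of $n-k+1$ rows is exactly $k(n-k+1)+1 = K+1 \equiv 1 \pmod K$ — this is the crucial arithmetic point. Once the mod-$K$ distinctness of rows is established, pairwise distinctness of \emph{all} ranks above the diagonal follows: two boxes in the same row differ by a nonzero multiple of $K$; two boxes in different rows have ranks in distinct residue classes mod $K$.

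\textbf{Positivity above the diagonal and nonpositivity below.}
For the sign statements I would argue column by column. The main diagonal is the segment from $(0,0)$ to $(k,K)$, i.e.\ the line $y = (K/k)x = (n-k+1)x$. A box with lower-left corner $(i-1, j-1)$ lies strictly above the diagonal iff its lower-left corner is weakly above the line, i.e.\ $j-1 \ge (n-k+1)(i-1)$, equivalently $j \ge (n-k+1)(i-1) + 1$. The lowest such box in column $i$ has $j = (n-k+1)(i-1)+1$; I claim its rank is exactly $1$. Indeed $\rank(i, (n-k+1)(i-1)+1) = 1 + (n-k+1)(i-1)k + \lfloor (n-k+1)(i-1)/(n-k+1)\rfloor - K(i-1) = 1 + (i-1)[k(n-k+1) + 1] - K(i-1) - 0 = 1 + (i-1)(K+1) - K(i-1) = 1 + (i-1) $. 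Hmm — so actually the bottom box of column $i$ above the diagonal has rank $i$, not $1$; I would double-check the floor term but the point stands that it is $\ge 1$. Then, since ranks strictly increase going up a column, every box above the diagonal in column $i$ has rank $\ge i \ge 1$, giving strict positivity. For boxes on or below the diagonal: the topmost such box in column $i$ has $j = (n-k+1)(i-1)$ (the box just below the lowest box above the diagonal), and its rank is the rank of the box above it minus $k$ or $k+1$, hence $\le i - k \le 0$ since $i \le k$; and ranks decrease going down the column, so all of them are $\le 0$. I would write this out cleanly with the floor function handled honestly.

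\textbf{Main obstacle.}
The only genuinely delicate point is the mod-$K$ argument: correctly tracking the floor term $\lfloor (j-1)/(n-k+1)\rfloor$ across a difference of rows and verifying that over a full period of $n-k+1$ rows the rank increases by $K+1$, so that the residues cycle through a complete set of representatives rather than repeating. Everything else is bookkeeping with the two monotonicity rules and the equation of the diagonal line. Accordingly the proof is short, and — consistent with the paper's remark that ``the following is easily verified and we omit the proof'' — I would present it as a brief sequence of these observations rather than belabor the elementary inequalities.
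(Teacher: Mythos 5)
The paper explicitly omits the proof of Proposition~\ref{prop: ranks} (``The following is easily verified and we omit the proof''), so there is no paper argument to compare against; I will assess your sketch on its own merits. Your proposal is essentially correct, and you catch your own slips as you go. The positivity argument is right once you fix the initial miscalculation: with $j-1 = (n-k+1)(i-1)$ one gets $\rank(i,j) = 1 + (i-1)K + (i-1) - K(i-1) = i$, so the bottom box of column $i$ weakly above the diagonal has rank $i\ge 1$, and ranks increase going up; the box just below has rank $i - (k+1) \le -1$ (your weaker bound $\le i-k \le 0$ already suffices), and ranks only decrease going down. For the mod-$K$ claim you identify the crucial point correctly: the rank increases by exactly $K+1 \equiv 1 \pmod{K}$ over one full block of $n-k+1$ rows and by $k$ within a block, so writing $j-1 = q(n-k+1)+r$ with $0\le q \le k-1$, $0 \le r \le n-k$ gives $r_j \equiv 1 + rk + q \pmod{K}$, and the map $(r,q) \mapsto rk+q$ is a bijection onto $\{0,\dots,K-1\}$. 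That last step (the bijectivity, which is just the base-$k$ decomposition) is the piece you left as ``I would pin down the exact bound''; your interim bound of $n-k$ on the correction term is not right in general (the true bound is $k-1$), but you flag this yourself and the periodicity argument bypasses the bound entirely. Pairwise distinctness of the positive ranks then follows as you say, since same-row boxes differ by nonzero multiples of $K$ and different rows lie in distinct residue classes. With those two small corrections written cleanly, this is a complete proof.
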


We will need the following statement relating ranks to attacking pairs.

\begin{lem}\label{lem:box-pairs}
Suppose $a,b$ are two boxes inside the rectangle. Then $\rank(a)<\rank(b)\le \rank(a)+k$ if and only if $(a,b)$ form an attacking pair.
\end{lem}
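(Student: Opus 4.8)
\textbf{Proof proposal for Lemma \ref{lem:box-pairs}.}

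The plan is to reduce the biconditional to a direct computation with the explicit rank formula
\[
\rank(i,j) = 1 + (j-1)k + \left\lfloor \frac{j-1}{n-k+1}\right\rfloor - K(i-1),
\]
using the combinatorial description of attacking pairs in terms of diagonals. First I would record two basic facts about ranks. Write $b = (i_b, j_b)$ and $a = (i_a, j_a)$. Moving North within a column increases the rank by $k$ or $k+1$ (the latter exactly when crossing a multiple of $n-k+1$ in the row index), and moving one column East decreases the rank by $K$. From this, ``$a$ and $b$ lie on the same diagonal'' means their lower-left corners differ by a multiple of $(1, K/k)$; since going up $K/k = n-k+1$ rows raises the rank by exactly $K$ (there is precisely one ``$+1$'' step among those $n-k+1$ vertical steps, because the row indices traversed are $n-k+1$ consecutive integers, exactly one of which is divisible by $n-k+1$), going up $K/k$ rows and left one column leaves the rank unchanged. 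Hence all boxes on a fixed diagonal, read left to right, have strictly increasing ranks, and moving to the next diagonal up adds a bounded amount. This is the key structural observation and I expect it to be the main technical point to nail down carefully.

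Next I would establish the forward direction. Suppose $(a,b)$ is an attacking pair. In the first case $a$ and $b$ are on the same diagonal with $a$ strictly left of $b$; then $b = a + c\cdot(1, n-k+1)$ for some $c \geq 1$. By the observation above, $\rank(b) = \rank(a) + \big(\text{sum of the up-step increments}\big) - cK$. Over $c(n-k+1)$ consecutive vertical steps the increments total $c(n-k+1)k + (\text{number of }+1\text{'s})$, and that count of $+1$'s is at least $c$ and at most $c$ plus possibly one more — here I would track it precisely to get $\rank(a) < \rank(b) \le \rank(a) + k$ when $c = 1$; for $c \geq 2$ one checks the gap exceeds $k$, so such pairs are \emph{not} attacking, forcing $c=1$. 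Wait — re-examining: attacking pairs in the same-diagonal case allow any $c \ge 1$, so I must instead show that for $c \ge 2$ the box $a$ is automatically at distance $> k$, but actually the cleaner route is: an attacking pair of the ``same diagonal'' type with $a$ immediately left of $b$ gives rank difference in $\{k, k+1\}$ intersected appropriately, and a pair further apart decomposes; I would handle this by observing ranks on one diagonal increase by $K/\,$-related increments and directly bounding. For the second case, $a$ is one diagonal below $b$ and to the right of $b$: then $a = b + (d, -(n-k+1) + e)$ type displacement; again I compute $\rank(a) - \rank(b)$ explicitly and check it lands in $\{1, \dots, k\}$, using that $a$ being strictly to the \emph{right} of $b$ is exactly what makes the difference positive, and $a$ being only \emph{one} diagonal below is what keeps it $\le k$.

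For the converse I would argue by the pigeonhole-type statement in Proposition \ref{prop: ranks}: ranks in different rows have different residues mod $K$, and within a row ranks drop by $K$ per column. Given two boxes with $\rank(a) < \rank(b) \le \rank(a)+k$, I would determine from the residues mod $K$ which rows and which relative column positions are forced, and show the only geometric configurations consistent with a gap in $[1,k]$ are precisely the two listed in the definition of attacking pair — i.e., the map from ``nearby-rank pairs'' to ``box configurations'' is exactly inverted by the attacking-pair conditions. Concretely, the residue of $\rank(i,j)$ mod $K$ depends only on $j$ (it is $1 + (j-1)k + \lfloor (j-1)/(n-k+1)\rfloor \bmod K$), so $\rank(b) - \rank(a) \in [1,k]$ with $k < K$ pins down the pair of rows $(j_a, j_b)$ up to the diagonal structure, and then the column constraint follows from the sign and magnitude of the difference. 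Assembling these cases completes both directions. The main obstacle throughout is bookkeeping the floor-function ``$+1$'' corrections correctly across multiple rows; everything else is forced once those increments are tracked.
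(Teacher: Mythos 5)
Your proposal has the right general shape (reduce everything to the explicit rank formula and trace increments across rows, columns, and diagonals), but it stumbles on exactly the step you yourself flag as ``the key structural observation.''

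The crucial fact, which the paper establishes up front, is that moving to the \emph{next} box on the same diagonal --- one column right and $K/k = n-k+1$ rows up --- increases the rank by exactly $1$:
\[
\rank(i+1,\,j+K/k) \;=\; \rank(i,j) + k\cdot\tfrac{K}{k} + 1 - K \;=\; \rank(i,j)+1.
\]
Here the ``$+1$'' is because among $n-k+1$ consecutive row indices there is exactly one multiple of $n-k+1$, so exactly one of those vertical steps is a ``$+(k+1)$'' step and the rest are ``$+k$''. In your write-up you first state that going up $K/k$ rows and \emph{left} one column ``leaves the rank unchanged'' (left should be right, and the net change is $+1$, not $0$), and then in the same-diagonal case of the forward direction you assert the increment per diagonal step is in $\{k,k+1\}$ --- which is the increment for a vertical step within a column, not a diagonal step. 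These two inconsistent values ($0$, then $\{k,k+1\}$, when the true answer is $1$) send the argument off the rails: you conclude that $c\ge 2$ would force a gap $> k$, then notice this contradicts the definition of attacking pair, and the discussion stops without resolution. With the correct increment of $1$, the same-diagonal case is immediate --- two boxes on a diagonal at column distance $c$ have rank difference exactly $c$, and $1\le c\le k-1 < k$ --- which is the paper's one-line Case 1. There is also a sign slip in your Case 2 (you compute $\rank(a)-\rank(b)$ and check it lies in $\{1,\dots,k\}$, but the lemma requires $\rank(b)-\rank(a)\in\{1,\dots,k\}$). For the converse, your mod-$K$-residues idea is a genuinely different route from the paper (which argues directly from diagonal spacing that boxes two or more diagonals apart have rank gap $>k$), and it could be made to work since the residue of $\rank(i,j)$ mod $K$ indeed depends only on $j$; but as stated it is only a sketch, and the forward direction needs the corrected diagonal increment before anything else can go through.
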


\begin{proof}
First observe that each diagonal has at most one box in each block and that the ranks on the same diagonal increase by $1$ from left to right since  for each box $(i,j)$,
\[
    \rank(i+1,j+K/k) = \rank(i,j) + k(K/k)+1 - K = \rank(i,j)+1.
\]

For the reverse implication, we consider the following cases:

\textbf{Case 1.} Suppose the boxes containing $a$ and $b$ form an attacking pair on the same diagonal, with $a$ to the left of $b$.  Then we have $a<b<a+k$ because the ranks on a diagonal increase by $1$ and there are $k$ columns.

\textbf{Case 2.} Suppose the boxes containing $a$ and $b$ form an attacking pair with $a$ to the right of $b$ where $a$ is in one lower diagonal than $b$.  Letting $x$ be the entry below $b$, we have $a=x+j$ for some $j$ with $1\le j<k$ since $x$ and $a$ are on the same diagonal.  Then $b$ is either equal to $x+k$ or $x+k+1$, so $a<b\le a+k$.

We now show that if two squares do not form an attacking pair, then one of the inequalities is not satisfied.  If $a$ is to the right of $b$ in its diagonal, or left of $b$ in one higher diagonal, the inequality $a<b$ is not satisfied by the cases above.  So we can assume the two boxes are at least two diagonals apart.  But if the largest entry in the lower diagonal is $e$, then the smallest entry two diagonals up is equal to $e+1+k>e+k$ and so we cannot have the inequality $b\le a+k$ satisfied.
\end{proof}






\begin{defn}\label{def:delta-num}
    We write $\delta_{K,k}$ for the number of boxes that lie fully above the diagonal in the $K\times k$ rectangle, which is equal to $(k\cdot K-K)/2=(k-1)K/2$.
\end{defn}

We now show a correspondence between certain attacking pairs and the complement squares to those that contribute to pathdinv.  To do so, we first rephrase the definition of pathdinv in a way that we used in \cite{GGG1}.  The proof simply follows from analyzing both sides of the inequality in Definition \ref{def:arm-leg}, and we omit it.

\begin{lem}\label{lem:alt-pathdinv}
    The statistic $\pathdinv(D)$ is the number of boxes $b$ above $D$ such that, if $s=K/k$ is the slope of the diagonal, we have $$\leg(b)\in \{s\cdot \arm(b)-1,s\cdot\arm(b),s\cdot\arm(b)+1,s\cdot\arm(b)+2,\ldots,s\cdot\arm(b)+(s-1)\}.$$
\end{lem}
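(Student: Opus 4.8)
The plan is to derive the reformulation directly from Definition~\ref{def:arm-leg} by an elementary manipulation of the two defining inequalities, using crucially that $s = K/k$ is a \emph{positive integer} since $k \mid K$. Fix a box $b$ lying above $D$, write $a = \arm(b)$ and $\ell = \leg(b)$, and recall that $k/K = 1/s$. The defining condition for $b$ to be counted by $\pathdinv(D)$ reads
\[
\frac{a}{\ell+1} \;\le\; \frac 1s \;<\; \frac{a+1}{\ell}.
\]

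First I would handle the left inequality: since $\ell + 1 \ge 1 > 0$, cross-multiplying is legitimate and yields $sa \le \ell + 1$, i.e. $\ell \ge sa - 1$. Next I would handle the right inequality $\tfrac 1s < \tfrac{a+1}{\ell}$. For $\ell \ge 1$ this cross-multiplies to $\ell < s(a+1)$, and because $s(a+1)$ is an integer this is equivalent to $\ell \le s(a+1) - 1 = sa + s - 1$; for $\ell = 0$ the fraction $\tfrac{a+1}{0}$ is read as $+\infty$ (the intent of Definition~\ref{def:arm-leg}), so the inequality holds automatically, consistently with $0 \le sa + s - 1$ (valid as $s \ge 1$). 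Combining the two bounds gives exactly $sa - 1 \le \ell \le sa + s - 1$, that is,
\[
\leg(b) \in \{\, s\cdot\arm(b) - 1,\ s\cdot\arm(b),\ s\cdot\arm(b)+1,\ \ldots,\ s\cdot\arm(b) + (s-1)\,\},
\]
which is the asserted set, and each step is an equivalence, so the two descriptions count the same boxes.

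There is essentially no obstacle here; the argument is a short chain of equivalences. The only points deserving a brief remark are the boundary cases: when $a = 0$ the lower bound $sa - 1 = -1$ is vacuous because $\leg(b) \ge 0$ always, so the listed set $\{-1, 0, \ldots, s-1\}$ selects the same boxes as $\{0, 1, \ldots, s-1\}$; and the convention $\tfrac{a+1}{0} = +\infty$ must be used (as in the original definition) to make the right inequality meaningful when $\leg(b) = 0$. Since these are the only subtleties, the proof may reasonably be omitted in the text, as stated.
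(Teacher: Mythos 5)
Your proof is correct and follows exactly the route the paper intends: the text remarks that the lemma "simply follows from analyzing both sides of the inequality in Definition~\ref{def:arm-leg}, and we omit it," and your careful cross-multiplication, together with the integrality of $s = K/k$ to sharpen the strict right-hand inequality and the handling of the $\leg(b)=0$ and $\arm(b)=0$ boundary cases, is precisely the analysis the authors had in mind.
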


\begin{figure}
    \centering
    \includegraphics{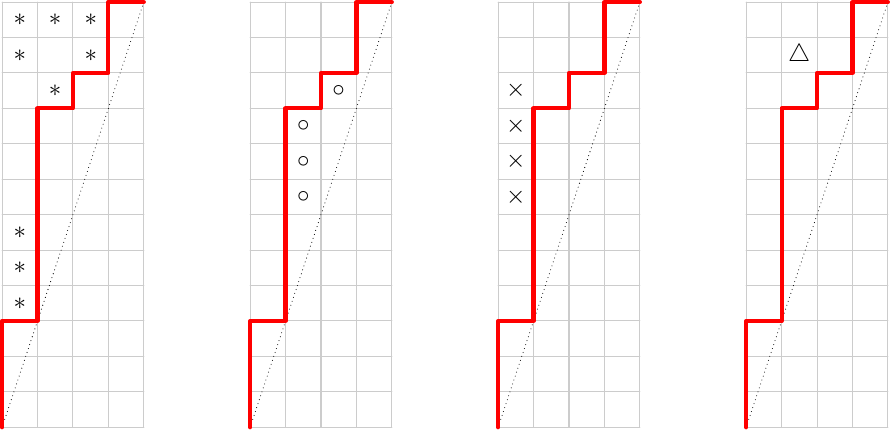}
    \caption{At left, computing pathdinv of the red path $D$ as the number of boxes marked with $\ast$, whose arm and leg satisfy the conditions of Lemma \ref{lem:alt-pathdinv}.  We therefore have $\pathdinv(D)=9$. The remaining figures show the three types of complementary boxes as used in the proof of Lemma \ref{lem: codinv of a  path}.}
    \label{fig:example-pathdinv}
\end{figure}

\begin{lem}
\label{lem: codinv of a  path}
For a Dyck path $D$ the difference
$\delta_{K,k}-\pathdinv(D)$ equals the number of attacking pairs $a,b$ of boxes such that $a$ is to the left of a vertical step in $D$, and $b$ is between $D$ and the main diagonal.
\end{lem}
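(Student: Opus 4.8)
The plan is to set up a bijection between boxes $b$ above $D$ that do \emph{not} satisfy the $\pathdinv$ inequality of Lemma~\ref{lem:alt-pathdinv} and attacking pairs $(a,b')$ with $a$ directly left of a vertical step of $D$ and $b'$ strictly between $D$ and the main diagonal. Since $\delta_{K,k}$ is the total number of boxes above the diagonal and $\pathdinv(D)$ counts those satisfying the inequality, the left-hand side $\delta_{K,k}-\pathdinv(D)$ is exactly the number of ``bad'' boxes above $D$, so it suffices to biject bad boxes with such attacking pairs.

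First I would recall, following Lemma~\ref{lem:alt-pathdinv}, that a box $b$ above $D$ is bad precisely when $\leg(b) \notin \{s\cdot\arm(b)-1, s\cdot\arm(b), \ldots, s\cdot\arm(b)+(s-1)\}$, where $s = K/k$. This splits into two regimes: $\leg(b) \le s\cdot\arm(b)-2$ (too little leg relative to arm) and $\leg(b) \ge s\cdot\arm(b)+s$ (too much leg), matching the three ``types'' of complementary boxes pictured in Figure~\ref{fig:example-pathdinv}. In each regime I would track, for the box $b$, the row of $D$'s vertical step that ``cuts off'' its arm on the right (call the box just left of that step $a$) and the box $b'$ where a ray from $b$ meets the region below $D$. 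The arithmetic of ranks from Lemma~\ref{lem:box-pairs} — namely that $(a,b')$ attacking is equivalent to $\rank(a) < \rank(b') \le \rank(a)+k$ — is the tool that converts the arm/leg inequalities into the attacking condition. Concretely, a box $b$ with arm $\alpha$ and the vertical step of $D$ immediately to the right of $b$'s row determine $a$; the slope-$s$ diagonal through $b$ continued downward past $D$ hits a box $b'$ below the path, and the ``bad'' inequality on $\leg(b)$ translates, via the rank formula, into exactly $\rank(a) < \rank(b') \le \rank(a)+k$, with the two regimes corresponding to whether $b'$ lies on the same diagonal as $a$ (with $a$ to the left) or one diagonal below $a$ (with $a$ to the right) — i.e.\ the two cases in the definition of attacking pair. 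One also needs to check $b'$ is strictly between $D$ and the main diagonal, which follows since $b$ was strictly above $D$ and the number of above-diagonal boxes on each diagonal is controlled.

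For injectivity and surjectivity I would argue that the construction is reversible: given an attacking pair $(a,b')$ of the required form, the vertical step of $D$ just right of $a$ together with the diagonal through $b'$ reconstruct a unique box $b$ above $D$, and the rank inequality guarantees $b$ is bad. The main obstacle, I expect, is the bookkeeping at the ``seams'': because the rank function increases by $k$ or by $k+1$ as one moves up a column (the $\lfloor (j-1)/(n-k+1)\rfloor$ correction in the rank formula), the slope-$s$ rays and the arm/leg counts do not line up in a completely uniform way, so the translation between ``$\leg(b)$ lies outside the bad interval'' and ``$\rank(a) < \rank(b') \le \rank(a)+k$'' requires carefully separating the case where the relevant column indices straddle a multiple of $n-k+1$ from the generic case. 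Handling these boundary rows — and confirming no box is double-counted or missed there — is where the real care is needed; the rest is the routine arm/leg arithmetic already set up in Lemmas~\ref{lem:box-pairs} and~\ref{lem:alt-pathdinv}.
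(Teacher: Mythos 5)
Your opening reduction contains a genuine error that undermines the whole count. You assert that ``$\delta_{K,k}-\pathdinv(D)$ is exactly the number of bad boxes above $D$.'' But $\delta_{K,k}$ counts all boxes lying fully above the \emph{main diagonal} (Definition~\ref{def:delta-num}), whereas $\pathdinv(D)$ only counts boxes above the \emph{path} $D$ satisfying the arm/leg inequality. Since $D$ lies weakly above the diagonal, the difference $\delta_{K,k}-\pathdinv(D)$ equals the number of bad boxes above $D$ \emph{plus} the number of boxes strictly between $D$ and the diagonal. You even gesture at this yourself by saying your two regimes match ``the three types'' of complementary boxes in Figure~\ref{fig:example-pathdinv} --- but two regimes cannot cover three types. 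The third type, the boxes below $D$ and above the diagonal (the $\circ$ boxes in the paper's proof), is entirely absent from your bijection, and these boxes are not negligible: they must be matched to attacking pairs by a separate construction (the paper uses a vertical reflection of each such box within its column, then reads off the pair from the reflected position). Without handling them, your proposed bijection can only account for a strict subset of the left-hand side.

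For the two regimes you do treat (``too high'' and ``too low'' boxes above $D$), your sketch is in the same spirit as the paper's argument: the $\triangle$-type boxes are sent rightward to a vertical step to locate $a$, and the $\times$-type boxes are shifted by $\lfloor \leg(t)/s\rfloor$ columns; in both cases Lemma~\ref{lem:box-pairs} converts the leg inequality into the attacking condition. But even there you would need to verify that the three constructions together hit each qualifying attacking pair exactly once --- the paper delegates this to the explicit matchings of \cite[Lemma 2.22]{GorskyMazin}, and your proposal does not yet supply a mechanism for separating which attacking pairs come from which of the three sources. So the proof as proposed has both a counting gap (the missing $\circ$ boxes) and an incomplete argument for bijectivity of the combined matching.
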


\begin{proof}
    We outline the proof of essentially the same fact shown in \cite[Lemma 2.22]{GorskyMazin}, though we note that their figures are flipped upside down and transposed from ours, and we translate the main steps here in our notation. 
    
    In Figure \ref{fig:example-pathdinv}, the boxes contributing to $\pathdinv$ are marked with $\ast$. We know that $\delta_{K,k}-\pathdinv(D)$ is the number of boxes fully above the diagonal that are not marked with $\ast$, and we sort them into three types:
    \begin{itemize}
        \item \textbf{Below $D$:} We mark these boxes with a $\circ$.
        \item \textbf{Above $D$, ``too high'':} These squares $b$ have $\leg(b)\ge s\cdot \arm(b)+s$, in other words, their leg is too long to be marked with $\ast$. We mark these with $\times$.
        \item \textbf{Above $D$, ``too low'':} These squares $b$ have $\leg(b)\le s\cdot \arm(b)-2$, in other words, their leg is too short.  We mark these with $\triangle$.
    \end{itemize}

    For each $\triangle$ entry, we draw a line to the right until we cross a vertical step of $D$; let $a$ be the box just to the left of this vertical step.  Then by the height condition for $\triangle$, there is a box $b$ above the diagonal and below $D$ in the same column as $\triangle$ that is one diagonal above $a$.  We match the $\triangle$ square to this attacking pair $(a,b)$, as shown at left below.  Conversely, every such off-diagonal pair $(a,b)$ corresponds to a triangle.

\begin{center}
    \includegraphics{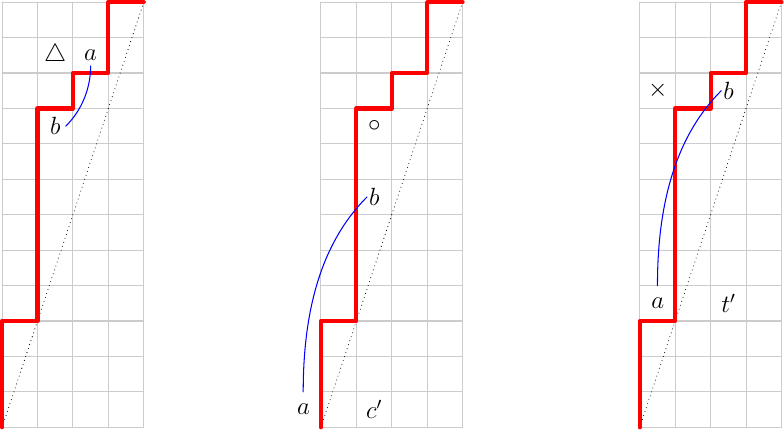}
\end{center}

    We now consider a square $c$ marked with $\circ$.  We first reflect $c$ vertically as follows: let $d$ be the number of squares vertically between $c$ and the path $D$, and define $c'$ to be the square in the same column as $c$ that has exactly $d$ squares vertically below it in the grid (starting at the bottom below the diagonal).  Then, let $a$ be the square to the left of the vertical step of $D$ in the row of $c'$, and $b$ the square in the column of $c'$ on the same diagonal as $a$.  By an analysis of the heights involved, one sees that $b$ is indeed a square between the path and the diagonal, and so $(a,b)$ form an attacking pair of the desired form.  We match $c$ with the attacking pair $(a,b)$ as shown at middle above.  By the work in \cite{GorskyMazin}, this association is bijective with the attacking pairs formed by taking a square $b$ between the diagonal and $D$ and matching it with the entry $a$ on its diagonal just outside of $D$ that is farthest to the left.

    The remaining pairs are matched with $\times$'s as shown at right above, via the following algorithm.  Given a square $t$ marked with $\times$, we move exactly $\left \lfloor\frac{\leg(t)}{s}\right\rfloor$ columns to the right, and in the new column we find the square $t'$ below $D$ whose vertical distance to $D$ equals $\leg(t)$.  Then the row of $t'$ determines the entry $a$ to its left, and the entry $b$ is the unique entry in the column of $t'$ in the same diagonal as $a$.  This again is a bijection with the remaining pairs by the work in \cite{GorskyMazin}. 
\end{proof}

\subsection{Stacked parking functions}
\label{sec: stacks}

We use the notation of \cite{GGG1,HRW} here, and recall the definition of a \textit{stacked} parking function that can be used to reformulate the Rise version of the Delta conjecture.  

\begin{defn}
A \textbf{stack} $S$ of boxes in an $n\times k$ grid is a subset of the grid boxes such that there is one element of $S$ in each row, at least one in each column, and each box in $S$ is weakly to the right of the one below it.

A \textbf{stacked parking function} with respect to $S$ is a labeled lattice path $P$ with north and east steps from $(0,0)$ to $(k,n)$ such that each box of $S$ lies below the path of $P$, and the labeling is strictly increasing up each column.  (See the right hand diagram in Figure \ref{fig:map-F}.)
\end{defn}

We write $\LD(S)$ is the set of stacked parking functions with respect to $S$, and
\[
\LD_{n,k}^{\stack} \coloneqq \bigcup_{S\in \Stack_{n,k}} \LD(S).
\]


We will also need the following construction from \cite{GGG1} relating rational parking functions with the stacked ones. Set $K = k(n-k+1)$.  Given a standard parking function in the $K\times k$ rectangle, we call a label $a$ {\bf big} if $a>n$, and {\bf small} if $a\le n$.  We denote by $b_i$ the number of big labels in the $i$-th column, and by $s_i$ the number of small labels. Note that 
\begin{equation}
\label{eq: sum small big}
\sum_{i=1}^{k} s_i=n,\ \sum_{i=1}^{k} b_i=K-n
\end{equation}

\begin{defn}
\label{def: admissible for b}
A $(K,k)$-parking function is called {\bf admissible} if $b_i\le n-k$ for all $i$.
\end{defn}

We construct a map $F$ from the set of standard admissible $(K,k)$ parking functions to the set $\LD_{n,k}^{\stack}$ as follows:

\begin{figure}
    \centering
    \includegraphics{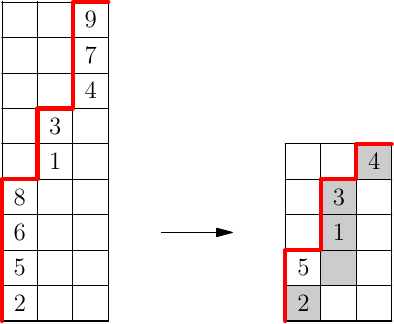}
    \caption{The map $F$ that shrinks a $(K,k)$-parking function to a stacked parking function by removing the big labels.  Here $k=3$ and $n=6$, so $K=9$.}
    \label{fig:map-F}
\end{figure}

\begin{itemize}
\item The parking function $F(\pi)$ is obtained by erasing all big labels in $\pi$ and the north steps to the left of them.
\item In particular, the length of the $i$th vertical run of the lattice path for $F(\pi)$ is equal to $s_i$.
\item The height of the stack in column $i$ is given by $h_i=n-k+1-b_i$.
\end{itemize}

See Figure \ref{fig:map-F} for an example.

\begin{lem}\cite[Lemma 4.7]{GGG1}
\label{lem: admissible to stacks}
The stacked parking function $F(\pi)$ is well defined in $\LD_{n,k}^{\stack}$. We have a bijection between $\LD_{n,k}^{\stack}$ and the quotient of the set of admissible $(K,k)$ parking functions by the action of $S_{K-n}$ which permutes the big labels.
\end{lem}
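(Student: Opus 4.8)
The plan is to verify directly from the definitions that $F(\pi)$ meets the three requirements to lie in $\LD_{n,k}^{\stack}$, and then to analyze the fibers of $F$. For well-definedness: erasing the big north steps of $\pi$ leaves a path whose $i$-th vertical run has length $s_i$, so by \eqref{eq: sum small big} the total vertical length is $\sum_i s_i = n$ and $F(\pi)$ runs from $(0,0)$ to $(k,n)$. In a standard parking function the labels are distinct and weakly increase up each run, hence strictly increase, so the retained small labels strictly increase up each column of $F(\pi)$. For the stack $S$ with column heights $h_i = n-k+1-b_i$: admissibility ($b_i \le n-k$) is equivalent to $h_i \ge 1$, so every column of the $n\times k$ grid contains a box of $S$; by \eqref{eq: sum small big}, $\sum_i h_i = k(n-k+1) - (K-n) = n$, so $S$ has exactly one box per row; and placing the $h_i$ boxes of column $i$ directly on top of those of column $i-1$ makes $S$ weakly increasing to the right going up, so $S$ is a genuine stack, indeed the unique stack with the prescribed column heights. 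Finally one must check $S$ lies weakly below the path of $F(\pi)$, i.e. $\sum_{j\le i} h_j \le \sum_{j\le i} s_j$ for all $i$: since $\pi$ is a $(K,k)$-Dyck path of slope $n-k+1$, its runs $v_j = s_j + b_j$ satisfy $\sum_{j\le i}(s_j+b_j) \ge (n-k+1)i$, and substituting $b_j = (n-k+1) - h_j$ turns this into exactly $\sum_{j\le i} s_j \ge \sum_{j\le i} h_j$. Hence $F(\pi)\in\LD_{n,k}^{\stack}$.

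For the bijection, note that $S_{K-n}$ acts by permuting the labels $n+1,\dots,K$ among the big north steps (re-sorting within each run), which changes neither which steps carry big labels nor the small labels; so $F$ is constant on $S_{K-n}$-orbits and descends to a map $\overline F$ on the quotient. For surjectivity, start from $Q\in\LD_{n,k}^{\stack}$ with path runs $s_i$ and stack column heights $h_i$: a pigeonhole argument gives $h_i \le n-k+1$ (the other $k-1$ columns each contain a stack box, so one column holds at most $n-(k-1) = n-k+1$), whence $b_i \coloneqq (n-k+1) - h_i \in\{0,\dots,n-k\}$. Build the $(K,k)$-path with runs $v_i = s_i + b_i$; the inequalities $\sum_{j\le i} s_j \ge \sum_{j\le i} h_j$ in $Q$ translate back into the $(K,k)$-Dyck condition exactly as above, and $\sum_i v_i = n + k(n-k+1) - n = K$. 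Place the small labels of $Q$ on the bottom $s_i$ steps of run $i$ in increasing order and the labels $n+1,\dots,K$ on the remaining steps in any parking-compatible way; the result is an admissible standard $(K,k)$-parking function with $F$-image $Q$. For injectivity, if $F(\pi) = F(\pi')$ then $\pi,\pi'$ share the same $s_i$ and $h_i$, hence the same $b_i$ and $v_i$, hence the same underlying Dyck path; in each run the small labels ($\le n$) must all sit below the big ones ($>n$) since labels strictly increase up a run, so the big/small step pattern is forced, and the small labels themselves agree because they are recorded by $F(\pi)=F(\pi')$; thus $\pi$ and $\pi'$ differ only in the placement of the big labels and lie in one $S_{K-n}$-orbit. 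Therefore $\overline F$ is a bijection.

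The crux is the bookkeeping in the first paragraph: the equivalence, under the substitution $b_j = (n-k+1)-h_j$, between the partial-sum Dyck inequalities for the $(K,k)$-path and the condition ``the stack lies weakly below the path'', together with the two counting facts that admissibility is equivalent to $h_i\ge 1$ and that $h_i\le n-k+1$ holds automatically. Once these are in place, the rest is the routine verification that $F$ forgets precisely the data permuted by $S_{K-n}$ and nothing more. (Throughout, ``parking function'' is understood in the standard sense, so an element of $\LD_{n,k}^{\stack}$ carries the labels $1,\dots,n$ and a $(K,k)$-parking function carries $1,\dots,K$; the unlabeled or content-graded variants follow by running the same argument letter by letter.)
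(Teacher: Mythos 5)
Your proof is correct. Note that this paper does not prove the lemma itself but cites it as \cite[Lemma 4.7]{GGG1}; your argument is the direct verification one would expect there — the key identities $\sum_i h_i = n$, the equivalence of admissibility with $h_i\ge 1$, the automatic bound $h_i\le n-k+1$, and the translation (via $b_j=(n-k+1)-h_j$) of the $(K,k)$ rational Dyck inequalities $\sum_{j\le i}v_j\ge(n-k+1)i$ into the condition $\sum_{j\le i}s_j\ge\sum_{j\le i}h_j$ that the stack lies below the path — together with the observation that within each vertical run the big labels necessarily sit above the small ones, which makes $F$ forget exactly the data permuted by $S_{K-n}$.
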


One can define the analogues of $\area$ and $\dinv$ statistics for stacked parking functions. These appear in the Delta Theorem, but we do not need them in this paper and refer to \cite[Section 4.1]{GGG1} for all definitions and a detailed discussion.

\subsection{Affine permutations}  We now establish notation on affine permutations.

\begin{defn}
    An (extended) \textbf{affine permutation} in $\widetilde{S_K}$ is a bijection $\omega:\mathbb{Z}\to \mathbb{Z}$ such that, for all $i\in \mathbb{Z}$, $$\omega(i+K)=\omega(i)+K.$$  
We will often write the affine permutations in {\bf window notation}
$$
\omega=\left[\omega(1),\ldots,\omega(K)\right].
$$
\end{defn} 

In order for $\omega$ to be well defined, we need $\omega(1),\dots,\omega(K)$ to have pairwise distinct remainders modulo $K$. By reducing $\omega(i)$ modulo $K$, we get a projection $\widetilde{S_K}\to S_K$.

\begin{defn}
    The \textbf{degree} of an affine permutation $\omega$ is
\begin{equation}
\label{def: degree omega}
\deg \omega=\frac{1}{K}\sum_{i=1}^K (\omega(i)-i)=\frac{1}{K}\left(\sum_{i=1}^{K}\omega(i)-\frac{K(K+1)}{2}\right).
\end{equation}
\end{defn}
One can check that $\deg\omega$ is always an integer and 
$
\deg(\omega_1\omega_2)=\deg \omega_1+\deg \omega_2.
$
\begin{defn}
Given $\lambda\in \bZ^K$, we define the translation by 
$$
\ttt_\lambda:=\left[1+K\lambda_1,\ldots,K+K\lambda_K\right].
$$
\end{defn}
We have
$
\deg(\ttt_\lambda)=|\lambda|=\sum_{i=1}^{K}\lambda_i.
$
Furthermore, any affine permutation can be uniquely written as
\begin{equation}
\label{eq: omega decomposition}
\omega=\ttt_{\lambda}\ww=[\ww(1)+K\lambda_{\ww(1)},\ldots,\ww(K)+K\lambda_{\ww(K)}],\ \lambda\in \bZ^K,\ww\in S_K. 
\end{equation}

\begin{defn}
\label{def: positive permutation}
We say that an affine permutation $\omega=[\omega(1),\ldots,\omega(K)]$ is {\bf positive} if $\omega(i)>0$ for $i=1,\ldots,K$. Equivalently,
$\omega=\ttt_{\lambda}\ww$ and $\lambda_i\ge 0$ for $i=1,\ldots,K$. We denote the set of positive affine permutations by $\widetilde{S}_K^{+}$.
\end{defn}

\begin{defn}
\label{def: normalized permutation}
We say that an affine permutation $\omega$ is {\bf normalized}, if $1\le \omega^{-1}(1)\le K$, that is, the values $\omega(1),\ldots,\omega(K)$ in the window contain $1$. Equivalently, we require that $\lambda_1=0$.
We denote by $\widetilde{S}_K^{0}$ the set of normalized affine permutations, and by 
$\widetilde{S}_K^{+,0}$ the set of positive and normalized affine permutations.
\end{defn}

\begin{rmk}
\label{rem: right action preserves positive}
We have a {\bf right} action of $S_K$ on $\widetilde{S_K}$. Note that for ${\bf u}\in S_K$ we have
$$
(\ttt_{\lambda}\ww){\bf u}^{-1}=[\ww {\bf u}^{-1}(1)+K\lambda_{\ww {\bf u}^{-1}(1)},\ldots,\ww {\bf u}^{-1}(K)+K\lambda_{\ww {\bf u}^{-1}(K)}].
$$
In particular, the right action of $S_K$ preserves the sets $\widetilde{S}_K^+,\widetilde{S}_K^{0}$ and $\widetilde{S}_K^{+,0}$. 
\end{rmk}

\section{Combinatorics of $\gamma$-restricted permutations}\label{sec: gamma restricted}

In this section, we establish some combinatorial results on a special affine permutation $\gamma$ that will correspond to the nil-elliptic element used to construct the affine Springer fibers in the sections below.  The facts proven here will help with dimension counting.

\subsection{Definition and setup} 
We will need a special affine permutation $\gamma=\gamma_{n,k,N}\in \widetilde{S}_K$ defined as follows:
\begin{equation}
\label{eq: def gamma}
\gamma(x) = 
    \begin{cases}
    x+k & \text{if }1\leq x\leq (n-k)k\\
    x+k+1 & \text{if } (n-k)k < x < K\\
    1+K(N+1) & \text{if }x=K.
    \end{cases}
\end{equation}

\begin{rmk}\label{rmk:RankAbove}
    Observe that for a box in row $j$ and column $i$ in the $K\times k$ rectangle such that $j<K$, the rank of the box $(i,j+1)$ immediately above $(i,j)$ satisfies
    \[
    \rank(i,j+1) = \gamma(\rank(i,j)).
    \]
    Furthermore, the boxes with $j<K$ which are above the main diagonal are exactly those for which $x=\rank(i,j)$ is not divisible by $K$, so that either $\gamma(x) = x+k+1$ (if $j$ is divisible by $n-k+1$) or $\gamma(x) = x+k$ (if $j$ is not divisible by $n-k+1$).
\end{rmk}

\begin{lem}
\label{lem: gamma is a permutation}
The affine permutation $\gamma$ is well defined in $\widetilde{S_K}$. Its projection to $S_K$ is a single $K$-cycle.
\end{lem}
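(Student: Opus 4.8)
The plan is to verify the two claims in Lemma~\ref{lem: gamma is a permutation} separately: first that $\gamma$ is a well-defined element of $\widetilde{S_K}$, and then that its image in $S_K$ is a single $K$-cycle.

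For well-definedness, by the criterion stated after the definition of affine permutation, it suffices to check that $\gamma$ satisfies the periodicity $\gamma(x+K) = \gamma(x)+K$ and that $\gamma(1),\dots,\gamma(K)$ have pairwise distinct residues modulo $K$. The periodicity is not part of the displayed formula \eqref{eq: def gamma}, so I would first extend $\gamma$ to all of $\bZ$ by declaring $\gamma(x+K)=\gamma(x)+K$ and note this is consistent (it is the standard way to read window notation). For the residues, I would reduce the window values mod $K$: for $1\le x\le (n-k)k$ we get $x+k \bmod K$; for $(n-k)k < x < K$ we get $x+k+1 \bmod K$; and for $x=K$ we get $1+K(N+1)\equiv 1 \bmod K$. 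Observe that as $x$ ranges over $1,\dots,(n-k)k$, the values $x+k$ range over $k+1,\dots,(n-k)k+k = (n-k+1)k = K$, i.e.\ they cover the residues $k+1,\dots,K-1,0$; as $x$ ranges over $(n-k)k+1,\dots,K-1$, the values $x+k+1$ range over $(n-k)k+k+2,\dots,K+k = K+2,\dots,K+k$, i.e.\ residues $2,\dots,k$; and $x=K$ gives residue $1$. Together these are the residues $0,1,2,\dots,K-1$ each exactly once, so $\gamma$ is well defined. This is a routine but slightly fiddly bookkeeping check with the index ranges.

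For the cycle structure, the cleanest approach is to show the orbit of a single element under $\gamma$ (modulo $K$) has size $K$. Remark~\ref{rmk:RankAbove} essentially already does this: $\gamma$ acts on ranks of boxes by moving one box North, and $\gamma(\rank(i,K)) $ wraps around to $\rank$ of a box in column $1$. Concretely, starting from the rank $1$ box in the SW corner and applying $\gamma$ repeatedly, one walks up column $1$ through all $K$ rows of the rectangle (the ranks in column $1$ are $1 = \rank(1,1), \rank(1,2), \dots, \rank(1,K)$, each obtained from the previous by adding $k$ or $k+1$), and since the ranks in column $1$ are $1, k+1, \dots$ and form a complete set of residues mod $K$ by Proposition~\ref{prop: ranks} (ranks in different rows have different remainders mod $K$, and there are $K$ rows), the orbit of $1$ in $S_K$ has size exactly $K$. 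Hence the projection of $\gamma$ to $S_K$ is a single $K$-cycle. Alternatively, and perhaps more self-containedly, I would directly track the orbit of $1$: $1 \mapsto 1+k \mapsto 1+2k \mapsto \cdots$, incrementing by $k$ for the first $n-k$ steps within each "block," then by $k+1$ once we cross a multiple of $n-k+1$ rows, continuing until after $K-1$ applications we return to a value congruent to $1$; the key point is that the partial sums $1, 1+k, \dots$ (with the occasional $+1$) hit $K$ distinct residues because the total increment over $K$ steps is $\sum = (n-k)k \cdot \text{(number of blocks)} + \dots$, which works out to $\equiv 0 \pmod K$ only after a full $K$ steps.

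I expect the main obstacle to be purely notational: carefully handling the two regimes in the definition of $\gamma$ (the $+k$ steps versus the $+k+1$ steps, governed by divisibility by $n-k+1$) and the wraparound at $x=K$, making sure the index ranges line up so that exactly $K$ distinct residues appear and the orbit closes up after exactly $K$ steps rather than some proper divisor of $K$. Invoking Proposition~\ref{prop: ranks} and Remark~\ref{rmk:RankAbove} sidesteps most of this, so if those are available the proof is short; otherwise it is a short induction on the orbit of $1$.
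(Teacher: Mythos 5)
Your proof is correct, and it is essentially the same computation as the paper's, just packaged slightly differently. The paper observes that the second claim (the projection is a single $K$-cycle) implies the first (well-definedness), so it only needs to trace the orbit of $1$ modulo $K$, which it does by a direct calculation identical in substance to your second ("self-contained") approach. You instead verify well-definedness directly by checking the residues of the window entries form a complete system mod $K$, which is correct but slightly redundant given that the cycle claim already delivers this. Your rank-based framing of the cycle argument (via Remark~\ref{rmk:RankAbove} and Proposition~\ref{prop: ranks}, walking up column $1$) is a valid alternative way to express the orbit of $1$; it matches the paper's explicit orbit $1\mapsto k+1\mapsto\cdots\mapsto K\mapsto 1$ and buys you a cleaner reason why the $K$ iterates are distinct (column-$1$ ranks have distinct residues), at the cost of invoking two auxiliary results. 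Either version would be acceptable; just note that the paper's economy of first proving the cycle claim and deducing well-definedness as a corollary would shorten your write-up.
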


\begin{proof}
It is sufficient to prove the second claim. By iterating $\gamma$ and reducing modulo $K$, we get
$$
1\mapsto k+1\mapsto\cdots\mapsto (n-k)k+1\mapsto K+2\cong 2\mapsto k+2\mapsto \cdots 
$$
$$
\mapsto k\mapsto 2k \mapsto\cdots\mapsto (n-k)k\mapsto K\mapsto  1+K(N+1)\cong 1\mod K.
$$ This completes the proof.
\end{proof}

\begin{example}\label{ex: gamma}
    For $k=4$, $K=12$, $N=3$, we have $$\gamma=[5,6,7,8,9,10,11,12,14,15,16,49].$$  Its projection to $S_K$ is $5,6,7,8,9,10,11,12,2,3,4,1$ in list notation, which in cycle notation is $$(1\,\,5\,\,9\,\,2\,\,6\,\,10\,\,3\,\,7\,\,11\,\,4\,\,8\,\,12).$$
\end{example}

\begin{defn}
\label{def: gamma restricted}
We say that an affine permutation $\omega$ is {\bf $\gamma$-restricted} if:
\begin{itemize}
\item $\omega$ is  positive and normalized, and
\item $\omega^{-1}(x)<\omega^{-1}(\gamma(x))$ for all $x\in \bZ$.
\end{itemize}
\end{defn}

\begin{example}
The affine permutation
    $$\omega=[1,5,11,16,6,9,20,10,12,14,15,19]$$ is $\gamma$-restricted for $\gamma$ as in Example \ref{ex: gamma}, because $5$ is to the right of $1$, $6$ is to the right of $2$ (which appears to the left of the window), $7$ is to the right of $3$ (which appear consecutively just to the left of the window), and so on.  
\end{example}

\begin{example}\label{ex:omega-inverse}
    We will often consider the inverse of a $\gamma$-restricted affine permutation; for the example above, we have $$\omega^{-1}=[1,-2,-1,-8,2,5,0,-5,6,8,3,9].$$
\end{example}

\begin{defn}
    An \textbf{inversion} of an affine permutation $\omega$ is a pair $(\alpha,\beta)$ of entries such that  $\alpha$ lies in the window (that is, $1\le \omega^{-1}(\alpha)\le K$), $\alpha<\beta$, and $\beta$ occurs to the left of $\alpha$ (that is, $\omega^{-1}(\beta)<\omega^{-1}(\alpha)$).  Note that $\beta$ may be to the left of the window.

    We write $\inv(\omega)$ to denote the number of inversions of $\omega$.
\end{defn}

\begin{example}

The inversions in the affine permutation $\omega$ above are $$(1,2),(1,3),(1,4),(1,7),(1,8),(5,7),(5,8),(6,7),(6,8),(6,11),(6,16),$$
$$(9,11),(9,16),(10,11),(10,16),(10,20),(12,16),(12,20),$$
$$(14,16),(14,20),(15,16),(15,20),(19,20).$$
   Thus $\inv(\omega)=23$.  Note that $\inv(\omega)=\inv(\omega^{-1})$ for any affine permutation $\omega$, and so above we would find $\inv(\omega^{-1})=23$ as well.
\end{example}

\subsection{Relating rational parking functions and affine permutations}\label{sec:P-w}
Let $\pi$ be a parking function with Dyck path $D$. We consider the following additional labeling: to the right of each vertical step of $D$ we write the parking function label as usual, and to the left we write the rank of the corresponding box, as in Figure \ref{fig:parking-func-of-w}.

\begin{figure}
    \centering
\begin{tikzpicture}[scale=0.8]
\draw (0,0)--(0,12)--(4,12)--(4,0)--(0,0);
\draw (1,0)--(1,12);
\draw (2,0)--(2,12);
\draw (3,0)--(3,12);
\draw (0,1)--(4,1);
\draw (0,2)--(4,2);
\draw (0,3)--(4,3);
\draw (0,4)--(4,4);
\draw (0,5)--(4,5);
\draw (0,6)--(4,6);
\draw (0,7)--(4,7);
\draw (0,8)--(4,8);
\draw (0,9)--(4,9);
\draw (0,10)--(4,10);
\draw (0,11)--(4,11);
\draw [dotted] (0,0)--(4,12);
\draw [line width=2] (0,3)--(4,3);
\draw [line width=2] (0,6)--(4,6);
\draw [line width=2] (0,9)--(4,9);
\draw (0.5,0.5) node {1};
\draw (0.5,1.5) node {2};
\draw (0.5,2.5) node {6};
\draw (0.5,3.5) node {10};
\draw (1.5,4.5) node {5};
\draw (1.5,5.5) node {8};
\draw (1.5,6.5) node {11};
\draw (1.5,7.5) node {12};
\draw (2.5,8.5) node {3};
\draw (2.5,9.5) node {4};
\draw (2.5,10.5) node {7};
\draw (3.5,11.5) node {9};
\draw [line width=2,red] (0,0)--(0,4)--(1,4)--(1,8)--(2,8)--(2,11)--(3,11)--(3,12)--(4,12);

\draw(-0.5,0.5) node {$\mathbf{1}$};
\draw(-0.5,1.5) node {$\mathbf{5}$};
\draw(-0.5,2.5) node {$\mathbf{9}$};
\draw(-0.5,3.5) node {$\mathbf{14}$};

\draw(0.5,4.5) node {$\mathbf{6}$};
\draw(0.5,5.5) node {$\mathbf{10}$};
\draw(0.5,6.5) node {$\mathbf{15}$};
\draw(0.5,7.5) node {$\mathbf{19}$};

\draw(1.5,8.5) node {$\mathbf{11}$};
\draw(1.5,9.5) node {$\mathbf{16}$};
\draw(1.5,10.5) node {$\mathbf{20}$};

\draw(2.5,11.5) node {$\mathbf{12}$};
\end{tikzpicture}
    \caption{The labeled rational Dyck path $\pi$ corresponding to $\omega_\pi = [1,5,11,16,6,9,20,10,12,14,15,19]$. The ranks are in bold to the left of the Dyck path, and the parking function is to the right of the path.}
    \label{fig:parking-func-of-w}
\end{figure}

\begin{defn}
To a parking function $\pi$ we associate an affine permutation $\omega_\pi$ such that $\omega_\pi(i)$ is the rank of the box in the same row as, and just to the left of, the parking function label $i$.
\end{defn}

\begin{exa}
The parking function $\pi$ in Figure~\ref{fig:parking-func-of-w} corresponds to the affine permutation 
$$\omega=\omega_\pi=[1,5,11,16,6,9,20,10,12,14,15,19].$$
which is $\gamma$-restricted as shown above. 
 Note that $\omega=\ttt_{\lambda}\ww$ where 
$$\ww=[1,5,11,4,6,9,8,10,12,2,3,7]$$ and 
$\lambda=(0,1,1,1,0,0,1,1,0,0,0,0)$.
\end{exa}

\begin{lem}
\label{lem: pf bijection}
The map $\pi\mapsto \omega_{\pi}$ is a bijection between the set of $(K,k)$ parking functions and the set of $\gamma$-restricted affine permutations, for any fixed $N\ge 0$.
\end{lem}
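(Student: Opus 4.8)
The plan is to construct the inverse map explicitly and check that both directions are well-defined. Given a $\gamma$-restricted affine permutation $\omega$, I would reconstruct the parking function $\pi$ as follows. First, observe that a $(K,k)$-parking function is equivalent data to (i) a rational Dyck path $D$ in the $K \times k$ rectangle, which is recorded by the set of ranks of the boxes immediately to the left of its vertical steps, and (ii) a weakly-increasing-up-columns labeling. From $\omega$, define the candidate rank set $R = \{\omega(1),\ldots,\omega(K)\}$; I would show this is exactly the set of ranks of boxes to the left of the vertical steps of a genuine Dyck path. The key point is that $\gamma$-restrictedness forces $R$ to be a valid ``Dyck'' rank set: positivity of $\omega$ guarantees all ranks are positive (hence the path stays weakly above the diagonal, by Proposition~\ref{prop: ranks}), and the condition $\omega^{-1}(x) < \omega^{-1}(\gamma(x))$ together with Remark~\ref{rmk:RankAbove} ensures that whenever a box with rank $x$ (not divisible by $K$, i.e. above the diagonal) is to the left of a vertical step, so is the box immediately above it — which is precisely the condition that $R$ arises from a connected lattice path. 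Normalization ($\omega^{-1}(1)$ in the window, i.e. $\lambda_1 = 0$) pins down the path to start in the SW corner. Then place label $i$ in the box whose rank is $\omega(i)$; the labeling is well-defined and column-strict-increasing again because of the $\gamma$-restriction applied to consecutive boxes in a column (those are related by $\gamma$, and $\gamma$-restriction says the label below has smaller $\omega^{-1}$, i.e. is the smaller label). Finally, check this $\pi$ is a parking function and that $\omega_\pi = \omega$, so the two constructions are mutually inverse.

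The steps in order: (1) Show $\pi \mapsto \omega_\pi$ lands in the set of $\gamma$-restricted permutations — normalization from the SW-corner start, positivity from ranks above the diagonal being positive (Proposition~\ref{prop: ranks}), and the restriction condition from Remark~\ref{rmk:RankAbove} since the box above a left-of-vertical-step box is also left of a vertical step, and column-strictness of the labeling matches $\omega^{-1}(x) < \omega^{-1}(\gamma(x))$. (2) Show $\omega_\pi$ is genuinely an affine permutation: the $K$ ranks appearing in one ``period'' of columns have distinct residues mod $K$ by Proposition~\ref{prop: ranks}, so the window values are a valid window. (3) Construct the inverse: from $\gamma$-restricted $\omega$, recover the Dyck path from the rank set $\{\omega(i)\}$ and the labeling by $i \mapsto$ box of rank $\omega(i)$; verify using the $\gamma$-restriction and positivity/normalization that this is a well-formed labeled Dyck path. (4) Check the two maps are inverse to each other, which is essentially bookkeeping once the correspondence ``rank of box left of vertical step'' $\leftrightarrow$ ``window value'' is set up.

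I expect the main obstacle to be step (3): proving that the rank multiset $\{\omega(1),\ldots,\omega(K)\}$ (extended $K$-periodically by $+K$) really is the rank set of the boxes-left-of-vertical-steps of an honest lattice path, and not some disconnected or diagonal-violating configuration. Concretely one must show: for each column $i$ there is a contiguous vertical run of boxes whose ranks appear, the runs in adjacent columns are compatible (the path connects), and the whole path stays weakly above $y = (K/k)x$. Positivity handles the last of these via Proposition~\ref{prop: ranks}. Connectivity and contiguity is where the $\gamma$-restriction does the real work: if rank $x$ is present but rank $\gamma(x)$ (the box above, when $x$ is above the diagonal) is absent, one derives a contradiction with $\omega^{-1}(x) < \omega^{-1}(\gamma(x))$ by a counting/pigeonhole argument on how many ranks can appear in each column — using that each residue class mod $K$ appears exactly once among $\omega(1),\ldots,\omega(K)$ and tracking which residues correspond to which column via the rank formula. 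Once this structural lemma is in hand, the rest is routine. It is also worth remarking that the bijection is manifestly independent of $N$, since $\gamma$ restricted to $\{1,\ldots,K-1\}$ does not involve $N$ and the only role of $N$ is in the value $\gamma(K) = 1 + K(N+1)$, whose residue mod $K$ is $1$ regardless — and the $\gamma$-restriction condition at $x = K$ only constrains the relative position of the labels in rank $K$ and rank $1$.
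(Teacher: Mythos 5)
Your overall architecture (forward direction well-defined, explicit inverse, mutual inverses) matches the paper's, but there is a genuine gap in the structural claim on which both your step (1) and step (3) rest. You assert that $\gamma$-restrictedness forces: ``whenever a box with rank $x$ is to the left of a vertical step, so is the box immediately above it,'' i.e.\ $x\in R\Rightarrow\gamma(x)\in R$, and you propose to derive a contradiction from $x\in R$, $\gamma(x)\notin R$. This is false, and no pigeonhole argument can rescue it: it fails exactly when the path takes one or more east steps between consecutive north steps. In the paper's running example $\omega_\pi=[1,5,11,16,6,9,20,10,12,14,15,19]$ (Figure \ref{fig:parking-func-of-w}), the rank $14$ is in the window but $\gamma(14)=18$ is not; the window instead contains $18-K=6$, because the vertical step in the next row sits one column to the right. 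Your claim would force every column of the path to run all the way to the top row, i.e.\ it characterizes only the single staircase path, not general Dyck paths.

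The correct statement, which is the heart of the paper's proof, is the following. If $x=\omega(i)$ is the rank left of the vertical step in some row, then the unique window value congruent to $\gamma(x)$ modulo $K$ is $\gamma(x)-mK=\omega(j)$, where $m$ is the horizontal displacement of the vertical step in the row above (ranks drop by $K$ per column eastward). Hence $\omega^{-1}(\gamma(x))=j+mK$, and the single inequality $\omega^{-1}(x)<\omega^{-1}(\gamma(x))$, i.e.\ $i<j+mK$ with $i,j\in\{1,\dots,K\}$, unpacks simultaneously into $m\ge 0$ (the path moves weakly right going up, so it is a genuine lattice path) and, when $m=0$, $j>i$ (the labels increase up each vertical run). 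Your proposal separates these two conditions and gets the first one wrong. Note also that the paper's argument is row-based: since the window values have distinct residues mod $K$ and ranks in distinct rows have distinct residues (Proposition \ref{prop: ranks}), there is automatically exactly one vertical step per row, which dissolves your worries about per-column contiguity and connectivity without any counting argument. With the displacement-$m$ analysis substituted for your false structural lemma, the rest of your plan goes through and coincides with the paper's proof.
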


\begin{proof}
We first show that $\omega=\omega_\pi$ is always $\gamma$-restricted. 
 By Proposition \ref{prop: ranks} the values  $\omega(i)$ are all positive and have pairwise distinct remainders modulo $K$, so 
$\omega$ is a well-defined positive affine permutation. Furthermore, the vertical step at the southwest corner has rank 1, so $\omega$ is normalized.

Next, consider two vertical steps in consecutive rows of $D$ with parking function labels $i$ and $j$, and let $m\ge 0$ be the horizontal distance between them (see Figure \ref{fig: i and j}). Let $x=\omega(i)$. Then  $\gamma(x)$ is the rank of the box just above $x$ by Remark~\ref{rmk:RankAbove} since the parking function label $i$ is not in the top row of the $K\times k$ rectangle (see Remark \ref{rmk:RankAbove}). 

\begin{figure}[ht!]
\begin{tikzpicture}
\draw [line width=2,red] (0,0)--(0,1);
\draw [line width=2,red] (2,1)--(2,2);
\draw (0,1)--(2,1);
\draw (0,1)--(0,2);
\draw (-0.5,0.5) node {$x$};
\draw (-0.5,1.5) node {$\gamma(x)$};
\draw (0.5,0.5) node {$i$};
\draw (2.5,1.5) node {$j$};
\draw [<->] (0,1.2)--(2,1.2);
\draw (1,1.4) node {$m$};
\end{tikzpicture}
\caption{}
\label{fig: i and j}
\end{figure}

We have $\gamma(x)=\omega(j)+mK=\omega(j+mK)$, so that $\omega^{-1}(\gamma(x))=j+mK$.  If $m=0$, then $\omega^{-1}(\gamma(x))=j>i=\omega^{-1}(x)$ by the parking function condition.  If $m>0$, then $j+mK>K\ge i$, so again we have $\omega^{-1}(\gamma(x))>\omega^{-1}(x)$.  Finally, for $x=\omega(i)$ in the top row we have $\gamma(x)=1+(N+1)K$. Since $\omega^{-1}(1)\ge 1$, we get $$\omega^{-1}(\gamma(x))=\omega^{-1}(1+(N+1)K)=\omega^{-1}(1)+(N+1)K\ge 1+K>i=\omega^{-1}(x).$$

We have shown the image $\omega_\pi$ of $\pi$ under the map is a $\gamma$-restricted affine permutation; we now construct the inverse map. Given a $\gamma$-restricted affine permutation $\omega$, we construct the rational Dyck path by placing vertical steps to the right of the ranks corresponding to the values $\omega(i)$ for $1\le i\le K$. Since $\omega$ is positive, by Proposition \ref{prop: ranks} all the vertical steps are to the left of the diagonal. Since $\omega$ is normalized, there is a vertical step at the southwest corner of the rectangle.  Since $\omega$ is an affine permutation, there is one vertical step in each row.

We now show that the vertical steps move weakly to the right from bottom to top, forming a Dyck path by connecting the runs of vertical steps horizontally.  For a pair of consecutive vertical steps labeled as in Figure \ref{fig: i and j} (where $m$ may be negative) the inequality $\omega^{-1}(x)<\omega^{-1}(\gamma(x))$ is equivalent to $j+mK>i$.  Thus $m\ge 0$, so we do indeed have a well-defined Dyck path.  

We then label the box to the right of the vertical step labeled $\omega(i)$ by $i$ for each $i$.  To check that this gives a parking function, either $m=0$ and $j>i$ (and $\pi$ satisfies the parking function condition) or $j$ is to the right of $i$. 
 The reverse map clearly inverts the forward map, so the proof is complete.
\end{proof}

The following lemma is clear from Lemma~\ref{lem: pf bijection}, which is a fact we will need in later sections.

\begin{lem}\label{lem: A Wild Conductor Appears}
    If $\omega$ is $\gamma$-restricted, then $\omega(i) \leq kK$ for all $1\leq i \leq K$.
\end{lem}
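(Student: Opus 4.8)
The plan is to push the statement through the bijection of Lemma~\ref{lem: pf bijection} and reduce it to an elementary bound on the rank function of the rectangle. So the first step is: given a $\gamma$-restricted $\omega$, use Lemma~\ref{lem: pf bijection} to write $\omega = \omega_\pi$ for a (unique) $(K,k)$ parking function $\pi$. By the very definition of $\omega_\pi$, for each $1 \le i \le K$ the value $\omega(i)$ is the rank of the box of the $K \times k$ rectangle immediately to the right of the vertical step of $\pi$ carrying the label $i$; in particular $\omega(i) = \rank(a,b)$ for some box with $1 \le a \le k$ and $1 \le b \le K$. (In fact this box lies weakly above the main diagonal, so by Proposition~\ref{prop: ranks} the $\omega(i)$ are positive and pairwise distinct, but for the upper bound only membership in the rectangle is needed.)

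The second step is to bound the rank function over the entire rectangle. From the defining formula
$$
\rank(a,b) = 1 + (b-1)k + \left\lfloor \frac{b-1}{n-k+1}\right\rfloor - K(a-1),
$$
the right-hand side is non-decreasing in $b$ and non-increasing in $a$, so it attains its maximum over $1\le a\le k$, $1 \le b \le K$ at the top-left box $(a,b)=(1,K)$. A short computation using $K = k(n-k+1)$ shows $\lfloor (K-1)/(n-k+1)\rfloor = k-1$, whence $\rank(1,K) = 1 + (K-1)k + (k-1) = kK$. Combining the two steps, $\omega(i) = \rank(a,b) \le \rank(1,K) = kK$ for every $1\le i\le K$, which is the claim.

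I do not expect any genuine obstacle here — this is essentially the remark "clear from Lemma~\ref{lem: pf bijection}" made explicit. The only point requiring a little care is evaluating the floor term of the rank formula at the corner box, which is immediate once one substitutes $K - 1 = k(n-k+1) - 1$; everything else is monotonicity of an affine-linear expression in $a$ and $b$.
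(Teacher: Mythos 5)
Your proof is correct and takes the same route the paper implicitly takes: the paper does not give a proof but remarks only that the lemma is ``clear from Lemma~\ref{lem: pf bijection},'' and your argument makes that explicit by pushing $\omega$ through the parking-function bijection, noting $\omega(i)$ is the rank of a box in the $K\times k$ rectangle, and bounding the rank function by monotonicity together with the corner evaluation $\rank(1,K)=kK$. (The quantity $kK$ is indeed what the paper later names the conductor $c$, described as ``the index of the largest basis vector in the staircase diagram,'' matching your corner-box calculation.)
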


We now use Lemma \ref{lem: codinv of a  path} to translate the $\dinv$ statistic into the language of affine permutations as follows.

\begin{lem}\label{lem:deltaminusdinv}  For any $\pi\in \PF_{K,k}$ and $\omega=\omega_{\pi}$, we have
    \[
    \delta_{K,k}-\dinv(\pi) = |\{(\alpha,\beta)\mid 1\leq \omega^{-1}(\alpha)\leq K,\, \omega^{-1}(\beta) < \omega^{-1}(\alpha),\, \alpha<\beta\leq \alpha+k\}|
    \]
\end{lem}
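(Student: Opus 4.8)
The plan is to combine the two combinatorial translations already available: Lemma~\ref{lem: codinv of a  path}, which identifies $\delta_{K,k}-\pathdinv(D)$ with a count of certain attacking pairs, and Lemma~\ref{lem:box-pairs}, which characterizes attacking pairs by the rank inequality $\rank(a)<\rank(b)\le\rank(a)+k$. First I would unwind the definition $\dinv(\pi)=\pathdinv(D)+\tdinv(\pi)-\maxtdinv(D)$, so that
\[
\delta_{K,k}-\dinv(\pi) = \bigl(\delta_{K,k}-\pathdinv(D)\bigr) + \bigl(\maxtdinv(D)-\tdinv(\pi)\bigr).
\]
The strategy is to show that the right-hand side of the claimed identity splits as a disjoint union indexed by which of the two boxes $a,b$ lie to the left of a vertical step of $D$: pairs $(\alpha,\beta)$ with $\alpha<\beta\le\alpha+k$ and $\omega^{-1}(\beta)<\omega^{-1}(\alpha)$ where $\omega^{-1}(\alpha)\in\{1,\dots,K\}$ correspond via Lemma~\ref{lem:box-pairs} exactly to attacking pairs $(a,b)$ where $a$ is the box to the left of the vertical step with parking-function label $\omega^{-1}(\alpha)$. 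I would then partition these attacking pairs according to whether $b$ is (i) between $D$ and the main diagonal, or (ii) also a box to the left of a vertical step of $D$.

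The first part of the partition is handled directly by Lemma~\ref{lem: codinv of a  path}: the count of attacking pairs $(a,b)$ with $a$ left of a vertical step and $b$ strictly between $D$ and the diagonal is precisely $\delta_{K,k}-\pathdinv(D)$. For the second part, I would argue that attacking pairs $(a,b)$ where both $a$ and $b$ sit just to the left of vertical steps of $D$, counted with the condition that the label of $b$'s row precedes the label of $a$'s row, account for exactly $\maxtdinv(D)-\tdinv(\pi)$. Here I use the description (recorded right after Definition~\ref{def:maxtdinv}) that $\maxtdinv(D)$ is the number of attacking pairs of boxes both lying directly to the right of up-steps of $D$, equivalently both lying directly to the left of vertical steps of $D$ under our conventions; for each such geometric attacking pair $(a,b)$, the labeling either has the label in $a$'s row smaller than that in $b$'s row (contributing to $\tdinv(\pi)$) or larger (so that $\omega^{-1}(\beta)<\omega^{-1}(\alpha)$, where $\alpha=\rank(a)<\rank(b)=\beta$), and these two cases are complementary. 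Subtracting gives $\maxtdinv(D)-\tdinv(\pi)$ as the number of attacking pairs of the second type, with the inversion condition as stated.

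The one subtlety to check carefully is that the rank inequality $\alpha<\beta\le\alpha+k$ in the statement is matched correctly with the geometric notion of attacking pair — but this is exactly the content of Lemma~\ref{lem:box-pairs}, so it is already in hand; I would just need to confirm orientation conventions (that "to the left of a vertical step" corresponds to having one's rank appear as $\omega(i)$ with $1\le i\le K$). I also need to verify that the two families of pairs are genuinely disjoint and exhaust the set on the right-hand side: disjointness holds because a box $b$ cannot simultaneously lie strictly between $D$ and the diagonal and lie to the left of a vertical step of $D$, and exhaustion holds because every box $b$ below the diagonal either is strictly between $D$ and the diagonal or lies on $D$ (hence to the left of a vertical step), while Lemma~\ref{lem: A Wild Conductor Appears} together with positivity of $\omega$ ensures $b$ does lie weakly below the diagonal. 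The main obstacle is purely bookkeeping: aligning the three conventions (ranks, attacking pairs, left/right of steps) so that the disjoint-union decomposition is transparent; there is no hard estimate or new idea needed beyond the cited lemmas.
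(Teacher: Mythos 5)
Your proposal is correct and follows essentially the same route as the paper: the same splitting $\delta_{K,k}-\dinv(\pi)=(\delta_{K,k}-\pathdinv(D))+(\maxtdinv(D)-\tdinv(\pi))$, with Lemma~\ref{lem: codinv of a  path} and Lemma~\ref{lem:box-pairs} handling the first summand and the attacking-pair/inversion complementarity handling the second. The only point to tighten is the exhaustion step: the reason a box $b$ strictly above $D$ cannot occur is that such a box has $\omega^{-1}(\beta)>K\geq\omega^{-1}(\alpha)$, contradicting the inversion condition (rather than anything from Lemma~\ref{lem: A Wild Conductor Appears}), which is exactly how the paper argues via the equivalence $\omega^{-1}(\beta)\le 0$ iff $b$ lies below $D$.
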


\begin{proof}
Let $D$ be the Dyck path of $\pi$.  Since $\dinv(\pi)=\pathdinv(D)+\tdinv(\pi)-\maxtdinv(D)$, we have
$$\delta_{K,k}-\dinv(\pi)=\left(\delta_{K,k}-\pathdinv(D)\right)+(\maxtdinv(D)-\tdinv(\pi)).
$$

By Lemma \ref{lem: codinv of a  path}, the quantity $(\delta_{K,k}-\pathdinv(D))$ counts the number of attacking pairs $(a,b)$ of boxes such that $a$ is to the left of a vertical step in $D$ and $b$ is between $D$ and the main diagonal.  We claim these pairs correspond to the pairs $(\alpha,\beta)$ satisfying the stated conditions and also having $\omega^{-1}(\beta)\le 0$.

Indeed, let $\alpha$ be the rank of box $a$ and $\beta$ the rank of box $b$; then $\omega^{-1}(\alpha)$ is the parking function label of $\pi$ to the right of $a$.  Since $\beta$ is below $D$, we have that $\beta+qK$ is a rank of a vertical step for some $q\ge 1$, so we have that $\omega^{-1}(\beta+qK)=\omega^{-1}(\beta)+qK$ is a parking function label between $1$ and $K$ inclusive.  But then that means $\omega^{-1}(\beta)\le 0$.  In fact, this analysis shows $\omega^{-1}(\beta)\le 0$ if and only if $\beta$ is below $D$.  By Lemma \ref{lem:box-pairs}, we have $1\le \alpha<\beta\le \alpha+k$, and since we assumed $\beta$ is above the diagonal we count all such pairs.  

We now show the quantity $(\maxtdinv(D)-\tdinv(\pi))$ counts the remaining pairs $(\alpha,\beta)$, that is, with $\omega^{-1}(\beta)\ge 1$.  In this case $\alpha$ and $\beta$ both correspond to ranks of vertical steps of the parking function, with labels $\omega^{-1}(\alpha)$ and $\omega^{-1}(\beta)$, and the inequalities mean that they are precisely the attacking pairs that do not contribute to $\tdinv(\pi)$.  This completes the proof.
\end{proof}

\subsection{Properties of $\gamma$-restricted affine permutations}

Throughout this subsection we assume that $\omega^{-1}$ is $\gamma$-restricted and $N\ge 1$. Because we are making the inverse $\omega^{-1}$ be $\gamma$-restricted, this means for $\omega$ that:
\begin{itemize}
\item $\omega(x)<\omega(\gamma (x))$ for all $x\in \mathbb{Z}$.
\item $1\le \omega(1)\le K$, that is, $\omega^{-1}$ is normalized
\item $\omega^{-1}(i)>0$ for $i=1,\ldots,K$. In other words, if $1\le \omega(\alpha)\le K$ then $\alpha>0$.
\end{itemize}

 We start with several technical lemmas about $\gamma$ and $\omega^{-1}$.

 \begin{lem}
\label{lem: beta mK}
     If $\beta=mK$, then $\gamma(\beta)=\beta+KN+1$ for any $m$.  Moreover, if $\omega^{-1}$ is $\gamma$-restricted, we have $\omega(\gamma(\beta))\ge \gamma(\beta)$.
 \end{lem}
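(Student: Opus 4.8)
\textbf{Proof plan for Lemma~\ref{lem: beta mK}.}

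The first claim is immediate from the definition of $\gamma$. Recall that $\gamma$ is defined on a window and extended by the rule $\gamma(x+K) = \gamma(x) + K$ (this is built into the fact that $\gamma \in \widetilde{S}_K$). Since $\gamma(K) = 1 + K(N+1)$, we get for $\beta = mK$ that $\gamma(mK) = \gamma(K) + (m-1)K = 1 + K(N+1) + (m-1)K = mK + KN + 1 = \beta + KN + 1$. So the first sentence is just unwinding notation.

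For the second claim, I would argue as follows. Set $\beta = mK$ and write $\alpha = \gamma(\beta) = \beta + KN + 1$. I want to show $\omega(\alpha) \geq \alpha$. Since $N \geq 1$, we have $KN + 1 \geq K + 1 > 0$, so $\alpha > \beta$; in particular $\alpha$ is not itself a multiple of $K$ (as $\alpha \equiv 1 \pmod K$). The key is to use the hypothesis that $\omega^{-1}$ is $\gamma$-restricted, which (as recorded in the bulleted list opening this subsection) translates to $\omega(x) < \omega(\gamma(x))$ for all $x$, together with $\omega^{-1}(i) > 0$ for $i = 1, \dots, K$, i.e. the entries $1, \dots, K$ of $\omega$ sit in positive positions. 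I would consider the position $\omega^{-1}(1)$: by normalization of $\omega^{-1}$ we have $1 \leq \omega^{-1}(1) \leq K$. Now iterate: by the $\gamma$-restricted condition on $\omega^{-1}$, applying $\gamma$ repeatedly to the position $\omega^{-1}(1)$ moves strictly rightward, so $\omega^{-1}(1) < \omega^{-1}(\gamma(1)) < \omega^{-1}(\gamma^2(1)) < \cdots$. Since the projection of $\gamma$ to $S_K$ is a single $K$-cycle (Lemma~\ref{lem: gamma is a permutation}), the orbit of $1$ under $\gamma$ modulo $K$ covers all residues, and the values $\gamma^j(1)$ increase without bound. In particular there is some $j$ with $\gamma^j(1) \equiv \alpha \pmod K$, hence $\alpha = \gamma^j(1) + qK$ for some integer $q$, and I can compare $\omega(\alpha) = \omega(\gamma^j(1)) + qK$ against $\alpha$.

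The cleanest route, though, is probably more direct: since $\alpha \equiv 1 \pmod K$, write $\alpha = 1 + rK$ with $r = mN + m \geq 1$ (using $\beta = mK$, $N \geq 1$; one should check the relevant range of $m$, but the statement says "for any $m$" so I expect $m \geq 1$ is the case of interest, or the inequality is read with the convention that matters). Then $\omega(\alpha) = \omega(1 + rK) = \omega(1) + rK \geq 1 + rK = \alpha$, where the inequality uses only $\omega(1) \geq 1$, which is exactly the normalization of $\omega^{-1}$ (first bullet of the subsection: $1 \leq \omega(1) \leq K$). So in fact the second claim needs only normalization of $\omega^{-1}$, not the full $\gamma$-restricted hypothesis, once one observes $\gamma(mK) \equiv 1 \pmod K$.

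\textbf{Main obstacle.} The only subtlety is bookkeeping about the range of $m$ and making sure $\gamma(\beta) = \beta + KN + 1$ lands where claimed when $m$ could be negative or zero — but the congruence $\gamma(mK) \equiv 1 \pmod K$ holds regardless, and $\omega(1 + rK) = \omega(1) + rK$ holds for every integer $r$, so the argument $\omega(\gamma(\beta)) = \omega(1) + rK \geq 1 + rK = \gamma(\beta)$ goes through uniformly. I expect essentially no difficulty here; this is a warm-up lemma feeding the dimension computations that follow.
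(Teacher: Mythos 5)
Your argument is correct and is essentially the paper's proof: the first claim follows from periodicity of $\gamma$, and the second from $\omega(\gamma(\beta))=\omega(1+(m+N)K)=\omega(1)+(m+N)K\ge 1+(m+N)K=\gamma(\beta)$, using only $\omega(1)\ge 1$ from normalization. (Minor slip: in your parenthetical, $r$ should be $m+N$, not $mN+m$, but nothing in the argument depends on this.)
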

 
\begin{proof}
    We have, since $\gamma\in \widetilde{S}_K$, $$\gamma(\beta)=\gamma(mK)=\gamma(K)+(m-1)K=1+K(N+1)+\beta-K=\beta+KN+1.$$
    We also have 
$$
\omega(\gamma(\beta))=\omega(mK+NK+1)=mK+NK+\omega(1)\ge mK+NK+1=\gamma(\beta)
$$ as desired.
\end{proof}

\begin{cor}
\label{cor: beta mK}
Suppose that $\omega^{-1}$ is $\gamma$-restricted, $\omega(\gamma(\beta))<K$ and $\gamma(\beta)>k$.
Then $\beta$ is not divisible by $K$.
\end{cor}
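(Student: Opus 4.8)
\textbf{Proof plan for Corollary~\ref{cor: beta mK}.}
The plan is to argue by contradiction: suppose $\beta$ \emph{is} divisible by $K$, say $\beta = mK$, and derive a contradiction from the two hypotheses $\omega(\gamma(\beta)) < K$ and $\gamma(\beta) > k$. The natural first step is to invoke Lemma~\ref{lem: beta mK}, which applies precisely because $\beta = mK$: it tells us both that $\gamma(\beta) = \beta + KN + 1$ and that $\omega(\gamma(\beta)) \geq \gamma(\beta)$. Combining the second of these with the hypothesis $\omega(\gamma(\beta)) < K$ immediately yields $\gamma(\beta) < K$, i.e.\ $\gamma(\beta) \leq K-1$.

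Next I would feed $\gamma(\beta) < K$ back into the formula $\gamma(\beta) = \beta + KN + 1 = mK + KN + 1$. Since $N \geq 1$ (we are in the standing assumption of this subsection), we have $KN + 1 \geq K + 1$, so $\gamma(\beta) = mK + KN + 1 \geq mK + K + 1 = (m+1)K + 1$. For this to be $< K$ we would need $(m+1)K + 1 < K$, i.e.\ $mK < -1$, forcing $m \leq -1$. So any divisible-by-$K$ value $\beta$ satisfying the hypotheses must be negative, and in fact $\gamma(\beta) = mK + KN+1$ with $m \le -1$; one then checks that $\gamma(\beta) \le -K + KN + 1 = K(N-1)+1$, which is $\le 1$ when $N = 1$ and can be larger when $N \geq 2$. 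This is where I expect the real work to be: I need to see that the combination ``$\gamma(\beta)$ small and positive'' together with ``$\gamma(\beta) > k$'' and the normalization/positivity constraints on $\omega^{-1}$ is actually impossible when $\beta \in K\bZ$.

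The cleanest route is probably to track $\omega^{-1}$ rather than $\omega$. Since $\gamma(\beta) > k \ge 1$ and $\gamma(\beta) = mK + KN + 1$ with $m \le -1$ and $N \ge 1$, the value $\gamma(\beta)$ lies in the window-range only for a narrow band of $m$; writing $\gamma(\beta) = \omega(\alpha)$ for the appropriate $\alpha$ (using $\omega^{-1}$ normalized so $\omega(1) \in [1,K]$, hence $\omega^{-1}$ of a value in $[1,K]$ is positive) and comparing with $\omega(\beta) < \omega(\gamma(\beta))$ from $\gamma$-restrictedness of $\omega^{-1}$, one gets $\omega(mK) = \omega(0) + mK$ (translation covariance) which must be strictly less than $\omega(\gamma(\beta)) < K$, giving a bound on $\omega(0)$. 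Then the constraint $\gamma(\beta) > k$ combined with $\gamma(\beta) \le K-1$ squeezes $m$ so tightly (essentially $m$ cannot be made negative while keeping $mK + KN + 1$ in the interval $(k, K)$, since consecutive admissible values differ by $K$) that no valid $m$ exists. I expect the main obstacle to be bookkeeping the interaction between the three constraints $\gamma(\beta) > k$, $\gamma(\beta) < K$, and $\gamma(\beta) \equiv KN + 1 \pmod K$: since $KN+1 \equiv 1 \pmod K$, we have $\gamma(\beta) \equiv 1 \pmod K$, so $\gamma(\beta) \in \{\dots, 1-K, 1, 1+K, \dots\}$; the only representative in $[1, K-1]$ is $\gamma(\beta) = 1$, which contradicts $\gamma(\beta) > k \ge 1$. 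That last observation is in fact the whole proof, and I would lead with it: $\gamma(\beta) \equiv 1 \pmod K$ together with $k < \gamma(\beta) < K$ is impossible since the unique such residue in the open interval $(0,K)$ is $1 \le k$.
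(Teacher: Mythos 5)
Your proposal is correct and, despite some exploratory detours in the middle (the bookkeeping with $m\le -1$ and tracking $\omega^{-1}$), it ultimately arrives at exactly the paper's argument: from Lemma~\ref{lem: beta mK} one gets $k<\gamma(\beta)\le \omega(\gamma(\beta))<K$, while $\gamma(\beta)\equiv 1\pmod K$, and these are incompatible since the only residue $1\pmod K$ in $(0,K)$ is $1\le k$. You correctly identify at the end that this final observation ``is in fact the whole proof,'' which matches the paper verbatim.
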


\begin{proof}
Assume by way of contradiction that $\beta=mK$. By Lemma \ref{lem: beta mK} we have
$$
k<\gamma(\beta)\le \omega(\gamma(\beta))<K.
$$
On the other hand, $\gamma(\beta)=1\pmod K$,  a contradiction.
\end{proof}

\begin{lem}
\label{lem: gamma almost increasing}
We have the following two facts.
\begin{enumerate}
    \item Suppose $\alpha<\beta$ and $\alpha\neq mK$ for all $m\in \mathbb{Z}$. Then $\gamma(\alpha)<\gamma (\beta)$.
    \item Suppose $\gamma (\alpha)<\gamma (\beta)$ and $\beta\neq mK$ for all $m\in \mathbb{Z}$. Then $\alpha<\beta$.
\end{enumerate}
\end{lem}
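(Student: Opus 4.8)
The plan is to deduce both statements from two elementary properties of $\gamma$ together with the fact, recorded in Lemma~\ref{lem: gamma is a permutation}, that $\gamma$ is a bijection of $\mathbb{Z}$. The first property is that $\gamma(x)-x\in\{k,k+1\}$ whenever $x$ is not divisible by $K$; this is immediate from the definition~\eqref{eq: def gamma} together with the periodicity $\gamma(x+K)=\gamma(x)+K$, since such an $x$ reduces into a window position $1,\dots,K-1$. The second is that $\gamma(mK)=mK+KN+1$ for every $m\in\mathbb{Z}$, which is the content of Lemma~\ref{lem: beta mK}. Since $N\ge 1$ and $k\le K$, we have $KN+1\ge K+1>k$, so the ``big jump'' of $\gamma$ occurring on the single residue class $0\bmod K$ is large enough to make the argument go through.

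For part (1): since $\gamma$ is injective and $\alpha\neq\beta$, it suffices to show $\gamma(\alpha)\le\gamma(\beta)$. The hypothesis $\alpha\neq mK$ gives $\gamma(\alpha)\le\alpha+k+1$. If $\beta$ is also not a multiple of $K$, then $\gamma(\beta)\ge\beta+k\ge(\alpha+1)+k=\alpha+k+1\ge\gamma(\alpha)$, as required. If instead $\beta=mK$, then $\alpha\le mK-1$, hence $\gamma(\alpha)\le mK+k$, while $\gamma(\beta)=mK+KN+1>mK+k$ by the inequality $KN+1>k$ noted above; so $\gamma(\alpha)<\gamma(\beta)$.

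For part (2): injectivity of $\gamma$ and $\gamma(\alpha)\neq\gamma(\beta)$ force $\alpha\neq\beta$. Suppose for contradiction that $\alpha>\beta$. Since $\beta$ is not a multiple of $K$ by hypothesis, part (1) applied to the pair $\beta<\alpha$ yields $\gamma(\beta)<\gamma(\alpha)$, contradicting $\gamma(\alpha)<\gamma(\beta)$. Therefore $\alpha<\beta$.

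There is no genuine obstacle here; the whole argument is a short case analysis. The only point that needs attention is bookkeeping of which of $\alpha$ and $\beta$ lies in the residue class $0\bmod K$ — the one class on which $\gamma$ is not ``locally increasing'' — and invoking $N\ge 1$ to ensure the increment $KN+1$ on that class dominates the increments $k$ and $k+1$ occurring elsewhere.
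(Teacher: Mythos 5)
Your proof is correct. Part (1) is handled with essentially the same two-case analysis the paper uses: when $\beta$ is not a multiple of $K$ both increments are in $\{k,k+1\}$ and the strict inequality $\alpha<\beta$ plus injectivity closes the gap, and when $\beta$ is a multiple of $K$ the ``big jump'' $KN+1$ dominates $k+1$. For part (2) you take a genuinely different and cleaner route: rather than redoing the case analysis (as the paper does, splitting on whether $\alpha$ is a multiple of $K$ and chaining inequalities like $\alpha+K+1\le\gamma(\alpha)<\gamma(\beta)\le\beta+k+1$), you derive (2) from (1) by contraposition. Since the hypothesis of (2) asserts $\beta\neq mK$, assuming $\beta<\alpha$ lets you invoke part (1) with $\beta$ in the role of the smaller index to get $\gamma(\beta)<\gamma(\alpha)$, a contradiction; injectivity rules out equality. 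This buys a shorter argument and makes the logical dependence of (2) on (1) explicit, at the modest cost of obscuring the symmetric structure of the two parts that the paper's parallel case analyses display. Both are fine; yours is a touch tidier.
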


\begin{proof}
For (1), if $\beta=m'K$ for some $m'$, then since $\gamma\in \widetilde{S}_K$ we have by Lemma \ref{lem: beta mK} that $$\gamma(\beta)=\beta+KN+1\ge \beta+K+1$$ and $\gamma(\alpha)\le \alpha+k+1$, so $\gamma(\alpha)<\gamma(\beta)$. 

Otherwise, $\gamma(\alpha) \in \{\alpha +k,\alpha +k+1\}$ and $\gamma(\beta) \in \{\beta+k,\beta+k+1\}$. Since $\alpha<\beta$ we have $\alpha+1\le \beta$ and $\gamma(\alpha)\le \alpha+k+1\le \beta+k\le \gamma(\beta)$. But $\gamma$ is a bijection, so $\gamma(\alpha)<\gamma(\beta)$.

For (2), if $\alpha = m'K$ for some $m'$, then $\gamma(\alpha)=\alpha+KN+1$ by Lemma \ref{lem: beta mK}. Since $\gamma(\alpha)<\gamma(\beta)$, and $\beta \neq mK$, we get
    \[
    \alpha+K+1 \leq \alpha+NK+1 =\gamma(\alpha) < \gamma(\beta) \leq \beta + k +1
    \]
    and thus $\alpha < \beta + (k-K) \leq \beta$.
    
Otherwise, suppose $\alpha\neq m'K$. Since $\beta \neq mK$ for all $m$, then similarly to part (1) we have 
$$\alpha + k \leq \gamma(\alpha) < \gamma(\beta) \leq \beta+k+1,$$ so $\alpha\leq \beta$, which in turn implies $\alpha<\beta$.    
\end{proof}

For the following lemmas, we define two sets of pairs of integers depending on an affine permutation $\omega$ (whose inverse is $\gamma$-restricted) that correspond to certain inversions, and that will give the dimension count of one of our geometric spaces. Define the sets
\begin{equation}
\label{eq: def A}
    A_{\omega} = \{(\alpha,\beta)\mid 1\leq \omega(\alpha)\leq K,\, \alpha < \beta,\, \omega(\gamma(\beta)) < \omega(\alpha)\}
\end{equation}
and 
\begin{equation}
\label{eq: def B}
B_{\omega}=\{ (\alpha,\beta)\mid 1\leq \omega(\alpha)\leq K,\, \omega(\beta) < \omega(\alpha),\,\alpha+k<\beta\}.
\end{equation}

\begin{example}
Since we have switched notation in this section so that $\omega^{-1}$ is $\gamma$-restricted, in this example we define $\omega$ to be the inverse of our $\gamma$-restricted example above, namely 
$$\omega=[1, -2, -1, -8, 2, 5, 0, -5, 6, 8, 3, 9].$$ We have that the values of $\alpha$ such that $1\le \omega(\alpha)\le K=12$ are $$\alpha=1,5,6,9,10,11,12,14,15,16,19,20,$$ which are precisely the elements in the window of $\omega^{-1}$.  Recall that $$\gamma=[5,6,7,8,9,10,11,12,14,15,16,49]$$ in this running example as well, so an example of a pair $(\alpha,\beta)\in A_\omega$ is $(1,3)$, since $1<3$ and $$\omega(\gamma(3))=\omega(7)=0<1=\omega(1).$$
 We then have, by similar computations, that $$A_{\omega}=\{(1,3),(1,4),(6,7),(6,11),(9,11),(10,11),(10,16),(12,16)\}.$$
For $B_\omega$, we consider the same set of values for $\alpha$ but now we find $\beta$ such that $\beta>\alpha+4$ and $\omega(\beta)<\omega(\alpha)$.  For instance, if $\alpha=1$ and $\beta=7$, we have $\omega(\beta)=0<1=\omega(\alpha)$.  The full set is $$B_\omega=\{(1,7),(1,8),(6,11),(6,16),(9,16),(10,16),(10,20),(12,20)\}.$$ 
Notice that $|A_\omega|=|B_\omega|=8$.
\end{example}

\begin{lem}
\label{lem: A size}
Suppose that $\omega^{-1}=\omega_{\pi}$ is a $\gamma$-restricted permutation corresponding to the $(K,k)$ parking function $\pi$ by Lemma \ref{lem: pf bijection}. Then  
$$
|A_{\omega}|=|B_\omega|=\inv(\omega^{-1})-(\delta_{K,k}-\dinv(\pi)).
$$
\end{lem}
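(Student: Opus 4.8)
The plan is to handle the second equality almost for free and then to realize the first equality by an explicit bijection. For the second equality, unwind the definition of an inversion of the affine permutation $\omega^{-1}$: these are exactly the pairs $(\alpha,\beta)$ with $1\le\omega(\alpha)\le K$, $\alpha<\beta$, and $\omega(\beta)<\omega(\alpha)$, so $\inv(\omega^{-1})$ counts precisely such pairs. On the other hand, applying Lemma~\ref{lem:deltaminusdinv} with its ``$\omega$'' taken to be $\omega_\pi=\omega^{-1}$ (so that its ``$\omega^{-1}$'' is our $\omega$), the quantity $\delta_{K,k}-\dinv(\pi)$ counts exactly those of these pairs that additionally satisfy $\beta\le\alpha+k$. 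Subtracting, $\inv(\omega^{-1})-(\delta_{K,k}-\dinv(\pi))$ is the number of pairs $(\alpha,\beta)$ with $1\le\omega(\alpha)\le K$, $\omega(\beta)<\omega(\alpha)$ and $\alpha+k<\beta$ (the last condition already forcing $\alpha<\beta$); this is precisely $B_\omega$, giving $|B_\omega|=\inv(\omega^{-1})-(\delta_{K,k}-\dinv(\pi))$.

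For the first equality $|A_\omega|=|B_\omega|$ I would use the map $\phi(\alpha,\beta)=(\alpha,\gamma(\beta))$. It is injective because $\gamma$ is a bijection of $\bZ$. To see $\phi$ sends $A_\omega$ into $B_\omega$: given $(\alpha,\beta)\in A_\omega$, write $\beta'=\gamma(\beta)$; then $1\le\omega(\alpha)\le K$ and $\omega(\beta')=\omega(\gamma(\beta))<\omega(\alpha)$ are immediate from the definition of $A_\omega$, and since the defining formula for $\gamma$ (using $N\ge 1$) gives $\gamma(x)\ge x+k$ for all $x$, from $\alpha<\beta$ we get $\alpha+k\le(\beta-1)+k<\beta+k\le\gamma(\beta)=\beta'$, so $(\alpha,\beta')\in B_\omega$. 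Conversely, given $(\alpha,\beta')\in B_\omega$, set $\beta=\gamma^{-1}(\beta')$; the conditions $1\le\omega(\alpha)\le K$ and $\omega(\gamma(\beta))=\omega(\beta')<\omega(\alpha)$ are automatic, so $\phi$ is surjective as soon as we show $\alpha<\beta$, which then places $(\alpha,\beta)\in A_\omega$.

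To prove $\alpha<\beta$ I would separate the two ``speeds'' of $\gamma$. If $\beta$ is not a multiple of $K$, then $\gamma(\beta)\le\beta+k+1$, so $\beta\ge\beta'-k-1\ge\alpha$ using $\alpha+k<\beta'$; and $\alpha=\beta$ is impossible, since then $\omega(\beta')=\omega(\gamma(\alpha))>\omega(\alpha)$ by the $\gamma$-restrictedness of $\omega^{-1}$, contradicting $(\alpha,\beta')\in B_\omega$. If instead $\beta=mK$ for some $m$, I claim this case is vacuous: by Lemma~\ref{lem: beta mK}, $\gamma(\beta)=(m+N)K+1$ and $\omega(\gamma(\beta))\ge\gamma(\beta)$, so together with $\omega(\gamma(\beta))<\omega(\alpha)\le K$ we get $(m+N)K+1<K$, hence $m+N\le 0$, hence $\beta'=(m+N)K+1\le 1$; then $\alpha+k<\beta'\le 1$ forces $\alpha<0$, contradicting the positivity of $\omega^{-1}$, which gives $\alpha>0$ from $1\le\omega(\alpha)\le K$. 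This completes the bijection.

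The main obstacle is exactly this last point: the jump $\gamma(mK)=mK+KN+1$ is ``too big'' for the naive increment bookkeeping, and one must verify it cannot occur inside an inversion of the type counted by $B_\omega$ — which works out precisely because $\omega^{-1}$ is positive and normalized. Away from multiples of $K$ everything reduces to the fact that $\gamma$ increases by $k$ or $k+1$, and the identification $|B_\omega|=\inv(\omega^{-1})-(\delta_{K,k}-\dinv(\pi))$ is just the combination of Lemma~\ref{lem:deltaminusdinv} with the definition of an inversion.
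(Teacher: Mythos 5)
Your proof is correct and follows essentially the same route as the paper: the second equality via Lemma~\ref{lem:deltaminusdinv} and the definition of $\inv$, and the first via the bijection $(\alpha,\beta)\mapsto(\alpha,\gamma(\beta))$. The only cosmetic differences are that you collapse the paper's two subcases ($\gamma(\beta)=\beta+k$ and $\gamma(\beta)=\beta+k+1$) into a single ``$\beta$ not a multiple of $K$'' case using the bound $\gamma(\beta)\le\beta+k+1$ together with the $\gamma$-restriction ruling out $\alpha=\beta$, and that you derive the vacuity of the $\beta=mK$ case directly from Lemma~\ref{lem: beta mK} rather than via Corollary~\ref{cor: beta mK}; both of these unwind to the same argument.
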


\begin{proof}
Recall that by Lemma \ref{lem:deltaminusdinv} we have
\[
\delta_{K,k}-\dinv(\pi)=|\{(\alpha,\beta):1\leq \omega(\alpha)\leq K, \omega(\beta)\le \omega(\alpha), \alpha<\beta\leq \alpha+k\}|.
\]
Therefore
\begin{align*}
 \inv(\omega^{-1}) - (\delta_{K,k}-\dinv(\pi))&=
|\{(\alpha,\beta) \mid 1\leq \omega(\alpha) \leq K,\, \omega(\beta)<\omega(\alpha),\,\alpha<\beta\}| 
\\ &\phantom{=.}-|\{(\alpha,\beta) \mid 1\leq \omega(\alpha)\leq K,\, \omega(\beta)<\omega(\alpha),\,\alpha<\beta\leq \alpha+k\}|
\\ &=|\{(\alpha,\beta)\mid 1\leq \omega(\alpha)\leq K,\,\omega(\beta)<\omega(\alpha),\,\alpha+k<\beta\}|\\
&=|B_{\omega}|.
\end{align*}
It now suffices to show $|A_\omega|=|B_\omega|$.  We construct a bijection $\varphi:A_\omega\to B_\omega$ by defining $\varphi(\alpha,\beta)=(\alpha,\gamma(\beta))$.
Since $\gamma$ is invertible, it suffices to show $(\alpha,\beta)\in A_{\omega}$ if and only if $(\alpha,\gamma(\beta))\in B_{\omega}$.

    Suppose $(\alpha,\beta)\in A_{\omega}$. Then $1\leq \omega(\alpha)\leq K$, $\alpha<\beta$, and by the $\gamma$-restricted condition we have $\omega(\beta)<\omega(\gamma(\beta))<\omega(\alpha)$. Thus, it remains to show that $\alpha+k < \gamma(\beta)$. 
    Indeed, since $\alpha<\beta$, then $\alpha+k < \beta+k\leq \gamma(\beta)$, so $\alpha+k < \gamma(\beta)$, and thus $(\alpha,\gamma(\beta))\in B_\omega$.

    Suppose $(\alpha,\gamma(\beta))\in B_{\omega}$. Then $1\leq \omega(\alpha)\leq K$, $\omega(\gamma(\beta)) < \omega(\alpha)$, and $\alpha+k < \gamma(\beta)$. It suffices to show $\alpha < \beta$. We have the following cases:
    
    \textbf{Case 1.}  If $\gamma(\beta)=\beta+k$, then $\alpha + k <\beta+k$, so $\alpha<\beta$ and we are done.

    \textbf{Case 2.} $\gamma(\beta)=\beta+k+1$, then $\alpha + k <\beta+k+1$ and $\alpha\le \beta$. But $\omega(\gamma(\alpha))>\omega(\alpha)>\omega(\gamma(\beta))$, so $\alpha\neq \beta$ and we are done.

    \textbf{Case 3.} Finally, suppose  $\beta=mK$. Since $1\le \omega(\alpha)\le K$, we have $\alpha>0$ and $\gamma(\beta)>\alpha+k>k$, while $\omega(\gamma(\beta)) <\omega(\alpha)\le K$. By Corollary \ref{cor: beta mK} we get a contradiction. 
\end{proof}

We will require one more technical lemma about the elements of the set $A_\omega$.

\begin{lem}
\label{lem:A-properties}
Suppose that $\omega^{-1}$ is $\gamma$-restricted. Then:
\begin{enumerate}
\item  For all $(\alpha,\beta)\in A_{\omega}$ we have  
$
\omega(\beta)<\omega(\alpha)$ and $\omega(\gamma(\beta))<\omega(\gamma(\alpha)).
$

\item  Assume $1\leq \omega(\alpha)\leq K, \gamma(\alpha) < \gamma(\beta)$, and $\omega(\gamma(\beta)) < \omega(\alpha)$. Then $\alpha<\beta$, and $(\alpha,\beta)\in A_{\omega}$.
\end{enumerate}
\end{lem}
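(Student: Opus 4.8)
The plan is to treat the two parts separately; part (1) follows at once from the defining inequality of the $\gamma$-restricted condition, while part (2) needs the technical lemmas on $\gamma$ proved above. For part (1), fix $(\alpha,\beta)\in A_\omega$, so that $1\le\omega(\alpha)\le K$, $\alpha<\beta$, and $\omega(\gamma(\beta))<\omega(\alpha)$. Since $\omega^{-1}$ is $\gamma$-restricted we have $\omega(x)<\omega(\gamma(x))$ for every $x\in\bZ$; applying this at $x=\beta$ gives $\omega(\beta)<\omega(\gamma(\beta))<\omega(\alpha)$, which is the first assertion, and applying it at $x=\alpha$ gives $\omega(\gamma(\beta))<\omega(\alpha)<\omega(\gamma(\alpha))$, which is the second. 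No further input is needed.

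For part (2), assume $1\le\omega(\alpha)\le K$, $\gamma(\alpha)<\gamma(\beta)$, and $\omega(\gamma(\beta))<\omega(\alpha)$. The crux is to show $\alpha<\beta$: once this is established, $(\alpha,\beta)$ satisfies all three conditions in the definition of $A_\omega$, so $(\alpha,\beta)\in A_\omega$ and we are done. To prove $\alpha<\beta$ I would apply Lemma~\ref{lem: gamma almost increasing}(2), which deduces $\alpha<\beta$ from $\gamma(\alpha)<\gamma(\beta)$ provided $\beta$ is not a multiple of $K$. To rule out $\beta\equiv0\pmod K$ I would verify the two hypotheses of Corollary~\ref{cor: beta mK}. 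First, $\omega(\gamma(\beta))<\omega(\alpha)\le K$ gives $\omega(\gamma(\beta))<K$. Second, $1\le\omega(\alpha)\le K$ forces $\alpha=\omega^{-1}(\omega(\alpha))\ge1$ since $\omega^{-1}$ is positive, and since $N\ge1$ the definition of $\gamma$ (with Lemma~\ref{lem: beta mK} controlling the value at multiples of $K$) yields the uniform bound $\gamma(x)\ge x+k$, so $\gamma(\beta)>\gamma(\alpha)\ge\alpha+k\ge k+1>k$. Corollary~\ref{cor: beta mK} then gives that $\beta$ is not divisible by $K$, and Lemma~\ref{lem: gamma almost increasing}(2) completes the proof.

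I do not expect a genuine obstacle in this lemma: the only points needing care are checking the bound $\gamma(x)\ge x+k$ in the wrap-around case, where the shift is $KN+1\ge K+1>k$ (using $N\ge1$ and $K\ge k$), and keeping track of which equivalent form of the $\gamma$-restricted condition is being applied to $\omega$ versus $\omega^{-1}$ at each step.
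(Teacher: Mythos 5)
Your proof is correct and takes essentially the same approach as the paper: part (1) chains the $\gamma$-restricted inequality at $\beta$ and $\alpha$ around the defining inequality of $A_\omega$, and part (2) deduces $\alpha>0$ from positivity, gets $\gamma(\beta)>\gamma(\alpha)\ge\alpha+k>k$, applies Corollary~\ref{cor: beta mK} to rule out $\beta\equiv0\pmod K$, and then invokes Lemma~\ref{lem: gamma almost increasing}(2). The extra remarks about the wrap-around case and the bound $\gamma(x)\ge x+k$ are harmless elaboration of what the paper leaves implicit.
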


\begin{proof}
For (1), by the $\gamma$-restricted condition, we have 
$$
\omega(\beta)<\omega(\gamma(\beta))<\omega(\alpha)<\omega(\gamma(\alpha)).$$  Above, the middle inequality follows from the definition of $A_\omega$.

For (2), since $1\le \omega(\alpha)\le K$, we have $\alpha>0$ since $\omega^{-1}$ is $\gamma$-restricted. 
 We also have, by the assumptions and the definition of $\gamma$, that $$\gamma(\beta)>\gamma(\alpha)\ge \alpha+k>k$$ since $\alpha>0$.  On the other hand, $\omega(\gamma(\beta))<\omega(\alpha)\le K$, so by Corollary \ref{cor: beta mK} we get $\beta\neq mK$.
By Lemma \ref{lem: gamma almost increasing}, we conclude that $\alpha<\beta$, and so $(\alpha,\beta)\in A_\omega$.
\end{proof}

\section{Affine $\Delta$-Springer fibers}
\label{sec: geometry}

In this section, we define the varieties $X_{n,k}$ and $Y_{n,k}$. We then show that they have Springer actions which are related by a Schur skewing operator.

\subsection{Affine flags}

Let $n\geq k$ be two positive integers and define $K = k(n-k+1)$ as above. Let $\bO = \bC[[\epsilon]]$ and  $\bK = \bC((\epsilon))$ be the rings of formal power series and Laurent series, respectively. Given an element 
$$
f=a_{m}\epsilon^m+a_{m+1}\epsilon^{m+1}+\cdots\in \bK,\quad \text{ with } a_m\neq 0
$$
we define its valuation as $\nu(f)=m$.

We will consider the loop groups $\GL_K(\bO)\subset \GL_K(\bK)$ together with the evaluation map
$$
\ev: \GL_K(\bO)\to \GL_K(\bC)
$$ defined by setting $\epsilon\mapsto 0$.
Let $I = \ev^{-1}(B)$ be the usual \textbf{Iwahori subgroup} of $\GL_K(\bO)$ where $B$ is the Borel subgroup of $\GL_K(\bC)$ of upper triangular matrices, and let $I^- = \ev^{-1}(B^-)$ the opposite Iwahori subgroup where $B^-$ is the subgroup of lower triangular matrices.

More generally, for any composition $\eta\vDash K$ let $\bP_\eta = \ev^{-1}(P_\eta)$ be the \textbf{parahoric subgroup} of type $\eta$, where $P_\eta\subseteq \GL_K(\bC)$ is the parabolic subgroup consisting of block-lower-triangular matrices with block sizes $\eta_i$. In particular, $\bP_{(1^K)}=I^-$ where $I^-$ is the Iwahori group defined above, $\bP_{(K)}=\GL_K(\bO)$, and $\bP_{(K-n,1^n)} = \ev^{-1}(P_{(K-n,1^n)})$ is the parahoric subgroup corresponding to the parabolic subgroup for the composition $(K-n,1^n)$.

\begin{defn}
The affine flag variety is defined as $\aFl=\aFl_{(1^K)} = \GL_K(\bK)/I^{-}$ and the affine Grassmannian is defined as 
$\aGr=\aFl_{(K)} = \GL_K(\bK)/\GL_K(\bO)$. More generally, for a composition $\alpha$ we define the partial affine flag variety
$$
\aFl_{\eta}=\GL_K(\bK)/\bP_{\eta}.
$$
We have natural projection maps
\begin{equation}
\label{eq: pr eta}
\aFl\xrightarrow{\pr_{\eta}} \aFl_{\eta}\to \aGr.
\end{equation}
\end{defn}

Equivalently, we can visualize the points in $\aGr$ using lattices. A lattice is a free $\bO$-submodule in $\bK^K$ of rank $K$. The affine Grassmannian is isomorphic to the space of $\bO$-lattices $\Lambda$ in $\bK^K$ by identifying $g\in \GL_K(\bK)/\GL_K(\bO)$ with the lattice $\Lambda_g=g\bO^K$. Note that $\GL_K(\bK)$ acts transitively on the set of all lattices, and the stabilizer of $\bO^K$ is the group $\GL_K(\bO)$.

Similarly, the points in $\aFl$ correspond to flags of lattices 
$$
\Lambda_{\bullet}=(\Lambda_0\supset \Lambda_1\supset \cdots\supset \Lambda_K= \epsilon \Lambda_0)
$$
where $\Lambda_i=g\Lambda_i^{\std}$ and 
$$
\Lambda_i^{\std}=\bO\{\epsilon e_1,
\ldots, \epsilon e_{i},e_{i+1},\ldots,e_K\}.
$$
The stabilizer of the standard flag $\Lambda^{\std}_{\bullet}$ is precisely the group $I^{-}$. Sometimes it will be useful to  consider the lattices $\Lambda_i$ for all $i\in \bZ$ extending  periodically as $\Lambda_{i+K}=\epsilon \Lambda_i$.

For a composition $\eta=(\eta_1,\ldots,\eta_s)$ a point in the partial flag variety $\aFl_{\eta}$ is again a flag of lattices
$$
(\Lambda_0\supset \Lambda_{\eta_1}\supset \Lambda_{\eta_1+\eta_2}\supset\cdots\supset \epsilon \Lambda_0)
$$
where we can identify $\Lambda_i$ with  $g\Lambda_i^{\std}$.  The projection $\aFl\to \aFl_{\eta}$ forgets some lattices in the complete flag, in particular the projection  $\aFl\to \aGr$ sends $\Lambda_{\bullet}$ to $\Lambda_0$.





Let $T\cong (\bC^*)^K$ be the algebraic torus of diagonal $K\times K$ matrices with entries in $\bC^*$ which acts on $\aFl$. Then the $T$-fixed points are exactly $\omega I^-$ for $\omega\in \widetilde{S}_K$. Here we identified $\omega$ with the corresponding affine permutation matrix in $\GL_K(\bK)$. Using the decomposition $\omega=\ttt_{\lambda}\ww$ as in \eqref{eq: omega decomposition}, we can write $\omega I^- = \Lambda_\bullet$ where
\[
\Lambda_i =\bO\{ \epsilon^{\lambda_{\ww_1}+1}e_{\ww_1},\dots, \epsilon^{\lambda_{\ww_{i}}+1}e_{\ww_{i}},\epsilon^{\lambda_{\ww_{i+1}}} e_{\ww_{i+1}},\dots, \epsilon^{\lambda_{\ww_K}} e_{\ww_K} \},
\]
so in particular 
\[
\Lambda_{0} = \bO\{\epsilon^{\lambda_{\ww_1}}e_{\ww_1},\dots, \epsilon^{\lambda_{\ww_K}}e_{\ww_K}\} = \bO\{\epsilon^{\lambda_1}e_1,\dots, \epsilon^{\lambda_K}e_K\}.
\]

\begin{example}\label{ex:flag}
  Consider the affine permutation $\omega=[1,5,11,16,6,9,20,10,12,14,15,19]$ from our running examples above.  We have $\omega=\ttt_\lambda \ww$ where $\ww=1,5,11,4,6,9,8,10,12,2,3,7$ and $\lambda=(0,1,1,1,0,0,1,1,0,0,0,0)$.  Then $\omega I^{-}$ corresponds to the flag $\Lambda$ with $$\Lambda_0=\bO\{e_1,e_5,e_{11},\epsilon e_4, e_6, e_9, \epsilon e_{8}, e_{10}, e_{12}, \epsilon e_2, \epsilon e_3, \epsilon e_7\}$$ and, for instance, $$\Lambda_{7}=\bO\{\epsilon e_1,\epsilon e_5,\epsilon e_{11},\epsilon^2 e_4, \epsilon e_6, \epsilon e_9, \epsilon^2 e_{8}, e_{10}, e_{12}, \epsilon e_2, \epsilon e_3, \epsilon e_7\}$$ and $$\Lambda_8=\bO\{\epsilon e_1,\epsilon e_5,\epsilon e_{11},\epsilon^2 e_4, \epsilon e_6, \epsilon e_9, \epsilon^2 e_{8}, \epsilon e_{10}, e_{12}, \epsilon e_2, \epsilon e_3, \epsilon e_7\}.$$  In particular, note that $\Lambda_7/\Lambda_8$ is spanned by $e_{10}=e_{\omega(8)}$.
\end{example}

\subsection{Affine Schubert cells}

We now define the Schubert decomposition of the affine flag variety.

\begin{defn}
Given $\omega\in \widetilde{S}_K$, we define the Schubert cell by $C_{\omega}:=I^{-}\omega I^{-}\subset \aFl$. 
\end{defn}
It is known  that $\bigsqcup_{\omega}C_{\omega}=\aFl$. To describe the Schubert cells in terms of lattices, we parametrize the (topological) $\bC$-basis $e_i,\ i\in \bZ$ in $\bK^K$ by extending $e_1,\ldots,e_K$ periodically via $e_{i+K}=\epsilon e_i$. Then $\Lambda_{\bullet}$ is in $C_\omega$ if and only if
$\Lambda_{i-1}/\Lambda_{i}$ is spanned by
$$
f_{\omega(i)}=e_{\omega(i)}+\cdots
$$
where the higher degree terms are in $\bC\{e_m: m>\omega(i)\}$.

The above expansion is not unique since we can add to $f_{\omega(i)}$ a linear combination of $f_{\omega(j)}$ whenever $j>i$ and $\omega(j)>\omega(i)$. To sum up, we get a  unique {\bf canonical generator}
\begin{equation}
\label{eq: canonical}
f_{\omega(i)}=e_{\omega(i)}+\sum_{\stackrel{j<i}{\omega(j)>\omega(i)}}\lambda_{\omega(j)}^{\omega(i)}e_{\omega(j)}\in \Lambda_{i-1}.
\end{equation}

\begin{example}
    Continuing from Example \ref{ex:flag}, another element of $C_\omega$ is obtained by multiplying $\Lambda_\bullet$ by the element $M\in I_-$ that sends each $e_i\mapsto e_i$ for $i\neq 8$, and sends $e_{10}\mapsto e_{10}+e_{12}+2\epsilon e_4$.  Note that the latter may be written as $e_{10}\mapsto e_{10}+e_{12}+2e_{16}$ in our new periodic notation. Then $M\Lambda_{7}/M\Lambda_8$ is spanned by $$f_{\omega(8)}=f_{10}= e_{10}+e_{12}+2 e_{16}=e_{\omega(8)} + e_{\omega(9)} + 2 e_{\omega(4)}.$$

    The canonical representative then would only include the $e_{\omega(4)}$ term since $4<8$, so the canonical generator is $f_{\omega(8)}=e_{\omega(8)}+2e_{\omega(4)}$, and we have $\lambda_{16}^{10}=2$.
\end{example}

We conclude that  $C_{\omega}$ is isomorphic to the affine space of dimension
\begin{equation}\label{eq:dim-Cw}
\dim C_{\omega}=\inv(\omega)=|\{(i,j)\ :\ j<i,\, 1\le i\le K,\, \omega(j)>\omega(i)\}|.
\end{equation}
Also, note that with the above notations
\begin{equation}
\label{eq: lambda i span}
\Lambda_i=\bC\{f_{\omega(j)}:j > i\},
\end{equation}
where this notation means the formal $\bC$-span allowing infinitely many terms.

\begin{rmk}
\label{rem: components}
Since we work with $\GL_K$ instead of $\mathrm{SL}_K$, the affine Grassmannian and the affine flag variety both have infinitely many connected components, one for each integer $m$ which we call $\aFl(m)$. These can be characterized in terms of Schubert cells as 
$$
\aFl(m)\coloneqq \bigsqcup_{\omega:\deg(\omega)=m}C_{\omega}.
$$
Equivalently, a point in $\aFl(m)$ corresponds to $g\in \GL_K(\bK)$ such that $\nu(\det(g))=m$ (see \cite[Theorem 1.3.11]{Zhu} and reference therein).
\end{rmk}

The following is standard.

\begin{lem}
Assume $\omega\in \widetilde{S_K}^+$ is a positive affine permutation and $\Lambda_{\bullet}\in C_\omega$. Then $\Lambda_0\subset \bO^K$ and 
$$
\dim \bO^K/\Lambda_0=\deg(\omega).
$$
\end{lem}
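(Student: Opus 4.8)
The plan is to analyze the canonical generators of the flag attached to a positive affine permutation and read off both claims from the window values. First I would recall that by \eqref{eq: lambda i span} we have $\Lambda_0 = \bC\{f_{\omega(j)} : j > 0\}$, where each canonical generator has the form $f_{\omega(j)} = e_{\omega(j)} + (\text{higher-index terms})$ as in \eqref{eq: canonical}. Since $\omega$ is positive, $\omega(j) > 0$ for $1 \le j \le K$, and hence $\omega(j) > 0$ for all $j \ge 1$ (because $\omega(j+K) = \omega(j) + K$ and $\omega$ restricted to the window already has positive values). Under the identification $e_{i+K} = \epsilon e_i$, every $e_m$ with $m > 0$ lies in $\bO^K = \bC\{e_m : m > 0\}$ (as a formal $\bC$-span), so each $f_{\omega(j)}$ with $j > 0$ lies in $\bO^K$, giving $\Lambda_0 \subseteq \bO^K$.

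Next I would compute the codimension. The quotient $\bO^K/\Lambda_0$ is spanned over $\bC$ by the images of those $e_m$, $m > 0$, that are \emph{not} among $\{\omega(j) : j > 0\}$; equivalently, since $\{\omega(j) : j \in \bZ\} = \bZ$, the relevant indices are the positive integers $m$ with $\omega^{-1}(m) \le 0$. Because $\omega$ is a bijection commuting with the shift by $K$, the number of such $m$ equals
\[
\#\{m > 0 : \omega^{-1}(m) \le 0\} = \#\{j \le 0 : \omega(j) > 0\}.
\]
A standard counting argument (pairing up, for each $j$ in a window of length $K$, the contribution of $j$ versus $\omega(j)$) shows this number equals $\tfrac{1}{K}\sum_{i=1}^K(\omega(i) - i) = \deg(\omega)$; concretely, $\sum_{i=1}^K(\omega(i)-i) = K \cdot \#\{j \le 0 : \omega(j) > 0\} - K\cdot\#\{j \ge 1 : \omega(j) \le 0\}$ and the second term vanishes by positivity of $\omega$. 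This identifies $\dim \bO^K/\Lambda_0 = \deg(\omega)$, and in particular shows the count is finite so that $\Lambda_0$ really is a finite-codimension sublattice of $\bO^K$.

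The main obstacle is making the bookkeeping with the periodic basis rigorous: one must be careful that the canonical generators $f_{\omega(j)}$ for $j > 0$, although infinitely many, span exactly the $\bC$-subspace of $\bO^K$ complementary to the finite set $\{e_m : m > 0,\ \omega^{-1}(m) \le 0\}$, and that no cancellation or convergence issue spoils the dimension count. This follows because the leading terms $e_{\omega(j)}$ are distinct and the correction terms in \eqref{eq: canonical} only involve strictly larger indices, so the family $\{f_{\omega(j)}\}_{j>0}$ is in "row echelon form" with respect to the order on $\bZ$; hence its $\bC$-span meets each $e_m$-coordinate predictably. Once this echelon structure is spelled out, both the inclusion $\Lambda_0 \subseteq \bO^K$ and the codimension formula are immediate, and the lemma follows.
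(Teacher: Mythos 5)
Your proof is correct. The paper states this lemma without proof (calling it ``standard''), so there is no fixed approach to compare against; your argument via the row-echelon structure of the canonical generators \eqref{eq: canonical} is a legitimate and essentially self-contained way to see it. The key observations are all there: positivity gives $\omega(j)>0$ for all $j>0$, the correction terms in $f_{\omega(j)}$ have strictly larger index so that each generator lies in $\bO^K = \bC\{e_m : m>0\}$, and the echelon structure identifies $\bO^K/\Lambda_0$ with the span of $\{e_m : m>0,\ \omega^{-1}(m)\le 0\}$. The counting identity
$\sum_{i=1}^K(\omega(i)-i) = K\,\#\{j\le 0 : \omega(j)>0\} - K\,\#\{j\ge 1 : \omega(j)\le 0\}$
is easily checked using the decomposition $\omega=\ttt_\lambda\ww$ of \eqref{eq: omega decomposition}: each side equals $K\sum_a\lambda_a$ in general, and positivity forces $\lambda_a\ge 0$ so the second term vanishes.

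A slightly slicker alternative, more in the style of the paper, is to note that $\dim\bO^K/\Lambda_0 = \nu(\det g)$ is the index of the connected component of $\aGr$ containing $\Lambda_0$ (Remark \ref{rem: components}), hence is constant on the cell $C_\omega$; so one may compute at the $T$-fixed point $\omega I^-$, where $\Lambda_0 = \bO\{\epsilon^{\lambda_1}e_1,\dots,\epsilon^{\lambda_K}e_K\}$ with all $\lambda_i\ge 0$ and the codimension is visibly $\sum_i\lambda_i = \deg(\omega)$. Your argument amounts to carrying this out directly on the canonical generators, trading the invariance-on-components observation for the explicit echelon bookkeeping; both are fine.
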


We can also define Schubert cells in partial affine  flag varieties. The cells in $\aFl_{\eta}$ are labeled by cosets $\widetilde{S_K}/S_{\eta}$ where $S_{\eta}=S_{\eta_1}\times \cdots\times S_{\eta_{\ell}}$ is the parabolic subgroup in $S_K$. More precisely, since the canonical generators above are preserved under projection, we have the following. 

\begin{prop}
\label{prop: schubert cells in partial flags}
Let $\pr_{\eta}:\aFl\to \aFl_{\eta}$ be the projection defined in \eqref{eq: pr eta}. Suppose that $\omega\in S_K$ is the minimal  length (that is, minimal $\inv(\omega)$) representative in its coset in $\widetilde{S_K}/S_{\eta}$, and $C_{\omega}$ the corresponding Schubert cell in $\aFl$. Then $\pr_{\eta}:C_{\omega}\to \pr_{\eta}(C_{\omega})$ is an isomorphism and $\pr_{\eta}(C_{\omega})$ is the Schubert cell in $\aFl_{\eta}$.
\end{prop}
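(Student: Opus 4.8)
The plan is to unwind both the Schubert decomposition of $\aFl$ and of $\aFl_{\eta}$ into their coordinate descriptions via canonical generators, and then match them up directly. First I would recall that the cells of $\aFl_{\eta} = \GL_K(\bK)/\bP_{\eta}$ are indexed by cosets in $\widetilde{S_K}/S_\eta$, since $\bP_\eta = I^- S_\eta I^-$ at the level of the Bruhat-type decomposition (the parahoric $\bP_\eta$ is generated by $I^-$ together with the finite parabolic $S_\eta\subset \GL_K(\bC)\subset \GL_K(\bO)$). Thus each cell of $\aFl_\eta$ is $I^- \omega \bP_\eta/\bP_\eta$, and choosing $\omega$ to be the minimal-length representative of its coset $\omega S_\eta$ is the standard normalization. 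The key structural input is that the projection $\pr_\eta:\aFl\to\aFl_\eta$ forgets the lattices $\Lambda_i$ whose index $i$ is not a partial sum of the $\eta_j$, and — crucially — that the canonical generator formula \eqref{eq: canonical} only involves indices $j<i$ with $\omega(j)>\omega(i)$, so the expression for the retained generators $f_{\omega(i)}$ is unchanged under the projection.

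The main steps, in order, would be: (1) Identify $\pr_\eta(C_\omega)$ explicitly as the set of partial lattice flags obtained from \eqref{eq: lambda i span} by keeping only the lattices $\Lambda_{i}$ for $i$ a partial sum $\eta_1+\cdots+\eta_r$. Since $\omega$ is the minimal-length representative in $\omega S_\eta$, the window values $\omega(i)$ are increasing within each block $\{\eta_1+\cdots+\eta_{r-1}+1,\ldots,\eta_1+\cdots+\eta_r\}$, which is exactly the condition that $C_\omega$ maps to the Schubert cell of $\aFl_\eta$ attached to that coset. (2) Show $\pr_\eta|_{C_\omega}$ is injective: given the partial flag $(\Lambda_0\supset\Lambda_{\eta_1}\supset\cdots)$ with canonical generators $f_{\omega(i)}$ at the retained indices, the full flag in $C_\omega$ refining it is uniquely reconstructed, because the canonical generators at the forgotten indices are determined by \eqref{eq: canonical} with the same coefficients $\lambda^{\omega(i)}_{\omega(j)}$, and the minimal-length condition guarantees no new free parameters appear when interpolating between the retained lattices (the intermediate steps within a block use the standard basis vectors $e_{\omega(i)}$ in increasing order with no correction terms beyond those already recorded). (3) Conclude $\pr_\eta|_{C_\omega}$ is an isomorphism onto its image by comparing affine-space coordinates: $\dim C_\omega = \inv(\omega)$ by \eqref{eq:dim-Cw}, and when $\omega$ is minimal in $\omega S_\eta$ this equals $\dim$ of the corresponding cell in $\aFl_\eta$ (the length of $\omega$ as a coset representative), and $\pr_\eta$ is an injective morphism of affine spaces of equal dimension, hence an isomorphism. (4) Finally, invoke $\bigsqcup_\omega C_\omega = \aFl$ together with surjectivity of $\pr_\eta$ to see that the images $\pr_\eta(C_\omega)$ over minimal-length $\omega$ partition $\aFl_\eta$, so each is indeed \emph{the} Schubert cell.

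I expect the main obstacle to be step (2)–(3): carefully verifying that no coordinates are lost or gained under the projection, i.e.\ that the affine coordinates $\lambda^{\omega(i)}_{\omega(j)}$ surviving in $\pr_\eta(C_\omega)$ are in bijection with all of those parametrizing $C_\omega$. This amounts to the combinatorial fact that $\inv(\omega) = \inv_{\eta}(\omega)$ (the number of inversions counting only pairs in distinct blocks, which is the dimension of the $\aFl_\eta$-cell) precisely when $\omega$ is the minimal-length coset representative, since then $\omega$ has no inversions within a single block. This is a standard property of minimal coset representatives in (affine) Coxeter groups, so the argument is routine once set up correctly, but it is the crux of why the minimal-length hypothesis is needed. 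Everything else is bookkeeping with the lattice descriptions already established in \eqref{eq: canonical}, \eqref{eq:dim-Cw}, and \eqref{eq: lambda i span}.
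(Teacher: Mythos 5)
Your proposal is correct and takes essentially the same approach as the paper: the paper states this as standard and justifies it in one line by noting that ``the canonical generators above are preserved under projection,'' which is exactly the observation underlying your steps (2)--(3), namely that for a minimal-length coset representative of $\widetilde{S_K}/S_\eta$ all inversions lie between blocks, so the canonical echelon coordinates $\lambda^{\omega(i)}_{\omega(j)}$ of $C_\omega$ are unchanged and recoverable from the partial flag.
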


Using decomposition \eqref{eq: omega decomposition}, we can also parametrize $\widetilde{S_K}/S_{\eta}$ as the set of $\ttt_{\lambda}\ww$ where $\ww\in S_K/S_{\eta}$. In particular, for $\eta=(K)$ the Schubert cells in $\aGr$ are parametrized by $\ttt_{\lambda}$ for $\lambda\in \bZ^K$. 

\subsection{The varieties $X_{n,k}$ and $Y_{n,k}$} Next, we define some subvarieties in (partial) affine flag varieties. These depend on the affine permutation $\gamma=\gamma_{n,k,N}$ from \eqref{eq: def gamma} which we upgrade to an operator on $\bO^K$ but denote by the same letter.
 
\begin{defn}
    Let $\gamma=\gamma_{n,k,N}$ be the $\bO$-linear operator on $\bO^K = \bC^K[[\epsilon]]$ defined by
    \[
    \gamma e_i = e_{\gamma(i)}=
    \begin{cases}
    e_{i+k} & \text{if }1\leq i\leq (n-k)k\\
    e_{i+k+1} & \text{if } (n-k)k < i < K\\
    \epsilon^{N+1} e_1 & \text{if }i=K.
    \end{cases}
    \]
\end{defn}

By Lemma \ref{lem: gamma is a permutation}, $\gamma$ corresponds to an affine permutation, and in particular, it is invertible. Moreover, the characteristic polynomial of $\gamma$ equals
$$
\det(z I-\gamma)=z^{K}-\epsilon^{N+k}
$$
and all eigenvalues of $\gamma$ are equal to $\zeta \epsilon^{\frac{N+k}{K}}$ where $\zeta$ runs over $K$-th roots of unity. In particular, $\gamma$ is regular, semisimple and equivalued.

\begin{defn}
The affine Springer fiber in the partial affine flag variety is defined as
\[
 \Sp_{\gamma,\eta} \coloneqq\{g\bP_\eta\in \GL_K(\bK)/\bP_\eta \mid \Ad(g^{-1})\gamma \in \Lie(\bP_\eta)\}\subset \aFl_{\eta},
 \]
 where $\Ad(g^{-1})\gamma = g^{-1}\gamma g$.

 Define also $\Sp_\gamma = \Sp_{\gamma,(1^K)}$ and $\Gr_{\gamma} = \Sp_{\gamma,(K)}$.
 Equivalently, we can state this definition in terms of lattices as follows
\[
 \Sp_{\gamma,\eta} \coloneqq\{\Lambda_{\bullet}\in \aFl_{\eta}
 \mid \gamma \Lambda_i\subset \Lambda_i\ \mathrm{for\ all}\ i\}.
 \]
\end{defn}

\begin{defn}
We define the \textbf{affine Borho--MacPherson variety}
\[
\BM_{\gamma,n,k}\coloneqq\{\Lambda_\bullet \in \aFl_{(K-n,1^n)} \mid \gamma \Lambda_i\subseteq \Lambda_i\,\forall i,\, \JT(\gamma|_{\Lambda_{0}/\Lambda_{K-n}}) \leq (n-k)^{k-1}\}\subset \Sp_{\gamma,(K-n,1^n)}
\]
Here $\JT(\gamma|_{\Lambda_{0}/\Lambda_{K-n}})$ denotes the Jordan type of the induced action of $\gamma$ on $\Lambda_0/\Lambda_{K-n}$, and $\le$ denotes dominance order on partitions, so the largest blocks are at most size $n-k$. 
\end{defn}
We have the following two equivalent definitions of $\BM_{\gamma,n,k}$, where $\bP = \bP_{(K-n,1^n)}$.
\begin{align*}
\BM_{\gamma,n,k}&= \{\Lambda_\bullet \in \aFl_{(K-n,1^n)} \mid \gamma \Lambda_i\subseteq \Lambda_i\,\forall i,\, \gamma^{n-k}\Lambda_0\subseteq \Lambda_{K-n}\}\\
&= \{g\bP\in \GL_K(\bK)/\bP \mid \Ad(g^{-1})\gamma \in \Lie(\bP), \JT(\ev(\Ad(g^{-1})\gamma))\le (n-k)^{k-1}\}
\end{align*}
where $\bP=\bP_{(K-n,1^n)}$.


Next, we define some ``positive" versions of the affine flag variety by considering certain unions of Schubert cells. First, recall the definitions of positive (Definition \ref{def: positive permutation}) and normalized (Definition \ref{def: normalized permutation}) affine permutations. As above, define $\widetilde{S}_{K}^{+,0}$ to be the set of positive and normalized affine permutations.

Define the following unions of Schubert cells:
\begin{align}\label{eq:def-of-C}
C &=  \bigcup_{\omega\in \widetilde{S}_K^{+,0}} I^- \omega I^-/I^-\subset \aFl\\
C' &= \pr_{(K-n,1^n)}(C)\subset \aFl_{(K-n,1^n)}.
\end{align}
 


\begin{defn}\label{def:Varieties}
We define the varieties
\begin{align*}
X_{n,k,N}&\coloneqq \Sp_{\gamma}\cap C\subset \aFl,\\
Y_{n,k,N}&\coloneqq  \BM_{\gamma,k,n}\cap C'\subset \aFl_{(K-n,1^n)}.
\end{align*}
\end{defn}



\begin{lem}
\label{lem: schubert cell intersects springer}
The intersection of a Schubert cell $I^-\omega I^-/I^-$ with $\Sp_{\gamma}$ is nonempty if and only if $\omega^{-1}(\gamma(a))>\omega^{-1}(a)$ for all $a$.
If it is nonempty, it contains the fixed point $\omega$.
\end{lem}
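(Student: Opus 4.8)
### Proof Plan

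The plan is to reduce everything to an explicit computation on the canonical generators $f_{\omega(i)}$ of the flag $\Lambda_\bullet$ associated to a Schubert cell $C_\omega$, exploiting that the condition $\gamma\Lambda_i\subseteq\Lambda_i$ for all $i$ is $I^-$-equivariant in a suitable sense, so it suffices to test it on a single convenient representative. First I would recall from \eqref{eq: lambda i span} that $\Lambda_i=\bC\{f_{\omega(j)}:j>i\}$, so $\gamma\Lambda_i\subseteq\Lambda_i$ is equivalent to $\gamma f_{\omega(j)}\in\Lambda_j$ for every $j$; since $\gamma e_m=e_{\gamma(m)}$ and $\gamma$ acts on the periodic basis by the affine permutation $\gamma$, applying $\gamma$ to the canonical generator \eqref{eq: canonical} gives $\gamma f_{\omega(j)}=e_{\gamma(\omega(j))}+\sum_{\ell<j,\,\omega(\ell)>\omega(j)}\lambda^{\omega(j)}_{\omega(\ell)}e_{\gamma(\omega(\ell))}$.

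The key point is then to analyze when this lies in $\Lambda_j$. The "if" direction: I would start from the fixed point $\omega\cdot I^-$ itself, i.e. the flag with $\Lambda_i=\bO\{\epsilon^{\lambda_{\ww_1}+1}e_{\ww_1},\dots\}$ in which every $f_{\omega(i)}=e_{\omega(i)}$. Here $\gamma\Lambda_i\subseteq\Lambda_i$ amounts to: for each $j$, the basis vector $e_{\gamma(\omega(j))}$ lies in $\bC\{e_{\omega(\ell)}:\ell>j\}$, equivalently $\gamma(\omega(j))=\omega(j')$ for some $j'>j$, equivalently $\omega^{-1}(\gamma(\omega(j)))>j=\omega^{-1}(\omega(j))$. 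Writing $a=\omega(j)$ and letting $a$ range over all integers (which is what $\omega(j)$ does as $j$ ranges over $\bZ$, using $K$-periodicity of both $\omega$ and $\gamma$), this is exactly the stated condition $\omega^{-1}(\gamma(a))>\omega^{-1}(a)$ for all $a$. So under this hypothesis $\omega\in\Sp_\gamma$, giving both nonemptiness and the fixed-point claim at once.

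For the "only if" direction, suppose $C_\omega\cap\Sp_\gamma\ne\emptyset$ and pick $\Lambda_\bullet$ in it with its canonical generators. I would argue by looking at leading terms: the leading term of $\gamma f_{\omega(j)}$ is $e_{\gamma(\omega(j))}$ (in the order on the periodic basis used to define canonical generators, $\gamma$ sends $e_m$ to $e_{\gamma(m)}$ and we need to check $\gamma$ is "leading-term preserving" in the appropriate sense — this is where $\gamma$ being given by an honest affine permutation, hence monomial on the periodic basis, is used). Since $\gamma f_{\omega(j)}\in\Lambda_j=\bC\{f_{\omega(\ell)}:\ell>j\}$ and the $f_{\omega(\ell)}$ have distinct leading terms $e_{\omega(\ell)}$, the leading term $e_{\gamma(\omega(j))}$ must equal $e_{\omega(\ell)}$ for some $\ell>j$; hence $\omega^{-1}(\gamma(\omega(j)))>j$, and again setting $a=\omega(j)$ and using periodicity gives the claim for all $a$.

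The main obstacle I anticipate is the bookkeeping around the ordering of the periodic basis $\{e_m\}_{m\in\bZ}$ and making precise the statement that $\gamma$ "preserves leading terms," since $\gamma$ is not order-preserving on $\bZ$ (it jumps from $K$ back to $1+K(N+1)$). The cleanest fix is to observe that $\Lambda_\bullet\in\Sp_\gamma$ iff $\gamma^{-1}\Lambda_i\supseteq\Lambda_i$, or simply to note that membership in $\Lambda_j$ can be detected by expanding in the canonical generators $\{f_{\omega(\ell)}\}$ rather than in $\{e_m\}$, and that $\gamma$ permutes the set $\{e_m:m>\omega(j)\}$-spans compatibly with the lattice filtration because $\gamma$ itself normalizes $I^-$ only up to the fixed point. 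Concretely I would phrase the argument entirely at the level of the $T$-fixed point $\omega$ using the lattice description $\Lambda_i=\bO\{\epsilon^{\lambda_{\ww}+[\cdot]}e_{\ww}\}$ and the periodic convention $e_{m+K}=\epsilon e_m$, so that $\gamma e_{\omega(j)}=e_{\gamma(\omega(j))}=\epsilon^{\lfloor\cdot\rfloor}e_{\,\gamma(\omega(j))\bmod K}$, reducing $\gamma\Lambda_i\subseteq\Lambda_i$ to the single combinatorial inequality $\omega^{-1}\gamma\omega(j)>j$; the general (non-fixed-point) case then follows because $\Sp_\gamma$ is closed and $T$-stable, so a nonempty $C_\omega\cap\Sp_\gamma$ forces its unique $T$-fixed point $\omega$ into $\Sp_\gamma$.
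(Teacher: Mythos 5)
Your overall plan matches the paper's proof: reduce to the canonical generators $f_{\omega(i)}$ and compare leading terms, with the ``if'' direction handled at the $T$-fixed point $\omega I^-$, where $f_{\omega(i)}=e_{\omega(i)}$ and $\gamma\Lambda_i\subseteq\Lambda_i$ becomes exactly $\omega^{-1}(\gamma(\omega(j)))>j$. You are also right to flag the subtlety that $\gamma$ is not monotone on $\bZ$: the leading-term claim ``$\gamma f_{\omega(i)}$ has leading term $e_{\gamma(\omega(i))}$'' uses that $\gamma$ is increasing on $\{m\geq\omega(i)\}$, and $\gamma$ fails to be increasing precisely at multiples of $K$ (since $\gamma(mK)=mK+KN+1$ whereas $\gamma(mK+1)=mK+k+1$). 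The tool that closes this is Lemma~\ref{lem: gamma almost increasing}, which gives monotonicity away from multiples of $K$; the residual case $\omega(i)=mK$ then needs a small separate argument, and your instinct to make this explicit is sound (the paper's own proof states the leading-term claim without this caveat).

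However, the repair you propose for the non-fixed-point case --- ``$\Sp_\gamma$ is closed and $T$-stable, so a nonempty $C_\omega\cap\Sp_\gamma$ forces its unique $T$-fixed point $\omega$ into $\Sp_\gamma$'' --- is wrong, and you should remove it. The paper states at the start of its ``Torus action'' subsection that the torus $T$ does \emph{not} act on $\Sp_{\gamma}$; that is precisely why the auxiliary one-parameter subgroup $S_N$ of Corollary~\ref{cor: C* action} is introduced. Moreover, even that $\bC^*$-action does not have all nonnegative weights on the Schubert cell coordinates, so a Bia\l{}ynicki-Birula/flow argument that the closure of $C_\omega\cap\Sp_\gamma$ contains the fixed point does not go through either. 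You need to argue the ``only if'' direction directly via the echelon structure of $\Lambda_{i-1}$ and Lemma~\ref{lem: gamma almost increasing}. One further minor point: you write that $\gamma\Lambda_i\subseteq\Lambda_i$ for all $i$ is ``equivalent to $\gamma f_{\omega(j)}\in\Lambda_j$ for every $j$''; what the definition gives directly is $\gamma f_{\omega(j)}\in\Lambda_{j-1}$, and while the stronger containment $\gamma f_{\omega(j)}\in\Lambda_j$ does hold, it uses that $\gamma$ acts nilpotently (in fact by zero) on each one-dimensional quotient $\Lambda_{j-1}/\Lambda_j$ because $\gamma^K=\epsilon^{N+k}$ carries $\Lambda_{j-1}$ into $\Lambda_j$; this deserves a word rather than being presented as an equivalence of definitions.
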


\begin{proof}
A flag $\Lambda_{\bullet}$ in the Schubert cell is in $\Sp_{\gamma}$ if and only if $\gamma \Lambda_{i-1}\subset \Lambda_{i-1}$ for all $i$. By \eqref{eq: lambda i span}, this is equivalent to 
$$
\gamma f_{\omega(i)} \in \bC\{f_{\omega(j)}:\ j \ge i\}.
$$
Since the leading term in $f_{\omega(i)}$ is $e_{\omega(i)}$, the leading term in $\gamma f_{\omega(i)}$ is $e_{\gamma\omega(i)}$, and we need
$$
\gamma(\omega(i))=\omega(j)\ \text{for some}\ j>i.
$$
By denoting $a=\omega(i)$, we get 
$$
\omega^{-1}(\gamma(a))>\omega^{-1}(a).
$$
Conversely, if this inequality is satisfied for $\omega$ then the above computation shows that the fixed point $\omega$ is in $\Sp_{\gamma}$.
\end{proof}

 We recall the definition of a $\gamma$-restricted permutation (Definition \ref{def: gamma restricted}).

\begin{cor}
\label{cor: cells Y}
The intersection of a Schubert cell $I^-\omega I^-/I^-$ with $X_{n,k,N}$ is nonempty if and only if $\omega$ is $\gamma$-restricted.
If it is nonempty, it contains the $T$-fixed point $\omega I^-$.
\end{cor}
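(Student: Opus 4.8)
The plan is to combine Lemma~\ref{lem: schubert cell intersects springer} with the bijection between $(K,k)$ parking functions and $\gamma$-restricted permutations, together with the explicit description of $C$ as a union of Schubert cells over $\widetilde{S}_K^{+,0}$. First I would observe that $X_{n,k,N} = \Sp_\gamma \cap C$ by Definition~\ref{def:Varieties}, so a Schubert cell $I^-\omega I^-/I^-$ meets $X_{n,k,N}$ nontrivially if and only if both $I^-\omega I^-/I^- \subseteq C$ and $I^-\omega I^-/I^-$ meets $\Sp_\gamma$. The first condition holds exactly when $\omega \in \widetilde{S}_K^{+,0}$, i.e.\ $\omega$ is positive and normalized, by the definition \eqref{eq:def-of-C} of $C$. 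The second condition holds exactly when $\omega^{-1}(\gamma(a)) > \omega^{-1}(a)$ for all $a \in \bZ$, by Lemma~\ref{lem: schubert cell intersects springer}.

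Next I would simply note that the conjunction of these two conditions is precisely the definition of $\omega$ being $\gamma$-restricted (Definition~\ref{def: gamma restricted}): positive, normalized, and $\omega^{-1}(x) < \omega^{-1}(\gamma(x))$ for all $x$. So the nonemptiness criterion reduces to ``$\omega$ is $\gamma$-restricted,'' which gives the first claim. For the second claim, when $I^-\omega I^-/I^- $ meets $\Sp_\gamma$, Lemma~\ref{lem: schubert cell intersects springer} already tells us that the $T$-fixed point $\omega I^-$ lies in $\Sp_\gamma$; since $\omega$ is in particular positive and normalized, $\omega I^-$ also lies in $C$ (the fixed point of a Schubert cell lies in that cell, which is contained in $C$), hence $\omega I^- \in \Sp_\gamma \cap C = X_{n,k,N}$.

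The one point requiring a small amount of care — and the only real obstacle, though a minor one — is verifying that membership of the whole Schubert cell $I^-\omega I^-/I^-$ in $C$ is governed purely by whether $\omega \in \widetilde{S}_K^{+,0}$, as opposed to some subtlety where part of a cell lies in $C$ and part does not. But this is immediate because $C$ is by definition~\eqref{eq:def-of-C} a \emph{union of entire Schubert cells}, and the cells partition $\aFl$; so a Schubert cell is either entirely inside $C$ (when its index is in $\widetilde{S}_K^{+,0}$) or entirely disjoint from it. With that remark in place, the corollary follows directly from the two lemmas and the relevant definitions, requiring essentially no further computation.
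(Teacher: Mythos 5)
Your argument is correct and matches the paper's proof: both deduce the result by combining Lemma~\ref{lem: schubert cell intersects springer} (which gives the condition $\omega^{-1}(\gamma(x))>\omega^{-1}(x)$ for all $x$, and produces the fixed point $\omega I^-$) with the definition of $C$ as the union of Schubert cells indexed by $\widetilde{S}_K^{+,0}$, which together are exactly the $\gamma$-restricted condition. Your extra remark that $C$ is a union of whole cells, so a cell is either contained in or disjoint from $C$, is a useful clarification but is implicit in the paper's shorter argument.
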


\begin{proof}
Indeed, by Lemma \ref{lem: schubert cell intersects springer} $\omega$ needs to satisfy 
$\omega^{-1}(\gamma(x))>\omega^{-1}(x)$ for all $x$, and by definition of $C$, $\omega$ needs to be positive and normalized. 
\end{proof}






\begin{figure}[ht!]
    \centering
    \includegraphics{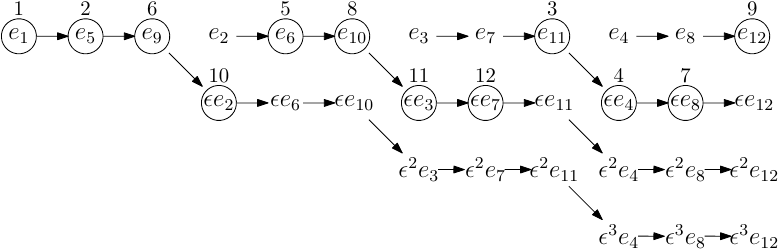}
    \caption{The staircase diagram of the fixed point $\epsilon^\lambda \ww I^-$ in $X_{n,k,N}=\Sp_\gamma\cap C$ in the case when $n=6$, $k=4$, $K=12$, $N$ is arbitrary and where $\lambda = (0,1,1,1,0,0,1,1,0,0,0,0)$ and $w = [1,5,11,4,6,9,8,10,12,2,3,7]$.}
    \label{fig:staircase-diagram}
\end{figure}

One can visualize the $T$-fixed point of 
$X_{n,k,N}$ as follows: List the vectors $\epsilon^j e_i$ for $0\leq j\leq (i-1)\bmod k$ with a directed arrow from $\epsilon^je_i$ to $\gamma(\epsilon^je_i)$. Given a $T$-fixed point $\epsilon^\lambda \ww I^{-}$, circle the vector $\epsilon^{\lambda_i}e_i$ and label it by $\ww^{-1}(i)$. See Figure~\ref{fig:staircase-diagram} for an example of a staircase diagram.

Following Lemma \ref{lem: pf bijection} we can also associate a $(K,k)$ parking function to a $\gamma$-restricted permutation $\omega$. It is easy to see that it precisely matches the staircase diagram up to $90^{\circ}$ rotation, compare Figures \ref{fig:parking-func-of-w} and \ref{fig:staircase-diagram}.

\begin{lem}
The $T$-fixed points in $Y_{n,k,N}$ are in bijection with $\LD_{n,k}^{\stack}$. In particular, the set of $T$-fixed points does not depend on $N$.
\end{lem}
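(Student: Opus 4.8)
The plan is to reduce the claim to the combinatorics already developed in Sections \ref{sec: background} and \ref{sec: gamma restricted}, together with the $T$-fixed-point analysis of $X_{n,k,N}$ from Corollary \ref{cor: cells Y}. First I would identify the $T$-fixed points of $\aFl_{(K-n,1^n)}$: these are the cosets $\omega \bP_{(K-n,1^n)}$ where $\omega$ ranges over minimal-length representatives of $\widetilde{S_K}/S_{(K-n,1^n)}$. Under $\pr_{(K-n,1^n)}$, the $T$-fixed points of $C$ (which by Corollary \ref{cor: cells Y} are exactly the $\gamma$-restricted $\omega$) map onto the $T$-fixed points of $C'$, and since $\pr_{(K-n,1^n)}$ is $\gamma$-equivariant and maps $\Sp_\gamma$ into $\Sp_{\gamma,(K-n,1^n)}$, a fixed point $\omega\bP$ lies in $Y_{n,k,N}$ iff there is a $\gamma$-restricted lift $\omega'$ with $\omega'\bP = \omega\bP$ and the Jordan type condition $\JT(\gamma|_{\Lambda_0/\Lambda_{K-n}})\le (n-k)^{k-1}$ holds at that fixed point. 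Since the Jordan type of $\gamma$ on $\Lambda_0/\Lambda_{K-n}$ at a fixed point depends only on the coset (not the chosen lift), this is a well-defined condition on $\gamma$-restricted permutations modulo $S_{(K-n,1^n)}$.

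Next I would translate the Jordan type condition combinatorially. A $\gamma$-restricted $\omega$ corresponds via Lemma \ref{lem: pf bijection} to a $(K,k)$ parking function $\pi$. The coset $\omega\bP_{(K-n,1^n)}$ only remembers $\Lambda_0$ and the last $n$ steps $\Lambda_{K-n}\supset \cdots\supset \Lambda_K$, i.e. it forgets the ordering of $\Lambda_0/\Lambda_{K-n}$; on the level of parking functions this is exactly the quotient by $S_{K-n}$ permuting the labels $n+1,\dots,K$, since those labels index the vertical steps contributing to $\Lambda_0/\Lambda_{K-n}$ while labels $1,\dots,n$ index the flag $\Lambda_{K-n}\supset\cdots\supset\Lambda_K$. (Here one uses that $\Lambda_0\subset\bO^K$ with $\dim\bO^K/\Lambda_0 = \deg\omega$, and the $n$ one-dimensional quotients at the bottom of the flag.) Then I would check that the Jordan type bound $\gamma^{n-k}\Lambda_0\subseteq\Lambda_{K-n}$ at the fixed point is equivalent to the statement that each big label $a>n$ appears in a vertical run of height $\le n-k$ above it in the appropriate sense — precisely the admissibility condition $b_i\le n-k$ of Definition \ref{def: admissible for b}. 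This is where the key input is Lemma \ref{lem:eval-in-sub} (referenced in the excerpt) computing $\JT(\gamma|_{\Lambda_0/\epsilon\Lambda_0}) = (n-k)^k+\mu$, restricted appropriately. Combining, the $T$-fixed points of $Y_{n,k,N}$ biject with admissible $(K,k)$-parking functions modulo $S_{K-n}$-relabeling of big labels, and Lemma \ref{lem: admissible to stacks} identifies this set with $\LD_{n,k}^{\stack}$.

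Finally, the independence from $N$ is immediate once the bijection is set up: the set of $\gamma$-restricted permutations does not depend on $N$ (Lemma \ref{lem: pf bijection} holds for any fixed $N\ge 0$), the admissibility condition is purely combinatorial, and so the resulting indexing set of fixed points is the same for all $N$. I expect the main obstacle to be the precise dictionary between the Jordan type / lattice condition $\gamma^{n-k}\Lambda_0\subseteq\Lambda_{K-n}$ at a $T$-fixed point and the parking-function admissibility condition $b_i\le n-k$ — one must carefully track how the periodic basis $e_i$ with $e_{i+K}=\epsilon e_i$, the action of $\gamma$, and the staircase diagram of Figure \ref{fig:staircase-diagram} encode both the flag and the Jordan block structure on $\Lambda_0/\Lambda_{K-n}$, and verify that forgetting the ordering of the top part of the flag matches exactly the $S_{K-n}$-action on big labels. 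The rest is bookkeeping using results already established.
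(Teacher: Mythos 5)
Your proposal follows essentially the same route as the paper: identify the $T$-fixed points of $C'\cap\Sp_{\gamma,(K-n,1^n)}$ with right-$S_{K-n}$-cosets of $\gamma$-restricted permutations (equivalently, $(K,k)$-parking functions up to relabeling the erased labels), translate the Jordan type bound into the per-column admissibility condition $b_i\le n-k$, and conclude with Lemma~\ref{lem: admissible to stacks}.  The one place you go astray is in invoking Lemma~\ref{lem:eval-in-sub} as the ``key input'' for translating the Jordan type condition: that lemma (a) lives in Section~\ref{sec: Springer action}, later in the paper than the statement in question, (b) bounds $\JT(\gamma|_{\Lambda_0/\epsilon\Lambda_0})$ for \emph{arbitrary} points of $Y_{n,k}$, and (c) is not needed at a $T$-fixed point, where $\Lambda_0$ is a diagonal (monomial) lattice, so $\gamma$ acts on $\Lambda_0/\Lambda_{K-n}$ by the explicit staircase rule and the Jordan blocks are read off directly from the columns of the parking function diagram — which is exactly the one-line argument the paper gives.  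You also slightly misstate Corollary~\ref{cor: cells Y}: the $\gamma$-restricted permutations index the fixed points of $X_{n,k,N}=C\cap\Sp_\gamma$, not of $C$ itself.  Neither issue affects the soundness of the overall plan, which matches the paper's proof.
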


\begin{proof}
Similarly to Corollary \ref{cor: cells Y}, the fixed points in $C'\cap \Sp_{\gamma,(K-n,1^n)}$ are in bijection  with the cosets of $\gamma$-restricted affine permutations by the right action of $S_{K-n}.$ Following Lemma \ref{lem: pf bijection}, we can identify these with the cosets of $(K,k)$-parking functions by the action of $S_{K-n}$ which permutes the labels $n+1,\ldots,K$. Equivalently, we can just call all these ``big labels" and not distinguish them, as in Section \ref{sec: stacks}.

To describe fixed points in $Y_{n,k}$, we need to further constrain the Jordan type of $\gamma$ acting on $\Lambda_{0}/\Lambda_{K-n}$. Suppose $\Lambda_\bullet\in Y_{n,k}^T$. Then the action of $\gamma$ on $\Lambda_{0}/\Lambda_{K-n}$ has Jordan blocks of size $b_i$ where $b_i$ is the number of big labels in the $i$-th column. We have $\JT(\Lambda_{0}/\Lambda_{K-n})\le (n-k)^{k-1}$ if and only if $b_i\le n-k$ for all $i$.  

By Lemma \ref{lem: admissible to stacks}, such parking functions (up to permutation of the big labels) are in bijection with $\LD_{n,k}^{\stack}$.
\end{proof}

\subsection{Stabilization}

\begin{defn}
We define the \textbf{conductor} $c$ by the equation $c=k\cdot K$. 
\end{defn}

Note that $c$ is the index of the largest basis vector in the staircase diagram. For example, in Figure \ref{fig:staircase-diagram} the largest basis vector is $\epsilon^3 e_{12}=e_{48}$, so $c=48$. 

\begin{lem}
\label{lem: conductor is super effective}
For all flags $\Lambda_{\bullet}\in X_{n,k,N}$ and all $m\ge c$ we have $e_m\in \Lambda_0$. The same holds for all partial flags $\Lambda_{\bullet}\in Y_{n,k,N}$.
\end{lem}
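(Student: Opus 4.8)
## Proof proposal for Lemma~\ref{lem: conductor is super effective}

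The plan is to reduce the statement about an arbitrary flag $\Lambda_\bullet \in X_{n,k,N}$ to a statement about the $T$-fixed point in its Schubert cell, and then to verify the claim there by a direct computation with the staircase diagram. First I would recall that $X_{n,k,N} = \Sp_\gamma \cap C$, so by Corollary~\ref{cor: cells Y} every flag in $X_{n,k,N}$ lies in a Schubert cell $C_\omega$ for some $\gamma$-restricted $\omega$, and that cell contains the $T$-fixed point $\omega I^-$. Using the canonical generators~\eqref{eq: canonical} and the span description~\eqref{eq: lambda i span}, we have $\Lambda_0 = \bC\{f_{\omega(j)} : j \geq 1\}$ where $f_{\omega(j)} = e_{\omega(j)} + \sum_{\ell < j,\ \omega(\ell) > \omega(j)} \lambda^{\omega(j)}_{\omega(\ell)} e_{\omega(\ell)}$. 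The key observation is that every basis vector $e_m$ appearing in the expansion of $f_{\omega(j)}$ has index $\geq \omega(j)$; in particular, if $\omega(j) \geq c$ for all relevant $j$, then comparing leading terms shows that the $\bC$-span $\Lambda_0$ contains $e_m$ for all $m \geq c$ exactly when the fixed-point lattice $\Lambda_0^{\omega} = \bC\{e_{\omega(j)} : j \geq 1\}$ does. So it suffices to prove the lemma at each $T$-fixed point $\omega I^-$.

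Next I would prove the fixed-point case. By Lemma~\ref{lem: A Wild Conductor Appears}, a $\gamma$-restricted $\omega$ satisfies $\omega(i) \leq kK = c$ for all $1 \leq i \leq K$. Since $\omega$ is positive, $\{\omega(1),\dots,\omega(K)\}$ is a set of $K$ positive integers with distinct residues mod $K$, all lying in $\{1,\dots,c\}$; because $c = kK$ is a multiple of $K$ this forces, for each residue class, that $\omega$ hits at least the smallest representative. More precisely, the fixed point lattice $\Lambda_0^\omega = \bO\{\epsilon^{\lambda_1}e_1,\dots,\epsilon^{\lambda_K}e_K\} = \bO\{e_{\omega(1)},\dots,e_{\omega(K)}\}$ (using the periodic identification $e_{i+K} = \epsilon e_i$), and since $\Lambda_0$ is an $\bO$-module it contains $e_{\omega(i) + rK}$ for all $r \geq 0$. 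Thus $\Lambda_0^\omega \supseteq \bigcup_i \{e_{\omega(i) + rK} : r \geq 0\}$, and to conclude $e_m \in \Lambda_0^\omega$ for all $m \geq c$ I need: every $m \geq c$ is congruent mod $K$ to some $\omega(i) \leq m$. Since the $\omega(i)$ exhaust all residues mod $K$ and each $\omega(i) \leq c \leq m$, this is immediate. The same argument applies to the partial-flag case $Y_{n,k,N} \subseteq \aFl_{(K-n,1^n)}$, since $\pr_{(K-n,1^n)}$ does not change $\Lambda_0$, and the fixed points of $Y_{n,k,N}$ project from (cosets of) $\gamma$-restricted permutations, to which Lemma~\ref{lem: A Wild Conductor Appears} still applies.

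The only genuinely delicate point is the reduction from an arbitrary flag to its fixed point: one must make sure that passing from $f_{\omega(j)}$ (which may have many lower-order-but-higher-index correction terms) to $e_{\omega(j)}$ does not lose membership of the high-index vectors. The clean way to see this is a downward induction on $m$: the largest index vector actually occurring among all $f_{\omega(j)}$ with bounded "support" controls the rest, or more simply, observe that $\Lambda_0 \supseteq \epsilon^k \bO^K = \bO\{e_m : m \geq c\}$ can be checked by noting $\gamma^K = \epsilon^{N+k}\,\mathrm{id}$ is scalar, hence $\epsilon^{N+k}\Lambda_0 = \gamma^K \Lambda_0 \subseteq \Lambda_0$, combined with the fact that $\Lambda_0 \subseteq \bO^K$ (as $\omega$ is positive) and $\dim \bO^K/\Lambda_0 = \deg\omega$ is bounded. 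I expect the first, more combinatorial route via Lemma~\ref{lem: A Wild Conductor Appears} and the canonical generators to be the cleanest, with the scalar identity $\gamma^K = \epsilon^{N+k}$ as a useful cross-check, and this — pinning down exactly why $c = kK$ (rather than something larger) is the right conductor — is the step I would write most carefully.
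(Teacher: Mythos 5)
Your primary route — Lemma~\ref{lem: A Wild Conductor Appears} to get $\omega(i)\le c$ for $1\le i\le K$, distinct residues modulo $K$, periodicity to hit every $m\ge c$ as some $\omega(j)$ with $j\ge 1$, then upper-triangularity of the canonical generators $f_{\omega(j)}$ to pass from the $T$-fixed lattice to an arbitrary $\Lambda_0\in C_\omega$ and to $Y_{n,k,N}$ via lifting — is precisely the paper's proof, just phrased as a reduction-to-the-fixed-point rather than directly producing $\widetilde{f_m}=\epsilon^{(m-a)/K}f_a$ and eliminating higher-index terms. One caveat on your ``cross-check'': the inclusion $\epsilon^{N+k}\Lambda_0=\gamma^K\Lambda_0\subseteq\Lambda_0$ is automatic for any $\bO$-lattice and, together with $\Lambda_0\subseteq\bO^K$ and finite codimension, does not by itself force $\epsilon^k\bO^K\subseteq\Lambda_0$ (and note $\epsilon^k\bO^K=\bO\{e_m:m\ge c+1\}$, an off-by-one from the lemma's $m\ge c$), so that route would need a genuine conductor argument rather than the scalar identity alone.
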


\begin{proof}
Let us first prove that for all $m\ge c$ there exists a vector $\widetilde{f_m}=e_m+\cdots\in \Lambda_0$.

Assume $\Lambda_{\bullet}\in C_{\omega}$ for some Schubert cell $C_{\omega}$. By Corollary \ref{cor: cells Y}, $\omega$ is $\gamma$-restricted. Note that $\omega(1),\ldots,\omega(K)$ all have different remainders modulo $K$, and by \ref{lem: A Wild Conductor Appears} $\omega(i)\le c$ for $i=1,\ldots,K$. Therefore for all $m\ge c$ there exists $a=\omega(i)\le c$ such that $a=m\mod K$ and there exists a vector 
$$
f_a=e_a+\cdots\in \Lambda_{i-1}\subset \Lambda_0.
$$
Now we can choose $\widetilde{f_m}=\epsilon^{(m-a)/K}f_a=e_m+\cdots\in \Lambda_0$. Finally, for any $m\ge c$ we can start with $\widetilde{f_m}$ and subtract a linear combination of  $\widetilde{f_{m'}}$ for  $m'>m$ to eliminate all higher degree coefficients. 

Since any partial flag in $Y_{n,k,N}$ can be extended to at least one flag in $X_{n,k,N}$,  the above argument applies to $Y_{n,k,N}$ verbatim.
\end{proof}

\begin{lem}
\label{lem: N stability}
For $N\ge k$ the spaces $X_{n,k,N}$ and $Y_{n,k,N}$ do not depend on $N$. That is, for $N,N'\ge k$ we have 
$$
X_{n,k,N}=X_{n,k,N'},\quad Y_{n,k,N}=Y_{n,k,N'}.
$$
In particular, for $N,N'\ge k$  a flag in $C$ is invariant under $\gamma_{n,k,N}$ if and only if it is invariant under $\gamma_{n,k,N'}$. 
\end{lem}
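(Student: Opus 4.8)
The plan is to show that the defining conditions for $X_{n,k,N}$ are genuinely insensitive to $N$ once $N \geq k$, by tracking where the parameter $N$ enters. The only place $\gamma = \gamma_{n,k,N}$ depends on $N$ is in the value $\gamma(K) = 1 + K(N+1)$, i.e. $\gamma e_K = \epsilon^{N+1} e_1$; on every other basis vector $e_i$ ($i<K$) the operator is independent of $N$. So for a fixed flag $\Lambda_\bullet \in C$ lying in a Schubert cell $C_\omega$ (with $\omega$ $\gamma$-restricted by Corollary~\ref{cor: cells Y}, hence positive, normalized, and satisfying $\omega(i) \le c = kK$ for all $i$ in the window by Lemma~\ref{lem: A Wild Conductor Appears}), the condition ``$\gamma \Lambda_i \subseteq \Lambda_i$ for all $i$'' only sees the action of $\gamma$ on basis vectors, and all the relevant vectors $f_{\omega(i)}$, being of the form $e_{\omega(i)} + (\text{higher})$ with $\omega(i) \le kK$, are acted on by $\gamma$ in an $N$-independent way \emph{except} when $\gamma(\omega(i))$ lands in the ``wrap-around'' regime governed by $\gamma(K)$.

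First I would make this precise using Lemma~\ref{lem: conductor is super effective}: for every $\Lambda_\bullet \in X_{n,k,N}$ we have $e_m \in \Lambda_0$ for all $m \ge c = kK$. The key observation is then that for $N \ge k$, the value $\gamma(K) = 1 + K(N+1) \ge 1 + K(k+1) > kK = c$, and more generally whenever $\gamma$ acts on a vector whose leading term $e_a$ has $a$ a multiple of $K$ (the wrap-around case, $\gamma(a) = a + KN + 1$ by Lemma~\ref{lem: beta mK}), the image has leading term of index $\ge KN + 1 \ge Kk + 1 > c$, so by Lemma~\ref{lem: conductor is super effective} this image automatically lies in $\Lambda_0$ — indeed in $\epsilon^{?}\Lambda_0$ for an appropriate power, hence in the appropriate $\Lambda_i$ once one is slightly more careful about which lattice in the flag it must land in. Thus the ``dangerous'' instances of the invariance condition $\gamma \Lambda_i \subseteq \Lambda_i$ are satisfied for free, independently of $N$, and all remaining instances involve only the $N$-independent part of $\gamma$. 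This shows $X_{n,k,N} = X_{n,k,N'}$ as subsets of $C$ for $N, N' \ge k$; I would phrase the argument symmetrically so that membership in $X_{n,k,N}$ implies membership in $X_{n,k,N'}$ and vice versa.

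For $Y_{n,k,N}$, the same analysis handles the invariance conditions $\gamma \Lambda_i \subseteq \Lambda_i$; what remains is the Jordan-type condition $\JT(\gamma|_{\Lambda_0/\Lambda_{K-n}}) \le (n-k)^{k-1}$, equivalently $\gamma^{n-k}\Lambda_0 \subseteq \Lambda_{K-n}$. Since $\Lambda_0/\Lambda_{K-n}$ is a finite-dimensional quotient and, by Lemma~\ref{lem: conductor is super effective}, $\Lambda_{K-n} \supseteq \epsilon^{k}\Lambda_0 \supseteq \bO\{e_m : m \ge c\}$ (up to adjusting constants), the induced operator $\bar\gamma$ on $\Lambda_0/\Lambda_{K-n}$ only depends on the action of $\gamma$ modulo the span of basis vectors of index $\ge c$ — and on that bounded range $\gamma$ is $N$-independent once $N \ge k$ forces the wrap-around terms past index $c$. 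Hence $\bar\gamma$, and therefore its Jordan type, is independent of $N$; combined with the previous paragraph this gives $Y_{n,k,N} = Y_{n,k,N'}$.

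The main obstacle I anticipate is the bookkeeping in the first paragraph: verifying that when $\gamma$ is applied to a canonical generator $f_{\omega(i)} \in \Lambda_{i-1}$ and produces a vector with leading term of large index, that vector lands in the correct lattice $\Lambda_{i-1}$ (not merely in $\Lambda_0$) — this requires pairing the ``all large-index basis vectors lie in $\Lambda_0$'' statement of Lemma~\ref{lem: conductor is super effective} with the flag structure and possibly an induction downward through the $\Lambda_i$, or equivalently a clean statement that $\epsilon \Lambda_0 = \Lambda_K \subseteq \Lambda_{i-1}$ absorbs everything of sufficiently high index. Once that lemma-level fact is isolated and proved, the $N$-independence is a direct comparison of the two sets of defining equations.
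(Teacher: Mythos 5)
Your route is correct and genuinely different from the paper's, though both hinge on Lemma~\ref{lem: conductor is super effective}. The cleanest way to close the bookkeeping gap you flag is to look at the difference operator $D\coloneqq \gamma_{n,k,N}-\gamma_{n,k,N'}$: it is $\bO$-linear, kills $e_i$ for $i<K$, and sends $e_K$ to $(\epsilon^{N+1}-\epsilon^{N'+1})e_1$, so its image on $\bO^K$ lies in $\bO\cdot\epsilon^{\min(N,N')+1}e_1\subseteq\bC\{e_m: m>(k+1)K\}$ once $\min(N,N')\geq k$. By Lemma~\ref{lem: conductor is super effective} together with the $\bO$-lattice structure of $\Lambda_0$ one has $\bC\{e_m: m\geq kK\}\subseteq\Lambda_0$, hence $\bC\{e_m: m>(k+1)K\}\subseteq\epsilon\Lambda_0=\Lambda_K\subseteq\Lambda_i$ for every $i$, so $D\Lambda_i\subseteq\Lambda_i$ automatically and the two invariance conditions coincide; the same containment shows $D$ vanishes on $\Lambda_0/\Lambda_{K-n}$, settling the Jordan-type condition for $Y_{n,k,N}$. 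The paper proceeds differently: it passes to the finite-dimensional quotient $\Lambda_0^{\std}/\epsilon^{k+1}\Lambda_0^{\std}$, using the two-sided containment $\Lambda_0^{\std}\supseteq\Lambda_i\supseteq\epsilon^{k+1}\Lambda_0^{\std}$ (again from Lemma~\ref{lem: conductor is super effective}) to identify $X_{n,k,N}$ with a space of truncated flags invariant under $\epsilon$ and the induced operator $\overline\gamma$, and then observes that $\gamma(e_K)=\epsilon^{N+1}e_1$ already vanishes in the quotient once $N\geq k$, so $\overline\gamma$ is manifestly $N$-independent. The truncation device packages your ``which $\Lambda_{i-1}$ does the wrap-around image land in'' worry once and for all by letting the quotient do the absorbing; your difference-operator version is more elementary and stays in the ambient affine flag variety, at the cost of writing out the absorption inclusion explicitly.
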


\begin{proof}Note that 
$$
\epsilon^k\Lambda_0^{\std}=\bC\{e_m:m\ge k\cdot K+1\}=
\bC\{e_m:m\ge c+1\}.
$$
By Lemma \ref{lem: conductor is super effective} for all $\Lambda_{\bullet}$ in $X_{n,k,N}$ we have $\epsilon^k\Lambda_0^{\std}\subset \Lambda_0$, so $\epsilon^{k+1}\Lambda_0^{\std}\subset \epsilon \Lambda_0$ for all $\Lambda_{\bullet}\in X_{n,k}$. 
In particular, 
$$
\Lambda_0^{\std}\supset \Lambda_i\supset \epsilon^{k+1}\Lambda_0^{\std}\ \mathrm{for\ all}\  i=0,\ldots,K-1.
$$

Given a flag $\Lambda_{\bullet}\in X_{n,k,N}$, we can define the subspace $\overline{\Lambda_i}=\Lambda_i/\epsilon^{k+1}\Lambda_0^{\std}$ in the finite-dimensional space $\Lambda_0^{\std}/\epsilon^{k+1}\Lambda_0^{\std}$.
This yields an isomorphism  between $X_{n,k,N}$ and the space $\overline{X_{n,k,N}}$  of flags $\overline{\Lambda_{\bullet}}$ which are invariant under both $\epsilon$ and the restriction $\overline{\gamma}$ of $\gamma$ to $\Lambda_0^{\std}/\epsilon^{k+1}\Lambda_0^{\std}$, and are contained in the union of Schubert cells given by an appropriate truncation of $C$. 

On the other hand, $\gamma(e_K)=\epsilon^{N+1}e_{1}$ and for $N\ge k$ we have $\gamma(e_K)\in \epsilon^{k+1}\Lambda_0^{\std}$. Therefore for $N\ge k$ the restriction $\overline{\gamma}$ does not depend on $N$, and hence the truncated space $\overline{X_{n,k,N}}$ does not depend on $N$. 

Finally, the flag $\Lambda_{\bullet}\in X_{n,k,N}$ can be reconstructed from $\overline{\Lambda_{\bullet}}\in \overline{X_{n,k,N}}$ by simply taking the preimages of $\overline{\Lambda_i}$ under the projection 
$$\Lambda_0^{\std}\to \Lambda_0^{\std}/\epsilon^{k+1}\Lambda_0^{\std}
$$
and hence does not depend on $N$ either. The proof for $Y_{n,k,N}$ is similar.
\end{proof}

Motivated by Lemma \ref{lem: N stability}, we can define the ``stable'' version of the spaces $X_{n,k,N}$ and $Y_{n,k,N}$.

\begin{defn}
We define $X_{n,k}=X_{n,k,N}$ and $Y_{n,k}=Y_{n,k,N}$ where $N\ge k$ is an arbitrary integer. 
\end{defn}

\begin{rmk}
We can define the nilpotent operator $\gamma_{\infty}$ by
$$
\gamma_{\infty}(e_i)=\begin{cases}
    e_{i+k} & \text{if }1\leq i\leq (n-k)k\\
    e_{i+k+1} & \text{if } (n-k)k < i < K\\
    0 & \text{if }i=K.
    \end{cases}
$$
Then the proof of Lemma \ref{lem: N stability} implies that $X_{n,k}=\Sp_{\gamma_{\infty}}\cap C$.

It would be interesting to explore further this connection between the regular semisimple and nilpotent affine Springer fibers.
\end{rmk}

\subsection{Torus action}

The torus $T$ does not act on the affine Springer fiber $\Sp_{\gamma,\eta}$. However, we have an action of $\bC^*$ defined as follows. First, for $s\in \bC^*$ we define an operator $S$ on $\bK^K=\bC((\epsilon))^K$ as follows:
\begin{align}
S_N(\epsilon^{m}e_{i})&=
s^{\theta_N(m,i)}\epsilon^{m}e_{i}, \label{eq: def S}\\
\text{where }\theta_N(m,i)&=Km+q(N+k)+N(n-k+1)(r-1).
\end{align}

Here $1\le i=qk+r\le K$ where $0\le q\le n-k$ and $1\le r\le k$ and $m\in \bZ$.

\begin{example}
    For $k=4$, $K=12$, we have $\theta_N(2,6)=2N+(N+4)+N\cdot 3 \cdot 1 = 5N+4$, and so $$S_N(\epsilon^2 e_6)=s^{5N+4}\epsilon^2 e_6.$$
\end{example}

The motivation for our definition of $S_N$ above is the following lemma.

\begin{lem}
\label{lem: C* weights}
We have $S_N\epsilon=\diag(s^{K})\epsilon S_N$ and $S_N\gamma=\diag(s^{N+k})\gamma S_N$.
\end{lem}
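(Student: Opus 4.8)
The plan is to verify the two commutation relations directly on the topological basis $\{\epsilon^m e_i : m \in \bZ,\ 1 \le i \le K\}$, since both $S_N$, $\epsilon$, and $\gamma$ are determined by their action on these vectors and everything is $\bC$-linear (indeed, $\bO$-semilinear in an appropriate sense). The only subtlety is bookkeeping: we need to track how the index $i = qk+r$ (with $0 \le q \le n-k$, $1 \le r \le k$) transforms under $\gamma$, keeping in mind that $\gamma$ shifts by $k$ in the ``generic'' rows and by $k+1$ in the rows past $(n-k)k$, and wraps around with an extra $\epsilon^{N+1}$ at $i = K$.

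First I would dispatch the relation $S_N \epsilon = \diag(s^K)\epsilon S_N$. On a basis vector, $\epsilon(\epsilon^m e_i) = \epsilon^{m+1}e_i$, so $S_N \epsilon (\epsilon^m e_i) = s^{\theta_N(m+1,i)}\epsilon^{m+1}e_i$, while $\epsilon S_N(\epsilon^m e_i) = s^{\theta_N(m,i)}\epsilon^{m+1}e_i$. Since $\theta_N(m+1,i) - \theta_N(m,i) = K$ by the formula $\theta_N(m,i) = Km + q(N+k) + N(n-k+1)(r-1)$, the two differ exactly by the scalar $s^K$, which is what $\diag(s^K)$ contributes; here $\diag(s^K)$ is the scalar operator $s^K \cdot \mathrm{id}$ (or the diagonal matrix with all entries $s^K$), so this is immediate.

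Next I would handle $S_N \gamma = \diag(s^{N+k})\gamma S_N$, splitting into the three cases in the definition of $\gamma$. In the first case $1 \le i \le (n-k)k$, write $i = qk+r$ with $0 \le q \le n-k-1$; then $\gamma e_i = e_{i+k}$ and $i+k = (q+1)k + r$, so the ``$q$'' increments by $1$ and ``$r$'' is unchanged, giving $\theta_N(m,i+k) - \theta_N(m,i) = N+k$. In the second case $(n-k)k < i < K$, we have $q = n-k$ forced and $r < k$; then $\gamma e_i = e_{i+k+1}$ and $i + k + 1$ corresponds to $q' = n-k+1$... but this exceeds the allowed range, so the right way to see it is that $e_{i+k+1}$ should be compared using periodicity $e_{j+K} = \epsilon e_j$: indeed $i + k + 1 = (n-k)k + r + k + 1$, and one checks $i + k + 1 - K = i + k + 1 - k(n-k+1) = r + 1 - k \le 0$... so rather than fight the indexing, I would instead compute $\theta_N$ on $e_{i+k+1}$ directly via its own $(q,r)$-decomposition and verify the difference is again $N+k$; the term $q(N+k)$ contributes one more $(N+k)$ when we pass from the $q = n-k$ block to the top, and the correction from the $+1$ (i.e. incrementing $r$ and possibly wrapping) is absorbed by the $N(n-k+1)(r-1)$ term together with the $Km$ term — this is the one genuinely fiddly computation. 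In the third case $i = K$, we have $q = n-k$, $r = k$, and $\gamma e_K = \epsilon^{N+1}e_1$; so $S_N\gamma(\epsilon^m e_K) = s^{\theta_N(m+N+1,1)}\epsilon^{m+N+1}e_1$ with $\theta_N(m+N+1,1) = K(m+N+1) + 0 + 0$, which must equal $(N+k) + \theta_N(m,K)$ where $\theta_N(m,K) = Km + (n-k)(N+k) + N(n-k+1)(k-1)$; verifying $K(m+N+1) = (N+k) + Km + (n-k)(N+k) + N(n-k+1)(k-1)$, i.e. $KN + K = (N+k)(n-k+1) + N(n-k+1)(k-1) = (n-k+1)(N+k + Nk - N) = (n-k+1)(k + Nk) = k(n-k+1)(1+N) = K(N+1)$, which checks out.

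The main obstacle is purely the second case: making sure the periodic identification $e_{j+K} = \epsilon e_j$ and the jump of $\gamma$ from shift-by-$k$ to shift-by-$(k+1)$ are handled consistently in the exponent $\theta_N$, so that the net change is uniformly $N+k$. I expect this to be a short but careful case check, and I would present it by computing $\theta_N$ on the image index via its own $(q,r,m)$-data rather than by trying to ``increment'' the formula. Once all three cases give a difference of exactly $N+k$ in the exponent of $s$, the relation $S_N\gamma = \diag(s^{N+k})\gamma S_N$ follows, completing the proof.
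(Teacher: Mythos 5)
Your approach is the same as the paper's: verify the two identities on the basis $\epsilon^m e_i$, and for the second identity split into the three cases of the definition of $\gamma$. Your treatment of the first identity and of Cases~1 and~3 is correct and matches the paper.

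However, your sketch of Case~2 contains two errors that you would need to fix if you actually carried it out. First, the arithmetic $i+k+1-K = r+1-k \le 0$ is wrong: with $i=(n-k)k+r$ and $K=(n-k)k+k$ one gets $i+k+1-K = r+1 > 0$, so $e_{i+k+1} = e_{K+r+1} = \epsilon e_{r+1}$; in other words $m' = 1$ and $i' = r+1$, and it does wrap. Second, your heuristic that ``the term $q(N+k)$ contributes one more $(N+k)$ when we pass from the $q=n-k$ block to the top'' is backwards: in this case $q$ drops from $n-k$ to $0$, so $q(N+k)$ contributes a decrease of $(n-k)(N+k)$, while the increase $K$ from the $Km$ term and $N(n-k+1)$ from the $r$-shift must together overcompensate. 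The correct check is $\theta_N(1,r+1)-\theta_N(0,i) = K + N(n-k+1) - (n-k)(N+k) = (N+k)(n-k+1) - (N+k)(n-k) = N+k$, which is exactly what the paper computes. You do hedge by saying you would compute directly from the $(q,r,m)$-data rather than trust the heuristic, and that would indeed succeed; but as written the proposal leaves the only nontrivial case unverified and with a sign/wrapping error in the setup.
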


\begin{proof}
For the first equation, it is easy to check that $S_N(\epsilon e_i)=s^K\epsilon S_N(e_i)$, and the equation follows.  For the second equation, it suffices to show $S_N\gamma e_i = \diag(s^{N+k})\gamma S_N e_i$ for $1\leq i\leq K$, or equivalently that $\theta_N(m',i') - \theta_N(0,i) = N+k$ where $\epsilon^{m'}e_{i'} = \gamma(e_i)$. We consider several cases:

\textbf{Case 1.} For $1\le i=qk+r\le (n-k)k$ we have $0\le q<n-k$, so $m'=0$ and $i'=i+k=(q+1)k+r$. Therefore,
$$
\theta_N(0,i+k)-\theta_N(0,i)=(q+1)(N+k)+N(n-k+1)(r-1) - \theta_N(0,i)=N+k.
$$

\textbf{Case 2.} For $(n-k)k<i=qk+r<K$ we have $q=n-k$ and $1\le r<k$, so $m'=1$ and $i'=i+k+1=(n-k)k+r+k+1=K+r+1$. Therefore,
$$
\theta_N(1,r+1)=K+N(n-k+1)r,\ \theta_N(0,i)=(N+k)(n-k)+N(n-k+1)(r-1),\\
$$
so
\begin{align*}
\theta_N(1,r+1)-\theta_N(0,i)&=K+N(n-k+1)-(N+K)(n-k)\\
&=k(n-k+1)+N(n-k+1)-(N+K)(n-k)=N+k.
\end{align*}

\textbf{Case 3.} Finally, for $i=K$ we have $q=n-k$ and $r=k$ so $m'=N+1$ and $i'=1$, and we have
$$
\theta_N(0,K)=(N+k)(n-k)+N(n-k+1)(k-1),\quad \theta_N(N+1,1)=(N+1)K,
$$
so
\begin{align*}    
\theta_N(N+1,1)-\theta_N(0,K)&=(N+1)K-N(n-k+1)(k-1)-(N+k)(n-k)\\
&=(N+1)k(n-k+1)-N(n-k+1)(k-1)-(N+k)(n-k)\\
&=(k+N)(n-k+1)-(N+k)(n-k)=N+k
\end{align*}
as desired.
\end{proof}

\begin{cor}
\label{cor: C* action}
The operator $S_N$ defines a $\bC^*$-action on the affine Springer fiber $\Sp_{\gamma}$, and on the varieties $X_{n,k,N}$ and $Y_{n,k,N}$.
\end{cor}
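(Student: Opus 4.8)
The plan is to fix $s\in\bC^\ast$ and decompose $S_N$ as $S_N=D_s\circ\rho_s$, where $\rho_s\colon\bK^K\to\bK^K$ applies entrywise the $\bC$-algebra automorphism of $\bK$ with $\rho_s(\epsilon)=s^K\epsilon$ (``loop rotation of weight $K$''), and $D_s=\diag(s^{\theta_N(0,1)},\dots,s^{\theta_N(0,K)})\in\GL_K(\bC)$. Since $\theta_N(m,i)=Km+\theta_N(0,i)$, this reproduces the formula \eqref{eq: def S} defining $S_N$. Both factors send lattices to lattices ($\rho_s$ because it preserves $\bO$ and valuations, $D_s$ because it is $\bO$-linear and invertible) and commute with the projections $\pr_\eta$, so $S_N$ descends to a bijection of each of $\aGr$, $\aFl$, and $\aFl_\eta$; it is algebraic since the $\rho_s$-part is the standard loop-rotation $\bC^\ast$-action on affine flag varieties and $D_s$ comes from a cocharacter of $T$. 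One then checks $S_{N,s}S_{N,s'}=S_{N,ss'}$ and $S_{N,1}=\mathrm{id}$, which is immediate because $\rho_s$ commutes with $D_{s'}$ and the exponents $\theta_N$ do not involve $s$; this gives the asserted $\bC^\ast$-action on the ambient spaces.

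The next step is to see that this action preserves the affine Springer fibers and the affine Borho--MacPherson variety. If $\Lambda_\bullet\in\Sp_{\gamma,\eta}$, so $\gamma\Lambda_i\subseteq\Lambda_i$ for all $i$, then Lemma~\ref{lem: C* weights} gives $\gamma S_N=s^{-(N+k)}S_N\gamma$ as operators on $\bK^K$, whence
\[
\gamma(S_N\Lambda_i)=s^{-(N+k)}S_N(\gamma\Lambda_i)\subseteq s^{-(N+k)}S_N\Lambda_i=S_N\Lambda_i,
\]
the last equality because $S_N\Lambda_i$ is a lattice and $s^{-(N+k)}\in\bC^\ast\subseteq\bO^\ast$. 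Thus $S_N\Lambda_\bullet\in\Sp_{\gamma,\eta}$. Iterating, $\gamma^{n-k}S_N=s^{-(n-k)(N+k)}S_N\gamma^{n-k}$, so the condition $\gamma^{n-k}\Lambda_0\subseteq\Lambda_{K-n}$ defining $\BM_{\gamma,n,k}$ is preserved as well. Hence $S_N$ stabilizes $\Sp_\gamma$, $\Sp_{\gamma,(K-n,1^n)}$, and $\BM_{\gamma,n,k}$.

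Finally, one shows $S_N$ preserves the unions of Schubert cells $C$ and $C'$. The key observation is that $S_N$ fixes every $T$-fixed point $\omega I^-$ of $\aFl$: such a point is a flag of lattices spanned by monomial vectors $\epsilon^a e_j$, and every such vector is an eigenvector of $S_N$; in particular $S_N$ fixes the standard flag. Writing $\psi_s(g)\coloneqq D_s\rho_s(g)D_s^{-1}$ for the group automorphism of $\GL_K(\bK)$ induced by $\rho_s$ followed by conjugation by $D_s$, one has $S_N\circ g=\psi_s(g)\circ S_N$ on $\bK^K$, and $\psi_s(I^-)=I^-$ (because $\rho_s$ commutes with $\ev$ and fixes $B^-$, and diagonal matrices normalize $B^-$). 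Combining these facts, $S_N(C_\omega)=I^-\psi_s(\omega)I^-/I^-=I^-\omega I^-/I^-=C_\omega$ for each $\omega$, using that $\psi_s(\omega)\in\omega I^-$ since $S_N$ fixes $\omega I^-$. Therefore $S_N(C)=C$, and $S_N(C')=\pr_{(K-n,1^n)}(S_N(C))=C'$ by equivariance of the projection. Intersecting with the previous step, $X_{n,k,N}=\Sp_\gamma\cap C$ and $Y_{n,k,N}=\BM_{\gamma,n,k}\cap C'$ are $S_N$-stable, which together with the first step completes the proof.

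I expect the only genuinely delicate point to be the first step: $S_N$ is $\bC$-linear but \emph{not} $\bO$-linear, so one must check carefully that it nonetheless induces an algebraic automorphism of the (partial) affine flag varieties. The decomposition $S_N=D_s\rho_s$ resolves this, provided one reads $\rho_s$ as loop rotation on the ind-scheme $\GL_K(\bK)$ rather than as an element of the loop group. Granting that, steps two and three are formal consequences of Lemma~\ref{lem: C* weights} and of $S_N$ fixing the $T$-fixed points.
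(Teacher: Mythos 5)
Your proof is correct and follows essentially the same approach as the paper: the paper likewise uses Lemma~\ref{lem: C* weights} to show $S_N\Lambda$ is a lattice, that $\gamma$-invariance is preserved, that the Jordan type condition (phrased there in terms of $\gamma|_{\Lambda_0/\Lambda_{K-n}}$ rather than $\gamma^{n-k}\Lambda_0\subseteq\Lambda_{K-n}$, but equivalently) is preserved, and simply asserts that $S_N$ preserves each affine Schubert cell. The one thing you add is the decomposition $S_N = D_s\circ\rho_s$ into a cocharacter of $T$ composed with loop rotation, which turns the paper's unjustified assertions that $S_N$ is algebraic and preserves Schubert cells into a careful argument; this is a useful clarification rather than a different proof.
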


\begin{proof}
Let $\Lambda$ be a lattice. Since $\epsilon S_N\Lambda=\diag(s^{-K})S_N\epsilon \Lambda\subset S_N\Lambda$, we conclude that $S_N\Lambda$ is a lattice, in particular, $S_N$ acts on the affine Grassmannian and all partial affine flag varieties. 
Furthermore, it preserves each affine Schubert cell and hence preserves the unions of cells $C$ and $C'$.

Now suppose that $\Lambda$ is $\gamma$-invariant. Then $\gamma S_N\Lambda=\diag(s^{-N-K})S_N\gamma \Lambda\subset S_N\Lambda$, and $S_N\Lambda$ is $\gamma$-invariant as well, therefore $S_N$ preserves the affine Springer fiber $\Sp_{\gamma}$ and its partial affine flag versions. This implies that $S_N$ preserves $X_{n,k,N}=\Sp_{\gamma}\cap C$.

Finally, to prove that $S_N$ preserves $Y_{n,k,N}$ it is sufficient to prove that it preserves $\BM_{\gamma,k,n}$. Indeed, $S_N$ provides an isomorphism between the vector spaces $\Lambda_{0}/\Lambda_{K-n}$ and $S_N\Lambda_{0}/S_N\Lambda_{K-n}$ which exchanges the actions of $\gamma$ on these up to a nonzero scalar factor $s^{-N-K}$. Therefore the Jordan types of $\gamma$ restricted to $\Lambda_{0}/\Lambda_{K-n}$ and $S_N\Lambda_{0}/S_N\Lambda_{K-n}$ agree, and the result follows.
\end{proof}

\begin{rmk}
The above $\bC^*$ action cannot be obtained by restricting the $T$-action. Indeed, the latter commutes with multiplication by $\epsilon$ but the former does not. In fact, one can think of the $\bC^*$ action as the action of a subgroup of the extended torus $T\times \bC^*_{\hbar}$ where $\bC^*_{\hbar}$ acts by rescaling $\epsilon$ (``loop rotation"). 
\end{rmk}

In the stable limit, the $\bC^*$ action from Corollary \ref{cor: C* action} extends to a $\bC^*\times \bC^*$ action. For $i=mK+qk+r$ where $m\in \bZ,0\le q\le n-k$ and $1\le r\le k$ we write
\begin{equation}
\label{eq: theta stable}
\theta(i)=(\theta_0(i),\theta_{\infty}(i))=(mK+qk,q+(n-k+1)(r-1)).
\end{equation}
This defines a diagonal  action of $\bC^*\times \bC^*$ on $\bO^n$ by $(s_1,s_2)\cdot e_i= s_1^{\theta_0(i)}s_2^{\theta_{\infty}(i)}e_i$.
\begin{rmk}
The weights $\theta_N$ from \eqref{eq: def S} can be written as 
$
\theta_N = \theta_0+N\theta_{\infty}.
$
\end{rmk}
\begin{lem}
\label{lem: stable torus action}
The weights \eqref{eq: theta stable} define a $\bC^*\times \bC^*$ action on $X_{n,k}$ and $Y_{n,k}$.
\end{lem}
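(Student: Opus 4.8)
The plan is to repeat the proof of Corollary~\ref{cor: C* action} almost verbatim, now using the \emph{nilpotent} operator $\gamma_{\infty}$ together with the stable presentations $X_{n,k}=\Sp_{\gamma_{\infty}}\cap C$ (from the Remark following Lemma~\ref{lem: N stability}) and the analogous identity $Y_{n,k}=\BM_{\gamma_{\infty},n,k}\cap C'$, where $\BM_{\gamma_{\infty},n,k}$ is the $\gamma_\infty$-version of the affine Borho--MacPherson variety; this identity follows from the proof of Lemma~\ref{lem: N stability} in the same way as the one for $X_{n,k}$, since once $\Lambda_{K-n}\supseteq \epsilon^{k+1}\Lambda_0^{\std}$ the operators $\gamma_{n,k,N}$ (for $N\ge k$) and $\gamma_{\infty}$ induce the same endomorphism of $\Lambda_0/\Lambda_{K-n}$, so the Jordan-type condition is unchanged. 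For $(s_1,s_2)\in\bC^*\times\bC^*$ let $S=S_{s_1,s_2}$ be the continuous $\bC$-linear operator on $\bK^K$ with $S\,e_i=s_1^{\theta_0(i)}s_2^{\theta_\infty(i)}\,e_i$ on the periodically extended basis, so that $(s_1,s_2)\mapsto S_{s_1,s_2}$ is a homomorphism with $S_{1,1}=\mathrm{id}$.

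The one computation needed is the pair of commutation relations $S\epsilon=s_1^{K}\epsilon\,S$ and $S\gamma_{\infty}=s_1^{k}s_2\,\gamma_{\infty}\,S$, the analogues of Lemma~\ref{lem: C* weights}. The first is immediate from $\theta(i+K)=\theta(i)+(K,0)$. For the second, Cases~1 and~2 in the proof of Lemma~\ref{lem: C* weights} establish $\theta_N(\gamma_{\infty}e_i)-\theta_N(e_i)=N+k$ for every index $i$ not divisible by $K$ — which is exactly the range on which $\gamma_{\infty}$ is nonzero, and where $\gamma_{\infty}$ agrees with $\gamma_{n,k,N}$; since $\theta_N=\theta_0+N\theta_{\infty}$, comparing the constant term and the coefficient of $N$ gives $\theta(\gamma_{\infty}e_i)=\theta(e_i)+(k,1)$, while $\gamma_{\infty}e_i=0$ when $K\mid i$. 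This yields $S\gamma_{\infty}=s_1^{k}s_2\,\gamma_{\infty}\,S$ on $\bK^K$. Note that it is essential here to use $\gamma_{\infty}$ rather than $\gamma_{n,k,N}$: the summand $e_K\mapsto\epsilon^{N+1}e_1$ is not homogeneous for the two-dimensional torus, which is precisely why the $\bC^*\times\bC^*$-action exists only in the stable limit.

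Given the two relations, everything proceeds as in Corollary~\ref{cor: C* action}. Since $S$ is invertible and $\epsilon\, S\Lambda=s_1^{-K}S\epsilon\Lambda\subseteq S\Lambda$, the operator $S$ sends lattices to lattices and flags of lattices to flags of lattices, hence acts on $\aFl$ and on each $\aFl_{\eta}$; being diagonal in $\{e_i\}$, it fixes all $T$-fixed points and sends the canonical generators \eqref{eq: canonical} of $C_{\omega}$ to scalar multiples of canonical generators, so it preserves every Schubert cell and therefore $C$ and $C'=\pr_{(K-n,1^n)}(C)$; if $\gamma_{\infty}\Lambda_i\subseteq\Lambda_i$ for all $i$ then $\gamma_{\infty}S\Lambda_i=s_1^{-k}s_2^{-1}S\gamma_{\infty}\Lambda_i\subseteq S\Lambda_i$, so $S$ preserves $\Sp_{\gamma_{\infty}}$ and hence $X_{n,k}$; and $S$ restricts to an isomorphism $\Lambda_0/\Lambda_{K-n}\xrightarrow{\sim}S\Lambda_0/S\Lambda_{K-n}$ intertwining the $\gamma_{\infty}$-actions up to the nonzero scalar $s_1^{-k}s_2^{-1}$, so Jordan types are preserved and $S$ preserves $\BM_{\gamma_{\infty},n,k}$, hence $Y_{n,k}$. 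Algebraicity is clear: on the affine cell $C_{\omega}\cap X_{n,k}$, coordinatized by the canonical coefficients $\lambda^{\omega(i)}_{\omega(j)}$, the operator $S$ rescales each $\lambda^{\omega(i)}_{\omega(j)}$ by a fixed Laurent monomial in $s_1,s_2$, so $\bC^*\times\bC^*\times X_{n,k}\to X_{n,k}$ is a morphism, and likewise for $Y_{n,k}$. The only genuine content is the second commutation relation, which is just Cases~1 and~2 of Lemma~\ref{lem: C* weights} read off coefficientwise in $N$; the one conceptual point worth stressing is the passage to $\gamma_{\infty}$ and the stable models $\Sp_{\gamma_{\infty}}\cap C$ and $\BM_{\gamma_{\infty},n,k}\cap C'$.
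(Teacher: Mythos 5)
Your proposal is correct and follows essentially the same route as the paper's own proof: the key step in both is the identity $\theta(\gamma(i))=\theta(i)+(k,1)$ for $K\nmid i$, verified by the same case analysis lifted from Lemma~\ref{lem: C* weights}, after which both invoke the argument of Corollary~\ref{cor: C* action}. The only cosmetic difference is that the paper truncates to the finite-dimensional quotient $\Lambda_0^{\std}/\epsilon^{k+1}\Lambda_0^{\std}$ (where the induced $\overline{\gamma}$ sends the image of $e_K$ to zero), whereas you work directly on $\bK^K$ with the nilpotent operator $\gamma_\infty$; these are two ways of packaging the same observation that the inhomogeneous term $e_K\mapsto\epsilon^{N+1}e_1$ must be removed to admit the two-parameter torus.
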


\begin{proof}
We follow the proof of Lemma \ref{lem: N stability} and work with the truncated space $\overline{X_{n,k,N}}$ of flags in $\Lambda_0^{\std}/\epsilon^{k+1}\Lambda_0^{\std}$
instead $(N\ge k)$. 

It suffices to show that when $\overline{\gamma}(e_i)\neq 0$ (that is, when $i$ is not divisible by $K$), then the weights \eqref{eq: theta stable} satisfy 
\begin{equation}
\label{eq: theta stable 2}
\theta(\gamma(i))=\theta(i)+(k,1), \hspace{1cm} \theta(K+i)=\theta(i)+(K,0).
\end{equation}
The second identity clearly holds when $i$ is not divisible by $K$. For the first identity, similarly to Lemma \ref{lem: C* weights} we have two cases:

\textbf{Case 1.} If $q<n-k$ then we get $\gamma(i)=i+k=mK+(q+1)k+r$, so
$$
\theta(\gamma(i))=(mK+(q+1)k,q+1+(n-k+1)(r-1))=\theta(i)+(k,1).
$$

\textbf{Case 2.} If $q=n-k$ and $r<k$, we get $\gamma(i)=i+k+1=(m+1)K+(r+1)$, so
\begin{align*}
\theta(\gamma(i))&=((m+1)K,(n-k+1)r),\\
\theta(i)&=(mK+(n-k)k,n-k+(n-k+1)(r-1)),
\end{align*}
so \eqref{eq: theta stable 2} is satisfied in this case as well. Finally, for $q=n-k$ and $r=k$ we get $\overline{\gamma}(e_i)=0$, so we do not need to check anything.  The rest of the argument follows the proof of Corollary \ref{cor: C* action}.
\end{proof}

\section{Springer theory}\label{sec: Springer action}

\subsection{Springer action}

In this section, we prove that the Borel-Moore homologies of $X_{n,k}$ and $Y_{n,k}$ have actions of $S_K$, respectively $S_n$. We then use Borho and MacPherson's theory of partial resolutions of nilpotent varieties~\cite{BM} to show that they are related by a Schur-skewing operator in Theorem~\ref{thm:perp homology}. Throughout this section, we follow the conventions in \emph{loc.~cit.}~for perverse sheaves.

Let us denote $G=\GL_K\bC$, let $B$ the Borel subgroup of lower triangular invertible matrices, and let $P=P_{(K-n,1^n)}$ is the parabolic subgroup of type $\alpha=(K-n,1^n)$ of block-lower-triangular invertible matrices. We denote by $\mathfrak{g},\mathfrak{b}$ and $\mathfrak{p}$ the corresponding Lie algebras.
Recall that we have an evaluation map 
$
\ev: \GL_K(\bO)\to G
$ 
and the  parahoric subgroup $\bP = \bP_\alpha = \bP_{(K-n,1^n)} = \ev^{-1}(P)$.

Let $L$ be the Levi subgroup of $P$, let $\mathcal{N}_L$ be its nilpotent cone, and let $\mathfrak{n}=[\mathfrak{b},\mathfrak{b}]$ and $\mathfrak{n}_P = [\mathfrak{p},\mathfrak{p}]$ be the nilradicals of $\mathfrak{b}$ and $\mathfrak{p}$, respectively. Then we have the following maps, all of which are semismall by~\cite{BM},
\[
\begin{tikzcd}[/tikz/column 1/.append style={anchor=base east}, column sep=0]
    G\times_B \mathfrak{n} & \eqqcolon\widetilde{\mathcal{N}}\arrow[d,"\eta"]\arrow[dd,"\pi", bend left=50]  \\
    G\times_P(\mathcal{N}_L+\mathfrak{n}_P) & \eqqcolon\widetilde{\mathcal{N}}^P  \arrow[d,"\xi"]  \\
    &\mathcal{N}.
 \end{tikzcd}
\]
We then have the following commutative diagram where each square is cartesian. Here, each of the maps $\ev$ takes $g\bP_\eta\in \Sp_{\gamma,\eta}$ to $[1,\ev(Ad(g^{-1})\gamma)]\in \widetilde{\mathcal{N}}^{P_\eta}$. Note that $Ad(g^{-1})\gamma\in Lie(\bP_\eta)$ by definition, but after taking $\ev$ it lands in $\mathcal{N}_L+\mathfrak{n}_{P_\eta}$ by the fact that $\gamma$ is a nil-elliptic operator. This gives a map that is well-defined up to the left (adjoint) action of $P_\eta$ on $\mathfrak{p}_\eta$ and is hence well-defined up to the left (adjoint) action of $G$. Thus, we get a commutative diagram where each square is cartesian,
 \begin{equation}
 \label{eq: Springer cartesian}
 \begin{tikzcd}
     \Sp_\gamma \arrow[r,"\ev"]\arrow[d] & \text{$[G\backslash\widetilde{\mathcal{N}}]$}\arrow[d,"\hat\eta"] & \widetilde{\mathcal{N}}\arrow[d,"\eta"]\arrow[dd,"\pi" {yshift=10pt}, bend left=60] \arrow[l]  \\
     \Sp_{\gamma,\alpha} \arrow[r,"\ev"] \arrow[d] & \text{$[G\backslash \widetilde{\mathcal{N}}^P]$}\arrow[d,"\hat\xi"] & \widetilde{\mathcal{N}}^P  \arrow[d,"\xi"] \arrow[l] \\
    \Gr_\gamma   \arrow[r,"\ev"] & \text{$[G\backslash \mathcal{N}]$} & \mathcal{N}. \arrow[l]
    \arrow[from=1-2,to=3-2, "\hat\pi" {yshift=10pt}, bend left=60,crossing over]
 \end{tikzcd}
 \end{equation}
 Here, we consider all spaces with their reduced induced scheme structure.  Here, the notation $[G\backslash \mathcal{N}]$ (and similar constructions) means the quotient stack formed by the left action of $G$ on $\mathcal{N}$.


 In terms of lattices, $\ev$ sends a flag of lattices $\Lambda_{\bullet}$ to the flag $F_{\bullet}$ defined by $F_i=\Lambda_0/\Lambda_{K-i}$ in the $K$-dimensional space $\Lambda_0/\epsilon\Lambda_0$, together with the operator $\overline{\gamma}=\gamma|_{\Lambda_0/\epsilon\Lambda_0}$. This pair in turn defines an element of $\widetilde{\mathcal{N}}^P$, up to the left action of $G$ translating the flag and conjugating $\overline{\gamma}$.

 Given $t\in \mathcal{N}_L$, let $\mathcal{O}_t\coloneqq \Ad(L).t$, the adjoint orbit of $t$ in $\Lie(L)$. Furthermore, let $y = (1,t)\in \widetilde{\mathcal{N}}^P$, and let $\mathcal{O}_y \coloneqq G\times_P(\mathcal{O}_t + \mathfrak{n}) \subseteq \widetilde{\mathcal{N}}^P$, which are the strata of $\widetilde{\mathcal{N}}^P$ considered by Borho and MacPherson~\cite{BM}. 

  \begin{rmk}
  The discussion in \cite[Section 2.7]{GG} yields the following description of $\overline{\cO_y}$: We have $\overline{\cO_{y}}=G\times_P(\overline{\mathcal{O}_t} + \mathfrak{n})$. Alternatively, the variety $\overline{\cO_y}$ is isomorphic to the space of pairs $(F_{\bullet},x)\in G/P\times \mathcal{N}$ such that $xF_i\subseteq F_i$ for all $i$ and
  $$
  \JT(x|_{F_i/F_{i+1}})\leq \JT(t_i)
  $$
  where $t=t_0+t_{K-n}+t_{K-n+1}+\cdots+t_{K}$ is the block decomposition of $t$ (where $t_i=0$ for $i>0$ in our case, since these blocks are size $1$).
  \end{rmk}


Let $W^P=N_G(L_P)/L_P$ be the \emph{relative Weyl group}. Combinatorially, for $P=P_{\alpha}$ the group $W^P$ permutes blocks of $\alpha$ of equal size.
For our choice of $\alpha=(K-n,1^n)$, we have $W^P=S_n$.

\begin{prop}\cite[Proposition 2.6]{BM}\label{prop:BM-action-full}
\label{prop: partial Weyl action}
The partial Weyl group $W^P$ acts by endomorphisms on $\IC(\xi'_*\bQ_{\widetilde{\mathfrak{p}^{rs}}}) \cong R\xi_*\bQ_{\widetilde{\mathfrak{p}}}$, where $\mathfrak{p}^{rs}\subseteq \mathfrak{p}$ is the subset of regular semisimple elements, $\widetilde{\mathfrak{p}^{rs}} = \xi^{-1}(\mathfrak{p}^{rs})$, and $\xi': \widetilde{\mathfrak{p}^{rs}}\to \mathfrak{p}^{rs}$ is the restriction of $\xi$. 
\end{prop}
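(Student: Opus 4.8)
The plan is to follow Borho and MacPherson \cite{BM}, in two stages: first construct the $W^P$-action over the open dense locus $\mathfrak{g}^{rs}$ of regular semisimple elements, where $\xi$ degenerates to an honest covering map and everything is controlled by the representation theory of the finite group $W = S_K$; then transport the action to all of $\widetilde{\mathfrak{p}}$ using that $R\xi_*\bQ_{\widetilde{\mathfrak{p}}}$ is the intersection cohomology complex of that generic local system.

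For the first stage, I would restrict over $\mathfrak{g}^{rs}$ and use that the full Grothendieck--Springer morphism $\pi$ is there a Galois covering with deck group $W = S_K$; thus $\pi_*\bQ$ becomes the local system attached to the regular representation $\bQ[W]$ of $\pi_1(\mathfrak{g}^{rs})\twoheadrightarrow W$, with monodromy acting by left translation and the deck group by right translation. Factoring $\pi = \xi\circ\eta$, the morphism $\eta$ restricts over $\mathfrak{g}^{rs}$ to the quotient by the deck subgroup $W_P = W_L\cong S_{K-n}$, since a type-$P$ parabolic subalgebra contained in $\Ad(g)\mathfrak{p}$ and containing a fixed regular semisimple element $x$ is the same datum as a Borel subalgebra of the Levi through $x$, of which there are $|W_P|$. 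Hence $\xi'_*\bQ_{\widetilde{\mathfrak{p}^{rs}}}\cong(\pi_*\bQ)^{W_P}$ is the local system with fiber $\bQ[W/W_P]\cong\mathrm{Ind}_{W_P}^W\bQ$ carrying the left $W$-monodromy; its rank $[W:W_P]$ agrees with $\#(G/P)^T$, the number of type-$P$ parabolics containing a given regular semisimple element. I would then observe that $W^P = N_W(W_P)/W_P$ — which equals $S_n$ for $P = P_{(K-n,1^n)}$ because $N_{S_K}(S_{K-n}) = S_{K-n}\times S_n$ — acts on $\bQ[W/W_P]$ by $\bar n\cdot gW_P = gn^{-1}W_P$; this is well defined exactly because $\bar n$ normalizes $W_P$, and it commutes with the left $W$-action and hence with the monodromy. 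This gives a homomorphism $\bQ[W^P]\to\End(\xi'_*\bQ_{\widetilde{\mathfrak{p}^{rs}}})$.

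For the second stage: $\widetilde{\mathfrak{p}}$ is smooth and $\xi$ is proper, so $R\xi_*\bQ_{\widetilde{\mathfrak{p}}}[\dim\widetilde{\mathfrak{p}}]$ is a semisimple perverse sheaf by the decomposition theorem. The smallness of $\xi$ established in \cite{BM} then yields the isomorphism $R\xi_*\bQ_{\widetilde{\mathfrak{p}}}\cong\IC(\xi'_*\bQ_{\widetilde{\mathfrak{p}^{rs}}})$ with no summands supported on $\mathfrak{g}\setminus\mathfrak{g}^{rs}$. Since the intermediate extension is fully faithful on semisimple local systems on $\mathfrak{g}^{rs}$, we obtain $\End(R\xi_*\bQ_{\widetilde{\mathfrak{p}}}) = \End(\IC(\xi'_*\bQ_{\widetilde{\mathfrak{p}^{rs}}})) = \End(\xi'_*\bQ_{\widetilde{\mathfrak{p}^{rs}}})$, so the $W^P$-action from the first stage extends uniquely across this isomorphism, which is the assertion.

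The step I expect to be the main obstacle is the smallness of $\xi$ — equivalently, the displayed isomorphism $R\xi_*\bQ_{\widetilde{\mathfrak{p}}}\cong\IC(\xi'_*\bQ_{\widetilde{\mathfrak{p}^{rs}}})$ — which demands a stratum-by-stratum check that fiber dimensions drop strictly faster than half the codimension; for $\mathfrak{gl}_K$ this reduces to the combinatorics of Jordan types together with the description of $\overline{\mathcal{O}_y}$ as a space of flag--matrix pairs recalled above, and I would simply invoke \cite{BM}. The remaining ingredients — the covering-space computation over $\mathfrak{g}^{rs}$ and the full faithfulness of $\IC$ — are standard.
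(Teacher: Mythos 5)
The paper does not prove this statement; it is cited verbatim from Borho and MacPherson as \cite[Proposition 2.6]{BM}, so there is no in-paper argument to compare against. Your reconstruction is correct and is exactly their argument: over $\mathfrak{g}^{rs}$ one identifies $\xi'_*\bQ$ with the $W$-equivariant local system $\bQ[W/W_P]$ and lets $W^P = N_W(W_P)/W_P$ act by the commuting right translations, then transports endomorphisms across $R\xi_*\bQ_{\widetilde{\mathfrak{p}}}\cong\IC(\xi'_*\bQ_{\widetilde{\mathfrak{p}^{rs}}})$ (smallness of $\xi$, from \cite{BM}) using full faithfulness of intermediate extension. One minor slip in stage one: ``a type-$P$ parabolic subalgebra contained in $\Ad(g)\mathfrak{p}$'' should read ``a Borel subalgebra contained in $\Ad(g)\mathfrak{p}$''---the fiber of $\eta$ over a point of $\widetilde{\mathfrak{p}^{rs}}$ consists of the Borel subalgebras of the Levi through $x$, of cardinality $|W_P|=|W_L|$, which is clearly what you meant.
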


Let $d_x=\dim\pi^{-1}(x)$ and $d_y = \dim \xi^{-1}(y)$.

\begin{prop}\label{prop:BM-action-restriction}
    The partial Weyl group $W^P$ action on $\IC(\xi'_*\bQ_{\widetilde{\mathfrak{p}^{rs}}}) \cong R\xi_*\bQ_{\widetilde{\mathfrak{p}}}$ restricts to a $W^P$ action on $R\xi^y_*\IC(\bQ_{\cO_y})$.
\end{prop}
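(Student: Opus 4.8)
The plan is to realize $R\xi^y_*\IC(\bQ_{\cO_y})$ as a $W_P$-multiplicity complex inside the ordinary Springer sheaf $R\pi_*\bQ_{\widetilde{\mathcal{N}}}$, on which the full Weyl group $W=S_K$ acts, and then read off the $W^P$-action by restricting the $W$-action to the normalizer $N_W(W_P)$. First I would recall that $\eta\colon\widetilde{\mathcal{N}}\to\widetilde{\mathcal{N}}^P$ is proper and semismall (as noted in the excerpt) and is fiberwise the Springer resolution of the nilpotent cone of the factor $\GL_{K-n}$ of the Levi $L=\GL_{K-n}\times(\GL_1)^n$ of $P$ (the $\GL_1$-factors and the central torus contribute nothing to nilpotent orbits). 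Since we are in type $A$ all local systems that occur are trivial, so the decomposition theorem together with Springer theory for $\GL_{K-n}$ gives, up to shift,
\[
R\eta_*\bQ_{\widetilde{\mathcal{N}}}\;\cong\;\bigoplus_{\lambda\vdash K-n}\IC\!\bigl(\bQ_{\cO_{y_\lambda}}\bigr)\otimes S^\lambda ,
\]
where $y_\lambda=(1,t_\lambda)$ with $t_\lambda\in\mathcal{N}_L$ of Jordan type $\lambda$, $S^\lambda$ is the Specht module of $W_P=S_{K-n}$, and the relative Springer action of $W_P$ on $R\eta_*\bQ_{\widetilde{\mathcal{N}}}$ acts only on the factors $S^\lambda$.

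Since $\IC(\bQ_{\cO_{y_\lambda}})$ is the extension by zero of the intersection complex of $\overline{\cO_{y_\lambda}}$ and $\xi\circ i=\xi^{y_\lambda}$ for the closed embedding $i\colon\overline{\cO_{y_\lambda}}\hookrightarrow\widetilde{\mathcal{N}}^P$, applying $R\xi_*$ to the displayed decomposition yields
\[
R\pi_*\bQ_{\widetilde{\mathcal{N}}}\;\cong\;\bigoplus_{\lambda\vdash K-n}\bigl(R\xi^{y_\lambda}_*\IC(\bQ_{\cO_{y_\lambda}})\bigr)\otimes S^\lambda ,
\qquad
\Hom_{W_P}\!\bigl(S^\lambda,\,R\pi_*\bQ_{\widetilde{\mathcal{N}}}\bigr)\;\cong\;R\xi^{y_\lambda}_*\IC(\bQ_{\cO_{y_\lambda}}).
\]
Next I would invoke the fact (Borho--MacPherson; equivalently the compatibility of Springer theory with parabolic induction) that the Springer $W=S_K$-action on $R\pi_*\bQ_{\widetilde{\mathcal{N}}}$ restricts on the parabolic subgroup $W_P=S_{K-n}$ to the action appearing in this decomposition. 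Hence $N_W(W_P)=S_{K-n}\times S_n$ preserves the $W_P$-isotypic decomposition of $R\pi_*\bQ_{\widetilde{\mathcal{N}}}$; on each $\Hom_{W_P}(S^\lambda,R\pi_*\bQ_{\widetilde{\mathcal{N}}})$ the subgroup $W_P$ acts trivially, while the complementary factor $S_n$ commutes with $W_P$ and so does not twist $S^\lambda$. Therefore $N_W(W_P)$ acts on $\Hom_{W_P}(S^\lambda,R\pi_*\bQ_{\widetilde{\mathcal{N}}})\cong R\xi^{y_\lambda}_*\IC(\bQ_{\cO_{y_\lambda}})$ through $W^P=S_n$. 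For $\lambda=(K-n)$ this is the action of Proposition~\ref{prop:BM-action-full} on $(R\pi_*\bQ_{\widetilde{\mathcal{N}}})^{W_P}=R\xi_*\bQ_{\widetilde{\mathcal{N}}^P}$, so the action obtained on the remaining summands—in particular on $R\xi^y_*\IC(\bQ_{\cO_y})$ for the partition $\lambda=(n-k)^{k-1}$ relevant to $Y_{n,k}$—is exactly the asserted restriction of that action.

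I expect the delicate point to be the compatibility invoked in the previous paragraph: that the $S_{K-n}$-action built into the decomposition of $R\eta_*\bQ_{\widetilde{\mathcal{N}}}$ is genuinely the restriction to $W_P\subset W$ of the Springer $S_K$-action on $R\pi_*\bQ_{\widetilde{\mathcal{N}}}$, rather than an a priori unrelated action. This is precisely the mechanism underlying Borho and MacPherson's construction of the $W^P$-action, and is what makes Proposition~\ref{prop:BM-action-full} well posed, so this step should be phrased as a direct appeal to \cite[Section~2]{BM}; the rest of the argument is the decomposition-theorem bookkeeping above. Finally, all of this respects the $G$-equivariant (stacky) structure in diagram~\eqref{eq: Springer cartesian}, since every construction used is $G$-equivariant, so passing to the quotient stacks $[G\backslash\,\cdot\,]$ and base-changing along $\Sp_{\gamma,\alpha}\to[G\backslash\widetilde{\mathcal{N}}^P]$ (resp.\ $\Gr_\gamma\to[G\backslash\mathcal{N}]$) transports the resulting $W^P$-action to the affine Springer fiber side without further work.
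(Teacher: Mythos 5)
Your proof is correct, and it reaches the same conclusion by unpacking the Borho--MacPherson machinery rather than citing it. The paper's proof is shorter: it quotes \cite[Proposition 1.10, Equation 6]{BM} directly, which gives $R\xi^y_*\IC(\bQ_{\cO_y})[-2d_y]$ both as a direct summand of $R\xi_*\bQ_{\widetilde{\mathfrak{p}}}[\dim\mathfrak{p}]$ and as $\bigoplus_x Rj^x_*\IC(\bQ_{\cO_x})[-2d_x]\otimes V_x^y$ with $V_x^y=\Hom_{W_L}(V_y,V_x)$, and then observes that the $W^P$-module structure on the multiplicity spaces $V_x^y$ induces the desired action as a restriction of the action from Proposition~\ref{prop:BM-action-full}. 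You instead rederive that decomposition from the factorization $\pi=\xi\circ\eta$, the decomposition theorem for the semismall map $\eta$ (Springer theory for the Levi, with trivial local systems in type $A$), and the identification of $R\xi^{y_\lambda}_*\IC(\bQ_{\cO_{y_\lambda}})$ with the multiplicity space $\Hom_{W_P}(S^\lambda,R\pi_*\bQ_{\widetilde{\mathcal{N}}})$, on which $N_W(W_P)$ acts through $W^P$. Both arguments hinge on exactly the same nontrivial input, which you correctly isolate: that the $W_L$-action appearing in the decomposition of $R\eta_*\bQ_{\widetilde{\mathcal{N}}}$ is the restriction to $W_L\subseteq W$ of the full Springer action, which is \cite[Section 2]{BM}. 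Your version buys a more self-contained derivation and makes transparent why the multiplicity spaces carry $W^P$-actions; the paper's version is just a citation. Two cosmetic points: the proposition is phrased on $\widetilde{\mathfrak{p}}$ while you work on the nilpotent locus $\widetilde{\mathcal{N}}$ (the paper's proof elides the same restriction, and BM's results cover both), and your identification $N_W(W_P)\cong S_{K-n}\times S_n$ assumes $K-n\neq 1$, which holds in all nondegenerate cases here. Neither affects the validity of the argument.
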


\begin{proof}
    By \cite[Proposition 1.10]{BM}, the perverse sheaf $R\xi_*^y \IC(\bQ_{\cO_y})[-2d_y]$ is a direct summand of the semisimple object $R\xi_* \bQ_{\widetilde{\mathfrak{p}}}[\dim\mathfrak{p}]$ in the category of perverse sheaves, and Borho and MacPherson give the following explicit formula for its decomposition into simple objects. 
    
    Let $V_x \coloneqq H^{2d_x}(\mathcal{B}_x)=V_{\JT(x)}$ and 
    \[
    V_y \coloneqq H^{2d_y}(\eta^{-1}(y))\cong H^{2d_y}(\mathcal{B}(L)_t) \cong H^{2d_y}(\mathcal{B}_{t_{0}}\times \cdots \times \mathcal{B}_{t_{n}}).
    \]
    By the Springer correspondence, $V_x$ is the irreducible representation of $W$ corresponding to $\JT(x)$, and $V_y$ is the irreducible representation of $W_L$ (the Weyl group of $L$) corresponding to $\JT(t_0),\dots, \JT(t_{n})$. Let 
    \[
    V_x^y \coloneqq \mathrm{Hom}_{W_L}(V_y,V_x),
    \]
    so that $V_x^y\otimes V_y$ is the $V_y$-isotypic component of $V_x$ as a $W_L$-module. 
    Observe that $V_x^y$ naturally inherits the structure of a $W^P$ module. 
    By \cite[1.10, Equation 6]{BM}, we have
    \[
    R\xi^y_*\IC(\bQ_{\cO_y})[-2d_y] \cong \bigoplus Rj^x_*\IC(\bQ_{\cO_x})[-2d_x] \otimes V_x^y
    \]
    where the sum runs over adjoint orbits $\cO_x$ in $\mathcal{N}$. Then $V_x^y$ inherits the structure of a $W^P$ module, which induces a $W^P$ action on the left-hand side which is the restriction of the action of $W^P$ on $R\xi_*\bQ_{\widetilde{\mathfrak{p}}}$.
\end{proof}

 The subvariety $\overline{\mathcal{O}_y}$ of $\widetilde{\mathcal{N}}^P$ gives a substack $[G\backslash \mathcal{O}_y]$ of $[G\backslash \widetilde{\mathcal{N}}^P]$, and we have restricted maps $\hat\xi^y$ and $\xi^y$:
 \begin{equation}\label{eq:commutative-diag1}
    \begin{tikzcd}
    {[G\backslash \widetilde{\mathcal{N}}^P]}\arrow[dd,"\hat\xi"{yshift=-10pt}] &&\widetilde{\mathcal{N}}^P\arrow[ll]\arrow[dd,"\xi"{yshift=-10pt}]&\\
    &{[G\backslash \overline{\mathcal{O}_y}]}\arrow[lu,hookrightarrow,"\hat j^y"]\arrow[ld,"\hat\xi^y"]&& \overline{\mathcal{O}_y}\arrow[ll,crossing over]\arrow[lu,"j^y",hookrightarrow]\arrow[ld,"\xi^y"]\\
    {[G\backslash \mathcal{N}]}&&\mathcal{N}\arrow[ll]&
    \end{tikzcd}
 \end{equation}

We also define $$C''  = \pr_{(K)}(C)\subset \aGr,$$ and note that $C''$ consists of Schubert cells in $\aGr$ labeled by $\ttt_{\lambda}\in \widetilde{S_K}$ such that $\lambda_i\ge 0$ for all $i$ and $\lambda_1=0$. By Remark \ref{rem: right action preserves positive} we get 
$C=\pr^{-1}(C'')$. Also notice that a lattice $\Lambda$ is in $C''$ if and only if $\Lambda\subset \bO^K$ and $\Lambda$  contains a vector with leading term $e_1$.  

We then obtain the following cartesian diagrams, which parallel \eqref{eq: Springer cartesian}, from the definitions and the discussion above.

\begin{prop} For $P = P_\alpha$ where $\alpha = (K-n,1^n)$ and $t\in \mathcal{N}_L$ such that $t_0$ has Jordan type $(n-k)^{k-1}$:

a) The affine Borho-MacPherson variety fits into a cartesian square
\begin{equation}
\label{eq: commutative diag BM}
\begin{tikzcd} 
        \BM_{\gamma,n,k} \arrow[r,"\ev"]\arrow[d,"\pr"]& {[G\backslash\overline{\mathcal{O}_y}]}\arrow[d,"\hat\xi^y"]\\
        \Gr_{\gamma} \arrow[r,"\ev"] & {[G\backslash\mathcal{N}]}.
    \end{tikzcd}
 \end{equation}   
 
b) The spaces $X_{n,k}$ and $Y_{n,k}$ fit into the following cartesian squares: 
 \begin{equation}
 \label{eq:commutative-diag2}
    \begin{tikzcd}
        Y_{n,k} \arrow[r,"\ev"]\arrow[d,"\pr"]& {[G\backslash\overline{\mathcal{O}_y}]}\arrow[d,"\hat\xi^y"]\\
        C''\cap \Gr_{\gamma} \arrow[r,"\ev"] & {[G\backslash\mathcal{N}]}.
    \end{tikzcd}
\quad
    \begin{tikzcd}
        X_{n,k} \arrow[r,"\ev"]\arrow[d,"\pr"]& {[G\backslash\widetilde{\mathcal{N}}]}\arrow[d,"\hat\pi"]\\
        C''\cap \Gr_{\gamma} \arrow[r,"\ev"] & {[G\backslash\mathcal{N}]}.
    \end{tikzcd}
 \end{equation}

\end{prop}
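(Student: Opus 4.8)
The plan is to obtain all three cartesian squares by base change and pasting from the squares already established in \eqref{eq: Springer cartesian}, together with the explicit description of $\overline{\mathcal{O}_y}$ recalled above and the lattice descriptions of $\ev$ and $\pr$. For part (a), the bottom-left square of \eqref{eq: Springer cartesian} (the one with vertical map $\hat\xi$) identifies $\Sp_{\gamma,\alpha}$ with $\Gr_\gamma\times_{[G\backslash\mathcal{N}]}[G\backslash\widetilde{\mathcal{N}}^P]$ via the two $\ev$ maps; base-changing this identification along the closed immersion of stacks $[G\backslash\overline{\mathcal{O}_y}]\hookrightarrow[G\backslash\widetilde{\mathcal{N}}^P]$ gives
\[
\Gr_\gamma\times_{[G\backslash\mathcal{N}]}[G\backslash\overline{\mathcal{O}_y}]\;\cong\;\Sp_{\gamma,\alpha}\times_{[G\backslash\widetilde{\mathcal{N}}^P]}[G\backslash\overline{\mathcal{O}_y}],
\]
so it remains to identify the right-hand side with $\BM_{\gamma,n,k}$. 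By definition it is the locus of $\Lambda_\bullet\in\Sp_{\gamma,\alpha}$ with $\ev(\Lambda_\bullet)\in[G\backslash\overline{\mathcal{O}_y}]$; writing $\ev(\Lambda_\bullet)$ as the $G$-orbit of the induced partial flag $F_\bullet$ on $\Lambda_0/\epsilon\Lambda_0$ together with $\overline\gamma=\gamma|_{\Lambda_0/\epsilon\Lambda_0}$, and using the description of $\overline{\mathcal{O}_y}$ as the pairs $(F_\bullet,x)$ with $\JT(x|_{F_i/F_{i+1}})\le\JT(t_i)$ — with $t_0$ of type $(n-k)^{k-1}$ and all other $t_i=0$, the latter being automatic on one-dimensional blocks — this reduces to $\JT(\overline\gamma|_{\Lambda_0/\Lambda_{K-n}})\le(n-k)^{k-1}$, exactly the condition defining $\BM_{\gamma,n,k}$. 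As both sides carry the reduced induced structure, the square of (a) is cartesian, with maps the evident restrictions of $\pr$ and $\ev$.

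For part (b), the common base is $C''\cap\Gr_\gamma$: from $C=\pr^{-1}(C'')$ for $\pr\colon\aFl\to\aGr$, the surjectivity of $\pr_{(K-n,1^n)}$, and the factorization $\aFl\to\aFl_{(K-n,1^n)}\xrightarrow{q}\aGr$ one gets $C'=q^{-1}(C'')$; moreover $\gamma\Lambda_0\subseteq\Lambda_0$ forces the projections of $\Sp_\gamma$ and of $\BM_{\gamma,n,k}$ to $\aGr$ to land in $\Gr_\gamma$. Hence $X_{n,k}=\Sp_\gamma\cap\pr^{-1}(C'')=\pr^{-1}(C''\cap\Gr_\gamma)$ and $Y_{n,k}=\BM_{\gamma,n,k}\cap q^{-1}(C'')=\pr^{-1}(C''\cap\Gr_\gamma)$, where $\pr$ now denotes the projection to $\Gr_\gamma$ from $\Sp_\gamma$, resp.\ from $\BM_{\gamma,n,k}$. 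Base-changing the outer cartesian square of \eqref{eq: Springer cartesian} (with $\hat\pi=\hat\xi\circ\hat\eta\colon[G\backslash\widetilde{\mathcal{N}}]\to[G\backslash\mathcal{N}]$), which reads $\Sp_\gamma\cong\Gr_\gamma\times_{[G\backslash\mathcal{N}]}[G\backslash\widetilde{\mathcal{N}}]$, along $C''\cap\Gr_\gamma\hookrightarrow\Gr_\gamma$ produces a cartesian square with upper-left corner $(C''\cap\Gr_\gamma)\times_{\Gr_\gamma}\Sp_\gamma=X_{n,k}$, which is the first square in \eqref{eq:commutative-diag2}; base-changing the square of part (a) along the same morphism produces one with upper-left corner $(C''\cap\Gr_\gamma)\times_{\Gr_\gamma}\BM_{\gamma,n,k}=Y_{n,k}$, the second square.

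The only step carrying content beyond the formal manipulation of cartesian squares is the scheme-theoretic identification in part (a): one has to verify that the membership $\ev(\Lambda_\bullet)\in[G\backslash\overline{\mathcal{O}_y}]$ really unwinds, via the lattice description of $\ev$ and the quotient stack, to the inequality $\gamma^{n-k}\Lambda_0\subseteq\Lambda_{K-n}$, and that the reduced induced scheme structures on the two sides agree. Granted \eqref{eq: Springer cartesian} and the description of $\overline{\mathcal{O}_y}$, this is essentially bookkeeping, but it is where care is required.
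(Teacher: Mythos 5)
Your proposal is correct and follows exactly the route the paper intends: the paper offers no explicit proof beyond asserting that the squares ``parallel \eqref{eq: Springer cartesian}'' and follow from the definitions, and your argument is a careful fleshing-out of that — base change of the cartesian squares in \eqref{eq: Springer cartesian} along $[G\backslash\overline{\mathcal{O}_y}]\hookrightarrow[G\backslash\widetilde{\mathcal{N}}^P]$ and along $C''\cap\Gr_\gamma\hookrightarrow\Gr_\gamma$, plus the unwinding of the Jordan-type condition via the lattice description of $\ev$ and the stated description of $\overline{\mathcal{O}_y}$. The one point you rightly flag (agreement of reduced induced structures) is also left implicit in the paper.
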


\begin{rmk}
Strictly speaking, these maps and diagrams are defined for $X_{n,k,N}$ and $Y_{n,k,N}$ for some specific $N$. On the other hand, it is easy to see by Lemma \ref{lem: N stability} that for $N\ge k$ the evaluation map $\ev$ does not depend on $N$, and hence the same cartesian diagrams hold for $X_{n,k}$ and $Y_{n,k}$. 
\end{rmk}

\begin{lem}\label{lem: Springer action Y}
    There is a ``Springer'' action of $W=S_K$ on the Borel-Moore homology of $X_{n,k}$.
\end{lem}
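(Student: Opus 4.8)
The plan is to obtain the $S_K$-action on $H^{BM}_*(X_{n,k})$ by pulling back the Borho--MacPherson $W$-action along the evaluation map, using the rightmost cartesian square in \eqref{eq:commutative-diag2}. Concretely, the full Weyl group $W = S_K$ acts on $R\pi_* \bQ_{\widetilde{\mathcal N}}$ by the classical Springer action (this is the $P = B$ case of Proposition~\ref{prop:BM-action-full}, equivalently the original Borho--MacPherson/Lusztig construction), and since this action is $G$-equivariant it descends to an action on $R\hat\pi_* \bQ_{[G\backslash \widetilde{\mathcal N}]}$ on the quotient stack $[G\backslash \mathcal N]$. Because the square
\begin{equation*}
\begin{tikzcd}
X_{n,k} \arrow[r,"\ev"]\arrow[d,"\pr"]& {[G\backslash\widetilde{\mathcal{N}}]}\arrow[d,"\hat\pi"]\\
C''\cap \Gr_{\gamma} \arrow[r,"\ev"] & {[G\backslash\mathcal{N}]}
\end{tikzcd}
\end{equation*}
is cartesian, proper base change gives a canonical isomorphism
\[
R(\pr)_* \bQ_{X_{n,k}} \;\cong\; \ev^*\, R\hat\pi_* \bQ_{[G\backslash \widetilde{\mathcal N}]}
\]
of complexes on $C''\cap \Gr_\gamma$, and the $S_K$-action on the right-hand side transports to one on the left. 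Taking hypercohomology with compact supports (or the dual, $H^{BM}_*$) then produces the desired action of $S_K = W$ on $H^{BM}_*(X_{n,k})$.

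The main steps, in order, are: (i) recall/invoke that the classical Springer $W$-action on $R\pi_*\bQ_{\widetilde{\mathcal N}}$ is $G$-equivariant, hence descends to $[G\backslash\mathcal N]$ — this is implicit in the Borho--MacPherson framework already cited in the paper; (ii) verify the square above is genuinely cartesian with reduced structure, which the preceding Proposition asserts, so that proper base change applies (here $\pi$, hence $\hat\pi$, is proper, so no complication arises); (iii) apply $\ev^*$ to the $W$-equivariant complex and take Borel--Moore homology, noting that $H^{BM}_*(X_{n,k})$ is computed as the hypercohomology of $R(\pr)_!\omega_{X_{n,k}}$, or more simply as $\mathbb{H}^{BM}$ of the pulled-back complex, so that functoriality of $\ev^*$ endows it with the $W$-action; (iv) record that this action is independent of the auxiliary parameter $N\ge k$, which follows from the remark that the evaluation map does not depend on $N$ in the stable range.

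The step I expect to be the main obstacle is (iii): making precise in what sense $H^{BM}_*(X_{n,k})$ is the hypercohomology of a complex that carries the $W$-action, since $X_{n,k}$ is non-compact and infinite-dimensional-ambient (it lives in the affine flag variety). One must be careful that the base-change isomorphism is compatible with the formation of Borel--Moore homology; the cleanest route is to note that $X_{n,k}$ is a \emph{finite-dimensional} variety (it is cut out inside a union of finitely many affine Schubert cells, as established in the stabilization subsection via the truncation $\overline{X_{n,k,N}}$), so one may work with an honest finite-dimensional model and the standard six-functor formalism applies without subtlety. Once the action is in place, its compatibility with the $\bC^*$- and $\bC^*\times\bC^*$-actions (needed for the bigrading in Theorem~\ref{thm:GradedFrob}) follows since those actions commute with $\ev$ up to the scalars computed in Lemma~\ref{lem: C* weights} and Lemma~\ref{lem: stable torus action}; but that compatibility is not needed for the bare statement of this lemma.
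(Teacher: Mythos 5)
Your proposal follows the same route as the paper: use the cartesian square from \eqref{eq:commutative-diag2}, invoke the Borho--MacPherson/Lusztig identification $\End(R\hat\pi_*\bQ_{[G\backslash\widetilde{\mathcal N}]})\cong \bQ[W]$, and transport the $W$-action across $\ev$ by proper base change. The one place where the paper is cleaner is exactly the step you flagged as the potential obstacle: rather than pulling back $\bQ$ by $\ev^*$ and then massaging $H^*_c$ or its dual into $H^{\BM}_*$, the paper applies Verdier duality once at the source to get $\End(R\hat\pi_*\bD_{[G\backslash\widetilde{\mathcal N}]})\cong\bQ[W]$, and then uses the $\ev^!$ form of base change, $R\pr_*\bD_{X_{n,k}}\cong \ev^!R\hat\pi_*\bD_{[G\backslash\widetilde{\mathcal N}]}$, so that Borel--Moore homology $H^{\BM}_i(X_{n,k})=R^{-i}\Gamma(R\pr_*\bD_{X_{n,k}})$ carries the action directly, without any detour through compactly supported cohomology. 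Your version is equivalent (for $\pr$ proper, $\mathbb D \circ \ev^* = \ev^! \circ \mathbb D$ and $\mathbb D$ commutes with $R\pr_*$), just less direct; worth knowing that working with $\bD$ and $\ev^!$ from the outset dissolves the finiteness/compactness worry you raised.
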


\begin{proof}
We follow the proof sketch in \cite[Subsection 2.6.3]{Yun}. Let $\bD_{[G\backslash \widetilde{N}]}$ be the dualizing complex of $[G\backslash \widetilde{N}]$. By the cartesian diagram on the right side of~\eqref{eq:commutative-diag2}, the fact that $\hat\pi$ is proper, and the fact that $\pr$ in this cartesian square is a map of algebraic varieties, by proper base change we have
\begin{equation}\label{eq:BaseChange}
R\pr_*\bD_{X_{n,k}}=\ev^!R\hat\pi_*\bD_{[G\backslash\widetilde{\mathcal{N}}]}.
\end{equation}
By standard results in Springer theory~\cite{BM,Lusztig-Greenpoly}, there are canonical identifications
\[
\End_{D([G\backslash \mathcal{N}])}(R\hat\pi_*\bQ_{[G\backslash \widetilde{\mathcal{N}}]}) = \End_{D_G(\mathcal{N})}(R\pi_*\bQ_{\widetilde{\mathcal{N}}}) = \End_{D(\mathcal{N})}(R\pi_*\bQ_{\widetilde{\mathcal{N}}}) = \bQ[W],
\]
where $W\cong S_K$ is the Weyl group of $G$, and where $D(-)$ (respectively $D_G(-)$) stand for the (respectively $G$-equivariant) bounded derived category of constructible sheaves. Hence, applying the Verdier duality functor, see~\cite[Corollary 2.8.9]{AcharBook}, we also have
\[
\End_{D([G\backslash \mathcal{N}])}(R\hat\pi_*\bD_{[G\backslash \widetilde{\mathcal{N}}]}) = \bQ[W].
\]
By~\eqref{eq:BaseChange}, the functor $\ev^!$ gives a map
\[
\bQ[W]= \End_{D([G\backslash\mathcal{N}])}(R\hat\pi_*\bD_{[G\backslash\widetilde{\mathcal{N}}]})\to \End_{D(C''\cap \Gr_\gamma)}(R\pr_*\bD_{Y_{n,k}}),
\]
and hence we also get a $W\cong S_K$ action on 
\[
R^{-i}\Gamma(\pr_*\bD_{X_{n,k}}) = R^{-i}\Gamma(\bD_{X_{n,k}})= H_{i}^{BM}(X_{n,k};\bQ),
\]
as desired.
\end{proof}

\begin{rmk}
    In order to conclude that there is a $S_n\cong W^P$ action on the Borel-Moore homology of $Y_{n,k}$, we could first show that $\overline{\mathcal{O}_y}$ is rationally smooth on all fibers of $\hat \xi^y$ over $ev(\Gr_\gamma)$. Unfortunately, this is not true. However, $\overline{\mathcal{O}_y}$ is rationally smooth on all fibers of $\hat\xi^y$ over $ev(C''\cap \Gr_\gamma)$, which we show in Subsection \ref{sec:GeomSkew}, which is exactly what we need to transfer the $S_n$ action to the Borel-Moore homology of $Y_{n,k}$.
\end{rmk}

\begin{lem}
\label{lem: parabolic invariants}
Given $\eta\vDash K$, consider the parabolic subgroup $S_{\eta}\subset S_K$ and the partial affine flag variety $\aFl_{\eta}$. Recall the projection $\pr_{\eta}:\aFl\to \aFl_{\eta}$. Then we have
$$
H^{\BM}_*(X_{n,k})^{S_\eta}\cong H^{\BM}_*(\pr_{\eta}(X_{n,k})),
$$
where the superscript $S_\eta$ denotes taking invariants with respect to the $S_\eta$ action.
\end{lem}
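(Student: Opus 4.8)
The plan is to express both sides as hypercohomology of complexes built by proper base change from classical Springer theory, so that passage to $S_\eta$-invariants corresponds exactly to passage from complete flags to type-$\eta$ partial flags.

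\textbf{Step 1: a partial resolution and two cartesian squares.} Let $P = P_\eta\subseteq G = \GL_K$ with Levi $L = L_\eta$, and factor the stacky Springer map $\hat\pi\colon [G\backslash\widetilde{\mathcal N}]\to[G\backslash\mathcal N]$ of \eqref{eq:commutative-diag2} as $[G\backslash\widetilde{\mathcal N}]\xrightarrow{\hat\eta_P}[G\backslash\widetilde{\mathcal N}^P]\xrightarrow{\hat\xi_P}[G\backslash\mathcal N]$, where $\widetilde{\mathcal N}^P = G\times_P(\mathcal N_L+\mathfrak n_P)$; the map $\hat\eta_P$ is proper and surjective, and its fiber over a pair (partial flag, nilpotent $x$) is the product $\prod_i \mathcal B(L_i)_{x_i}$ of Springer fibers of the Levi blocks. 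Exactly as in the derivation of the squares in \eqref{eq:commutative-diag2} — using $C = \pr^{-1}(C'')$, the surjectivity of $\hat\eta_P$, and the fact that $\overline\gamma = \gamma|_{\Lambda_0/\epsilon\Lambda_0}$ is automatically nilpotent — I would check that $\pr_\eta(X_{n,k}) = \Sp_{\gamma,\eta}\cap\pr_\eta(C)$ and that $\pr_\eta(X_{n,k})$ is the fiber product $(C''\cap\Gr_\gamma)\times_{[G\backslash\mathcal N]}[G\backslash\widetilde{\mathcal N}^P]$ (via $\ev$ on the left), while $X_{n,k}$ is the fiber product $\pr_\eta(X_{n,k})\times_{[G\backslash\widetilde{\mathcal N}^P]}[G\backslash\widetilde{\mathcal N}]$ (via $\pr_\eta$ on the left). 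In particular $\pr_\eta\colon X_{n,k}\to\pr_\eta(X_{n,k})$ is proper, being a base change of the proper map $\hat\eta_P$.

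\textbf{Step 2: base change.} For any morphism $f\colon X\to Y$ one has $\bD_X = f^!\bD_Y$, so $\ev\colon X_{n,k}\to[G\backslash\widetilde{\mathcal N}]$ gives $\bD_{X_{n,k}} = \ev^!\bD_{[G\backslash\widetilde{\mathcal N}]}$ and, likewise, $\bD_{\pr_\eta(X_{n,k})} = \ev^!\bD_{[G\backslash\widetilde{\mathcal N}^P]}$. Applying proper base change (as in the proof of Lemma~\ref{lem: Springer action Y}) to the second cartesian square of Step~1, with $\hat\eta_P$ proper, gives
\[
R(\pr_\eta)_*\bD_{X_{n,k}} = \ev^!\, R(\hat\eta_P)_*\bD_{[G\backslash\widetilde{\mathcal N}]}.
\]
Taking $R\Gamma$ and using $R\Gamma\circ R(\pr_\eta)_* = R\Gamma$, we obtain $H^{\BM}_*(X_{n,k}) = R\Gamma\bigl(\ev^!\, R(\hat\eta_P)_*\bD_{[G\backslash\widetilde{\mathcal N}]}\bigr)$.

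\textbf{Step 3: invariants.} By the decomposition theorem and standard Springer theory for $\GL_K$ (\cite{BM,Lusztig-Greenpoly}), the complex $R(\hat\eta_P)_*\bD_{[G\backslash\widetilde{\mathcal N}]}$ carries an action of $W_L\cong S_\eta$, which agrees with the restriction along $S_\eta\hookrightarrow W = S_K$ of the Springer $\bQ[W]$-action on $R\hat\pi_*\bD_{[G\backslash\widetilde{\mathcal N}]}$ used in Lemma~\ref{lem: Springer action Y}. Because the $W_L$-invariant cohomology of each fiber $\prod_i\mathcal B(L_i)_{x_i}$ of $\hat\eta_P$ is one-dimensional (Hotta–Springer: $\langle\widetilde H_{\mu_i},h_{|\mu_i|}\rangle$ is a single monomial in $q$), the invariant summand $(R(\hat\eta_P)_*\bD_{[G\backslash\widetilde{\mathcal N}]})^{S_\eta}$ — the image of the idempotent $e_{S_\eta} = \tfrac{1}{|S_\eta|}\sum_{w\in S_\eta}w$ — is, on the preimage under $\hat\xi_P$ of $\ev(C''\cap\Gr_\gamma)$, isomorphic to $\bD_{[G\backslash\widetilde{\mathcal N}^P]}$. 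Since $\ev^!$ and $R\Gamma$ are $\bQ$-linear they commute with splitting off this direct summand, whence
\[
H^{\BM}_*(X_{n,k})^{S_\eta} = R\Gamma\bigl(\ev^!(R(\hat\eta_P)_*\bD_{[G\backslash\widetilde{\mathcal N}]})^{S_\eta}\bigr) = R\Gamma(\ev^!\bD_{[G\backslash\widetilde{\mathcal N}^P]}) = R\Gamma(\bD_{\pr_\eta(X_{n,k})}) = H^{\BM}_*(\pr_\eta(X_{n,k})),
\]
and by construction the $S_\eta$-action used on the left is the restriction of the $S_K$-Springer action of Lemma~\ref{lem: Springer action Y}.

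\textbf{Main obstacle.} The delicate point is the claim in Step~3 that $(R(\hat\eta_P)_*\bD_{[G\backslash\widetilde{\mathcal N}]})^{S_\eta}$ agrees with $\bD_{[G\backslash\widetilde{\mathcal N}^P]}$ after applying $\ev^!$. For intermediate $\eta$ the variety $\widetilde{\mathcal N}^P$ is singular, so globally this invariant summand is a shift of $\IC([G\backslash\widetilde{\mathcal N}^P])$ rather than the dualizing complex, and the two agree only where $\widetilde{\mathcal N}^P$ is rationally smooth. One therefore has to prove that $\widetilde{\mathcal N}^P$ is rationally smooth along all fibers of $\hat\xi_P$ lying over $\ev(C''\cap\Gr_\gamma)$ — the general-$\eta$ analogue of the rational-smoothness fact established for $\overline{\mathcal O_y}$ in Subsection~\ref{sec:GeomSkew} (cf.\ the remark after Lemma~\ref{lem: Springer action Y}) — which can be attacked cell by cell over the Schubert paving of $C''\cap\Gr_\gamma$. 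A sheaf-free alternative is to compare affine pavings directly: by Proposition~\ref{prop: schubert cells in partial flags} the projection $\pr_\eta$ collapses each Schubert cell of $\aFl$ onto the Schubert cell of its $S_\eta$-coset, and it then remains to verify that the resulting cell decomposition of $\pr_\eta(X_{n,k})$ computes precisely the $S_\eta$-invariants of the cellular chain complex of $X_{n,k}$ under the Springer action.
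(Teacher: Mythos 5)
Your proof takes a genuinely different route from the paper's, and you correctly flag where the delicate point lies. The paper does not factor $\hat\pi$ through the intermediate stack $[G\backslash\widetilde{\mathcal{N}}^P]$; instead it applies Borho--MacPherson's \cite[Proposition 2.7b]{BM} directly over the base $\mathcal{N}$, in the form $R\xi_*\bQ_{\widetilde{\mathcal{N}}^{P_\eta}}\cong(R\pi_*\bQ_{\widetilde{\mathcal{N}}})^{S_\eta}$, then descends to $[G\backslash\mathcal{N}]$, pulls back along $\ev$, and uses proper base change on both resulting squares over $C''\cap\Gr_\gamma$. Because both sides of the BM identity are pushforwards of \emph{constant} sheaves to $\mathcal{N}$, no comparison between an IC sheaf and a dualizing complex ever enters, and the question of rational smoothness of $\widetilde{\mathcal{N}}^P$ simply does not arise. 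Your Step~3, by contrast, stops at the level of $\widetilde{\mathcal{N}}^P$, where the $S_\eta$-invariant summand of $R(\hat\eta_P)_*\bD$ is a priori a shift of $\IC(\bQ_{\widetilde{\mathcal{N}}^P})$, and you must argue it coincides with $\bD_{[G\backslash\widetilde{\mathcal{N}}^P]}$ — exactly the ``main obstacle'' you name.

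That obstacle is in fact closable, but you leave it open. The space $\widetilde{\mathcal{N}}^{P_\eta}=G\times_{P_\eta}(\mathcal{N}_{L}+\mathfrak{n}_{P_\eta})$ is a fiber bundle over the smooth $G/P_\eta$ with fiber $\mathcal{N}_{L}\times\mathfrak{n}_{P_\eta}$, and for $G=\GL_K$ the Levi nilpotent cones $\mathcal{N}_{\GL_{\eta_i}}$ are rationally smooth (this follows from the Springer decomposition of $R\pi_*\bQ_{\widetilde{\mathcal{N}}}$ and the fact that the $S_m$-invariants of $H^*(\mathcal{B}_x)$ for any nilpotent $x\in\mathfrak{gl}_m$ are one-dimensional, concentrated in degree $0$). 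So $\widetilde{\mathcal{N}}^{P_\eta}$ is rationally smooth \emph{globally}, and the restriction to fibers over $\ev(C''\cap\Gr_\gamma)$ in your Step~3 is unnecessary — this contrasts with the stratum closure $\overline{\cO_y}$ appearing in Theorem~\ref{thm:perp homology}, which is \emph{not} rationally smooth in general and genuinely requires Lemma~\ref{lem:rational-smoothness}. If you supply this rational-smoothness argument (or simply cite it), your proof goes through; but the paper's route is shorter and sidesteps the issue entirely, and your ``cell-by-cell'' and affine-paving fallbacks are considerably more work than either. A minor further point: the cartesian square you need in Step~1 (with $\pr_\eta(X_{n,k})$ over $[G\backslash\widetilde{\mathcal{N}}^{P_\eta}]$ and $C''\cap\Gr_\gamma$ over $[G\backslash\mathcal{N}]$) is the $\eta$-analogue of the squares in \eqref{eq:commutative-diag2}; the paper uses this implicitly as well, so you should state that it holds for all $\eta$, not only for $(K-n,1^n)$.
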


\begin{proof}
Let $P_\eta$ be the parabolic subgroup of $\GL_K\bC$ associated to $\aFl_\eta$, then $S_\eta\cong W_L$ is the Weyl group of the Levi subgroup $L$ of $P_\eta$. By \cite[Proposition 2.7b]{BM}, letting $\xi : \widetilde{\mathcal{N}}^{P_\eta} \to \mathcal{N}$ be the projection map as above,
\[
R\xi_*\bQ_{\widetilde{\mathcal{N}}^{P_\eta}} \cong (R\pi_*\bQ_{\widetilde{\mathcal{N}}})^{S_\eta}.
\]
This isomorphism descends to
\[
R\xi_*\bQ_{[G\backslash \widetilde{\mathcal{N}}^{P_\eta}]} \cong (R\pi_*\bQ_{[G\backslash\widetilde{\mathcal{N}}]})^{S_\eta},
\]
Applying the functor $\ev^*$ to both sides and then proper base change,
\[
R\pr_*\bQ_{\pr_\eta(X_{n,k})} \cong (R\pr_*\bQ_{X_{n,k}})^{S_\eta}.
\]
Applying Verdier dual and then derived global sections yields the result.
\end{proof}

\subsection{Geometric skewing formula}\label{sec:GeomSkew}

Lemma \ref{lem: Springer action Y} allows us to define the graded Frobenius character of the Borel-Moore homology of $X_{n,k}$ by considering the corresponding  representation of $S_K$ bigraded by homological degree and the connected component (see Remark \ref{rem: components}). In Theorem~\ref{thm:perp homology} below, we show that there is an action of $S_n$ on the Borel-Moore homology of $Y_{n,k}$, which allows us to define its graded Frobenius character. We will denote these by 
$$
\Frob_{q,t}H_*^{\BM}(X_{n,k})\ \mathrm{and}\ \Frob_{q,t} H_*^{\BM}(Y_{n,k}).
$$
The main result of this subsection is Theorem~\ref{thm:perp homology}, which relates these two Frobenius characters by the Schur skewing operator $s_{(n-k)^{k-1}}^\perp$.

Lemma \ref{lem: parabolic invariants} immediately implies the following.

\begin{cor}
\label{cor: parabolic invariants}
For all $\eta\vDash K$ we have
$$
\langle h_{\eta},\Frob_{q,t} H_*^{\BM}(X_{n,k})\rangle=\Hilb_{q,t} H^{\BM}_*(\pr_{\eta}(X_{n,k}))
$$
where $h_{\eta}$ is the complete symmetric function.
\end{cor}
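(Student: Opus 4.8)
The plan is to deduce this from Lemma~\ref{lem: parabolic invariants} together with the standard representation-theoretic identity that converts the pairing against $h_\eta$ into the dimension of an invariant subspace. Recall that for any finite-dimensional $\bQ[S_K]$-module $V$ one has $\dim_{\bQ} V^{S_\eta} = \langle h_\eta, \Frob(V)\rangle$: indeed $h_\eta = \Frob(\bQ[S_K/S_\eta])$, and by self-adjointness of the Hall inner product together with Frobenius reciprocity, $\langle h_\eta, \Frob(V)\rangle = \dim \Hom_{S_K}(\bQ[S_K/S_\eta], V) = \dim V^{S_\eta}$. (This is also the $\mu = \emptyset$ form of Lemma~\ref{lem:Skewing} applied iteratively.)

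First I would apply this identity in each bidegree. Since $\bQ$ has characteristic zero, the functor of $S_\eta$-invariants is exact and respects the bigrading, so $\bigl(H_*^{\BM}(X_{n,k})^{S_\eta}\bigr)_{i,j} = \bigl(H_{i,j}^{\BM}(X_{n,k})\bigr)^{S_\eta}$, where $i$ records the homological degree and $j$ records the connected component of $\aFl$. Summing the identity over $i$ and $j$ with weights $q^i t^j$ then gives
\[
\langle h_\eta, \Frob_{q,t} H_*^{\BM}(X_{n,k})\rangle = \sum_{i,j} \dim\bigl(H_{i,j}^{\BM}(X_{n,k})\bigr)^{S_\eta}\, q^i t^j = \Hilb_{q,t}\bigl(H_*^{\BM}(X_{n,k})^{S_\eta}\bigr).
\]

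Finally I would invoke Lemma~\ref{lem: parabolic invariants} to replace $H_*^{\BM}(X_{n,k})^{S_\eta}$ by $H_*^{\BM}(\pr_\eta(X_{n,k}))$. The isomorphism there is induced by the projection $\pr_\eta$ (at the sheaf level by $R\pr_*\bQ_{\pr_\eta(X_{n,k})} \cong (R\pr_*\bQ_{X_{n,k}})^{S_\eta}$, as in the proof of that lemma), and $\pr_\eta$ carries the component $\aFl(m)$ into $\aFl_\eta(m)$; hence the isomorphism preserves both the homological grading and the grading by connected component, so it is an isomorphism of bigraded vector spaces. Taking bigraded Hilbert series and combining with the displayed equation yields the claim. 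The only point requiring care is the compatibility of the Lemma~\ref{lem: parabolic invariants} isomorphism with the $t$-grading; this is immediate since the decomposition of Borel--Moore homology into connected components is geometric and is respected by $\pr_\eta$, and everything else is formal.
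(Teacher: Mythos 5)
Your proof is correct and follows the route the paper intends: the paper states that Lemma~\ref{lem: parabolic invariants} ``immediately implies'' the corollary, and your argument simply fills in the standard identity $\dim V^{S_\eta} = \langle h_\eta, \Frob(V)\rangle$ (via $h_\eta = \Frob(\bQ[S_K/S_\eta])$ and Frobenius reciprocity) together with the observation that taking $S_\eta$-invariants is exact and respects the bigrading. Your brief remark on compatibility of the isomorphism from Lemma~\ref{lem: parabolic invariants} with both gradings is a reasonable bit of added care, though the paper treats it as obvious.
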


Let
\begin{equation}\label{eq:ZDef}
Z \coloneqq \bigcup_{\substack{t'\in \overline{\mathcal{O}}_t\\ \ell(\JT(t'_{0}))>k}} \mathcal{O}_{t'}.
\end{equation}

\begin{lem}\label{lem:eval-in-sub}
    For all $g\mathbf{P}\in Y_{n,k}$, we have $\ev(\Ad(g^{-1})\gamma) \in \overline{\mathcal{O}}_t\setminus Z + \mathfrak{n}$. 
\end{lem}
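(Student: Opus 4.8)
The plan is to unwind what $\ev(\Ad(g^{-1})\gamma)$ is in terms of lattices and then check the two conditions: that its Jordan type lies in $\overline{\mathcal O}_t$ (after adding $\mathfrak n$), and that it avoids $Z$. Let me set up notation. For $g\mathbf P \in Y_{n,k}$ with associated flag of lattices $\Lambda_\bullet \in \aFl_{(K-n,1^n)}$, write $\overline\gamma$ for the induced operator of $\gamma$ on the $K$-dimensional space $V \coloneqq \Lambda_0/\epsilon\Lambda_0$, and let $F_i = \Lambda_i/\epsilon\Lambda_0$ be the induced partial flag, so $F_0 = V$, $F_{K-n} = \Lambda_{K-n}/\epsilon\Lambda_0$, and so on. By the discussion after diagram~\eqref{eq: Springer cartesian}, the pair $(F_\bullet,\overline\gamma)$ is exactly the point $\ev(g\mathbf P) \in \widetilde{\mathcal N}^P$, up to the $G$-action. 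So I must show that the $\mathfrak p$-part $\overline\gamma$, projected to the Levi, has Jordan type a partition whose orbit closure contains no stratum in $Z$, and whose closure is contained in $\overline{\mathcal O}_t$; equivalently, working with the block decomposition, I must control $\JT(\overline\gamma|_{F_0/F_{K-n}})$, which is the $t_0$-block, and the Jordan types on the one-dimensional blocks $F_i/F_{i+1}$ for $i \geq K-n$, which are forced to be $(1)$.

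First I would compute $\JT(\overline\gamma|_{\Lambda_0/\epsilon\Lambda_0})$, i.e. the Jordan type $\nu$ of the full operator $\overline\gamma$ on $V$. This is the key input and is the content of Lemma~\ref{lem:eval-in-sub} as it will actually be used downstream (see Theorem~\ref{thm: Intro Delta Springer}): the claim there is $\nu = (n-k)^k + \mu$ for some partition $\mu$ with $|\mu| = k$. The reason is structural: $\gamma$ on $\mathcal O^K$ has characteristic polynomial $z^K - \epsilon^{N+k}$, and by Lemma~\ref{lem: conductor is super effective} the lattice $\Lambda_0$ satisfies $e_m \in \Lambda_0$ for all $m \geq c = kK$, so $\Lambda_0$ sits between $\mathcal O^K$-type lattices in a controlled range; the induced nilpotent $\overline\gamma$ on $\Lambda_0/\epsilon\Lambda_0$ then has Jordan type governed by how $\Lambda_0$ meets the $\gamma$-cyclic structure. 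I would argue that since $\gamma^{n-k}$ shifts basis vectors by roughly $k(n-k)$ (plus lower-order corrections from the $+1$'s), and since the "big" part of the staircase has exactly $k$ columns of height $n-k+1$ and the structure forces $n-k$ of each ``row worth'' of the cyclic action to stay inside, one gets $n-k$ Jordan blocks of size exactly $k$ coming from the bulk, i.e. $(n-k)^k$ as a sub-partition, with the remaining $k$ boxes distributed in some $\mu$. The cleanest route is probably to reduce to the fixed-point / staircase-diagram picture: compute $\nu$ at a $T$-fixed point (where everything is monomial and visible from the diagram in Figure~\ref{fig:staircase-diagram}) and then note that Jordan type is upper-semicontinuous, but here I would want the value to be constant on $Y_{n,k}$, which needs the extra dominance-order constraint in the definition of $Y_{n,k}$ — this is where the condition $\gamma^{n-k}\Lambda_0 \subseteq \Lambda_{K-n}$ enters.

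With $\nu = (n-k)^k + \mu$ in hand, the rest is bookkeeping against the stratification. The element $t$ chosen in the statement of the preceding Proposition has $t_0$ of Jordan type $(n-k)^{k-1}$ and $t_i$ of size $1$ for $i > 0$, so $\JT(t) = (n-k)^{k-1} + (1^{n+ \text{stuff}})$; the orbit closure $\overline{\mathcal O_t}$ inside $\mathcal N \subseteq \mathfrak{gl}_K$ consists of nilpotents of Jordan type $\leq \JT(t)$ in dominance order. The defining condition of $Y_{n,k}$, namely $\JT(\overline\gamma|_{F_0/F_{K-n}}) \leq (n-k)^{k-1}$, says exactly that the $t_0$-block of $\ev(\Ad(g^{-1})\gamma)$ has Jordan type $\leq (n-k)^{k-1}$; combined with the automatic $(1)$'s on the small blocks, this puts $\ev(\Ad(g^{-1})\gamma)$ in $\overline{\mathcal O_t} + \mathfrak n$. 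To see it avoids $Z$: $Z$ is the union of strata $\mathcal O_{t'}$ with $t' \in \overline{\mathcal O}_t$ but $\ell(\JT(t'_0)) > k$, i.e. the $t_0$-block has more than $k$ parts. But $\overline\gamma|_{F_0/F_{K-n}}$ acts on a space of dimension $K - n = (n-k)k$ wait — I should double-check: $\dim(F_0/F_{K-n}) = \dim\Lambda_0/\Lambda_{K-n} = K-n$... hmm, and $(n-k)^{k-1}$ has $|(n-k)^{k-1}| = (n-k)(k-1) \neq K-n = (n-k)k$ in general, so the $t_0$-block actually has size $(n-k)k$ and the dominance bound $\leq (n-k)^{k-1}$ wait that partition is too small — I think the correct reading is that $t_0$ itself has Jordan type $(n-k)^k$ (size $(n-k)k$) and the $Y_{n,k}$-condition bounds the number of parts at $k$, not $k-1$; I would reconcile this by carefully matching conventions, but the upshot is the same: the $Y_{n,k}$-condition forces $\ell(\JT(\overline\gamma|_{F_0/F_{K-n}})) \leq k$, which is precisely the complement of the defining condition for $Z$, so $\ev(\Ad(g^{-1})\gamma) \notin Z + \mathfrak n$. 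I expect the main obstacle to be the first step — pinning down that $\JT(\overline\gamma|_{\Lambda_0/\epsilon\Lambda_0})$ has $(n-k)^k$ as its "stable part" uniformly over $Y_{n,k}$, since the individual Jordan type of $\overline\gamma$ can jump (as the Remark after Theorem~\ref{thm: Intro Delta Springer} warns) and one must show that only the part of $\nu$ complementary to $(n-k)^k$ (i.e.\ $\mu$) jumps, while the $(n-k)^k$ piece is rigid; this rigidity is exactly what the dominance condition in the definition of $Y_{n,k}$ buys, and making that precise — likely by analyzing the filtration of $V$ by images $\overline\gamma^j V$ and using $\gamma^{n-k}\Lambda_0 \subseteq \Lambda_{K-n}$ — is the heart of the argument.
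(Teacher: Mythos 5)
Your reduction of the lemma to a length bound on a Jordan type is the right first move and matches the paper: the membership $\ev(\Ad(g^{-1})\gamma)\in\overline{\mathcal O}_t+\mathfrak n$ is by construction, and avoiding $Z$ amounts to showing $\ell(\JT(\gamma|_{\Lambda_0/\Lambda_{K-n}}))\le k$, which the paper deduces from the stronger claim $\ell(\JT(\gamma|_{\Lambda_0/\epsilon\Lambda_0}))\le k$. But your argument for that length bound has a genuine gap. You assert that the defining condition of $Y_{n,k}$, namely $\JT(\gamma|_{F_0/F_{K-n}})\le(n-k)^{k-1}$ in dominance order, ``is precisely the complement of the defining condition for $Z$'' and hence forces the number of parts to be at most $k$. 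It does not: for a partition of fixed size, dominance order bounds the sizes of the parts from above, not their number --- for instance $(1^{(k-1)(n-k)})\le(n-k)^{k-1}$ in dominance order but has far more than $k$ parts. The length bound is exactly the nontrivial content of the lemma and cannot be read off from the dominance (equivalently, $\gamma^{n-k}\Lambda_0\subseteq\Lambda_{K-n}$) condition; it comes instead from positivity and normalization of $\Lambda_0$ (membership in $C''$) together with the cyclic structure of $\gamma$.

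The device the paper uses, and which your proposal does not supply, is the explicit $\gamma$-stable lattice $V=\bO\{\epsilon e_1,\epsilon\gamma e_1,\dots,\epsilon\gamma^{K-1}e_1\}$, i.e.\ $\epsilon$ times the lower border of the staircase diagram. One checks that $\gamma$ acts on $\bO^K/V$ with the staircase Jordan type $(k(n-k+1),(k-1)(n-k+1),\dots,n-k+1)$, which has exactly $k$ parts, and that $\Lambda_0\cap V\subseteq\epsilon\Lambda_0$ for any $\Lambda_\bullet\in X_{n,k}$ (an element of $\Lambda_0\setminus\epsilon\Lambda_0$ has a leading term not lying in $V$). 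Hence $\Lambda_0/\epsilon\Lambda_0$ is a $\gamma$-stable subquotient of $\bO^K/V$, its Young diagram is contained in the staircase, and its length is at most $k$. Your alternative routes --- computing the exact Jordan type $\nu=(n-k)^k+\mu$ first, or arguing at torus fixed points plus semicontinuity --- are respectively more than is needed here and, as you note yourself, blocked by the fact that the Jordan type jumps over $Y_{n,k}$. Finally, two computational slips compound the confusion: $K-n=(k-1)(n-k)$, not $(n-k)k$, so the block $F_0/F_{K-n}$ does have dimension $|(n-k)^{k-1}|$ and the paper's conventions are consistent; and ``$n-k$ Jordan blocks of size $k$'' is the partition $k^{n-k}$, the transpose of the sub-partition $(n-k)^k$ you intend.
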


\begin{proof}
    By construction, $\ev(\Ad(g^{-1})\gamma)\in \overline{\mathcal{O}}_t+\mathfrak{n}$. If $\ev(\Ad(g^{-1})\gamma)\in Z+\mathfrak{n}$, then $\ev(\Ad(g^{-1})\gamma) \in \mathcal{O}_{t'}$ for some $t'\in \overline{\mathcal{O}}_t$ such that $\ell(\JT(t'_{0}))>k$. Equivalently, converting this statement to lattices, this implies that $\gamma$ acts on $\Lambda_{0}/\Lambda_{K-n}$ by a nilpotent operator with Jordan type of length greater than $k$. It thus suffices to show that $\gamma|_{\Lambda_0/\epsilon\Lambda_0}$ has Jordan type of length at most $k$.

    Indeed, let $V = \bO\{\epsilon e_1,\epsilon\gamma e_1,\dots, \epsilon\gamma^{K-1}e_1\}$. In terms of the staircase diagram~\ref{fig:staircase-diagram}, $V$ is spanned by the vectors obtained by multiplying each of the vectors in the lower border of the staircase diagram by $\epsilon$. Observe that $V$ is closed under $\gamma$, and $\gamma$ acts on $\bO^K/V$ with Jordan type of length $k$. 
    
    We claim that for  $\Lambda_\bullet\in X_{n,k}$, then $\Lambda_0\cap V \subseteq \epsilon\Lambda_0$. Indeed, suppose there exists some $v\in \Lambda_0\cap V$ such that $v\notin \epsilon \Lambda_0$. Then $v\in \Lambda_0\setminus \epsilon \Lambda_0$, so $v$ has a leading term that's not in $V$, which means $v\notin V$, a contradiction.
    
    By the claim above, $\Lambda_0/\epsilon\Lambda_0$ is a subquotient of $\bO^K/V$ that's closed under $\gamma$:
    \[
        \bO^K/V \hookleftarrow \Lambda_0/(\Lambda_0\cap V) \twoheadrightarrow \Lambda_0/\epsilon\Lambda_0.
    \]
    Thus, we have the following set containment of Young diagrams 
    \begin{equation}\label{eq:JT Containment}
    (k(n-k+1),(k-1)(n-k+1),\dots, n-k+1)=\JT(\gamma|_{\bO^K/V}) \supseteq \JT(\gamma|_{\Lambda_0/\epsilon\Lambda_0}).
    \end{equation}
    Since the length of $\JT(\gamma|_{\bO^K/V})$ is $k$, then the length of $\JT(\gamma|_{\Lambda_0/\epsilon\Lambda_0})$ is at most $k$.
\end{proof}

\begin{prop}
    Every nonempty fiber of the map
    \[
    Y_{n,k}\to \widetilde{\Gr}
    \]
    sending $\Lambda_\bullet$ to $\Lambda_0$ is isomorphic to a $\Delta$-Springer fiber $Z_{n,\mu,k}$ for some partition $\mu$ (which vary over the fibers).
\end{prop}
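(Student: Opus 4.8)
The plan is to identify, for a fixed lattice $\Lambda_0$ in the image of the projection $\pi\colon Y_{n,k}\to \aGr$, the fiber $\pi^{-1}(\Lambda_0)$ with a variety of partial flags in the finite-dimensional vector space $V\coloneqq\Lambda_0/\epsilon\Lambda_0$, and then to match this with the definition of a $\Delta$-Springer fiber $Z_{n,\mu,k}$. First I would set $\overline\gamma\coloneqq\gamma|_{V}$, which makes sense since $\Lambda_\bullet\in Y_{n,k}$ forces $\gamma\Lambda_0\subseteq\Lambda_0$, and since $\epsilon\Lambda_0\subseteq\Lambda_K\subseteq\Lambda_0$ we also have $\epsilon\Lambda_0$ preserved by $\gamma$, so $\overline\gamma$ is a well-defined operator on the $K$-dimensional space $V$. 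The key input is Lemma~\ref{lem:eval-in-sub}: its proof shows that $\JT(\overline\gamma)\subseteq \JT(\gamma|_{\bO^K/V})=(k(n-k+1),(k-1)(n-k+1),\dots,n-k+1)$, and in particular $\overline\gamma$ is nilpotent with at most $k$ Jordan blocks. I would next argue that in fact $\overline\gamma$ has Jordan type of the form $\nu=(n-k)^k+\mu$ for some partition $\mu$ with $|\mu|=k$ and $\ell(\mu)\le k$: the containment in \eqref{eq:JT Containment} together with the fact that $\dim V=K=k(n-k+1)$ and that $\Lambda_0\in C''$ forces each of the (at most $k$) Jordan block sizes to be at least $n-k$. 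Setting $s=k$, $|\nu|=K$, this is exactly the setup $\nu=(n-k)^s+\mu$ of the $\Delta$-Springer fiber definition.

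Given this, I would translate the remaining data defining $\pi^{-1}(\Lambda_0)$. A point of $Y_{n,k}$ over $\Lambda_0$ is a flag of lattices $\Lambda_0\supset\Lambda_{K-n}\supset\Lambda_{K-n+1}\supset\cdots\supset\Lambda_K=\epsilon\Lambda_0$ with each $\Lambda_i$ a $\gamma$-invariant lattice and $\JT(\gamma|_{\Lambda_0/\Lambda_{K-n}})\le(n-k)^{k-1}$. Setting $F_i\coloneqq\Lambda_i/\epsilon\Lambda_0$ for $i=0,K-n,K-n+1,\dots,K$ gives a partial flag $V=F_0\supset F_{K-n}\supset\cdots\supset F_K=0$ of the shapes prescribed in the definition of $Z_{n,\nu,k}$ (with $\dim F_i=K-i$), and conversely any such $\overline\gamma$-invariant flag of subspaces of $V$ lifts uniquely to a flag of $\gamma$-invariant lattices between $\epsilon\Lambda_0$ and $\Lambda_0$ by taking preimages under $\Lambda_0\twoheadrightarrow V$ — one must check $\gamma$-invariance of the preimage, which follows since $\gamma\epsilon\Lambda_0\subseteq\epsilon\Lambda_0$. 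The Jordan-type condition $\JT(\gamma|_{\Lambda_0/\Lambda_{K-n}})\le(n-k)^{k-1}$ becomes $\JT(\overline\gamma|_{F_0/F_{K-n}})\le(n-k)^{k-1}=(n-k)^{s-1}$, which is precisely the defining condition of $Z_{n,\nu,k}$. Since the $\Delta$-Springer fiber depends only on the Jordan type $\nu$ of the chosen nilpotent (and $\nu=(n-k)^k+\mu$), this yields $\pi^{-1}(\Lambda_0)\cong Z_{n,\mu,k}$ (in the indexing convention of the definition, where one writes $Z_{n,\mu,k}$ with $\nu=(n-k)^k+\mu$). Finally I would note the map is algebraic: the identification of the fiber with the flag variety is given by an explicit linear-algebra construction, hence an isomorphism of varieties.

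The main obstacle I expect is the bookkeeping in establishing that $\JT(\overline\gamma)$ really has exactly the shape $(n-k)^k+\mu$ — i.e.\ that it has at most $k$ parts \emph{and} every part is at least $n-k$ — rather than merely being dominated by $\JT(\gamma|_{\bO^K/V})$. The upper bound on the number of parts is already in Lemma~\ref{lem:eval-in-sub}; the lower bound $n-k$ on each part is where the hypothesis $\Lambda_0\in C''$ (equivalently, $\Lambda_0\subseteq\bO^K$ with a vector of leading term $e_1$, arising from a $\gamma$-restricted affine permutation) must be used, presumably via the staircase-diagram description of fixed points and a semicontinuity argument, or directly by exhibiting enough elements of $V$ on which $\overline\gamma$ acts with long Jordan strings. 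A secondary subtlety is checking that taking preimages under $\Lambda_0\to V$ genuinely gives a bijection between $\overline\gamma$-invariant subspaces of $V$ and $\gamma$-invariant lattices sandwiched between $\epsilon\Lambda_0$ and $\Lambda_0$; this is standard but should be stated carefully, along with the compatibility with the cell structure of $C'$ ensuring that we land in $Y_{n,k}$ and not merely in $\Sp_{\gamma,(K-n,1^n)}$.
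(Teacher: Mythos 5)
Your reduction of the fiber to a variety of flags $F_\bullet$ in $V=\Lambda_0/\epsilon\Lambda_0$, and the lift from $\overline\gamma$-invariant subspaces of $V$ back to $\gamma$-invariant lattices, are both correct and match the paper. However, the central claim of your argument --- that $\JT(\overline\gamma)$ always has the form $(n-k)^k+\mu$ with $|\mu|=k$, i.e.\ that $\overline\gamma$ has exactly $k$ Jordan blocks each of size at least $n-k$ --- is false, and no semicontinuity or dimension argument will rescue it. Take $n=4$, $k=2$, $K=6$, and $\Lambda_0 = \bO\{e_1,\epsilon e_2, e_3, \epsilon e_4, e_5, \epsilon e_6\}$. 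One checks directly that this lattice is $\gamma$-invariant and lies in $C''$, and that $\overline\gamma$ sends $e_1\mapsto e_3\mapsto e_5\mapsto \epsilon e_2\mapsto \epsilon e_4\mapsto \epsilon e_6\mapsto 0$ in $V$, so $\JT(\overline\gamma)=(6)$: a single Jordan block, not two. The fiber over this $\Lambda_0$ is still nonempty (it is a single point, isomorphic to $Z_{4,(4),2}$), so you cannot exclude it.

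The paper avoids this by \emph{not} insisting that $|\mu|=k$, and by not identifying the fiber with flags in all of $V$. It only uses the containment from Lemma~\ref{lem:eval-in-sub} to conclude $\ell(\JT(\overline\gamma))\le k$, then defines $\mu$ as the partition obtained by deleting the first $n-k$ columns of $\JT(\overline\gamma)$ (so $|\mu|\ge k$, possibly strictly), and proves the weaker but correct lower bound that every part of $\JT(\overline\gamma)$ is at least $n-|\mu|$. Setting $\nu=(n-|\mu|)^k+\mu$, one then has $|\nu|\le K$, and the key step is to produce a $\overline\gamma$-invariant subspace $V'\subseteq V$ of Jordan type $\nu$ that contains $F_{K-n}\supseteq\mathrm{im}(\overline\gamma^{n-k})$; projection onto $V'$ identifies the fiber with $Z_{n,\mu,k}$ realized inside $V'\cong\bC^{|\nu|}$. (Here the $|\mu|$ you compute plays the role of the parameter called $k$ in the $\Delta$-Springer fiber definition, and your $k$ plays the role of $s$.) This subspace/projection device is the missing idea in your proposal: the ambient space for the $\Delta$-Springer fiber is $V'$, not $V$, and $\dim V'$ depends on the lattice.

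A secondary slip: you wrote the fiber as $Z_{n,\nu,k}$ at one point; the second index in the $\Delta$-Springer fiber notation is $\mu$, not $\nu$, so the target is $Z_{n,\mu,k}$ (with $\nu$ only entering as the Jordan type of the nilpotent on $V'$).
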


\begin{proof}
    Let $\Lambda'\in \widetilde{\Gr}_K$ be a lattice whose fiber over $Y_{n,k}$ is nonempty. Then the fiber over $\Lambda$ is the space of partial flags of lattices $\Lambda_\bullet\in Y_{n,k}$ such that $\Lambda_0=\Lambda'$, which is isomorphic to the following space of partial flags $F_\bullet$ of type $(K-n,1^n)$ in the vector space $V=\Lambda_0/\epsilon\Lambda_0$,
    \begin{equation}\label{eq:PartialFlagsDescripFiber}
        \{F_\bullet\in \mathrm{Fl}_{(K-n,1^n)}(V)\mid \overline{\gamma}F_i\subseteq F_i,\, \JT(\overline{\gamma}|_{F_0/F_{K-n}})\leq (n-k)^{k-1}\}.
    \end{equation}
    where $\overline{\gamma} \coloneqq \gamma|_V$. Observe that the intersection with $C'$ in the definition of $Y_{n,k}$ does not affect the description of the fibers, only the $\Lambda'$ whose fibers are nonempty, since $C'$ is a preimage of a union of Schubert cells in $\widetilde{\Gr}$.
    
    By \eqref{eq:JT Containment}, we also have that $\JT(\gamma|_V)=\JT(\overline{\gamma})$ has length at most $k$. Since $\JT(\overline{\gamma}|_{F_0/F_{K-n}}) \leq (n-k)^{k-1}$, then $\mathrm{im}(\overline{\gamma}^{n-k}) = \overline{\gamma}^{n-k}F_0 \subseteq F_{K-n}$. Let $\mu = \JT(\overline{\gamma}|_{\mathrm{im}(\overline{\gamma}^{n-k})})$, which is the partition obtained by deleting the first $n-k$ columns of $\JT(\overline{\gamma})$. 
    
    We claim that \eqref{eq:PartialFlagsDescripFiber} is isomorphic to the $\Delta$-Springer fiber $Z_{n,\mu,k}$. Indeed, if the smallest row of $\JT(\overline{\gamma})$ is size $m$, then the truncated partition $\mu$ must have size $|\mu| \geq K - (k-1)(n-k)-m = n-m$. Thus, all rows of $\JT(\overline{\gamma})$ have size at least $n-|\mu|$ (note that $|\mu|$ here is playing the role of $k$ in the definition of the $\Delta$-Springer fiber, and $k$ is playing the role of $s$).
    
    Therefore, there exists $V'$ a $\overline{\gamma}$-invariant subspace of $V$ on which $\overline{\gamma}$ acts with Jordan type $\nu = (n-|\mu|)^{k} + \mu$. For example, $V'$ can be obtained by choosing a generalized eigenbasis for $\overline{\gamma}$ and defining $V'$ as a subspace spanned by an appropriate subset of these vectors. It can be checked that since $F_{K-n}$ contains the $|\mu|$-dimensional subspace $\mathrm{im}(\overline{\gamma}^{n-k})$, then $F_{K-n}$ is contained in $V'$. Thus, projection onto $V'$ yields an isomorphism between \eqref{eq:PartialFlagsDescripFiber} and $Z_{n,\mu,k}$ (defined accordingly using partial flags on $V'\cong \bC^{|\nu|}$).
\end{proof}

\begin{lem}\label{lem:rational-smoothness}
    The space $\overline{\mathcal{O}}_y$ is rationally smooth at all points in the preimage of $\ev(Y_{n,k})$ in $\overline{\mathcal{O}}_y$.
\end{lem}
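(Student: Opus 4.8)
The plan is to reduce the statement, in two steps, to the assertion that a certain Kostka number equals $1$: first one unwinds the fiber‑bundle structure of $\overline{\mathcal{O}_y}$ over $G/P$ to pass to a single nilpotent orbit closure, and then one applies Lusztig's description of the intersection cohomology of nilpotent orbit closures in $\GL_K\bC$.

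\textbf{Step 1: reduction to a nilpotent orbit closure.} First I would use $\overline{\mathcal{O}_y}=G\times_P(\overline{\mathcal{O}_t}+\mathfrak{n}_P)$ together with the fact that for a parabolic $P\subseteq\GL_K\bC$ the quotient $G\to G/P$ is Zariski‑locally trivial; hence $\overline{\mathcal{O}_y}$ is Zariski‑locally isomorphic to a product of an open subset of the smooth variety $G/P$, the affine space $\mathfrak{n}_P$, and $\overline{\mathcal{O}_t}$. Since $t_i=0$ for $i>0$ (the size‑$1$ blocks), $\overline{\mathcal{O}_t}=\overline{\mathcal{O}_{t_0}}\times\{0\}^n$, where $\overline{\mathcal{O}_{t_0}}\subseteq\mathfrak{gl}_{K-n}$ is the closure of the nilpotent orbit of Jordan type $(n-k)^{k-1}$. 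Because rational smoothness is a local property that is unaffected by taking products with smooth varieties, $\overline{\mathcal{O}_y}$ is rationally smooth at a point $(F_\bullet,x)$ if and only if $\overline{\mathcal{O}_{(n-k)^{k-1}}}$ is rationally smooth at the orbit $\mathcal{O}_\mu$, where $\mu\coloneqq\JT(x|_{F_0/F_{K-n}})$.

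\textbf{Step 2: identifying the relevant orbits.} Next I would pin down which $\mathcal{O}_\mu$ occur. By the cartesian square~\eqref{eq:commutative-diag2}, the preimage of $\ev(Y_{n,k})$ in $\overline{\mathcal{O}_y}$ equals $(\hat\xi^y)^{-1}(\ev(C''\cap\Gr_\gamma))$; so for such a point, $x$ is $G$‑conjugate to $\gamma|_{\Lambda_0/\epsilon\Lambda_0}$ for some $\Lambda_0\in C''\cap\Gr_\gamma$, and hence by Lemma~\ref{lem:eval-in-sub} the Young diagram $\JT(x)$ is contained in $\JT(\gamma|_{\bO^K/V})=(k(n-k+1),(k-1)(n-k+1),\dots,n-k+1)$ (here $V$ is the lattice appearing in the proof of that lemma). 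Since $F_0/F_{K-n}$ is a $\gamma$‑stable quotient of the module of Jordan type $\JT(x)$, the partition $\mu$ is contained in $\JT(x)$, hence in this $k$‑row staircase, so $\ell(\mu)\le k$. The defining inequality of $\BM_{\gamma,n,k}$ gives $\mu\le(n-k)^{k-1}$ in dominance, so all parts of $\mu$ are at most $n-k$. It therefore suffices to prove that $\overline{\mathcal{O}_{(n-k)^{k-1}}}$ is rationally smooth at $\mathcal{O}_\mu$ for every partition $\mu$ of $(k-1)(n-k)$ with $\ell(\mu)\le k$ and $\mu_1\le n-k$.

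\textbf{Step 3: Springer theory and a Kostka count.} For this I would invoke Lusztig's description of the intersection cohomology of nilpotent orbit closures in $\GL_K\bC$: the local intersection cohomology of $\overline{\mathcal{O}_\lambda}$ at a point of $\mathcal{O}_\mu$ is given, up to a degree shift, by the Kostka--Foulkes polynomial $\widetilde{K}_{\lambda,\mu}(q)$, which has nonnegative coefficients summing at $q=1$ to the Kostka number $K_{\lambda\mu}$. Hence $\overline{\mathcal{O}_\lambda}$ is rationally smooth at $\mathcal{O}_\mu$ if and only if $\widetilde{K}_{\lambda,\mu}(q)$ is a monomial, i.e.\ if and only if $K_{\lambda\mu}=1$. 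Finally I would check $K_{(n-k)^{k-1},\mu}=1$ directly: in a semistandard Young tableau of the rectangular shape $(n-k)^{k-1}$ with content supported on $\{1,\dots,k\}$, each column is a strictly increasing $(k-1)$‑element subset of $\{1,\dots,k\}$, hence $\{1,\dots,k\}\setminus\{j\}$ for a unique omitted value $j$; the weak increase along rows forces the omitted values $j_1,\dots,j_{n-k}$ of the $n-k$ columns to be weakly decreasing, and the number of columns omitting a given $j$ must equal $(n-k)-\mu_j\ge 0$ (this is where $\mu_1\le n-k$ enters). Thus the multiset of omitted values, and hence the entire tableau, is uniquely determined by $\mu$, giving $K_{(n-k)^{k-1},\mu}=1$ and finishing the proof.

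\textbf{Expected main obstacle.} The hard part is the bookkeeping in Step 2 — making precise that the constraint of Lemma~\ref{lem:eval-in-sub} on the total Jordan type $\JT(\gamma|_{\Lambda_0/\epsilon\Lambda_0})$ genuinely descends to the bound $\ell(\mu)\le k$ on $\gamma$ restricted to the subquotient $F_0/F_{K-n}$ (using that the Jordan type of a subquotient of a $\bC[\gamma]$‑module is contained in that of the module) — together with quoting the exact normalization of Lusztig's stalk formula, so that it is $K_{\lambda\mu}$, with the orbit‑closure label as shape and the stratum label as content, and not a transpose, that controls rational smoothness. The fiber‑bundle reduction of Step 1 and the final Kostka count of Step 3 are routine.
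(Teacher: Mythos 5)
Your proof is correct and takes essentially the same route as the paper: the paper's proof consists of citing Lemma~\ref{lem:eval-in-sub} together with \cite[Lemma 3.4]{GG}, and your Steps 1 and 3 are precisely a self-contained proof of that cited lemma (reduction via the fiber-bundle structure to the rectangular orbit closure $\overline{\mathcal{O}_{(n-k)^{k-1}}}$, Lusztig's identification of the IC stalks with Kostka--Foulkes polynomials, and the check that $K_{(n-k)^{k-1},\mu}=1$ whenever $\ell(\mu)\le k$ and $\mu_1\le n-k$), while your Step 2 is exactly the role Lemma~\ref{lem:eval-in-sub} plays in the paper. The only simplification available is that the bound $\ell(\mu)\le k$ can be read off directly from the statement of Lemma~\ref{lem:eval-in-sub} via the definition of $Z$ in \eqref{eq:ZDef}, so the subquotient-containment detour in your Step 2 is not needed.
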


\begin{proof}
    This follows from Lemma~\ref{lem:eval-in-sub} and \cite[Lemma 3.4]{GG}.
\end{proof}

\begin{thm}\label{thm:perp homology}
    There is a ``Springer'' action of $W^P\cong S_n$ on the Borel-Moore homology of $Y_{n,k}$, and we have the following identity
    \[
    \frac{1}{q^{\binom{k-1}{2}(n-k)}} s_{(n-k)^{k-1}}^\perp \Frob_{q,t}H_*^{\BM}(X_{n,k}) =  \Frob_{q,t} H_*^{\BM}(Y_{n,k}).
    \]
\end{thm}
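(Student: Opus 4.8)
The plan is to deduce Theorem~\ref{thm:perp homology} by combining the cartesian diagrams in \eqref{eq:commutative-diag2} with the Borho--MacPherson decomposition and Lemma~\ref{lem:rational-smoothness}. First I would construct the $S_n \cong W^P$ action on $H_*^{\BM}(Y_{n,k})$. From the left cartesian square in \eqref{eq:commutative-diag2} and the properness of $\hat\xi^y$, proper base change gives $R\pr_*\bD_{Y_{n,k}} = \ev^! R\hat\xi^y_*\bD_{[G\backslash\overline{\mathcal{O}_y}]}$, exactly as in the proof of Lemma~\ref{lem: Springer action Y}. By Lemma~\ref{lem:rational-smoothness}, $\overline{\mathcal{O}_y}$ is rationally smooth along the preimage of $\ev(Y_{n,k})$, so after restricting to (a neighborhood of) $\ev(Y_{n,k})$ the complex $\IC(\bQ_{\mathcal{O}_y})$ agrees with the constant sheaf up to shift, and hence $R\xi^y_*\IC(\bQ_{\mathcal{O}_y})$ and $R\xi^y_*\bQ$ agree after this restriction. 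Proposition~\ref{prop:BM-action-restriction} furnishes a $W^P$-action on $R\xi^y_*\IC(\bQ_{\mathcal{O}_y})$; applying Verdier duality (as in \cite[Corollary 2.8.9]{AcharBook}) and then $\ev^!$ transports this to an action of $\bQ[W^P]$ on $\End$ of $R\pr_*\bD_{Y_{n,k}}$, and hence on $R^{-i}\Gamma(\bD_{Y_{n,k}}) = H_i^{\BM}(Y_{n,k};\bQ)$. I would need to check that the base change $\ev^!$ is compatible with the restriction to $\ev(C''\cap\Gr_\gamma)$, which is where rational smoothness is used — this is the analogue of the step in Lemma~\ref{lem: parabolic invariants}.

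Next I would establish the skewing identity itself. The strategy is to pair both sides against $h_\eta$ for all compositions $\eta \vDash n$ and use Lemma~\ref{lem:Skewing} together with Corollary~\ref{cor: parabolic invariants}. Recall that for $\lambda' = (n-k)^{k-1}$, Lemma~\ref{lem:Skewing} interprets $s_{\lambda'}^\perp \Frob_{q,t}H_*^{\BM}(X_{n,k})$ as $\frac{1}{\dim V_{\lambda'}}$ times the Frobenius character (as an $S_n$-module, with $n = K - |\lambda'|$... here one must be careful: $|\lambda'| = (k-1)(n-k) = K - n$, so indeed $K - |\lambda'| = n$) of the $V_{\lambda'}$-isotypic component of $H_*^{\BM}(X_{n,k})$ restricted to $S_{K-n} \subset S_K = W$. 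On the geometric side, the Borho--MacPherson formula \cite[1.10, Eq.~6]{BM} expresses $R\xi^y_*\IC(\bQ_{\mathcal{O}_y})$, and hence (via rational smoothness and the cartesian squares) $R\pr_* \bD_{Y_{n,k}}$, in terms of $V_x^y = \Hom_{W_L}(V_y, V_x)$ where $W_L = S_{K-n}\times S_1^n$ and $V_y$ corresponds to $t_0$ having Jordan type $(n-k)^{k-1}$, i.e.\ $V_y = V_{\lambda'}$ as an $S_{K-n}$-representation. Thus taking $V_{\lambda'}$-isotypic components of $H_*^{\BM}(X_{n,k})$ as an $S_{K-n}$-module is precisely the operation realized geometrically by passing from $X_{n,k}$ to $Y_{n,k}$ via the middle column of \eqref{eq: Springer cartesian}. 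Comparing the two descriptions gives
\[
s_{\lambda'}^\perp \Frob_{q,t}H_*^{\BM}(X_{n,k}) = q^{a}\,\Frob_{q,t}H_*^{\BM}(Y_{n,k})
\]
for some shift exponent $a$.

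Finally I would pin down the exponent $a = \binom{k-1}{2}(n-k)$ by a degree count. This is the affine analogue of the shift $q^{\binom{s-1}{2}(n-k)}$ appearing in Theorem~\ref{thm: GG}, and indeed with $s = k$ we get $\binom{k-1}{2}(n-k)$; the factor arises because $\dim\overline{\mathcal{O}_y} - \dim\widetilde{\mathcal{N}}$ (equivalently $d_x - d_y$ in the notation before Proposition~\ref{prop:BM-action-restriction}) contributes a cohomological shift of $2\binom{k-1}{2}(n-k)$, which becomes $q^{\binom{k-1}{2}(n-k)}$ after the $q$ records half the homological degree. I would verify this by computing $\dim\xi^{-1}(y)$ for the Jordan type $(n-k)^{k-1}$ block — this is a standard Springer fiber dimension count — and matching it against the corresponding term in the $X_{n,k}$ side, exactly as done in \cite{GG}. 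The main obstacle I anticipate is the first part: carefully justifying that the $W^P$-action of Proposition~\ref{prop:BM-action-restriction}, which a priori lives on all of $[G\backslash\overline{\mathcal{O}_y}]$, descends through $\ev^!$ to give a well-defined action on $H_*^{\BM}(Y_{n,k})$ — this requires knowing that the restriction of $\IC(\bQ_{\mathcal{O}_y})$ to $\ev(C''\cap\Gr_\gamma)$ is constant (Lemma~\ref{lem:rational-smoothness}, built on Lemma~\ref{lem:eval-in-sub}) and that proper base change is compatible with this restriction, which is the technical heart of the argument. Everything downstream is bookkeeping with the Springer correspondence and Lemma~\ref{lem:Skewing}.
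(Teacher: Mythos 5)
Your plan follows the paper's proof essentially verbatim: the Borho--MacPherson decomposition \cite[1.10, Eq.~6]{BM}, rational smoothness on $\rslocus$ (Lemma~\ref{lem:rational-smoothness}) to replace $\IC(\bQ_{\mathcal{O}_y})$ by the constant sheaf, the Decomposition Theorem for $\pi$ to realize the $V_y$-isotypic part, base change along $\ev$, Verdier duality, the shift $d_y=\binom{k-1}{2}(n-k)$, and finally Lemma~\ref{lem:Skewing} to translate isotypic components into Schur skewing. One small caveat: the remark about pairing against $h_\eta$ with Corollary~\ref{cor: parabolic invariants} is a false start (that corollary lives on the $X_{n,k}$ side with $\eta\vDash K$ and is used only later in the proof of Theorem~\ref{thm:GradedFrob}); you correctly pivot to the Lemma~\ref{lem:Skewing}/isotypic-component interpretation, which is what the paper actually does.
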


\begin{proof}
    By \cite[1.10, Equation 6]{BM}, we have
    \begin{equation}\label{eq:IC_iso}
    R\xi^y_*\IC(\mathbb{Q}_{\mathcal{O}_y})[-2d_y]  \cong \bigoplus_x Rj_*^x \IC(\mathbb{Q}_{\mathcal{O}_x})[-2d_x]\otimes V_x^y
    \end{equation}
    where the sum runs over the adjoint orbits $\mathcal{O}_x$ in $\mathcal{N}$,  $j^y$ and $\xi$ are as in \eqref{eq:commutative-diag1}, and $j^x: \overline{\mathcal{O}}_x\hookrightarrow \mathfrak{g}$. Additionally, $d_x = \dim \pi^{-1}(x)$ and $d_y = \dim \xi^{-1}(y)$. Recall that $V_x^y\coloneqq \mathrm{Hom}_{W_L}(V_y,V_x)$, so that $V_x^y$ is the $W^P\cong S_n$-module such that 
    \[
    (V_x)^{V_y\text{-iso}} = V_x^y \otimes V_y
    \]
    and $V_x = H^{2d_x}(\mathcal{B}_x)=V_{\JT(x)}$ and 
    \[
    V_y = H^{2d_y}(\eta^{-1}(y))\cong H^{2d_y}(\mathcal{B}(L)_t) \cong H^{2d_y}(\mathcal{B}_{t_{0}})\cong V_{JT(t_0)}= V_{(n-k)^{k-1}},
    \]
    where all isomorphisms are as $W^P\cong S_n$-modules.

    Let $\rslocus$ be the preimage of $\ev\circ \pr(X_{n,k}) = \xi^y\circ \ev(X_{n,k})$ in $\mathcal{N}$, Restricting both sides of \eqref{eq:IC_iso} to $\rslocus$, by Lemma~\ref{lem:rational-smoothness} we have
    \[
    R\xi_*^y \mathbb{Q}_{\overline{\mathcal{O}}_y}|_\rslocus [-2d_y] \cong \bigoplus_x Rj^x_* \IC(\mathbb{Q}_{\mathcal{O}_x})|_\rslocus [-2d_x] \otimes V_x^y,
    \]
    and then tensoring both sides by $V_y$ we have
    \begin{equation}\label{eq:decomp1}
    (R\xi_*^y \mathbb{Q}_{\overline{\mathcal{O}}_y})|_\rslocus \otimes V_y [-2d_y] \cong \bigoplus_x Rj^x_* \IC(\mathbb{Q}_{\mathcal{O}_x})|_\rslocus[-2d_x]\otimes (V_x)^{V_y\text{-iso}}.
    \end{equation}
    Next, by the Decomposition Theorem applied to $\pi$ (restricted to $\rslocus$) we have
    \[
    \bigoplus_x Rj^x_* \IC(\mathbb{Q}_{\mathcal{O}_x})|_\rslocus[-2d_x]\otimes V_x \cong R\pi_* \mathbb{Q}_{\widetilde{\mathcal{N}}}|_\rslocus
    \]
    where $W\cong S_K$ acts by the Springer representation on $V_x$ and on the sheaf on the right-hand side. Taking the $V_y$-isotypic components of each side, we have
    \begin{equation}\label{eq:decomp2}
    \bigoplus_x Rj^x_* \IC(\mathbb{Q}_{\mathcal{O}_x})|_\rslocus[-2d_x]\otimes (V_x)^{V_y\text{-iso}} \cong (R\pi_* \mathbb{Q}_{\widetilde{\mathcal{N}}}|_\rslocus)^{V_y\text{-iso}}
    \end{equation}
    Combining \eqref{eq:decomp1} and \eqref{eq:decomp2},
    \[
    (R\xi_*^y \mathbb{Q}_{\overline{\mathcal{O}}_y})|_\rslocus \otimes V_y [-2d_y] \cong (R\pi_* \mathbb{Q}_{\widetilde{\mathcal{N}}}|_\rslocus)^{V_y\text{-iso}}.
    \]

    Since $\rslocus$ is a union of left $G$ orbits, this isomorphism descends to an isomorphism of objects in $D_G(\rslocus) = D([G\backslash \rslocus])$,
    \[
    (R\hat\xi_*^y \mathbb{Q}_{[G\backslash\overline{\mathcal{O}}_y]})|_\rslocus \otimes V_y [-2d_y] \cong (R\hat\pi_* \mathbb{Q}_{[G\backslash\widetilde{\mathcal{N}}]}|_\rslocus)^{V_y\text{-iso}}.
    \]
    
    Applying $\ev^*$ to both sides and then using proper base change, we have
    \[
    (R\pr_* \mathbb{Q}_{Y_{n,k}})\otimes V_y[-2d_y] \cong (R\pr_* \mathbb{Q}_{\Sp_\gamma})^{V_y\text{-iso}}
    \]
    Now applying Verdier duality to both sides,
    \[
    (R\pr_* \mathbb{D}_{Y_{n,k}})\otimes V_y^*[2d_y] \cong (R\pr_* \mathbb{D}_{\Sp_\gamma})^{V_y\text{-iso}},
    \]
    where on the right-hand side Verdier duality commutes with taking the $V_y$-isotypic component since $V_y^*\cong V_y$. 
    
    Recall that $H^{\BM}_i(X;\bQ)\cong H^{-i}(X,\mathbb{D}_X)$.
    Pushing forward both sides to a point, we get
    \[
    H^{\BM}_{i-2d_y}(Y_{n,k};\bQ)\otimes V_y \cong H^{\BM}_{i}(\Sp_\gamma;\bQ)^{V_y\text{-iso}},
    \]
    for all $i$, where the isomorphism is as a graded $W^P\cong S_n$-module, where the $W^P$ action on the left is inherited from the $W^P$ action on $R\xi^y_* \mathrm{IC}(\bQ_{\mathcal{O}_y})$, and the $W^P$ action on the right-hand side is inherited from the usual Springer action. Observing that $d_y = \binom{k-1}{2}(n-k)$ and applying Lemma \ref{lem:Skewing}, the result follows.
\end{proof}

\section{Affine paving of $X_{n,k}$}
\label{sec: X dimensions}

\subsection{Affine paving}

Let $\omega^{-1}$ be the permutation obtained from a parking function $\pi$ as in Section \ref{sec:P-w}, so that $\omega^{-1}(i)$ is the rank of the cell of parking function $\pi$ labeled by $i$. Letting $C_{\omega^{-1}}$ be the Schubert cell $I^{-}\omega^{-1}I^-\subseteq \mathbf{G}/I^-$, then 
  \[
  \dim C_{\omega^{-1}} = \inv(\omega^{-1}) = |\{(\alpha,\beta)\mid 1\leq \omega(\alpha)\leq K, \omega(\beta)<\omega(\alpha),\alpha < \beta\}|.
  \]
  
We will need some explicit coordinates on the Schubert cell defined as follows.
Let $\Lambda_\bullet\in C_{\omega^{-1}}$ with $\Lambda_0\supseteq \Lambda_1\supseteq \Lambda_2\supseteq \cdots \supseteq \Lambda_K =\epsilon \Lambda_0$, and let $f_\alpha$ be the canonical generator of $\Lambda_{\omega(\alpha)-1}/\Lambda_{\omega(\alpha)}$ for each $1\leq \omega(\alpha)\leq K$, with expansion \eqref{eq: canonical}
    \[
    f_\alpha = e_\alpha + \sum_{\stackrel{\beta>\alpha}{ \omega(\beta)<\omega(\alpha)}} \lambda_\beta^\alpha e_\beta
    \]
    where we may extend the definition of $\lambda_\beta^\alpha$ to any $\alpha$ and $\beta$ by declaring $\lambda^\alpha_\beta = \lambda^{\alpha + K}_{\beta+K}$ and $\lambda^\alpha_\beta = 0$ if $\alpha > \beta$ or $\omega(\beta)>\omega(\alpha)$.

\begin{defn}
For $\alpha = mK+qk+r$ with $1\leq r\leq k$ and $0\leq q\leq n-k$, define 
    \begin{align*}
    \phi_0(\alpha) &\coloneqq mK+qk\\
    \phi_\infty(\alpha) &\coloneqq q+(n-k+1)(r-1).
    \end{align*}
    Furthermore, we define 
    \begin{align*}
    \phi_0(\lambda^{\alpha}_{\beta})&=\phi_0(\beta)-\phi_0(\alpha),\\
    \phi_\infty(\lambda^{\alpha}_{\beta})&=\phi_\infty(\beta)-\phi_\infty(\alpha).
    \end{align*}
\end{defn}
    
    Clearly, if $\alpha<\beta$ then $\phi_0(\lambda^{\alpha}_{\beta}),\phi_\infty(\lambda^{\alpha}_{\beta})\ge 0$ and at least one of these is nonzero.

\begin{defn}\label{def: Var Order}
Define $\lambda^\alpha_\beta\succ \lambda^{\alpha'}_{\beta'}$ if 
    \begin{itemize}
        \item $\phi_0(\lambda^\alpha_\beta) > \phi_0(\lambda^{\alpha'}_{\beta'})$, or
        \item $\phi_0(\lambda^\alpha_\beta) = \phi_0(\lambda^{\alpha'}_{\beta'})$ and $\phi_\infty(\lambda^\alpha_\beta) > \phi_\infty(\lambda^{\alpha'}_{\beta'})$.
    \end{itemize}
\end{defn}

Observe that this ordering respects the relation $\lambda^\alpha_\beta = \lambda^{\alpha+K}_{\beta+K}$.  Now, assume $N\gg 0$ and $X_{n,k}=X_{n,k,N}$ as above.
    
\begin{lem}
\label{lem: equations cell}
The intersection $X_{n,k}\cap C_{\omega^{-1}}$  is cut out in  the Schubert cell $C_{\omega^{-1}}$ by the equations:
\begin{equation}\label{eq:ElimEqn2 new}
 \lambda^\alpha_\beta -\lambda^{\gamma(\alpha)}_{\gamma(\beta)}=
 \sum_{d\geq 2}(-1)^d\sum_{\beta_1,\ldots,\beta_{d-1}}\left(\lambda^{\alpha}_{\beta_1}-\lambda^{\gamma(\alpha)}_{\gamma(\beta_1)}\right)\lambda^{\gamma(\beta_1)}_{\gamma(\beta_2)}\cdots \lambda^{\gamma(\beta_{d-1})}_{\gamma(\beta)}
 \end{equation}
for all $\alpha,\beta$ such that $1\le \omega(\alpha)\le K$, $\omega(\gamma(\beta))<\omega(\alpha)$.  The sum is over $\beta_1,\ldots,\beta_{d-1}$ such that
$$
\gamma(\beta_1)<\gamma(\beta_2)<\cdots<\gamma(\beta_{d-1})<\gamma(\beta).
$$
Furthermore, all coefficients $\lambda^{\alpha'}_{\beta'}$ appearing in the nonlinear terms of  \eqref{eq:ElimEqn2 new} are strictly less than $\lambda^{\alpha}_{\beta}$ and $\lambda^{\gamma(\alpha)}_{\gamma(\beta)}$  in our ordering.
\end{lem}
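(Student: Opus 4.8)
The plan is to work directly with the condition that $\gamma$ preserves the lattice flag $\Lambda_\bullet$, expressed in terms of the canonical generators. Since $\Lambda_i = \bC\{f_{\omega(j)} : j > i\}$ (in the formal span sense of \eqref{eq: lambda i span}), the Springer condition $\gamma\Lambda_{i-1}\subseteq\Lambda_{i-1}$ amounts to requiring, for each $\alpha$ with $1\le\omega(\alpha)\le K$, that $\gamma f_\alpha$ lies in the span of those $f_{\alpha'}$ with $\omega(\alpha')\ge\omega(\alpha)$. I would first compute $\gamma f_\alpha$ directly: since $\gamma e_i = e_{\gamma(i)}$ and $\gamma$ is a bijection,
\[
\gamma f_\alpha = e_{\gamma(\alpha)} + \sum_{\beta} \lambda^\alpha_\beta e_{\gamma(\beta)},
\]
where the sum ranges over $\beta>\alpha$ with $\omega(\beta)<\omega(\alpha)$. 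The goal is to re-expand this in the $f$-basis: $\gamma f_\alpha = \sum_{\alpha'} c_{\alpha'} f_{\gamma(\alpha')}$ for suitable coefficients, where $f_{\gamma(\alpha)}$ plays the role of the leading term (note $\gamma(\alpha)$ indexes the canonical generator at step $\omega(\gamma(\alpha))$, and by the $\gamma$-restricted condition $\omega(\gamma(\alpha))>\omega(\alpha)$, consistent with Lemma~\ref{lem: schubert cell intersects springer}). The content of the Springer condition is that the coefficient of $e_{\gamma(\beta)}$ must vanish whenever $\omega(\gamma(\beta))<\omega(\alpha)$, i.e. whenever $f_{\gamma(\beta)}$ is ``too early'' to be allowed.

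The key algebraic step is the change-of-basis computation. Writing $M$ for the (infinite, unitriangular with respect to the $\omega$-ordering) matrix $(\lambda^{\alpha'}_{\beta'})$, the relation between the $e$-basis and $f$-basis is $f = (I+\Lambda)e$ for the strictly-lower (in $\omega$-order) part $\Lambda$, so $e = (I+\Lambda)^{-1}f = \sum_{d\ge 0}(-\Lambda)^d f$. Applying $\gamma$ and matching coefficients, the requirement that the $e_{\gamma(\beta)}$-coefficient of $\gamma f_\alpha$ vanishes, after substituting $e_{\gamma(\beta)} = \sum_{d\ge0}(-1)^d\sum (\Lambda^d)\cdots f$, unwinds exactly into \eqref{eq:ElimEqn2 new}: the linear term $\lambda^\alpha_\beta - \lambda^{\gamma(\alpha)}_{\gamma(\beta)}$ comes from the $d=0,1$ contributions (the $\lambda^{\gamma(\alpha)}_{\gamma(\beta)}$ term being the contribution of the canonical generator $f_{\gamma(\alpha)}$ already having a $\lambda^{\gamma(\alpha)}_{\gamma(\beta)}e_{\gamma(\beta)}$ term), and the higher $d$ terms assemble into the telescoping products on the right-hand side over chains $\gamma(\beta_1)<\cdots<\gamma(\beta_{d-1})<\gamma(\beta)$. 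I would carry this out by carefully bookkeeping which indices $\alpha',\beta'$ can appear: since all $\lambda^{\alpha'}_{\beta'}$ in $\Lambda$ have $\alpha'<\beta'$, and $\gamma$ is ``almost increasing'' away from multiples of $K$ (Lemma~\ref{lem: gamma almost increasing}), the intermediate indices are forced into the stated chain.

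For the final clause — that every $\lambda^{\alpha'}_{\beta'}$ in the nonlinear terms is strictly $\prec$ both $\lambda^\alpha_\beta$ and $\lambda^{\gamma(\alpha)}_{\gamma(\beta)}$ — I would use the additivity of $\phi_0,\phi_\infty$ under the chain structure. In a typical nonlinear term we have a product $\lambda^{\gamma(\beta_i)}_{\gamma(\beta_{i+1})}$ with $\gamma(\beta_i)<\gamma(\beta_{i+1})$, and the key point is that the weights telescope: $\phi_0(\lambda^\alpha_{\beta_1}) + \phi_0(\lambda^{\gamma(\beta_1)}_{\gamma(\beta_2)}) + \cdots$ sums (using $\theta(\gamma(i)) = \theta(i)+(k,1)$ from \eqref{eq: theta stable 2}, equivalently $\phi_0(\gamma(i)) = \phi_0(i)+k$, $\phi_\infty(\gamma(i))=\phi_\infty(i)+1$ when $i$ is not a multiple of $K$) to $\phi_0(\lambda^\alpha_\beta)$ plus a nonnegative correction, while each individual factor contributes a strictly positive amount to at least one of $\phi_0,\phi_\infty$. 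Hence each factor is strictly dominated in the lexicographic order $\succ$ of Definition~\ref{def: Var Order}. One must handle the case where some $\beta_i$ is a multiple of $K$ separately, since then $\gamma$ jumps by $KN+1$ rather than $k$ or $k+1$; here Corollary~\ref{cor: beta mK} and Lemma~\ref{lem: beta mK}, together with $N\gg 0$, guarantee that such a jump only makes $\phi_0$ larger, which is compatible with (indeed strengthens) the strict inequality. The main obstacle I anticipate is precisely this bookkeeping around multiples of $K$ and making the telescoping-of-weights argument fully rigorous: one has to be careful that the chain $\gamma(\beta_1)<\cdots<\gamma(\beta_{d-1})<\gamma(\beta)$ actually lifts back to a chain $\beta_1<\cdots<\beta_{d-1}<\beta$ (via Lemma~\ref{lem: gamma almost increasing}(2), which needs $\beta\ne mK$ — supplied by Corollary~\ref{cor: beta mK} under the hypotheses $\omega(\gamma(\beta))<\omega(\alpha)\le K$ and $\gamma(\beta)>k$), and that the indices stay in the range where the $\lambda$'s are genuinely free coordinates rather than forced zeros.
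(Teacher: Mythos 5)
Your derivation of the equations \eqref{eq:ElimEqn2 new} is essentially the same as the paper's: you re-express $e$-basis vectors in terms of the canonical $f$-generators via the Neumann series $(I+\Lambda)^{-1}=\sum_d(-\Lambda)^d$, which is a compact way of encoding the same step-by-step elimination that the paper carries out explicitly. The telescoping product structure in \eqref{eq:ElimEqn2 new} falls out of either phrasing.

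For the ordering clause, however, your route genuinely diverges from the paper's and is less robust. You try to establish the identity $\sum_i\phi_0(\lambda^{\alpha_i}_{\beta_i})=\phi_0(\lambda^\alpha_\beta)$ (and similarly for $\phi_\infty$) by direct telescoping using $\theta(\gamma(i))=\theta(i)+(k,1)$, and then you flag the multiples-of-$K$ case as a concern, asserting that a jump at $\beta_i=mK$ ``only makes $\phi_0$ larger, which is compatible with (indeed strengthens) the strict inequality.'' That last claim is backwards: if the telescoped sum strictly \emph{exceeded} $\phi_0(\lambda^\alpha_\beta)$, the inequality $\phi_0(\lambda^{\alpha_i}_{\beta_i})\le\phi_0(\lambda^\alpha_\beta)$ would no longer follow from nonnegativity, and the comparison argument would collapse rather than be strengthened. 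The paper avoids this entirely by invoking Lemma~\ref{lem: stable torus action}: since the $\bC^*\times\bC^*$-action preserves $X_{n,k}$ for $N\gg 0$, the equations \eqref{eq:ElimEqn2 new} are automatically homogeneous in both $\phi_0$ and $\phi_\infty$, so the weight-sum identity holds \emph{a priori} with no correction and no case analysis. From there, a monomial of length $>1$ having one factor with both weights equal to those of $\lambda^\alpha_\beta$ would force all other factors to have zero weight in both $\phi_0$ and $\phi_\infty$, which is impossible for a nontrivial $\lambda^{\alpha'}_{\beta'}$. Your telescoping approach can be made to work, but it then needs to separately rule out monomials with some $\beta_i=mK$ (an argument the paper postpones to Theorem~\ref{thm: dim cell}, via Corollary~\ref{cor: beta mK}), whereas the equivariance argument handles it uniformly. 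A smaller point: your worry about lifting the chain $\gamma(\beta_1)<\cdots<\gamma(\beta)$ back to a chain in the $\beta_i$ is not actually needed for the ordering claim; the factors $\lambda^{\gamma(\beta_i)}_{\gamma(\beta_{i+1})}$ only require $\gamma(\beta_i)<\gamma(\beta_{i+1})$, not $\beta_i<\beta_{i+1}$. That lifting issue matters later (in showing which equations are nonvacuous), not here.
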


\begin{proof}
First, recall from \eqref{eq: canonical} that if $\Lambda_\bullet \in C_{\omega^{-1}}$ then $\Lambda_{i-1}/\Lambda_i$ is spanned by the canonical generator $f_{\omega^{-1}(i)}$ for all $i$.  We can alternatively restate this as the fact that for any $\alpha$, the quotient $\Lambda_{\omega(\alpha)-1}/\Lambda_{\omega(\alpha)}$ is spanned by $$f_\alpha=e_\alpha+\sum_{\substack{\beta>\alpha\\ \omega(\beta)<\omega(\alpha)}} \lambda_\beta^\alpha e_\beta.$$ Thus $\Lambda_{\omega(\alpha)-1}$ is spanned by $$\{f_{\beta'}\mid \omega(\beta')\ge \omega(\alpha)\}=\{e_{\beta'}+\sum_{\beta>\beta'} \lambda_\beta^{\beta'} e_\beta\mid \omega(\beta')\ge \omega(\alpha)\}.$$

Now, note that by the definition of $X_{n,k}$, we have that $\Lambda_\bullet\in X_{n,k}$ if and only if the vector
    \begin{equation}\label{eq:ElimEqn1 new}
    \gamma f_\alpha - f_{\gamma(\alpha)} = \sum_{\beta > \alpha}\lambda^\alpha_\beta e_{\gamma(\beta)} - \sum_{\rho > \gamma(\alpha)} \lambda_\rho^{\gamma(\alpha)} e_\rho = 
    \sum_{\beta > \alpha}\lambda^\alpha_\beta e_{\gamma(\beta)} - \sum_{\gamma(\beta) > \gamma(\alpha)} \lambda_{\gamma(\beta)}^{\gamma(\alpha)} e_{\gamma(\beta)}
    \end{equation}
    is in $\Lambda_{\omega(\alpha)-1}$ for all $\alpha$ such that $1\le \omega(\alpha)\le K$.  If $\omega(\gamma(\beta))>\omega(\alpha)$, then we can eliminate the $e_{\gamma(\beta)}$ term using the corresponding $f_{\gamma(\beta)}$ generator; suppose we do so over all such $\beta$ in increasing order of the value of $\gamma(\beta)$.  Now, fix an index $\beta$ such that $\omega(\gamma(\beta))<\omega(\alpha)$.  We consider the coefficient of $e_{\gamma(\beta)}$ after all such eliminations as outlined above.

    Note that its original coefficient was $\lambda_\beta^\alpha-\lambda_{\gamma(\beta)}^{\gamma{\alpha}}$.  This coefficient can be changed under our elimination process by any $\gamma(\beta')<\gamma(\beta)$, which in turn can be affected by previous values, so in particular we consider the maximal chain of elements $\beta_1,\ldots,\beta_m$ such that $\gamma(\beta_1)<\cdots <\gamma(\beta_{m})<\gamma(\beta)$ such that for all $i$ we have $\omega(\gamma(\beta_i))>\omega(\alpha)$.  We eliminate $e_{\gamma(\beta_1)}$ first by subtracting $$(\lambda_{\beta_1}^\alpha-\lambda_{\gamma(\beta_1)}^{\gamma(\alpha)})f_{\gamma(\beta_1)}=\sum_{\beta'>\gamma(\beta_1)}(\lambda_{\beta_1}^\alpha-\lambda_{\gamma(\beta_1)}^{\gamma(\alpha)}) \lambda_{\beta'}^{\gamma(\beta_1)}e_{\beta'}.$$  Looking at the terms where $\beta'=\beta_2,\beta_3,\ldots$, we notice that $e_{\gamma(\beta_2)}$ now has a coefficient of \begin{equation}\label{eq:beta-2}(\lambda_{\beta_2}^\alpha-\lambda_{\gamma(\beta_2)}^{\gamma(\alpha)})-(\lambda_{\beta_1}^{\alpha}-\lambda_{\gamma(\beta_1)}^{\gamma(\alpha)})\lambda_{\gamma(\beta_2)}^{\gamma(\beta_1)},\end{equation} and similarly $e_{\gamma(\beta_3)}$ now has a coefficient of $$(\lambda_{\beta_3}^\alpha-\lambda_{\gamma(\beta_3)}^{\gamma(\alpha)})-(\lambda_{\beta_1}^{\alpha}-\lambda_{\gamma(\beta_1)}^{\gamma(\alpha)})\lambda_{\gamma(\beta_3)}^{\gamma(\beta_1)}.$$  We next eliminate $e_{\gamma(\beta_2)}$ by subtracting the coefficient \eqref{eq:beta-2} times $f_{\gamma(\beta_2)}$ and find that the coefficient of $e_{\gamma(\beta_3)}$ is now $$(\lambda_{\beta_3}^\alpha-\lambda_{\gamma(\beta_3)}^{\gamma(\alpha)})-\lambda_{\gamma(\beta_1)}^{\gamma(\alpha)})\lambda_{\gamma(\beta_3)}^{\gamma(\beta_1)}-(\lambda_{\beta_2}^\alpha-\lambda_{\gamma(\beta_2)}^{\gamma(\alpha)})\lambda_{\gamma(\beta_3)}^{\gamma(\beta_2)}+(\lambda_{\beta_1}^{\alpha}-\lambda_{\gamma(\beta_1)}^{\gamma(\alpha)})\lambda_{\gamma(\beta_2)}^{\gamma(\beta_1)}\lambda_{\gamma(\beta_3)}^{\gamma{\beta_2}}$$ which is an alternating sum in a similar form to \eqref{eq:ElimEqn2 new}, summing over chains of $\beta_i$'s between $\beta_1$ and $\beta_3$.
    Continuing in this manner and setting the final coefficient of $e_{\gamma(\beta)}$ to be $0$, we obtain the desired equation.

    
    For the second claim, note that for $N\gg 0$, the quantities $\phi_0(\lambda^{\alpha}_{\beta})$ and $\phi_\infty(\lambda^{\alpha}_{\beta})$ are the weights of the $\mathbb{C}^*\times \mathbb{C}^*$ action \eqref{eq: theta stable} on the Schubert cell $C_{\omega^{-1}}$. By Lemma \ref{lem: stable torus action} this action preserves the subvariety $X_{n,k}=X_{n,k,N}$ for $N\gg 0$ and hence the equations \eqref{eq:ElimEqn2 new} are homogeneous with respect to both $\phi_0$ and $\phi_{\infty}$. Suppose a monomial $\lambda^{\alpha_1}_{\beta_1}\cdots \lambda^{\alpha_k}_{\beta_k}$
 appears in the right hand side of \eqref{eq:ElimEqn2 new} with a nonzero coefficient, and $k>1$. Then 
 $$
\sum_{i=1}^{k}\phi_0\left(\lambda^{\alpha_i}_{\beta_i}\right)=\phi_0\left(\lambda^{\alpha}_{\beta}\right),\ \sum_{i=1}^{k}\phi_\infty\left(\lambda^{\alpha_i}_{\beta_i}\right)=\phi_\infty\left(\lambda^{\alpha}_{\beta}\right),
 $$
 hence
$$
\phi_0\left(\lambda^{\alpha_i}_{\beta_i}\right)\le \phi_0\left(\lambda^{\alpha}_{\beta}\right),\ \phi_\infty\left(\lambda^{\alpha_i}_{\beta_i}\right)\le \phi_\infty\left(\lambda^{\alpha}_{\beta}\right).
$$
If we assume that both of these inequalities are equalities for some $i$, then for all $j\neq i$ we must have
$\phi_0\left(\lambda^{\alpha_i}_{\beta_i}\right)=\phi_\infty\left(\lambda^{\alpha_i}_{\beta_i}\right)=0$ which is not possible. We have a contradiction, so $\lambda^{\alpha_i}_{\beta_i}\prec \lambda^{\alpha}_{\beta}$.
\end{proof}    

By Corollary \ref{cor: cells Y} the intersection $X_{n,k}\cap C_{\omega^{-1}}$ is nonempty if and only if $\omega^{-1}$ is $\gamma$-restricted. By Lemma \ref{lem: pf bijection} there exists a $(K,k)$ parking function $\pi$ such that $\omega^{-1}=\omega_{\pi}$.

\begin{thm}
\label{thm: dim cell}
For a $(K,k)$ parking function $\pi$, the intersection $X_{n,k}\cap C_{\omega_\pi}$ is isomorphic to an affine space of dimension $\delta_{K,k}-\dinv(\pi)$ (where $\delta_{K,k}$ is defined in Definition~\ref{def:delta-num}).
\end{thm}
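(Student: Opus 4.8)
The plan is to read the dimension and the affine-space structure directly off the explicit equations of Lemma~\ref{lem: equations cell}, exploiting the triangularity they enjoy with respect to the order $\succ$ of Definition~\ref{def: Var Order}, and then to convert the resulting codimension count into $\delta_{K,k}-\dinv(\pi)$ via Lemma~\ref{lem: A size}. Since $\pi$ is a $(K,k)$ parking function, $\omega^{-1}=\omega_\pi$ is $\gamma$-restricted (Lemma~\ref{lem: pf bijection}) and the intersection $X_{n,k}\cap C_{\omega_\pi}$ is nonempty (Corollary~\ref{cor: cells Y}). We work inside the affine cell $C_{\omega^{-1}}\cong\mathbb{A}^{\inv(\omega^{-1})}$, with coordinates $\lambda^\alpha_\beta$ indexed by the pairs with $1\le\omega(\alpha)\le K$, $\alpha<\beta$, $\omega(\beta)<\omega(\alpha)$, subject to the periodic identification $\lambda^\alpha_\beta=\lambda^{\alpha+K}_{\beta+K}$. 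By Lemma~\ref{lem: equations cell}, $X_{n,k}\cap C_{\omega^{-1}}$ is the zero locus inside $C_{\omega^{-1}}$ of the equations \eqref{eq:ElimEqn2 new}, which (after discarding those that hold identically) are indexed by the set $A_{\omega}$ of \eqref{eq: def A} and have the form
\[
\lambda^\alpha_\beta-\lambda^{\gamma(\alpha)}_{\gamma(\beta)}=(\text{a polynomial all of whose monomials are products of coordinates }\prec\lambda^\alpha_\beta\text{ and }\prec\lambda^{\gamma(\alpha)}_{\gamma(\beta)}).
\]

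Next I would refine $\succ$ to a total order on the (finitely many) coordinates of $C_{\omega^{-1}}$. For $(\alpha,\beta)\in A_\omega$, define the \emph{pivot} $p_{\alpha,\beta}$ of the corresponding equation to be the $\succ$-larger of $\lambda^\alpha_\beta$ and $\lambda^{\gamma(\alpha)}_{\gamma(\beta)}$, with the convention that if one of the two is identically zero on $C_{\omega^{-1}}$ then $p_{\alpha,\beta}$ is the other. The equation then expresses $p_{\alpha,\beta}$ as a polynomial in coordinates all of which are strictly $\succ$-smaller than $p_{\alpha,\beta}$. Granting that $(\alpha,\beta)\mapsto p_{\alpha,\beta}$ is injective (the key point, discussed below), the standard triangular elimination applies: order the $|A_\omega|$ equations so that their pivots increase, and process them in that order while back-substituting the pivots already rewritten; since every coordinate occurring in a given equation is strictly $\succ$-smaller than that equation's pivot, it is either a non-pivot or a pivot of an earlier equation, so after substitution each pivot coordinate becomes a polynomial in the coordinates that are not pivots of any equation. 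Hence the coordinate projection forgetting the $|A_\omega|$ pivot coordinates is an isomorphism $X_{n,k}\cap C_{\omega^{-1}}\xrightarrow{\ \sim\ }\mathbb{A}^{\inv(\omega^{-1})-|A_\omega|}$. Finally, Lemma~\ref{lem: A size} gives $\inv(\omega^{-1})-|A_\omega|=\delta_{K,k}-\dinv(\pi)$, proving the theorem.

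The main obstacle is the injectivity of the pivot map, together with the claim that the $A_\omega$-indexed equations really do cut out the full intersection (no extra, nontrivial equations). Both reduce to controlling how $\gamma$ moves a coordinate $\lambda^\alpha_\beta$ around its $\gamma$-orbit $\{\lambda^{\gamma^j(\alpha)}_{\gamma^j(\beta)}\}_j$. By the $\gamma$-restricted condition the values $\omega(\gamma^j(\alpha))$ strictly increase, so each orbit meets the window $1\le\omega(\cdot)\le K$ only finitely often; and by the bihomogeneity of \eqref{eq:ElimEqn2 new} under the $\mathbb{C}^*\times\mathbb{C}^*$-action of Lemma~\ref{lem: stable torus action}, the $\succ$-weight of $\lambda^{\gamma^j(\alpha)}_{\gamma^j(\beta)}$ is constant along the orbit except across the ``wrap'' $\gamma e_K=\epsilon^{N+1}e_1$, where it jumps and where (by Corollary~\ref{cor: beta mK}) one of the two linear terms of the relevant equation vanishes identically. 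Feeding these observations, along with Lemmas~\ref{lem: gamma almost increasing} and \ref{lem:A-properties}, into a case analysis of the indices divisible by $K$ is precisely what yields injectivity of $(\alpha,\beta)\mapsto p_{\alpha,\beta}$ and the non-redundancy count. I expect this bookkeeping around the wrap point to be the technical heart of the proof; the elimination itself is then formal.
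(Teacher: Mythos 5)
Your overall strategy is the paper's: use the triangular structure of the equations from Lemma~\ref{lem: equations cell} to eliminate one coordinate per nontrivial equation, then convert $\inv(\omega^{-1})-|A_\omega|$ into $\delta_{K,k}-\dinv(\pi)$ via Lemma~\ref{lem: A size}. However, the step you single out as "the technical heart" is exactly where your proposal has a genuine gap, and the mechanism you propose for it does not work as stated. Your pivot $p_{\alpha,\beta}$ is defined as "the $\succ$-larger of $\lambda^\alpha_\beta$ and $\lambda^{\gamma(\alpha)}_{\gamma(\beta)}$," but these two coordinates have \emph{identical} $\phi_0$- and $\phi_\infty$-weights (this is precisely the bihomogeneity you invoke), so they are incomparable under $\succ$ and the pivot is ill-defined until you choose a refinement. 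Worse, an arbitrary total refinement does not rescue the argument: the coordinates along a $\gamma$-orbit form a chain $v_0,v_1,v_2,\dots$ with $v_j=\lambda^{\gamma^j(\alpha)}_{\gamma^j(\beta)}$, consecutive terms linked by equations $v_j-v_{j+1}=(\text{lower order})$, and if some $v_j$ happens to be a local maximum of the refined order it is the "max" pivot of two consecutive equations, killing injectivity. Since the orbit is periodic modulo the identification $\lambda^\alpha_\beta=\lambda^{\alpha+K}_{\beta+K}$, a local maximum is unavoidable for a generic refinement, so injectivity genuinely fails rather than merely being unproved.

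The paper's resolution is different in a small but essential way: the pivot of the equation indexed by $(\alpha,\beta)\in A_\omega$ is \emph{always} $\lambda^\alpha_\beta$ (never the other linear term), which makes injectivity of the pivot map automatic. What must then be checked is that the refinement $\lambda^\alpha_\beta\succ\lambda^{\gamma(\alpha)}_{\gamma(\beta)}$, imposed for all $\alpha$ not divisible by $K$, is consistent (it is monotone along each $\gamma$-orbit within one period), and that the would-be cycle at the wrap is broken because for $\alpha=mK$ the linear term $\lambda^{\gamma(\alpha)}_{\gamma(\beta)}$ does not occur in the equation at all --- this last point uses $N\ge k$ together with Lemma~\ref{lem: A Wild Conductor Appears}, not Corollary~\ref{cor: beta mK} as you suggest. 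Separately, the claim that equations with no surviving linear term are entirely vacuous (no nonlinear terms either) requires the two-case analysis via Lemma~\ref{lem: gamma almost increasing}, Lemma~\ref{lem:A-properties}, and Corollary~\ref{cor: beta mK}; you defer this as well. So while your outline points at the right difficulties, the proof is not complete without replacing the "max" pivot convention by the orbit-monotone one and carrying out the wrap-point and vacuousness arguments explicitly.
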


\begin{proof}
For notational convenience, set $\omega^{-1} = \omega_\pi$. 
By Lemma \ref{lem: equations cell} the intersection $X_{n,k}\cap C_{\omega^{-1}}$ is cut out by the equations \eqref{eq:ElimEqn2 new}, where $\omega(\alpha) > \omega(\gamma(\beta))$.  We would like to better control the linear terms of these equations. Recall that $1\le \omega(\alpha)\le K$ and $\omega^{-1}$ is $\gamma$-restricted.

The term $\lambda_{\beta}^{\alpha}$ appears in \eqref{eq:ElimEqn2 new} if $\alpha<\beta$, since we are already given that $\omega(\alpha)>\omega(\gamma(\beta))$, which implies $\omega(\alpha)>\omega(\beta)$ by the fact that $\omega^{-1}$ is $\gamma$-restricted.

The term $\lambda_{\gamma(\beta)}^{\gamma(\alpha)}$ appears in \eqref{eq:ElimEqn2 new} if $\gamma(\alpha)<\gamma(\beta)$. Indeed, since we are already given that $\omega(\alpha)>\omega(\gamma(\beta))$, by Lemma \ref{lem:A-properties} this implies  $\alpha<\beta$ and $\omega(\gamma(\alpha))>\omega(\gamma(\beta))$. In particular, in this case both $\lambda_{\beta}^{\alpha}$ and $\lambda_{\gamma(\beta)}^{\gamma(\alpha)}$ appear in the equation since they are not identically zero.

Assume that $\omega(\alpha)>\omega(\gamma(\beta))$ but neither of the linear terms appear in the equation \eqref{eq:ElimEqn2 new}. 
We claim that in this case nonlinear terms cannot appear as well, and the equation is vacuous. 
Indeed, we have two cases.

\textbf{Case 1.} Suppose $\lambda^{\gamma(\alpha)}_{\gamma(\beta_1)}\lambda^{\gamma(\beta_1)}_{\gamma(\beta_2)}\cdots \lambda^{\gamma(\beta_{d-1})}_{\gamma(\beta)}$ appears, meaning that it is not identically zero. Then $\gamma(\alpha)<\gamma(\beta_1)<\gamma(\beta_2)<\cdots <\gamma(\beta)$, so $\gamma(\alpha)<\gamma(\beta)$, and both linear terms $\lambda^\alpha_\beta$ and $\lambda^{\gamma(\alpha)}_{\gamma(\beta)}$ appear.

\textbf{Case 2.} Suppose $\lambda^{\alpha}_{\beta_1}\lambda^{\gamma(\beta_1)}_{\gamma(\beta_2)}\cdots \lambda^{\gamma(\beta_{d-1})}_{\gamma(\beta)}$ appears. Then $\alpha<\beta_1$ and $\gamma(\beta_1)<\gamma(\beta_2)<\cdots <\gamma(\beta)$. If $\beta\neq mK$ then applying Lemma \ref{lem: gamma almost increasing} to $\gamma(\beta_1)<\gamma(\beta)$ we get $\beta_1<\beta$, so $\alpha<\beta$ and  the linear term $\lambda_{\beta}^{\alpha}$ appears.
If $\beta=mK$, then we write
$$
k<\alpha+k<\beta_1+k\le \gamma(\beta_1)<\gamma(\beta)\ 
\mathrm{while}\ 
\omega(\gamma(\beta))<\omega(\alpha)\le K. 
$$
By Corollary \ref{cor: beta mK} we get a contradiction.

Therefore, by Lemma \ref{lem: equations cell} each nontrivial equation presents $\lambda^{\alpha}_{\beta} - \lambda^{\gamma(\alpha)}_{\gamma(\beta)}$ as a polynomial in smaller variables in our ordering. Observe that $\lambda^{\alpha}_{\beta}$ and $\lambda^{\gamma(\alpha)}_{\gamma(\beta)}$ have the same $\phi_0$ and $\phi_\infty$ weight, and hence are incomparable under $\prec$. Therefore, we may refine the ordering on the variables so that $\lambda^{\alpha}_{\beta} \succ \lambda^{\gamma(\alpha)}_{\gamma(\beta)}$ for $\alpha \neq mK$ for any $m$. 

Suppose \eqref{eq:ElimEqn2 new} appears as an equation when $\alpha = mK$ for some $m$, and suppose $\lambda_{\gamma(\beta)}^{\gamma(\alpha)}$ appears (so that both linear terms appear by the reasoning above). First, suppose $\beta\neq m'K$ for any $m'$. Since $\gamma(\alpha) < \gamma(\beta)$, then $\alpha + NK+1 < \gamma(\beta) \leq \beta + k + 1$. Since $N\geq k$, this implies $\beta > \alpha + (k-1)K + (K-k+1) > (k-1)K + (k-1)(n-k+1)$, thus $\gamma(\beta) > kK$. Since $\omega^{-1}$ is $\gamma$-restricted, then by Lemma~\ref{lem: A Wild Conductor Appears}, $\omega(\gamma\beta) > K$, contradicting $K\geq \omega(\alpha) > \omega(\gamma(\beta))$. If $\beta = m'K$, then $mK<m'K$ and $\omega(mK)>\omega(m'K)$ cannot simultaneously hold, a contradiction.

Thus, the equation for $\alpha=mK$ does not contain the $\lambda^{\gamma(\alpha)}_{\gamma(\beta)}$ linear term.



Therefore, we can eliminate the variables one by one in order in terms of variables which are smaller under $\prec$ and use remaining variables as free coordinates. 

The number of equations equals $| A_{\omega}|$ where $A_{\omega}$ is given by \eqref{eq: def A}, so 
$$
X_{n,k}\cap C_{\omega^{-1}}\simeq \mathbb{C}^{\inv(\omega^{-1})-|A_{\omega}|}=\mathbb{C}^{\delta_{K,k}-\dinv(\pi)}.
$$
The last equation follows from Lemma \ref{lem: A size}.
\end{proof}

\subsection{Proof of Theorem~\ref{thm:GradedFrob}}

In this section we prove Theorem \ref{thm:GradedFrob}. We recall some definitions and results from \cite{GGG1}. 

First, we have a degree $K$ symmetric function $E_{K,k}\cdot 1$. Here $E_{K,k}$ is an operator in the {\em elliptic Hall algebra} acting on symmetric functions, and we apply it to 1. We refer to \cite[Section 3]{GGG1} for all definitions and references. For the purposes of this paper, the reader can safely use the right hand side of the equation \eqref{eq: rational shuffle} as the definition of $E_{K,k}\cdot 1$.

A $(K,k)$ word parking function is  a labeling
of the vertical runs of a $(K,k)$ Dyck path by positive integers such that the labeling strictly increases up
each vertical run (but letters may repeat between columns), we denote the set of such functions by $\WPF_{K,k}$. Furthermore, for $\eta\vDash K$ we define 
$\WPF_{K,k,\eta}$ as the set of word parking functions with labels from $1$ to $\ell(\eta)$ and content $\eta$.
We write $\wWPF_{K,k}$ and $\wWPF_{K,k,\eta}$ for the same labelings except allowing the labeling to weakly increase up each vertical run.

The $\dinv$ statistics on word parking functions is defined exactly as in Definition \ref{def: dinv}, note that an attacking pair with equal letters $a=b$ does not contribute to $\tdinv$ or to $\dinv$. We will need another version (denoted by $\dinv'$) where in fact we assume that all attacking pairs with $a=b$ contribute to $\dinv$.

The following result is known as the Rational Shuffle Conjecture, proposed in \cite{RationalShuffle} and proved in \cite{Mellit}.  Note that in this section, we use $\omega$ both for the involution on symmetric functions that sends a Schur function $s_\lambda$ to the transpose Schur function $s_{\lambda'}$, and for an affine permutation, with the understanding that the two meanings can be easily distinguished by context.

We also recall that in general, applying the involution $\omega$ to an LLT polynomial has the combinatorial effect of allowing vs disallowing ties and interchanging $\dinv$ with $\dinv'$.

\begin{thm}[\cite{Mellit}]
\label{thm: rational shuffle}
We have
\begin{align}
\label{eq: rational shuffle}
E_{K,k}\cdot 1&=\sum_{\pi\in \WPF_{K,k}}q^{\area(\pi)}t^{\dinv(\pi)}x^{\pi},\\
\omega (E_{K,k}\cdot 1)&=\sum_{\pi\in \wWPF_{K,k}}q^{\area(\pi)}t^{\dinv'(\pi)}x^{\pi}.\label{eq: rational shuffle omega}
\end{align}

Equivalently, for all compositions $\eta\vDash K$ we have
\begin{equation}
\label{eq: E paired with complete}
\langle h_{\eta},\omega (E_{K,k}\cdot 1)\rangle=\sum_{\pi\in \wWPF_{K,k,\eta}} t^{\area(\pi)}q^{\dinv'(\pi)}
\end{equation}
where $h_{\eta}$ is the complete symmetric function.
\end{thm}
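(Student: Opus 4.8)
\textbf{Proof plan for Theorem~\ref{thm: rational shuffle} (the Rational Shuffle Theorem).}

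The statement is exactly the Rational Shuffle Conjecture of \cite{RationalShuffle} in the special case of integer slope $(K,k)$ with $k\mid K$, which was proven in full generality by Mellit~\cite{Mellit}; so strictly speaking the plan is to \emph{cite} it and then derive the equivalent reformulations~\eqref{eq: rational shuffle omega} and~\eqref{eq: E paired with complete} from~\eqref{eq: rational shuffle}. First I would recall that~\eqref{eq: rational shuffle} is the precise output of Mellit's theorem after matching conventions: our $E_{K,k}$ is the elliptic Hall algebra element $E_{(a,b)}$ with $(a,b)=(K,k)$ in the notation of \cite{RationalShuffle}, with the $a,b$ convention flipped as flagged in the introduction, and our $\WPF_{K,k}$, $\area$, $\dinv$ agree with the combinatorial side there (the reader can in fact take~\eqref{eq: rational shuffle} as the \emph{definition} of $E_{K,k}\cdot 1$, as noted in the text). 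The only subtlety is the sign/twist: one checks that for slope $(ka,kb)=(K,k)$ with $b=1$ the prefactor $(-1)^{k(a+1)}$ appearing in the introduction's statement of the Rational Shuffle Theorem is absorbed into the $\omega$-twist and the $q,t$-reversal used to pass to our normalization, so no stray sign survives.

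For~\eqref{eq: rational shuffle omega}: apply the involution $\omega$ to both sides of~\eqref{eq: rational shuffle}. Each monomial-graded piece $\langle E_{K,k}\cdot 1,\, m_\eta\rangle$-coefficient is, by~\eqref{eq: rational shuffle}, the generating function $\sum_{\pi\in\WPF_{K,k,\eta}} q^{\area(\pi)}t^{\dinv(\pi)}$ of \emph{strict} word parking functions of content $\eta$, which (after grouping by their underlying Dyck path and set of labels) is an LLT-type polynomial in the $q,t$ variables. By the standard fact recalled in the excerpt --- applying $\omega$ to an LLT polynomial interchanges ``ties allowed'' with ``ties disallowed'' and swaps $\dinv$ with $\dinv'$ --- the image under $\omega$ is $\sum_{\pi\in\wWPF_{K,k,\eta}} q^{\area(\pi)}t^{\dinv'(\pi)}$, the generating function over \emph{weakly} increasing word parking functions with $\dinv$ replaced by $\dinv'$. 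Summing over $\eta$ with the appropriate $x^\pi$ weights gives~\eqref{eq: rational shuffle omega}. (One must be slightly careful that $\area$ depends only on the underlying Dyck path and so is unaffected by $\omega$; this is immediate.)

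For~\eqref{eq: E paired with complete}: pair~\eqref{eq: rational shuffle omega} with $h_\eta$ and use $\langle h_\eta, x^\mu\rangle$-type extraction, i.e.\ the standard adjunction $\langle h_\eta, f\rangle = $ (coefficient extraction dual to the monomial basis) together with the fact that $\langle h_\eta, m_\mu\rangle = \delta_{\eta\mu}$ after symmetrization; concretely, the coefficient of $h_\eta$ in the $m$-expansion of a symmetric function $f$ equals the coefficient of the monomial $x_1^{\eta_1}\cdots x_{\ell}^{\eta_\ell}$, so pairing picks out exactly the word parking functions of content $\eta$, giving the right-hand side of~\eqref{eq: E paired with complete}. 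The main (and only real) obstacle is bookkeeping: verifying that the $a\leftrightarrow b$ convention flip, the $\omega$-twist, the $\mathrm{rev}_q$, and the sign $(-1)^{k(a+1)}$ all compose to the clean statement above; once the conventions are pinned down, the three displayed identities are equivalent to one another by elementary symmetric-function duality and the LLT-$\omega$ dictionary, and~\eqref{eq: rational shuffle} itself is Mellit's theorem.
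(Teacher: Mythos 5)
Your proposal matches the paper's treatment: the paper likewise cites Mellit for \eqref{eq: rational shuffle} (and explicitly allows the reader to take it as the definition of $E_{K,k}\cdot 1$, sidestepping the sign/convention bookkeeping), derives \eqref{eq: rational shuffle omega} from the LLT expansion and the action of $\omega$ on LLT polynomials via superization (citing \cite{HHLRU}), and treats \eqref{eq: E paired with complete} as the standard $h_\eta$-pairing reformulation. This is essentially the same approach, so no further comment is needed.
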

Note that here $\omega$ is the involution on symmetric functions, which is not to be confused with the affine permutation $\omega\in \widetilde{S_K}$. Further note that \eqref{eq: rational shuffle omega} follows easily from \eqref{eq: rational shuffle} by the expansion of the right-hand side of \eqref{eq: rational shuffle} into LLT polynomials and how $\omega$ acts on LLT polynomials, see for example Section 2 in \cite{HHLRU} on the technique of \emph{superization}, specifically Corollary 2.4.3 in \emph{loc. cit}.

By Lemma \ref{lem: pf bijection} there is a bijection between $\PF_{K,k}$ and the set of $\gamma$-restricted affine permutations. We will need an analogue of this bijection for $\wWPF_{K,k,\eta}$. Let $\eta\vDash K$, and $\pi\in \wWPF_{K,k,\eta}$. We define its standardization $\std(\pi)\in \PF_{K,k}$ as follows: this is a parking function such that all the boxes in $\pi$ labeled by $j$ are replaced by the labels in the $j$-th block of $\eta$. Within each block, we order the labels according to their ranks.

\begin{lem}
\label{lem: wpf bijection}
The map $\wWPF_{K,k,\eta}\to \PF_{K,k}$ defined by $\pi\mapsto \std(\pi)$ has the following properties:
\begin{enumerate}
\item $\dinv'(\pi)=\dinv(\std(\pi))$.
\item The affine permutation $\omega_{\std(\pi)}$ is the minimal length representative in $\widetilde{S_K}/S_{\eta}$.
\item The map $\pi\to \omega_{\std(\pi)}$ is a bijection between $\wWPF_{K,k,\eta}$ and the set of $\gamma$-restricted minimal length representatives in $\widetilde{S_K}/S_{\eta}$.
\end{enumerate}
\end{lem}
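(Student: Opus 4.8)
The plan is to verify the three assertions in turn, leaning on the machinery already built in Section~\ref{sec: gamma restricted} and on the analogous (non-word) bijection of Lemma~\ref{lem: pf bijection}. First I would set up notation carefully: given $\pi\in\wWPF_{K,k,\eta}$, each vertical step of the underlying Dyck path $D$ sits in a box of some rank, and the label on that step is an integer $j$ with $1\le j\le \ell(\eta)$; the multiset of labels is exactly $\eta$. Since the labels weakly increase up each column, the standardization replaces the $\eta_j$ occurrences of the letter $j$ by the consecutive integers $\eta_1+\cdots+\eta_{j-1}+1,\dots,\eta_1+\cdots+\eta_j$, assigning the \emph{smaller} new labels to the boxes of \emph{smaller rank} among the boxes labeled $j$. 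The first thing to check is that $\std(\pi)$ is actually a (column-strict, i.e.\ strictly increasing up each run) parking function in $\PF_{K,k}$: within a single column, two boxes labeled $j$ occur in distinct rows, hence (by the rank formula, Proposition~\ref{prop: ranks} and Remark~\ref{rmk:RankAbove}) the one higher up has the larger rank, so it receives the larger new label; and boxes with distinct letters already respect the order; this gives strict increase up each column.

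For part (1), I would compare $\tdinv$ statistics directly, since $\area$ and $\pathdinv$ depend only on $D$ and are unaffected by relabeling, so $\dinv'(\pi)-\dinv(\std(\pi)) = \tdinv'(\pi)-\tdinv(\std(\pi))$, where $\tdinv'$ counts \emph{every} attacking pair $(a,b)$ with label of $a$ \emph{strictly less than} label of $b$, \emph{plus} every attacking pair with equal labels (that is the effect of the $\dinv'$ convention). The key point is that for an attacking pair $(a,b)$ of boxes in $\pi$: if the labels are unequal, the order is preserved under standardization (so the pair contributes to $\tdinv'(\pi)$ iff it contributes to $\tdinv(\std(\pi))$); if the labels are equal, say both $=j$, then by Lemma~\ref{lem:box-pairs} the ranks satisfy $\rank(a)<\rank(b)$, so by our tie-breaking rule $a$ gets the strictly smaller new label, and the pair contributes to $\tdinv(\std(\pi))$ — matching the fact that under $\dinv'$ the tie \emph{did} contribute to $\tdinv'(\pi)$. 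Summing over all attacking pairs gives $\tdinv'(\pi)=\tdinv(\std(\pi))$, hence (1). (One must double-check the direction of the attacking-pair inequality matches the convention: $a$ to the left on the same diagonal, or $a$ to the right one diagonal below $b$, always has $\rank(a)<\rank(b)$ by Lemma~\ref{lem:box-pairs}, so this is consistent.)

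For part (2), I would argue that $\omega_{\std(\pi)}$ is the minimal-length representative of its coset in $\widetilde{S_K}/S_\eta$. Recall $\omega_{\std(\pi)}(i)$ is the rank of the box carrying label $i$. The coset of $\omega_{\std(\pi)}$ modulo the right action of $S_\eta$ consists of all relabelings that only permute labels \emph{within} each block of $\eta$; its minimal-length element is the one whose window values, restricted to each block of consecutive labels, are \emph{increasing}, which by Remark~\ref{rem: right action preserves positive} and the length formula \eqref{eq:dim-Cw}/Proposition~\ref{prop: schubert cells in partial flags} is exactly the condition that $\omega(i)<\omega(i+1)$ whenever $i,i+1$ lie in the same block. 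But that is precisely how we defined the standardization tie-break (smaller rank $\mapsto$ smaller label within a block), so $\omega_{\std(\pi)}$ is indeed the minimal-length representative. For part (3), composing: by Lemma~\ref{lem: pf bijection} the map $P\mapsto \omega_P$ is a bijection $\PF_{K,k}\to\{\gamma\text{-restricted affine permutations}\}$, and $\std$ lands exactly in those $P$ whose associated $\omega_P$ is the minimal-length representative in its $S_\eta$-coset with prescribed content $\eta$ (i.e.\ the label-set of block $j$ occupies exactly the boxes $\std$ assigns to letter $j$). Conversely, given a $\gamma$-restricted minimal-length representative $\omega$, I recover the unique $\pi\in\wWPF_{K,k,\eta}$ by taking the parking function $\omega^{-1}_P$ from Lemma~\ref{lem: pf bijection} and collapsing block $j$ of labels to the single letter $j$ — well-definedness as a weakly-increasing word parking function follows from minimality (increasing within blocks becomes weakly increasing after collapsing), and the two constructions are mutually inverse. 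I expect the main obstacle to be part (1): carefully matching the $\dinv'$ tie-counting convention to the behavior of $\tdinv$ under the rank-respecting tie-break, making sure no attacking pair is double-counted or dropped and that the $\pathdinv$/$\maxtdinv$ correction terms (Definition~\ref{def: dinv}) genuinely cancel since they only depend on $D$.
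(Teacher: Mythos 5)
Your proposal is correct and follows essentially the same route as the paper: part (1) reduces to comparing $\tdinv$ since the path-dependent terms cancel (the paper dismisses this as clear from definitions, and your careful matching of ties via Lemma~\ref{lem:box-pairs} is exactly the right justification), and parts (2)--(3) use the same observation that the $S_\eta$-coset corresponds to permuting labels within blocks and that $\inv$ is minimized by ordering each block by rank. No gaps.
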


\begin{proof}
Part (a) is clear from definitions. For parts (b) and (c), we recall that minimizing the length is equivalent to minimizing $\inv(\omega)$. A word parking function $\pi\in \wWPF_{K,k,\eta}$ corresponds to a collection of parking functions $\widetilde{\pi}\in \PF_{K,k}$ where the labels in different blocks of $\eta$ have the same relative order as in $\pi$, but the labels within one block are permuted arbitrarily. For all such $\widetilde{\pi}$ the corresponding affine permutations $\omega_{\widetilde{\pi}}$ are in the same coset in $\widetilde{S}_K/S_{\eta}$. It remains to notice that $\inv(\omega_{\widetilde{\pi}})$ is minimal if the labels in each block are ordered by their ranks, which is precisely our definition of $\std(\pi)$.

\end{proof}

Next, we state the main result of \cite{GGG1} which relates the Rational Shuffle Theorem to the Delta Theorem. 

\begin{thm}\cite{GGG1}
\label{thm:PerpSymmFunc}
We have 
$$
s^{\perp}_{(k-1)^{n-k}}(E_{K,k}\cdot 1)=\Delta'_{e_{k-1}}e_n,\quad  
s^{\perp}_{(n-k)^{k-1}}\omega (E_{K,k}\cdot 1)=\omega \Delta'_{e_{k-1}}e_n.
$$
\end{thm}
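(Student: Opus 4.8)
\textbf{Proof proposal for Theorem~\ref{thm:PerpSymmFunc}.}

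The two identities in the theorem are equivalent via the involution $\omega$: applying $\omega$ to the first identity and using that $\omega$ is an involution, together with the fact that $\omega$ commutes with Schur skewing in the transposed sense $\omega\,s_\lambda^\perp = s_{\lambda'}^\perp\,\omega$ (which follows from $\langle s_\mu^\perp f, g\rangle = \langle f, s_\mu g\rangle$ and $\omega s_\mu = s_{\mu'}\omega$, $\langle \omega f,\omega g\rangle = \langle f,g\rangle$), turns $s_{(k-1)^{n-k}}^\perp (E_{K,k}\cdot 1) = \Delta'_{e_{k-1}}e_n$ into $s_{(n-k)^{k-1}}^\perp \omega(E_{K,k}\cdot 1) = \omega\Delta'_{e_{k-1}}e_n$. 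So it suffices to prove the first identity, which is exactly Theorem~\ref{thm:IntroSkewing} stated in the introduction with $\lambda = (k-1)^{n-k}$; hence the content here is really just to record that this follows from \cite{GGG1} and to spell out the $\omega$-compatibility. First I would state the lemma $\omega\, s_\lambda^\perp = s_{\lambda'}^\perp\, \omega$ as a standalone fact (or cite it), prove it in two lines from the definition of the adjoint and the fact that $\omega$ is a self-adjoint involution, and then deduce the second identity from the first.

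For completeness, since the paper presents this as ``the main result of \cite{GGG1}'' but the body may want a self-contained sketch, I would recall the structure of the proof of Theorem~\ref{thm:IntroSkewing}: one works in the elliptic Hall algebra $\mathcal{E}_{q,t}$ and its action on $\Lambda(q,t)$, uses the identification of $E_{K,k}$ with an explicit element built from the $(K,k)$-generators, and then invokes the skewing/coproduct identities for these operators established in \cite{BHMPS, BHMPS2, Negut}. The key algebraic input is a formula expressing $s_\lambda^\perp$ applied to $E_{K,k}\cdot 1$ in terms of the operators $\Delta'_{e_{k-1}}$ acting on $e_n$; with $\lambda = (k-1)^{n-k}$ the dimensions match ($|\lambda| = (k-1)(n-k) = K - n$) and the two sides agree degree by degree.

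The main obstacle — if one wanted a genuinely self-contained argument rather than a citation — is establishing the commutation/skewing identity for $E_{K,k}$ in the elliptic Hall algebra, i.e.\ understanding how $s_\lambda^\perp$ interacts with the $E_{K,k}$ operator; this is precisely the technical heart of \cite{GGG1} and relies on the Negut-type shuffle-algebra identities. In the present paper, however, this is an assumed input: the plan is simply to cite \cite[Theorem 1.1]{GGG1} for the first equality and derive the second via the one-paragraph $\omega$-twist argument above. So concretely: (i) recall Theorem~\ref{thm:IntroSkewing} giving the first equation; (ii) prove $\omega s_\lambda^\perp = s_{\lambda'}^\perp \omega$; (iii) apply $\omega$ to both sides of the first equation, using $\omega(E_{K,k}\cdot 1)$ on the right-skewed side and $\omega\Delta'_{e_{k-1}}e_n$ on the other, with $\lambda' = (n-k)^{k-1}$, to conclude.
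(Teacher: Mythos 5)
Your proposal is correct and matches what the paper does: Theorem~\ref{thm:PerpSymmFunc} is presented purely as a citation to \cite{GGG1}, with the first identity being exactly \cite[Theorem 1.1]{GGG1} (restated as Theorem~\ref{thm:IntroSkewing} here). The derivation of the second identity from the first via $\omega\,s_\lambda^\perp = s_{\lambda'}^\perp\,\omega$ is the standard and correct way to obtain it, and your two-line verification of that commutation relation from self-adjointness of $\omega$ under the Hall pairing is sound; there is no gap.
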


Now we are ready to prove Theorem~\ref{thm:GradedFrob}.

\begin{proof}[Proof of Theorem~\ref{thm:GradedFrob}]
    By Theorem \ref{thm: dim cell}, the space $X_{n,k}$ admits an affine paving with cells 
    \[
    X_{n,k}\cap C_{\omega^{-1}} \simeq \mathbb{C}^{\delta_{K,k}-\dinv(\pi(\omega))}.
    \]
    Here $\pi(\omega)$ is the $(K,k)$ parking function corresponding to the $\gamma$-restricted permutation $\omega^{-1}$. Therefore
    $$
    \Hilb_{q,t}H_*^{\BM}(X_{n,k})=\sum_{\pi\in \PF_{K,k}}q^{\area(\pi)}t^{\dinv(\pi)}.
    $$
    Let $\eta\vDash K$ and let $\pr_\eta : \Sp_\gamma\to \Sp_{\gamma,\eta}$ be the projection map \eqref{eq: pr eta}. 
    The projection $\pr_{\eta}(X_{n,k})\subset \aFl_{\eta}$ also admits an affine paving which can be constructed as follows. By Proposition \ref{prop: schubert cells in partial flags} the Schubert cells in $\aFl_{\eta}$ can be written as $\pr_{\eta}(C_{\omega^{-1}})$ where $\omega^{-1}$ is the minimal length coset representative in $\widetilde{S}_K/S_{\eta}$. Furthermore, $\pr_{\eta}:C_{\omega^{-1}}\to \pr_{\eta}(C_{\omega^{-1}})$ is an isomorphism, so
    $$
    \pr_{\eta}(X_{n,k})\cap \pr_{\eta}(C_{\omega^{-1}})=\pr_{\eta}(X_{n,k}\cap C_{\omega^{-1}})\simeq X_{n,k}\cap C_{\omega^{-1}}\simeq \mathbb{C}^{\delta_{K,k}-\dinv(\pi(\omega))}.
    $$
    By Lemma \ref{lem: wpf bijection} we get
    $$
    \Hilb_{q,t}H_*^{\BM}(\pr_{\eta}(X_{n,k}))=\sum_{\pi\in \WPF_{K,k,\eta}}q^{\area(\pi)}t^{\delta_{K,k}-\dinv'(\pi)}
    $$
    Now we combine these results with Corollary \ref{cor: parabolic invariants}  and write
    \begin{align*}
    \left\langle h_\eta, \Frob_{q,t}H_*^{\BM}(X_{n,k})\right\rangle &= \Hilb_{q,t}H_*^{\BM}(X_{n,k})^{S_\eta}\\
    &= \Hilb_{q,t}H_*^{\BM}(\pr_\eta(X_{n,k}))\\
    &=\sum_{\pi\in \WPF_{K,k,
    \eta}} t^{\area(\pi)}q^{\delta_{K,k}-\dinv'(\pi)}\\
    &= \left\langle h_\eta, \rev_q \omega (E_{k,K}\cdot 1)\right\rangle
    \end{align*}
The last equation follows from \eqref{eq: E paired with complete}.    
Thus, 
\[
\Frob_{q,t}H_*^{\BM}(X_{n,k}) = \rev_q \omega E_{k,K}\cdot 1,
\]
as desired. Applying $s_{(n-k)^{k-1}}^\perp$ to both sides, multiplying both sides by $q^{-\binom{k}{2}(n-k)}$, and applying Theorem~\ref{thm:perp homology} and then Theorem~\ref{thm:PerpSymmFunc} we get
\begin{align*}
\Frob_{q,t}H_*^{\BM}(Y_{n,k}) &= \frac{1}{q^{\binom{k-1}{2}(n-k)}} s_{(n-k)^{k-1}}^\perp \rev_q \omega (E_{K,k}\cdot 1)\\
&= q^{\delta_{K,k}-\binom{k-1}{2}(n-k)} s_{(n-k)^{k-1}}^\perp  (\omega E_{K,k}\cdot 1)(q^{-1},t)\\
&= q^{\delta_{K,k}-\binom{k-1}{2}(n-k)} \omega \Delta'_{e_{k-1}}e_n(q^{-1},t)\\
&= q^{\binom{k}{2} + (k-1)(n-k)} \omega \Delta'_{e_{k-1}}e_n(q^{-1},t)\\
&= \rev_q \omega\Delta'_{e_{k-1}}e_n
\end{align*}
and we are done.
\end{proof}

\bibliographystyle{plain}
\bibliography{refs.bib}

\end{document}